\title[Non-symplectic automorphisms of O'Grady's tenfolds and cubic fourfolds]{Non-symplectic automorphisms of prime order of O'Grady's tenfolds and cubic fourfolds}
\author{Simone Billi}
\address[Simone Billi]{Università di Genova, Dipartimento di Matematica, Via Dodecaneso, 35, 16146 Genova, Italy}
\email{simone.billi@edu.unige.it}
\author{Annalisa Grossi}
\address[Annalisa Grossi]{Université Paris-Saclay, CNRS, Laboratoire de Mathématiques d'Orsay, Rue Michel Magat, Bât. 307, 91405 Orsay, France}
\curraddr{Alma Mater studiorum Università di Bologna Dipartimento di Matematica,
Piazza di Porta San Donato 5, Bologna, 40126 Italia}
\email{annalisa.grossi3@unibo.it}
\date{\today}
\subjclass[2020]{14J35, 14J42, 14J50}
\keywords{Irreducible holomorphic symplectic manifolds, non-symplectic automorphisms, cubic fourfolds}
\thanks{Simone Billi was partially supported by the Curiosity Driven 2021 Project \textit{Varieties with trivial or
negative canonical bundle and the birational geometry of moduli spaces of curves: a
constructive approach} - Programma nazionale per la Ricerca (PNR) DM 737/2021. 
Annalisa Grossi was partially supported by the European Research Council (ERC) under the European Union’s Horizon 2020 research and innovation programme (ERC-2020-SyG-854361-HyperK) and by Funded by the European Union - NextGenerationEU under the National Recovery and Resilience Plan (PNRR) - Mission 4 Education and research - Component 2 From research to business - Investment 1.1 Notice Prin 2022 - DD N. 104 del 2/2/2022, from title "Symplectic varieties: their interplay with Fano manifolds and derived categories", proposal code 2022PEKYBJ – CUP J53D23003840006. Simone Billi and Annalisa Grossi are members of the INdAM group GNSAGA}
\begin{document}

\begin{abstract}
    We give a lattice-theoretic classification of non-symplectic automorphisms of prime order of irreducible holomorphic symplectic manifolds of \(\ogten\) type. We determine which automorphisms are induced by a non-symplectic automorphism of prime order of a cubic fourfold on the associated Laza--Saccà--Voisin manifolds, giving a geometric and lattice-theoretic description of the algebraic and transcendental lattices of the cubic fourfold. As an application we discuss the rationality conjecture for a general cubic fourfold with a non-symplectic automorphism of prime order.
\end{abstract}
\maketitle
\section{Introduction}
An irreducible holomorphic symplectic (ihs) manifold is a simply connected, compact, complex, Kähler manifold \(X\) carrying a non-degenerate holomorphic symplectic form \(\sigma_X\) which spans  \(\Homology^{0}(X,\Omega_{X}^2)\).
An automorphism of an ihs manifold is called \textit{symplectic} if it acts trivially on the symplectic form, \textit{non-symplectic} otherwise. A cyclic group \(G \subset \Aut(X)\) is called non-symplectic if it is generated by a non-symplectic automorphism. 

Cubic hypersurfaces in \(\mathbb{P}^5\) admit a Hodge decomposition of K3 type, i.e. \(\Homology^4(Y,\mathbb{C}) = \Homology^{3,1}(Y) \oplus \Homology^{2,2}(Y) \oplus \Homology^{1,3}(Y)\) and \(h^{3,1}(Y)=1\), hence there is a notion of symplectic and non-symplectic automorphism. Namely, an automorphism of a cubic fourfold \(Y\) is symplectic if the induced action on \(\Homology^{4}(Y,\mathbb{Z})\) acts trivially on \(\Homology^{3,1}(Y)\), non-symplectic otherwise.

 Giovenzana--Grossi--Onorati--Veniani \cite{2022symplecticrigidity} prove that any symplectic automorphism of finite order of an ihs manifold of O’Grady’s 10-dimensional deformation
type is trivial. 

In this paper we classify non-symplectic automorphisms of prime order of an ihs manifold \(X\) of \(\ogten\) type. If \(X\) can be realized as a compactification of the twisted intermediate Jacobian fibration of the hyperplane sections of a cubic fourfold, referring to the model due to Laza--Saccà--Voisin \cite{laza2017hyper,sacca2020birational},
we give a lattice-theoretic criterion to determine when such an automorphism is induced by an automorphism of the cubic fourfold. 
To this purpose, we study the induced action on \(\Homology^4(Y,\mathbb{Z})\) by a non-symplectic automorphism of prime order of a smooth cubic fourfold \(Y\).
Starting from these results we exhibit the algebraic and the transcendental lattice of a general cubic fourfold that admits a 
non-symplectic automorphism of prime order. Moreover, we give a geometric set of generators of the algebraic lattice in terms of planes or cubic scrolls. As a consequence, we discuss the rationality conjecture for cubic fourfolds with a non-symplectic automorphism of prime order. 

The classification of the algebraic and the transcendental lattices of a cubic fourfold with a non-symplectic involution, and the discussion of the rationality conjecture is the content of a recent paper by Marquand \cite{marquand2023cubic}.

The classification of automorphisms of ihs manifolds of \(\ogten\) type is an extension of a result by
Brandhorst–Cattaneo \cite{prime_order_brandhorst} that provides lattice-theoretic constraints of non-symplectic automorphisms of odd prime order of an ihs manifold in terms of isometries of unimodular lattices. 

One of the advantages in studying induced automorphisms is to control the fixed locus. Automorphisms of ihs manifolds with empty fixed locus are needed to construct Enriques manifolds, the higher dimensional analogue of Enriques surfaces, see \cite{oguiso2011enriques, Boissier_NW_Sarti_Enriques} for more details.
The authors together with Luca Giovenzana and Franco Giovenzana in an upcoming paper \cite{noi_e_giovenzani} use the results about induced automorphisms to investigate the existence of Enriques manifolds as free quotient of ihs manifolds of \(\ogten\) type.
\subsection{Automorphisms of ihs manifolds and cubic fourfolds}
Automorphisms of ihs manifolds can be classified studying the induced action on the second integral cohomology \(\Homology^2
(X,\mathbb{Z})\), which carries a lattice structure provided by the
Beauville–Bogomolov–Fujiki quadratic form. In particular, if \(X\) is an ihs manifold of \(\ogten\) type we know by \cite{rapagnetta2006beauville} that \(\Homology^{2}(X,\mathbb{Z})\) is isometric to the abstract lattice \(\bL\colonequals \U^{\oplus3} \oplus \E_{8}(-1)^{\oplus2} \oplus \A_2(-1)\). A marking is an isometry \(\eta \colon \Homology^2(X,\mathbb{Z}) \rightarrow \bL\) and, whenever we fix a marked pair \((X,\eta)\) of \(\ogten\) type, the representation map
\begin{equation}\label{main rep map}
\eta_*\colon \Aut(X) \rightarrow \bO(\bL), \quad f \mapsto \eta \circ (f^{-1})^*\circ \eta^{-1}
\end{equation}
is injective by a result of Mongardi--Wandel \cite{mongardi2015induced}. For this reason, we study automorphisms in terms of their induced action in cohomology. More precisely, if an isometry \(\varphi \in \bO(\bL)\) verifies the assumptions of Hodge--theoretic Torelli theorem \cite[Theorem 1.3]{markman2011survey}, then there exists an automorphism \(g \in \Aut(X)\) such that \(\eta_{*}(g)=\varphi\). 

If \((X,\eta)\) is a marked pair of \(\ogten\) type, an isometry \(\varphi \in \bO(\bL)\) is \textit{non-symplectic} if the action on  the Hodge structure of \(\bL\otimes \mathbb{C}\) (induced via the marking) is non-symplectic.
A cyclic group \(G \subset \bO(\bL)\) is called \textit{non-symplectic} if \(G\) is generated by a non-symplectic isometry.

Similarly if \(Y\) is a cubic fourfold, the integral cohomology \(\Homology^4(Y,\mathbb{Z})\) with the intersection form, is an odd unimodular lattice isometric to \([1]^{\oplus 21} \oplus [-1]^{\oplus 3}\). Consider the square of an hyperplane section \(\eta_Y\in\Homology^4(Y,\mathbb{Z})\), then the primitive cohomology \(\Homology^{4}_p(Y,\mathbb{Z})=\langle\eta_Y\rangle^\perp\) with the intersection form is an even lattice isometric to \(\F:=\U^{\oplus 2}\oplus \E_8^{\oplus 2}\oplus \A_2\). We denote by \(A(Y)\colonequals\Homology^{2,2}(Y,\mathbb{C}) \cap \Homology^{4}(Y, \mathbb{Z})\) the lattice of algebraic cycles of \(Y\), and by \(T(Y)=A(Y)^{\perp} \subset H^{4}(Y,\mathbb{Z})\) the transcendental lattice. We denote by \(A_p(Y) \colonequals \Homology^{2,2}(Y,\mathbb{C}) \cap \Homology^{4}_p(Y, \mathbb{Z})\) the even lattice of algebraic primitive cycles. A marking is an isometry  \(\gamma \colon \Homology^{4}_p(Y,\mathbb{Z})\to \F\). Let \((Y,\gamma)\) be a marked pair, an isometry \(\varphi \in \bO(\F)\) is \textit{non-symplectic} if the action on  the Hodge structure of \(\F\otimes \mathbb{C}\) (induced via the marking) is non-symplectic.
Automorphisms of \(Y\) are identified with Hodge isometries of \(\Homology^4_p(Y,\mathbb{Z})\) by the Hodge--theoretic Torelli theorem \cite{voisin1986theoreme}, and then similar techniques to the ones used for ihs manifolds can be applied.

We recall the known results about automorphisms of other deformations types of ihs manifolds.
Regarding
prime order  automorphisms of ihs manifolds of K3\(^{[2]}\) type we refer to \cite{camere2012symplectic, mongardi2012symplectic,beauville2011antisymplectic,ohashi2013non,boissiere2016classification,boissiere2016isometries,camere2019verra}.
Non-symplectic automorphisms of ihs manifolds of K3\(^{[n]}\) type are treated in \cite{camere2020non,camere2021non}, while automorphisms of ihs manifolds of K\(_n(A)\) type are classified in \cite{mongardi2018prime} for \(n=2\), and in \cite{prime_order_brandhorst} for every \(n\).
The classification of symplectic automorphisms of ihs manifolds of \(\ogsix\) type is given in \cite{grossi2020symplectic}, while non-symplectic automorphisms of prime order are classified in \cite{grossi2022nonsymplectic} by the second named author. Symplectic birational infolutions of ihs manifolds of \(\ogten\) type are studied in \cite{marquand2023classification} and groups of symplectic birational transformations are studied in \cite{marquand2024finite}.

Automorphisms of prime order of a smooth cubic fourfold are calssified in \cite{Gonzales_Liendo,fu2016classification,yu2020moduli} in terms of their actions on \(\mathbb{P}^{5}\) and giving a general equation of \(Y\). Groups of symplectic automorphisms of cubic fourfolds are studied in \cite{fu2016classification,laza2022automorphisms}. The classification of algebraic and transcendental lattices of a cubic fourfold with an involution is the content of \cite{marquand2023cubic}.

\subsection{Contents of the paper}
In \autoref{Preliminaries} we introduce basic notions and results on lattice theory to study automorphisms of ihs manifolds and cubic fourfolds. Moreover, we recall Hodge--theoretic Torelli theorem for these manifolds and the construction due to Laza--Saccà--Voisin. 

In \autoref{Classification of non-symplectic automorphisms of OG10} we classify cyclic groups \(G\subset \Aut(X)\) of non-symplectic automorphisms of prime order of an ihs manifold \(X\) of \(\ogten\) type. We denote by \(\bL^G\) the \textit{invariant lattice} and by \(\bL_G\) the \textit{coinvariant lattice}. We provide a list of invariant and coinvariant sublattices of the induced action of \(G\) on \(\bL\) up to isometry. To achieve this we consider the unique primitive embedding \(\bL \hookrightarrow\bLambda:=\U^{\oplus 5}\oplus \E_8(-1)^{\oplus 2}\) in the unimodular lattice \(\bLambda\), and we refer to \cite{prime_order_brandhorst} for a classification of odd prime order isometries of unimodular lattices.

\begin{theorem}[see \autoref{Main theorem section 3}]
Let \(G \subset \bO(\bL)\) be a group of non-symplectic isometries of prime order \(p\). Then there exists an ihs manifold \(X\) of \(\ogten\) type such that \(G \subset \Aut(X)\) if and only if \(2 \leq p \leq 23\) and the pairs \((\bL
^G,\bL_G)\) appear in \autoref{tab:Lid} and \autoref{tab:Lnotid} for \(p=2\), and in \autoref{tab:L_p} for \(p\geq 3\).
\end{theorem}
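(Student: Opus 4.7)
The plan is to reduce the classification of prime-order isometries of $\bL$ to the corresponding classification on the unimodular overlattice $\bLambda=\U^{\oplus 5}\oplus \E_8(-1)^{\oplus 2}$, which contains $\bL$ with orthogonal complement isometric to $\A_2$ (since the discriminant form of $\bL$ is $u_{\A_2}(-1)$). Given $\varphi\in\bO(\bL)$ of prime order $p$ acting non-symplectically, I extend it to an isometry $\widetilde\varphi\in\bO(\bLambda)$ of the same order by prescribing an action on the $\A_2$-factor whose induced action on $\A_2^{\ast}/\A_2\cong\mathbb{Z}/3\mathbb{Z}$ matches the discriminant action of $\varphi$; for $p\notin\{2,3\}$ only the trivial extension is compatible, while for $p=3$ an order-three rotation is also available, and for $p=2$ one can choose either $\mathrm{id}_{\A_2}$ or $-\mathrm{id}_{\A_2}$. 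The invariant and coinvariant lattices $\bL^G$ and $\bL_G$ are then recovered by intersecting $\bLambda^{\widetilde\varphi}$ and $\bLambda_{\widetilde\varphi}$ with $\bL$.

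For odd $p$, necessity of the tabulated pairs follows from Brandhorst--Cattaneo's classification \cite{prime_order_brandhorst} of non-symplectic prime-order isometries of unimodular lattices, applied to $\widetilde\varphi$, together with the signature constraints imposed by the geometry: $\bL_G$ must have signature $(2,\cdot)$ since the symplectic period lies in $\bL_G\otimes\mathbb{C}$, and $\bL^G$ must have signature $(1,\cdot)$ in order to contain an invariant Kähler class. The bound $p\leq 23$ is then immediate: the characteristic polynomial of $\varphi$ on $\bL_G$ is a power of the $p$-th cyclotomic polynomial, so $(p-1)\mid\operatorname{rk}(\bL_G)$, while $\operatorname{rk}(\bL_G)\leq 23$. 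The case $p=2$ lies outside the scope of \cite{prime_order_brandhorst} and must be handled separately using Nikulin's theory of lattice involutions; the split between \autoref{tab:Lid} and \autoref{tab:Lnotid} corresponds to the two possible extensions, namely whether $\widetilde\varphi$ restricts to $\mathrm{id}_{\A_2}$ or to $-\mathrm{id}_{\A_2}$.

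For sufficiency, each pair in the tables determines an isometry $\varphi\in\bO(\bL)$ that I would realise geometrically via the surjectivity of the period map together with the Hodge-theoretic Torelli theorem \cite[Theorem 1.3]{markman2011survey}. Specifically, one chooses a generic period $\sigma$ in the $\zeta_p$-eigenspace of $\varphi$ inside $\bL_G\otimes\mathbb{C}$, so that the resulting marked pair $(X,\eta)$ has Picard lattice $\bL^G$ and $\varphi$ is automatically a Hodge isometry. The main obstacle is the last step of Torelli: $\varphi$ must preserve a K\"ahler chamber of the positive cone of $\bL^G$ rather than merely the positive cone itself, a condition that can fail in the presence of prime exceptional or wall classes in $\bL^G$ permuted non-trivially by $\varphi$. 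This verification must be carried out case-by-case for each tabulated pair, and, together with the separate analysis required for $p=2$, constitutes the most delicate portion of the argument.
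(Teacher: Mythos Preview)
Your overall strategy---extend to the unimodular overlattice $\bLambda$, classify there via Nikulin for $p=2$ and Brandhorst--Cattaneo for odd $p$, then restrict back by computing orthogonal complements of $\A_2$ (or of a rank-one piece of it)---is exactly the paper's approach. The gap is in the sufficiency direction, where you have misidentified the obstacle.

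You claim that verifying $\varphi$ preserves a K\"ahler chamber ``must be carried out case-by-case'' because wall classes in $\bL^G$ might be ``permuted non-trivially by $\varphi$''. But by definition $\varphi$ restricts to the identity on $\bL^G$. Once the period is chosen generically in the $\zeta_p$-eigenspace so that $\NS(X)=\bL^G$, every algebraic class---hence every wall divisor---is fixed pointwise, and the K\"ahler cone is automatically $\varphi$-invariant. This is precisely \autoref{non sympl autom are effective} in the paper, and it makes sufficiency a short uniform argument rather than a case analysis. The scenario you worry about (walls genuinely permuted) occurs for \emph{symplectic} actions or for non-regular birational maps, not here. What you \emph{do} omit is the monodromy check $\varphi\in\Mon^2(X)=\bO^+(\bL)$: for odd $p$ this follows from $\varphi^p=\mathrm{id}$, but for $p=2$ one must compute the spinor norm from the signature $(2,\ast)$ of $\bL_G$, as the paper does.

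Two minor remarks on the extension step. For $p=3$ the discriminant action is forced to be trivial since $\lvert\Aut(\mathbb{Z}/3\mathbb{Z})\rvert=2$, so the identity extension already suffices; your proposed order-three rotation of $\A_2$ is allowed but buys nothing and muddies the recovery of $\bL^G,\bL_G$ from $\bLambda^{G'},\bLambda_{G'}$. For $p=2$ with non-trivial discriminant action the paper extends by the transposition of the two simple roots of $\A_2$ rather than by $-\mathrm{id}_{\A_2}$; the transposition has invariant sublattice $[2]=\langle c+d\rangle$, which allows one to read off $\bL^G$ as $[2]^{\perp}\subset\bLambda^{G'}$ and keeps the signatures aligned as in \autoref{signature of L_G and L^G trivial}.
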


Throughout the paper we refer to a \textit{smooth} cubic fourfold, even if it is not always specified. 
By \cite{Gonzales_Liendo, yu2020moduli} if a non-symplectic automorphism of a cubic fourfold has prime order, it can be an involution or an automorphism of order three. In \autoref{Non-symplectic automorphisms of order three of a cubic fourfold} we give a lattice-theoretic and a geometric classification of non-symplectic automorphisms of prime order of a cubic fourfold.

Let \(G \subset \Aut(Y)\) be a group of non-symplectic automorphisms of prime order of a general cubic fourfold \(Y\), then the invariant lattice \(\F^G\) coincide with the primitive algebraic lattice \(A_p(Y)\), and the coinvariant lattice \(\F_G\) coincides with the transcendental lattice \(\T(Y)\).
In \autoref{Subsection alg and tras lattices} we classify the algebraic and the transcendental lattices of a general cubic fourfold with a non-symplectic automorphism of order three.
\begin{theorem}[See {\autoref{order_three_cubiche_prim}}]
Let \(G \subset\bO(\F)\) be a group of non-symplectic isometries of order three. Then there exists a cubic fourfold \(Y\) such that \(G\subset\Aut(Y)\) if and only if the pairs \((\F^G,\F_G)\) appears in \autoref{tab:order_three_non_sympl_cubic_prim}.  For such a general \(Y\) the algebraic lattice \(A(Y)\) appear in \autoref{tab:algebraic_transcendental_lattice_cubic_with_order3_action}, and the class of \(\eta_Y\) is expressed in a basis of \(A(Y)\). 
\end{theorem}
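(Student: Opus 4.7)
The plan is to use the Hodge-theoretic Torelli theorem for cubic fourfolds (Voisin) to reduce the realization question to the existence of an order-$3$ Hodge isometry on the primitive lattice $\F$, and then to read off $A(Y)$ and the class of $\eta_Y$ from the primitive index-$3$ embedding $\langle\eta_Y\rangle\oplus\F\hookrightarrow \Homology^4(Y,\mathbb{Z})$. The set-up parallels the treatment in \autoref{Classification of non-symplectic automorphisms of OG10}, except that here the fixed lattice $\F^G$ must be positive definite: since the action is non-symplectic, the real plane $\Homology^{3,1}(Y)\oplus\Homology^{1,3}(Y)$ sits in the coinvariant part, forcing $\F_G$ to absorb the full $(0,2)$ signature of $\F$.

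For the classification of pairs $(\F^G,\F_G)$, I would exploit the $G$-action to endow $\F_G$ with a Hermitian $\mathbb{Z}[\zeta_3]$-module structure, so that $\operatorname{rk}\F_G$ is even, $\F^G$ is positive definite, and $\F_G$ has signature $(\operatorname{rk}\F_G-2,2)$. Since $\F^G$ and $\F_G$ are primitive orthogonal sublattices of $\F$, Nikulin's glue theory ties their discriminant forms to that of $\F$, which is isomorphic to $\A_2(-1)$. Following the strategy of \cite{prime_order_brandhorst}, I would extend the isometry through a primitive embedding of $\F$ into a suitable unimodular lattice, where odd prime order isometries are classified, and then restrict to $\F$. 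Enumerating the resulting Hermitian $\mathbb{Z}[\zeta_3]$-lattices of prescribed rank, signature and discriminant yields the finite list in \autoref{tab:order_three_non_sympl_cubic_prim}.

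For the realization statement, I would invoke the surjectivity of the period map for cubic fourfolds (Laza, Looijenga), restricted to the subdomain $\mathcal{D}^{\varphi}\subset \mathcal{D}$ of periods fixed by the prescribed isometry $\varphi\in\bO(\F)$. This subdomain is a non-empty complex ball of positive dimension, and the locus inside it whose period does not correspond to a smooth cubic is a proper closed subvariety (a union of Hassett-type divisors removed by Laza--Looijenga). Hence a very general point of $\mathcal{D}^{\varphi}$ produces a smooth cubic $Y$ with $A_p(Y)=\F^G$ and $T(Y)=\F_G$, and Voisin's Torelli theorem lifts $\varphi$ to an automorphism of $Y$ realizing the prescribed action.

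Finally, to compute $A(Y)$ and the class of $\eta_Y$, I would use that $\Homology^4(Y,\mathbb{Z})$ is an index-$3$ overlattice of $\langle\eta_Y\rangle\oplus\F$ determined by a canonical isotropic glue element $\alpha$ in the discriminant group of $\langle\eta_Y\rangle\oplus\F$. For a general $Y$ with the $G$-action, $A(Y)$ is the primitive closure of $\langle\eta_Y\rangle\oplus A_p(Y)=\langle\eta_Y\rangle\oplus\F^G$ in $\Homology^4(Y,\mathbb{Z})$, and it strictly enlarges the direct sum precisely when the projection of $\alpha$ to the discriminant group of $\langle\eta_Y\rangle\oplus\F^G$ is non-trivial. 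Case-by-case inspection across the list of invariant lattices then yields the contents of \autoref{tab:algebraic_transcendental_lattice_cubic_with_order3_action} together with the explicit expression of $\eta_Y$ in a chosen basis of $A(Y)$. The main obstacle is precisely this discriminant-form bookkeeping, together with checking that the extensions of $\varphi$ to $\Homology^4(Y,\mathbb{Z})$ fix the polarization class $\eta_Y$, which is necessary for the lifted isometry to come from an automorphism of the embedded cubic.
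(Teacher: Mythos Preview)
Your overall strategy---reduce to Hodge isometries of $\F$, extend to the unimodular lattice $\Homology^4(Y,\mathbb{Z})$ and invoke \cite{prime_order_brandhorst}, then recover $A(Y)$ as the saturation of $\langle\eta_Y\rangle\oplus\F^G$---is the same one the paper uses. However, two steps in your proposal are genuine gaps rather than routine bookkeeping.

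First, you do not say where the ``prescribed rank'' of $\F_G$ comes from. The paper does not enumerate all Hermitian $\mathbb{Z}[\zeta_3]$-lattices from scratch; it imports the geometric classification of order-three non-symplectic automorphisms of cubic fourfolds (\autoref{class_cubiche}) together with the dimensions of the moduli spaces $\mathcal{M}_G$ from \cite{yu2020moduli}, and then uses \autoref{dimension_family} to read off $\rk\F_G=2\dim\mathcal{M}_G+2$. This pins down four specific ranks. Without that input your enumeration is unconstrained and would produce many spurious candidates.

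Second, and more seriously, your realization argument assumes that the excluded locus $\mathcal{C}_2\cup\mathcal{C}_6$ meets the ball $\mathcal{D}^\varphi$ in a \emph{proper} closed subset. This is false whenever $\F^G$ contains a short root ($v^2=2$) or a long root ($v^2=6$, $\divi(v,\F)=3$): in that case every period in $\mathcal{D}^\varphi$ is orthogonal to such a root, so $\mathcal{D}^\varphi\subset H_2\cup H_6$ and no smooth cubic exists with that action. The paper handles this explicitly: short roots in the positive-definite candidate $\F^G$ are ruled out by a sphere-packing bound or direct computer check, and long roots are detected via \autoref{num_crit_long_roots}, which converts the question into whether the overlattice $A(Y)$ contains a vector of square $1$. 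This filtering is precisely what cuts the a priori list of $3$-elementary pairs down to the four rows of \autoref{tab:order_three_non_sympl_cubic_prim}, and it is the substantive content of the theorem; your proposal treats it as automatic.

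Your description of the $A(Y)$ computation is correct in outline, but note that the paper determines the correct index-$3$ gluing not by abstract discriminant-form reasoning but by enumerating the finitely many overlattices of $\langle\eta_Y\rangle\oplus\F^G$, discarding those containing a square-$1$ vector (equivalently, those for which $\F^G$ would contain a long root), and then identifying $\eta_Y$ as the unique square-$3$ class whose orthogonal complement recovers $\F^G$.
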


There are many examples of rational cubic
fourfolds \cite{hassett2000special,kuznetsov2016derived, BRS19, RS19}, and it is conjectured that a cubic fourfold is rational if and only
if it admits an associated K3 surface (i.e the transcendental cohomology is
induced from a K3 surface) \cite{hassett2000special, kuznetsov2010derived, kuznetsov2016derived}. 
In \autoref{Geometry of cubic fourfolds with a non-symplectic automorphism of order three} we provide a geometric set of generators for the algebraic lattice of a cubic fourfold that admits a non-symplectic automorphism of order three, see \autoref{theorem set of generators for phi_3^2}, \autoref{theorem set of generators for phi_3^5} and \autoref{theorem set of generators for phi_3^7}. Moreover, knowing the algebraic and the transcendental lattices, we determine if there exists an associated K3 surface via Torelli theorem. When there is an associated K3 surface we verify the conjecture proving the rationality of the cubic. We collect these results in \autoref{tab:rat_properties_cubics}. 

\begin{center}
{\begin{longtable}{lllllll}
\caption{Cubic fourfolds with a non-symplectic automorphism of order three, same notation as in {\autoref{class_cubiche}}.}
\label{tab:rat_properties_cubics} \\
\toprule
No.  &  \(\rk(A(Y))\)  & Associated K3 & Rational \\
\midrule
\endfirsthead
\multicolumn{4}{c}%
{\tablename\ \thetable{}, follows from previous page} \\
No.  &  \(\rk(A(Y))\) & Associated K3 & Rationality\\
\midrule
\endhead
\multicolumn{4}{c}{Continues on next page} \\
\endfoot
\bottomrule
\endlastfoot

\(\phi_3^1\)  & \(1\) & No & ?  \\
\(\phi_3^5\)  & \(7\) & No & ?\\
\(\phi_3^7\)  & \(9\) & Yes & Yes  \\
\(\phi_3^2\) & \(13\) & Yes & Yes \\

\end{longtable}}
\end{center}

In \autoref{Automorphisms on Laza-Sacca-Voisin manifolds of OG10-type}
we relate an automorphism of a cubic fourfold \(Y\) to the induced automorphism of \(J(Y)\) and \(J^t(Y)\), where \(J(Y)\) denotes the associated Laza--Saccà--Voisin manifold, and \(J^t(Y)\) denotes the associated twisted Laza--Saccà--Voisin manifold. See \autoref{Laza--Saccà--Voisin manifolds} for precise definitions. A crucial result that follows from \cite{li2022elliptic,giovenzana2022period} relates the primitive integral cohomology \(\Homology^{4}_p(Y,\mathbb{Z})\) to the integral cohomology \(\Homology^2(J^t(Y),\mathbb{Z})\), see \autoref{hodge_isometry_twist}.

We obtain that any non-symplectic automorphism of a general cubic fourfold \(Y\) induces a non-symplectic automorphism of the associated \(J(Y)\) and \(J^t(Y)\). Moreover, we characterize non-symplectic automorphisms of order two and three of Laza--Saccà--Voisin manifolds that are induced by non-symplectic automorphisms of a cubic fourfold.

\begin{theorem}[See \autoref{induced_by_cubic} and \autoref{regular_autom}]
Let \(Y\) be a general cubic fourfold and let \(G\subset\Aut(Y)\) be group of non-symplectic automorphisms of prime order \(p\). Then the automorphisms of \(G\) induce non-symplectic automorphisms of the ihs manifold of \(\ogten\) type \(J^t(Y)\), and the pairs \((\bL^G,\bL_G)\) for such induced actions are classified in \autoref{tab:induced_actions_LSV}. Viceversa, if \(X\) is a general ihs manifold of \(\ogten\) type and \(G\subset\Aut(X)\) is a group of non-symplectic automorphisms of prime order such that \((\bL^G,\bL_G)\) appears in \autoref{tab:induced_actions_LSV}, then there exists a smooth cubic fourfold \(Y\) and a group of non-symplectic automorphisms \(G \subset \Aut(Y)\) such that \(X\) is birational to \(J^t(Y)\) and the group of automorphisms is induced by automorphisms of the cubic fourfold.
\end{theorem}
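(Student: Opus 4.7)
The plan is to transport isometries through the Hodge isometry of \autoref{hodge_isometry_twist}, which realises $\F \cong \Homology^{4}_p(Y,\mathbb{Z})$ as a primitive sublattice of $\bL \cong \Homology^{2}(J^t(Y),\mathbb{Z})$ whose orthogonal complement is of small rank and spanned by a class $v$ related to the Lagrangian fibration. For the forward direction, fix a generator $g\in G$. Its action $g^{*}$ on $\Homology^{4}(Y,\mathbb{Z})$ fixes the square of a hyperplane section, so it restricts to an isometry of $\F$; extending by the identity on $\langle v\rangle$ produces an isometry $\psi\in\bO(\bL)$. Non-symplecticity of $g$ translates into non-symplecticity of $\psi$, since the Hodge isometry pairs $\Homology^{3,1}(Y)$ with $\Homology^{2,0}(J^t(Y))$. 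The Hodge--theoretic Torelli theorem then produces an automorphism of $J^t(Y)$ inducing $\psi$, and one identifies it with the geometrically induced automorphism by the functoriality of the LSV construction, appealing to \autoref{regular_autom} for regularity.

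On the lattice side, the extension-by-identity recipe gives
\[
\bL^{G} \cong \F^{G} \oplus \langle v\rangle, \qquad \bL_{G} \cong \F_{G}.
\]
I then feed the pairs $(\F^{G},\F_{G})$ classified in \autoref{order_three_cubiche_prim} for $p=3$, and in \cite{marquand2023cubic} for $p=2$, into these formulas, compare the resulting pairs with the master list of \autoref{Main theorem section 3}, and record the matches in \autoref{tab:induced_actions_LSV}.

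For the converse, let $X$ be a general ihs manifold of $\ogten$ type with $G\subset\Aut(X)$ realising one of the pairs in \autoref{tab:induced_actions_LSV}. The shape of the pair shows that inside $\Homology^{2}(X,\mathbb{Z})\cong\bL$ one can identify a $G$-invariant primitive sublattice isometric to $\F$, on which $G$ acts by one of the isometry classes appearing in the cubic fourfold classification. The Torelli theorem for cubic fourfolds \cite{voisin1986theoreme} yields a smooth cubic $Y$ and an automorphism $g\in\Aut(Y)$ having that action on primitive cohomology. Since the period of $X$ coincides with that of $J^t(Y)$ under the embedding $\F\hookrightarrow\bL$, the Hodge--theoretic Torelli for $\ogten$ type gives a birational map $X\dashrightarrow J^t(Y)$, and equivariance of this map yields the induced $G$-action.

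The main obstacle is the identification in the first paragraph: verifying that the automorphism of $J^t(Y)$ produced by Torelli from $\psi$ coincides with the automorphism obtained directly from the LSV construction applied to $g$. This requires tracking the equivariance of \autoref{hodge_isometry_twist} under $\Aut(Y)$ and checking that $g$ preserves the line $\langle v\rangle$, which is geometrically clear because $g$ preserves the hyperplane section data defining the relative compactified Jacobian. A secondary subtlety in the converse is the uniqueness up to $\bO(\bL)^{G}$ of a primitive $G$-equivariant embedding $\F\hookrightarrow\bL$ with the prescribed invariant and coinvariant sublattices, which one deduces from the standard gluing and discriminant form theory already invoked in \autoref{Preliminaries}.
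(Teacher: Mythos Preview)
Your overall strategy is close to the paper's, but there is a concrete error and a missing verification that prevent the argument from going through as written.

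First, the orthogonal complement of $\F(-1)\cong\Homology^4_p(Y,\mathbb{Z})(-1)$ inside $\bL\cong\Homology^2(J^t(Y),\mathbb{Z})$ is not a single class $\langle v\rangle$: it is the rank~$2$ lattice $\U_Y^t=\langle L^t,\Theta^t\rangle\cong\U(3)$, as stated in \autoref{Laza--Saccà--Voisin manifolds} and \autoref{hodge_isometry_twist}. Consequently your displayed formulas $\bL^{G}\cong\F^{G}\oplus\langle v\rangle$ and $\bL_{G}\cong\F_{G}$ are not correct. The right picture is that $\bL_G\cong\F_G(-1)$, while $\bL^G$ is the saturation in $\bL$ of $\U(3)\oplus\F^G(-1)$, and the gluing is nontrivial since the embedding $\U(3)\hookrightarrow\bL$ has embedding subgroup $\mathbb{Z}/3\mathbb{Z}$ (cf.\ \autoref{birat_twisted_LSV}). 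This is exactly what the paper extracts from \autoref{inv_to_inv}, and it is what one must feed into \autoref{tab:Lid} and \autoref{tab:L_p} to obtain \autoref{tab:induced_actions_LSV}; with the rank~$1$ complement your lattice computations would not match the table.

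Second, in the converse you pass directly from ``a $G$-invariant primitive sublattice isometric to $\F$'' to a smooth cubic $Y$ via Torelli, but you do not check that the resulting period actually lies in the image of the period map, i.e.\ outside $\mathcal{C}_2\cup\mathcal{C}_6$ (\autoref{period_cubic_fourfolds}). In the paper this is the content of the hypothesis $\U(3)^{\perp_{\NS(X)}}\cap\mathcal{W}^{pex}_{\ogten}=\emptyset$ in \autoref{birat_twisted_LSV} and \autoref{recover_cubic}: short and long roots in $\F$ correspond precisely to prime exceptional classes in $\bL$, and one must verify (for each entry of \autoref{tab:induced_actions_LSV}) that none lie in $\U(3)^{\perp}\cap\bL^G$. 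Without this step you cannot conclude that $Y$ is smooth.

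A minor point on the forward direction: rather than producing $\psi$ abstractly and then arguing that the Torelli automorphism coincides with the geometrically induced one, the paper simply observes that $\phi\in\Aut(Y)$ already induces $\widetilde{\phi}^t\in\Bir(J^t(Y))$ by functoriality, shows it is regular because $\NS(J^t(Y))$ is fixed (\autoref{regular_action}), and reads off its invariant lattice via \autoref{inv_to_inv}. This avoids the identification step you flag as the ``main obstacle''.
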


\begin{center}
{\begin{longtable}{lllllll}
\caption{The column \(\Aut(Y)\) refers to non-symplectic automorphisms of a cubic fourfold \(Y\) with the same notation as in {\autoref{class_cubiche}}. The pairs \((\bL^G,\bL_G)\) are the invariant and the coinvariant lattices of the induced automorphism on the \(\ogten\) type manifold \(J^t(Y)\). In the column \(J(Y)\) we point out if \(J^t(Y)\) is birational to \(J(Y)\), while \(p\) is the order of \(G\).}
\label{tab:induced_actions_LSV} \\
\toprule
\(\Aut(Y)\)  &  \(\bL_G\) & \(\bL^G\) & \(\sgn(\bL^G)\) & \(J(Y)\) & \(p\)  \\
\midrule
\endfirsthead
\multicolumn{6}{c}%
{\tablename\ \thetable{}, follows from previous page} \\
\(\Aut(Y)\)  &  \(\bL_G\) & \(\bL^G\) & \(\sgn(\bL^G)\) & \(J(Y)\)& \(p\)\\
\midrule
\endhead
\multicolumn{6}{c}{Continues on next page} \\
\endfoot
\bottomrule
\endlastfoot

\(\phi_2^1\)  & \(\U^{\oplus 2}  \oplus \D_4(-1)^{\oplus 3}  \)& \(
\U  \oplus \E_6(-2)
\)& \((1,7)\) & yes & \(2\) \\

\(\phi_2^3\)  & \(\U  \oplus [2]\oplus[-2]^{\oplus 9} \)& \( [2]\oplus[-2]\oplus\E_6(-1)\oplus\D_4(-1)
\)& \((1,11)\) & yes & \(2\) \\

\hline

\(\phi_3^1\)  & \(\U^{\oplus} \oplus \E_8(-1)^{\oplus 2}\oplus\A_2(-1) \)& \(\U(3)
\)& \((1,1)\) & no& \(3\) \\


\(\phi_3^5\)  & \(\U\oplus \U(3) \oplus \E_6(-1)\oplus \A_2(-1)^{\oplus3} \)& \(\U(3)\oplus  \E_6^*(-3)
\)& \((1,7)\) & no&  \(3\) \\

\(\phi_3^7\)  & \(\U\oplus \U(3) \oplus \A_2(-1)^{\oplus 5} \)& \(\U(3)\oplus \E^*_6(-3)\oplus \A_2(-1)
\)& \((1,9)\) &yes&  \(3\) \\

\(\phi_3^2\) & \(\U\oplus \U(3) \oplus \A_2(-1)^{\oplus 3} \)& \(\U(3)\oplus \E_6(-1)\oplus \A_2(-1)^{\oplus 3}
\)& \((1,13)\) & yes&  \(3\) \\
\end{longtable}}
\end{center}

\subsection*{Acknowledgments} 
We wish to warmly thank Simon Brandhorst and Stevell Muller for fruitful discussions and for introducing us to the use of the open source computational software OSCAR, Emanuele Macrì and Lisa Marquand for their hints and suggestions, Giovanni Mongardi, Enrico Fatighenti and Franco Giovenzana for useful discussions.

\section{Preliminaries}\label{Preliminaries}

\subsection{Lattices}
A \textit{lattice} \(L\) is a free \(\mathbb{Z}\)-module of finite rank with an integral symmetric bilinear form \[(-,-):L\times L\rightarrow \mathbb{Z}\] which is non-degenerate.
The \textit{signature} \((l_+,l_-)\) of \(L\) is the signature of the real extension of \((-,-)\). The lattice is \textit{positive-definite} if \(l_-=0\) and \textit{negative-definite} if \(l_+=0\), otherwise it is called \textit{indefinite}. A lattice \(L\) is \textit{hyperbolic} if it is indefinite and \(l_+=1\). A lattice \(L\) is \textit{even} if \(x^2:=(x,x)\in 2\mathbb{Z}\) for any \(x\in L\), \textit{odd} otherwise.
The divisibility \(\divi(x,L)\) of an element \(x\in L\) is the positive generator of the ideal \(\{(x,y)|y\in L\}\subseteq \mathbb{Z}\).

Consider the \textit{dual lattice} \[L^\vee=\Hom_\mathbb{Z}(L,\mathbb{Z})\cong\{x\in L\otimes_\mathbb{Z}\mathbb{Q}|(x,l)\in\mathbb{Z} \  \forall \ l\in L\}\] and observe that \(L\subset L^\vee\) is a finite index subgroup, the quotient \(A_{L}:=L^\vee/L\) is called the \textit{discriminant group} of \(L\).
The \textit{length} \(l(A_{L})\) is the minimum number of generators of \(A_{L}\). There is a well-defined bilinear form \(b_{A_{L}}:A_{L}\times A_{L}\rightarrow \mathbb{Q}/\mathbb{Z}\), and if the lattice is even there is an associated quadratic form \(q_{A_{L}} : A_{L} \rightarrow \mathbb{Q}/2\mathbb{Z}\).

A lattice \(L\) is called \textit{unimodular} if the group \(A_{L}\) is the identity, or equivalently if \(|\det(L)|= 1\). The lattice is called \textit{p-elementary} for a prime number \(p\) if \(A_{L}\cong (\mathbb{Z}/p\mathbb{Z})^k\) for some positive integer \(k\).
\begin{definition}
     Let \(L\) be a \(2\)-elementary even lattice, we define 
     \[ \delta(L):=\begin{cases}
     0 & \text{ if } q_{A_{L}}(x)\in\mathbb{Z}/2\mathbb{Z} \text{ for any } x\in A_{L}\\
     1 & \text{otherwise}
     \end{cases}\]
 \end{definition}
\begin{definition}
Given a lattice \(L\), we call \textit{short root} a vector \(v\in L\) such that \(v^2=2\) and \(\divi(v,L)=1\). We call \textit{long root} a vector \(v\in L\) such that \(v^2=6\) and \(\divi(v,L)=3\). 
\end{definition}

If \(L\) is a lattice and \(k\) is an integer, we denote by \(L(k)\) the lattice whose bilinear form is obtained by the one of \(L\) multiplied by \(k\).
We denote by \(\A_n,\D_n,\E_n\) the positive-definite lattices associated to the ADE Dynkin diagrams. We recall that \(\U\) denotes the even unimodular lattice of rank two, and we denote by \([k]\) the rank one lattice generated by an element of square \(k\in\mathbb{Z}\).

Two even lattices \(L\)
and \(L'\) are in the same \textit{genus} if \( L \oplus  \U \cong  L' \oplus \U\) or equivalently if and only if they have
the same signature and discriminant quadratic form, see \cite[Corollary 1.9.4]{nikulin1980integral}. Isometric lattices have the same genus, but lattices with the same genus might not be isometric. 

A morphism of lattices \(L\rightarrow M\) is a linear map that preserves the bilinear forms, and if the morphism is injective it is called an \textit{embedding} of lattices. An embedding \(L\hookrightarrow M\) is called \textit{primitive} if its cokernel is torsion free, and in this case we denote by \(L^{\perp} \subset M\) the orthogonal complement. If the embedding \(L\hookrightarrow M\) has finite index, then we say that \(M\) is an \textit{overlattice} of \(L\).
Recall that by \cite[Proposition 1.4.1]{nikulin1980integral} there is a bijective correspondence between overlattices of \(L\) and isotropic subgroups of \(A_L\).
A primitive embedding \(L\hookrightarrow M\), where we denote by \(T=L^\perp\subset M\) the orthogonal complement, is given by a subgroup \(H\subset A_M\) called the \textit{embedding subgroup} and an isometry \(\gamma\colon H\to H'\subset A_L\) called the \textit{embedding isometry}, see \cite[Proposition 1.15.1]{nikulin1980integral}. If \(\Gamma\subset A_M\oplus A_{L(-1)}\) denotes the graph of \(\gamma\), then \[A_T\cong \Gamma^\perp/\Gamma.\] Similarly, by \cite[Proposition 1.5.1]{nikulin1980integral} we have that, if \(M\) is unique in its genus, a primitive embedding \(L\hookrightarrow M\) where \(T=L^\perp \subset M\) is given by a subgroup \(K\subset A_L\) called the \textit{gluing subgroup}, and an isometry \(\gamma\colon K\to K'\subset A_T\) called the \textit{gluing isometry}. If \(\Gamma\subset A_L\oplus A_{T(-1)}\) is the graph of \(\gamma\), then we have \[A_M\cong\Gamma^\perp/\Gamma.\] 
If we consider a primitive embedding \(L\hookrightarrow M\) such that the orthogonal complement \(T=L^\perp \subset M\) is not unique in its genus, then the embedding subgroup does not determine the primitive embedding, nor the isometry class of \(T\), see for instance \autoref{example}. 

\begin{example}\label{example}
    Consider the following positive definite lattices \[\A\colonequals  \begin{pmatrix}
        12 &1 \\
        1 & 2
    \end{pmatrix},  \B \colonequals \begin{pmatrix}
        6 &1 \\
        1 & 4
    \end{pmatrix}.\] 
    The lattices \(\U \oplus \A\) and \(\U \oplus \B\) are isometric, the lattices \(\A\) and \(\B\) have the same genus, but \(\A\) and \(\B\) are not isometric.
   This shows that \(\U\) admits two different primitive embeddings in \(M \colonequals \U \oplus \A \cong \U \oplus \B\) such that the respective orthogonal complements are not isometric, even if the gluing embedding is uniquely determined since \(\U\) is unimodular.
\end{example}

There is a natural morphism \(\bO(L)\rightarrow\bO(A_L)\) between the group of isometries of the lattice \(L\) and the group of isometries of the discriminant group \(A_L\).
The Cartan-Dieudonné theorem \cite[Theorem 9.10]{morrison2009embeddings} guarantees that \(\bO(L\otimes_{\mathbb{Z}}\mathbb{R})\) is generated by reflections with respect to non-isotropic vectors, hence it is possible to give the following definition.
\begin{definition}
\label{spinor_norm}
The \textit{spinor norm} \(\spin\colon\bO(L)\rightarrow \{\pm 1\}\) is the group homomorphism that takes value \(+1\) on reflections for a vector \(v\) with \(v^2<0\).
\end{definition}
The kernel of the spinor norm is denoted by \(\bO^+(L)\) and consists of elements that preserve the orientation of a positive-definite subspace of \(L\otimes_{\mathbb{Z}}\mathbb{R}\) of maximal rank.

\begin{definition}
If \(G\subseteq \bO(L)\) is a group of isometries, we call \(L^G:=\{x\in L\mid g(x)=x,\forall g\in G\}\) the \textit{invariant lattice} and we call \(L_G:=(L^G)^{\perp_{L}}\) the \textit{coinvariant lattice}.
\end{definition}

\begin{lemma}[see {\cite[Lemma 5.3]{boissiere2012automorphismes},  \cite[Lemma 1.8]{mongardi2018prime}}]\label{prime_action}
    Let \(L\) be a lattice and \(G\subset\bO(L)\) the group generated by an isometry of prime order \(p\). Then \(m:=\rk(L_G)/(p-1)\) is an integer and \[\frac{L}{L^G\oplus L_G}\cong (\mathbb{Z}/p\mathbb{Z})^a\] as groups, where \(a\leq m\). Moreover the finite group \(\frac{L}{L^G\oplus L_G}\) is a subgroup of the discriminant groups \(A_{L^G}\) and \(A_{L_G}\).
\end{lemma}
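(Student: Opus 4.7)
The plan is to exploit the factorization $x^p - 1 = (x-1)\Phi_p(x)$ over $\mathbb{Q}$, where $\Phi_p(x) = 1 + x + \cdots + x^{p-1}$ is irreducible. Let $g$ be a generator of $G$. Since the bilinear form on $L \otimes \mathbb{Q}$ pairs the $\lambda$-eigenspace of $g$ with the $\lambda^{-1}$-eigenspace, the fixed subspace $L^G \otimes \mathbb{Q}$ and the kernel of $\Phi_p(g)$ are mutually orthogonal and both non-degenerate. Thus $L_G \otimes \mathbb{Q}$ coincides with $\ker \Phi_p(g)$ and naturally carries the structure of a $\mathbb{Q}(\zeta_p)$-vector space; since $[\mathbb{Q}(\zeta_p):\mathbb{Q}] = p-1$, this gives $\rk(L_G) = m(p-1)$ for some non-negative integer $m$, and $L_G$ itself becomes a finitely generated torsion-free $\mathbb{Z}[\zeta_p]$-module of rank $m$.

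Next I would show that $L/(L^G \oplus L_G)$ is an $\mathbb{F}_p$-vector space and embeds into both discriminant groups. For the first assertion, the averaging argument gives $p\cdot x = T(x) + \bigl(p x - T(x)\bigr)$ with $T := 1 + g + \cdots + g^{p-1}$: the first summand is $g$-invariant and so lies in $L^G$, while the second is killed by $T$ (since $T^2 = pT$ on $L$) and therefore lies in $L \cap (L_G \otimes \mathbb{Q}) = L_G$. Consequently $pL \subseteq L^G \oplus L_G$, and the quotient is $p$-torsion of some rank $a$. For the embedding into $A_{L^G}$, pair $L$ against $L^G$ to obtain the restriction map $L \to (L^G)^\vee$; it vanishes on $L_G$ by orthogonality and takes $L^G$ into itself, hence descends to a homomorphism $L/(L^G \oplus L_G) \to A_{L^G}$. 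Injectivity uses the defining identity $L_G = L \cap (L^G)^\perp$: if $x \in L$ agrees on $L^G$ with some $y \in L^G$, then $x - y \in L \cap (L^G)^\perp = L_G$, so $x \in L^G \oplus L_G$. The symmetric argument produces the embedding into $A_{L_G}$.

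The main step is the bound $a \leq m$. I would introduce the operator $1 - g$, which kills $L^G$ and preserves $L_G$, so $(1-g)L \subseteq L_G$. This gives a homomorphism $\psi: L \to L_G/(1-g)L_G$, $x \mapsto (1-g)x \bmod (1-g)L_G$. If $(1-g)x = (1-g)w$ for some $w \in L_G$, then $x - w \in \ker(1-g) = L^G$, so $x \in L^G \oplus L_G$; thus $\psi$ factors through an injection $L/(L^G \oplus L_G) \hookrightarrow L_G/(1-g)L_G$. It remains to compute the size of the right-hand side: identifying $1 - g$ with multiplication by $1 - \zeta_p$ on $L_G$ viewed as a $\mathbb{Z}[\zeta_p]$-module, the structure theorem over the Dedekind domain $\mathbb{Z}[\zeta_p]$ writes $L_G$ as a direct sum of $m$ fractional ideals, and reducing each modulo the prime $(1 - \zeta_p)$, whose residue field is $\mathbb{F}_p$, contributes one factor of $p$. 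Altogether $|L_G/(1-g)L_G| = p^m$ and $a \leq m$.

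The delicate point is precisely this final computation: the averaging and orthogonality arguments are formal, but bounding $a$ by $m$ requires genuinely integral information about $L_G$. One must be careful that $\mathbb{Z}[\zeta_p]$ is only a Dedekind domain, so $L_G$ need not be free over it; invoking the structure theorem for finitely generated torsion-free modules over Dedekind domains is the cleanest way around this obstacle.
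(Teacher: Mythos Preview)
The paper does not supply its own proof of this lemma; it is quoted verbatim from the cited references \cite{boissiere2012automorphismes,mongardi2018prime}. Your argument is correct and is essentially the standard one found there: the $\mathbb{Z}[\zeta_p]$-module structure on $L_G$ to get divisibility of the rank by $p-1$, the averaging identity $px = T(x) + (px - T(x))$ to force $p$-torsion in the quotient, the orthogonality argument for the embeddings into the discriminant groups, and the map $(1-g)\colon L \to L_G/(1-g)L_G$ together with $|L_G/(1-\zeta_p)L_G| = p^m$ over the Dedekind domain $\mathbb{Z}[\zeta_p]$ to bound $a \leq m$. Nothing is missing.
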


If \(p\) is a prime number the following \(p\)-elementary lattices are well defined, and they will be useful in  \autoref{Classification of non-symplectic automorphisms of OG10}.
 \[ \K_p\colonequals\begin{pmatrix}
     -(p+1)/2& 1\\
     1 &-2
 \end{pmatrix} \h_p\colonequals\begin{pmatrix}
     (p-1)/2& 1\\
     1 &-2
 \end{pmatrix} .\] 
 We also consider the \(3\)-elementary lattice
\[ \E_6^*(3)\colonequals\begin{pmatrix}
     4& 2 & -1& 2& -1 & 1\\
     2 & 4 & 1 & 1 & -2 & 2\\
     -1& 1 & 4 & 1 & -2 & -1\\
     2&1&1&4&-2&-1\\
     -1 & -2 & -2&-2&4&-1\\
     1&2&-1&-1&-1&4
 \end{pmatrix} ,\]
and the \(17\)-elementary lattice 
 \[ \bL_{17}\colonequals\begin{pmatrix}
     2& 1 & 0& 1&\\
     1 & 2 & 0 & 0\\
     0& 0 & 2 & -1\\
     1&0&-1&4
 \end{pmatrix}.\]
 Finally, we introduce the lattices 
 \[ \N_{69}\colonequals\begin{pmatrix}
     6& 3 \\
     3 & -10
 \end{pmatrix} , \N_{15}\colonequals\begin{pmatrix}
     4& -1 \\
     -1 & 4
 \end{pmatrix} .\]

\subsection{Automorphisms of ihs manifolds of OG10 type}\label{Automorphisms of ihs manifolds of OG10 type}
Let \(X\) be an ihs manifold with a marking \(\eta \colon \Homology^2 (X,\mathbb {Z}) \to L\), then there is a representation map 
\[\Aut(X)\rightarrow \bO(L),\]
which is injective if \(X\) is an ihs manifold of \(\ogten\) type, see \cite{Mongardi_Wandel}.
The following theorem allows to study automorphisms in terms of Hodge isometries using the previous representation map. We recall that the \textit{Néron--Severi} lattice is denoted by \(\NS(X) \colonequals \Homology^{1,1}(X,\mathbb{C}) \cap \Homology^{2}(X,\mathbb{Z})\) and the \textit{transcendental} lattice is the orthogonal complement \(\T(X)\colonequals\NS(X)^{{\perp}_{L}}\).

\begin{proposition}[see, for instance, {\cite[§3]{Nik_finitegroups}}] \label{inv_contenuto_in_NS} If \((X, \eta)\) is a marked ihs manifold with marking \(\eta \colon \Homology^2 (X,\mathbb {Z}) \to L\) and \(G \subseteq O(L)\) is a group of non-symplectic isometries, then \(L^G \subseteq \NS(X)\) and \(\T(X) \subseteq L_G\).
\end{proposition}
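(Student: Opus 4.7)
The statement is a standard compatibility between the Hodge structure and the fixed/anti-invariant parts of a non-symplectic action, so the plan is to unpack the definition of \emph{non-symplectic} and use that $\Homology^{2,0}(X)$ is one-dimensional. First I would fix a generator $g\in G$ of the cyclic group, and denote by $\lambda\in\mathbb{C}^\times$ the scalar through which $g$ acts on the line $\Homology^{2,0}(X)=\mathbb{C}\sigma_X$; by hypothesis $g$ is non-symplectic, so $\lambda\neq 1$.

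Next, pick $x \in L^G$ and view it as an element of $\Homology^2(X,\mathbb{Z})\otimes\mathbb{C}$ via $\eta^{-1}$. Decompose it according to the Hodge decomposition as $x=x^{2,0}+x^{1,1}+x^{0,2}$. Since $g^*$ is an isometry of integral cohomology coming from a holomorphic automorphism, it preserves the Hodge decomposition, so $g^*(x^{p,q})=x^{p,q}$ for each $(p,q)$. Writing $x^{2,0}=c\,\sigma_X$ with $c\in\mathbb{C}$, the equality $g^*(x^{2,0})=x^{2,0}$ reads $c\lambda=c$; because $\lambda\neq 1$ this forces $c=0$, hence $x^{2,0}=0$. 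Complex conjugating (using that $x$ is real) yields $x^{0,2}=0$ as well. Therefore $x=x^{1,1}\in \Homology^{1,1}(X,\mathbb{C})\cap \Homology^2(X,\mathbb{Z})=\NS(X)$, proving the inclusion $L^G\subseteq \NS(X)$.

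The second inclusion follows by taking orthogonal complements inside $L\cong \Homology^2(X,\mathbb{Z})$: from $L^G\subseteq \NS(X)$ we get
\[
\T(X)=\NS(X)^{\perp_L}\subseteq (L^G)^{\perp_L}=L_G,
\]
which is the desired conclusion.

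There is essentially no obstacle here; the only subtle point is making sure that the hypothesis \emph{group of non-symplectic isometries} is interpreted, consistently with the conventions of the introduction, as meaning that $G$ is cyclic and generated by an isometry acting on the period line by a scalar different from $1$. Once that is clarified, the argument reduces to the one-line linear-algebra observation above and to the elementary fact that orthogonal complement reverses inclusions. No appeal to Torelli, to the specific deformation type, or to any finer lattice-theoretic machinery is required.
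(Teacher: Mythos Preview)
Your argument is the standard one and is correct; the paper does not supply its own proof but simply refers to Nikulin, so there is nothing to compare against beyond checking that your reasoning is sound. One small wording issue: you justify that \(g\) preserves the Hodge decomposition by saying it ``comes from a holomorphic automorphism'', but the statement only assumes \(G\subseteq\bO(L)\), not \(G\subset\Aut(X)\). The correct justification is already implicit in the paper's convention (and in your final paragraph): a \emph{non-symplectic isometry} is by definition a Hodge isometry acting nontrivially on \(\Homology^{2,0}\), so it preserves the Hodge decomposition automatically. With that adjustment the proof is complete.
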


We say that an ihs manifold \(X\) endowed with the action of \(G\subseteq\Aut(X)\) a group of non-symplectic  automorphisms is \textit{general} if one of the above inclusions is an equality, according to \cite[\S3]{Nik_finitegroups}.

By \cite{onorati2020monodromy}  if \(X\) is an ihs manifold of \(\ogten\) type with a marking \(\eta \colon \Homology^2 (X,\mathbb {Z}) \to \bL\), the monodromy group \(\Mon^2(X)\) coincides with the subgroup of index two of orientation preserving isometries \(\bO^+(\bL) \subset \bO(\bL)\). 
\begin{lemma}\label{p_leq_23}
If \(X\) is an ihs manifold of \(\ogten\) type
and \(G \subseteq \Aut(X)\) is a group of non-symplectic automorphisms of prime order \(p\) then \(p \leq 23\) .
\end{lemma}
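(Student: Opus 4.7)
The plan is to combine the action of a generator of $G$ on the holomorphic symplectic form with a cyclotomic divisibility constraint on the rank of the coinvariant lattice $\bL_G$. Let $g$ be a generator of $G$, viewed via the injective representation $\Aut(X)\hookrightarrow \bO(\bL)$, where $\bL=\U^{\oplus 3}\oplus \E_8(-1)^{\oplus 2}\oplus \A_2(-1)$ has rank $24$. Because $g$ has exact prime order $p$ and is non-symplectic, it acts on $\Homology^{2,0}(X)=\mathbb{C}\cdot\sigma_X$ by multiplication by a primitive $p$-th root of unity $\zeta_p$.

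The key step is to observe that the eigenvalues of $g^\ast$ on $\bL_G\otimes \mathbb{C}$ are all primitive $p$-th roots of unity. Indeed, $g^\ast$ has order $p$, so its eigenvalues on $\bL\otimes\mathbb{C}$ are $p$-th roots of unity; the eigenspace for the eigenvalue $1$ is precisely $\bL^G\otimes \mathbb{C}$, and $\bL_G\otimes\mathbb{C}$ is its orthogonal complement. Hence the characteristic polynomial of $g^\ast|_{\bL_G}$ is a power of the $p$-th cyclotomic polynomial $\Phi_p$, which has degree $p-1$. This is exactly the divisibility $(p-1)\mid \rk(\bL_G)$ recorded in \autoref{prime_action}.

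Next I would verify that $\bL_G$ is nonzero. Since $g^\ast \sigma_X=\zeta_p\sigma_X$ with $\zeta_p\neq 1$, the class $\sigma_X$ lies in $\bL_G\otimes \mathbb{C}$, so $\rk(\bL_G)\geq p-1$. Combined with $\rk(\bL_G)\leq \rk(\bL)=24$ this yields $p-1\leq 24$, hence $p\leq 25$. Since $p$ is prime, $p\leq 23$, as required.

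There is essentially no obstacle in this proof: the only delicate point is the non-vanishing of $\bL_G$, which is guaranteed by the non-symplecticity hypothesis via the action on $\sigma_X$. Everything else is a direct application of the cyclotomic divisibility \autoref{prime_action} to the rank-$24$ lattice $\bL$, combined with the fact that non-symplectic automorphisms act on $\sigma_X$ through a nontrivial character of $G$.
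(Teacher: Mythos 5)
Your proof is correct and follows essentially the same route as the paper: both reduce to the divisibility $(p-1)\mid\rk(\bL_G)$ from \autoref{prime_action} applied to the rank-$24$ lattice $\bL$. You additionally make explicit that $\bL_G\neq 0$ via the action on $\sigma_X$, a point the paper's one-line proof leaves implicit but which is genuinely needed to conclude.
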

\begin{proof}
By \autoref{prime_action} we know that \(p-1\) divides \(\rk(\bL_G)\), since \(\rk(\bL)=24\) we conclude. 
\end{proof}
Moreover, if \(X\) is an ihs manifold of \(\ogten\) type, \textit{numerical wall divisors} and \textit{numerical prime exceptional divisors} are computed in \cite{mongardi2022birational}:
\[ \mathcal{W}^{pex}_{\ogten}=\{v\in\bL|v^2=-2\}\cup\{v\in\bL|v^2=-6, \divi(v,\bL)=3\},\]
\[ \mathcal{W}_{\ogten}=\mathcal{W}^{pex}_{\ogten}\cup\{v\in\bL|v^2=-4\}\cup\{v\in\bL|v^2=-24, \divi(v,\bL)=3\}.\]
Hyperplanes orthogonal to classes in \(\mathcal{W}_{\ogten}\cap \NS(X)\) give a wall--and--chamber decomposition of the positive cone. The chamber that contains a Kähler class is the Kähler cone of \(X\).
\subsection{Cubic fourfolds and their automorphisms}

Let \(Y\subset\mathbb{P}^5\) be a smooth cubic fourfold.
The moduli space of smooth cubic fourfold \(\mathcal{M}\) is constructed in \cite{Laza_modulispaces}. We denote by \[\mathcal{D}/\Gamma=\{x\in \mathbb{P}(\Homology^4_p(Y,\mathbb{Z}) \otimes \mathbb{C})|x^2=0,x\cdot \overline{x}<0\}^{+}/\Gamma\] the global period domain of cubic fourfolds. The period map \[\mathcal{P}:\mathcal{M}\rightarrow \mathcal{D}/\Gamma\] associates \([Y] \in \mathcal{M}\) to its Hodge structure of the middle cohomology. To describe the image of this period map we define two divisors in \(\mathcal{D}/\Gamma\). Namely 
the set of short roots in \(\Homology^{4}_p(Y,\mathbb{Z})\) determines a \(\Gamma\)-invariant hyperplane arrangement \(H_6\) in \(\mathcal{D}\). Let \(\mathcal{C}_6 \colonequals H_6/\Gamma \subset \mathcal{D}/\Gamma\) be the associated divisor. Similarly the set of long roots in \(\Homology^{4}_p(Y,\mathbb{Z})\) determines a \(\Gamma\)-invariant hyperplane arrangement \(H_2\) in \(\mathcal{D}\). Let \(\mathcal{C}_2 \colonequals H_2/\Gamma \subset \mathcal{D}/\Gamma\) be the associated divisor.
\begin{theorem}[Voisin, Hassett, Laza, Looijenga]\label{period_cubic_fourfolds}
The period map \[\mathcal{P}:\mathcal{M}\rightarrow \mathcal{D}/\Gamma\setminus(\mathcal{C}_2\cup\mathcal{C}_6)\] is an isomorphism.
\end{theorem}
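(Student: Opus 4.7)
The plan is to combine three classical inputs due to the four named authors: infinitesimal Torelli, Voisin's global Torelli theorem \cite{voisin1986theoreme}, and the Laza--Looijenga determination of the image, building on Hassett's analysis of special cubic loci \cite{hassett2000special}. Both source and target are irreducible quasi-projective varieties of complex dimension $20$: the moduli $\mathcal{M}$ has this dimension via the standard GIT count for cubic hypersurfaces in $\mathbb{P}^5$ (degree $3$ forms in $6$ variables modulo $\mathrm{PGL}_6$), and $\mathcal{D}/\Gamma$ has this dimension because $\F$ has signature $(20,2)$. Accordingly, the proof splits into three tasks: \emph{(i)} the image of $\mathcal{P}$ avoids $\mathcal{C}_2 \cup \mathcal{C}_6$; \emph{(ii)} $\mathcal{P}$ is injective; \emph{(iii)} $\mathcal{P}$ is surjective onto the complement.

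For \emph{(i)}, I would exclude the hyperplane arrangements geometrically: a point of $\mathcal{C}_6$ (resp.\ $\mathcal{C}_2$) forces a short root (resp.\ long root) to lie in $A_p(Y)$, and Hassett's analysis of Noether--Lefschetz divisors identifies these Hodge-theoretic features with singular cubic fourfolds --- respectively the chordal cubic and the determinantal / secant-of-Veronese loci --- both ruled out by the smoothness of $Y$. For \emph{(ii)}, Voisin's argument proceeds in two stages. One first verifies infinitesimal Torelli via a Jacobian ring computation of Macaulay type, showing that the differential of $\mathcal{P}$ is injective at every smooth $Y$, so that $\mathcal{P}$ is étale of some fixed degree onto its image. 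One then reconstructs $Y$ from its Fano variety of lines $F(Y)$, which is itself determined by the Hodge structure on $\Homology^4_p(Y,\mathbb{Z})$ via the Beauville--Donagi Hodge isometry $\Homology^2(F(Y),\mathbb{Z})(-1) \simeq \Homology^4_p(Y,\mathbb{Z})$ together with the global Torelli theorem for $\mathrm{K3}^{[2]}$-type manifolds; this upgrades the étale property to an actual bijection.

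The main obstacle is \emph{(iii)}, the image determination, established independently by Laza \cite{Laza_modulispaces} and by Looijenga. The strategy is to extend $\mathcal{P}$ to a birational morphism between a GIT compactification $\overline{\mathcal{M}}$ of cubic fourfolds and a suitable semi-toric / Baily--Borel compactification of $\mathcal{D}/\Gamma$, then match the GIT boundary strata --- chordal cubics, determinantal cubics, the Veronese secant --- with the closures of the arrangement divisors $\mathcal{C}_2$ and $\mathcal{C}_6$ on the period side. Combined with \emph{(ii)}, this forces $\mathcal{P}$ to be an isomorphism onto $\mathcal{D}/\Gamma \setminus (\mathcal{C}_2 \cup \mathcal{C}_6)$. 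This boundary matching is by far the most delicate ingredient: it relies on Looijenga's theory of semi-toric compactifications and required more than two decades after Voisin's original Torelli theorem to be carried out in full.
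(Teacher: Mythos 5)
The paper does not prove \autoref{period_cubic_fourfolds}: it is recalled as a classical theorem, with the proof deferred entirely to the cited literature (Voisin for Torelli, Hassett for the special divisors, Laza and Looijenga independently for the determination of the image). There is therefore no internal argument to compare yours against; what can be assessed is whether your outline faithfully reflects how the result is established, and in its broad structure it does. The decomposition into \emph{(i)} avoidance of \(\mathcal{C}_2\cup\mathcal{C}_6\), \emph{(ii)} injectivity via infinitesimal plus global Torelli, and \emph{(iii)} surjectivity onto the complement via the Laza--Looijenga matching of GIT and arrangement boundaries is the correct division of labour among the four named authors, and you rightly single out \emph{(iii)} as the genuinely hard step.

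Two inaccuracies are worth flagging, though neither invalidates the outline. First, the chordal cubic \emph{is} the secant variety of the Veronese surface, so your pairing of the divisors with ``the chordal cubic and the determinantal / secant-of-Veronese loci'' both double-counts one degeneration and misassigns the other: with the paper's conventions, a short root together with \(\eta_Y\) saturates to a discriminant-\(6\) labeling and \(\mathcal{C}_6\) is accounted for by nodal (ADE-singular) degenerations, whereas a long root saturates to a discriminant-\(2\) labeling and \(\mathcal{C}_2\) corresponds to the chordal cubic. Second, reconstructing \(Y\) from its Fano variety of lines via the Beauville--Donagi isometry and global Torelli for \(\mathrm{K3}^{[2]}\)-type manifolds is Charles's (and Looijenga's) later proof of the Torelli theorem, not Voisin's original 1986 argument, which proceeds by degeneration to nodal cubics and comparison with K3 periods; the strategy is valid, but the attribution is anachronistic, and in any case the injectivity step is not ``étale of some fixed degree upgraded to a bijection'' but a genuine global statement. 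With those corrections your sketch is an accurate summary of the literature the paper is invoking.
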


\begin{definition}
    Let \(Y\) be a cubic fourfold. A \(d\)\textit{-labeling} is a rank \(2\) saturated sublattice \(K_d\subseteq A(Y)\) of discriminant \(d\) containing the class \(\eta_Y\).  Denote by \(\mathcal{C}_d\) the locus of the moduli space of smooth cubic fourfolds \(\mathcal{M}\) admitting a \(d\)-labeling.
\end{definition}
Cubic fourfolds with a labeling lie in a codimension one subspace of the moduli space \(\mathcal{M}\), as proved by Hassett \cite{hassett2000special}.
\begin{theorem}[see {\cite[{\(\S3\)}]{hassett2000special}}]
    The locus \(\mathcal{C}_d\) is non-empty if and only if \(d>6\) and \(d\equiv 0,2\) \((6)\). If it is not empty then it is an irreducible divisor in the moduli space \(\mathcal{M}\).
\end{theorem}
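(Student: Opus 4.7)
My plan is to translate the statement into a question about primitive rank-two sublattices of the odd unimodular lattice $\Homology^{4}(Y,\mathbb{Z})$ containing the class $\eta_Y$ of self-intersection $3$, and then to use the Voisin--Hassett--Laza--Looijenga description of the period map (\autoref{period_cubic_fourfolds}) to promote lattice data to actual cubic fourfolds. Writing $K_d = \langle \eta_Y, w\rangle$, the discriminant is $d = 3w^2 - (\eta_Y\cdot w)^2$, and the Hodge--Riemann bilinear relations force $K_d \subset A(Y)$ to be positive definite, so $d > 0$. Reducing modulo $3$ gives $d \equiv -(\eta_Y\cdot w)^2 \pmod{3}$, whence $d \equiv 0,2 \pmod{3}$. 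For the mod-$2$ analysis one verifies that $\eta_Y$ is a characteristic element of $\Homology^{4}(Y,\mathbb{Z})$, i.e.\ $x^2 \equiv \eta_Y\cdot x \pmod{2}$ for every $x$: on the sublattice $\langle \eta_Y\rangle \oplus \F$ this is immediate from $\eta_Y^2 = 3$ together with the evenness of $\F$, while on the index-$3$ overlattice $\Homology^{4}(Y,\mathbb{Z})$ one writes $w = (b\eta_Y + u)/3$ with $u \in \F$ and reduces $9w^2 = 3b^2 + u^2$ modulo $2$ to obtain $w^2 \equiv b \equiv \eta_Y\cdot w \pmod{2}$. Substituting back into the formula for $d$ shows that $d$ is even, and combining the two congruences yields $d \equiv 0,2 \pmod{6}$.

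The lower bound $d > 6$ comes from the excluded loci in \autoref{period_cubic_fourfolds}. A primitive $v \in \F$ with $v^2 = -2$ produces a labeling $\langle \eta_Y, v\rangle$ of discriminant $6$ whose period lies in the short-root divisor $\mathcal{C}_6$, and a primitive $v \in \F$ with $v^2=-6$ and $\divi(v,\F)=3$ gives a labeling of discriminant $2$ whose period lies in $\mathcal{C}_2$; since the period map misses $\mathcal{C}_2 \cup \mathcal{C}_6$, no smooth cubic fourfold realizes either case. Conversely, for any $d > 6$ with $d \equiv 0,2 \pmod 6$ one constructs an abstract positive definite rank-two lattice $K_d$ containing a vector of square $3$, primitively embeds $K_d \hookrightarrow \Homology^{4}(Y,\mathbb{Z})$ via Nikulin's existence theorem \cite{nikulin1980integral}, and uses surjectivity of the period map onto $\mathcal{D}/\Gamma \setminus (\mathcal{C}_2 \cup \mathcal{C}_6)$ to produce a smooth cubic realizing the labeling.

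For irreducibility, the decisive input is that primitive vectors $v \in \F$ of fixed square and divisibility form a single orbit under $\Gamma$, which follows from Eichler's criterion applied to $\F = \U^{\oplus 2}\oplus \E_8^{\oplus 2}\oplus \A_2$ (which contains $\U^{\oplus 2}$ as an orthogonal summand). Combined with uniqueness of the primitive embedding $K_d \hookrightarrow \Homology^{4}(Y,\mathbb{Z})$ up to monodromy, this shows that the locus of periods whose algebraic lattice contains a $d$-labeling is cut out in $\mathcal{D}$ by a single $\Gamma$-orbit of hyperplanes, so $\mathcal{C}_d$ descends to an irreducible divisor in $\mathcal{M}$. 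I expect the main obstacle to be the mod-$2$ characteristic-element computation: the index-$3$ extension of $\langle \eta_Y\rangle \oplus \F$ inside $\Homology^{4}(Y,\mathbb{Z})$ and the glue classes supported on the $\A_2$-summand of $\F$ must be tracked carefully in order to rule out the spurious residue $d \equiv 4 \pmod 6$ and arrive at the clean dichotomy $d \equiv 0, 2 \pmod 6$.
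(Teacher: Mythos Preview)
The paper does not prove this theorem; it is stated with a citation to Hassett \cite{hassett2000special} and used as a black box. Your sketch reconstructs Hassett's original argument and is essentially correct: the congruence $d\equiv 0,2\pmod 6$ comes exactly from the computation $d=3w^2-(\eta_Y\cdot w)^2$ together with the fact that $\eta_Y$ is a characteristic vector of the odd unimodular lattice $\Homology^4(Y,\mathbb{Z})$ (your index-$3$ overlattice calculation is one way to see this; equivalently, $\eta_Y^\perp=\F$ being even and $\eta_Y^2$ odd forces $\eta_Y$ characteristic), and the exclusion of $d=2,6$ is precisely the content of \autoref{period_cubic_fourfolds}. The irreducibility via Eichler's criterion on $\F\supset\U^{\oplus 2}$ is also the standard route.

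Two small corrections. First, a sign: in the paper's conventions $\F=\U^{\oplus 2}\oplus\E_8^{\oplus 2}\oplus\A_2$ has signature $(20,2)$, so short and long roots satisfy $v^2=2$ and $v^2=6$ (with $\divi(v,\F)=3$), not $-2$ and $-6$. Second, in the $d=2$ case the sublattice $\langle\eta_Y,v\rangle$ with $v\in\F$, $v^2=6$, $\divi(v,\F)=3$ is \emph{not} saturated in $\Homology^4(Y,\mathbb{Z})$; its saturation picks up the glue class $(\eta_Y\pm v)/3$ and it is this saturation that has discriminant $2$. Your sentence ``produces a labeling $\langle\eta_Y,v\rangle$ of discriminant $2$'' elides this step. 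Finally, for the converse direction you should remark that a generic period orthogonal to the embedded $K_d$ avoids the countably many root hyperplanes, so that surjectivity in \autoref{period_cubic_fourfolds} indeed yields a \emph{smooth} cubic; this is routine but worth saying.
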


Cubic fourfolds which are related to a K3 surface are of remarkable interest, more precisely we recall the following definition.
\begin{definition}
We say that a polarized K3 surface \((S,h)\) of degree \(d\) is \textit{associated} with a cubic fourfold \(Y\) if there exists a \(d\)-labeling \(K_d\) and there is a Hodge isometry \[\Homology^4(Y,\mathbb{Z})\supset K_d^\perp\cong h^\perp(-1)\subset \Homology^2(S,\mathbb{Z})(-1).\]
\end{definition}
In fact, the existence of an associated K3 surface only depends on the discriminant \(d\) of the \(d\)-labeling and not on the lattice \(K_d\).
Labelings for which there exists an associated K3 surface are called \textit{admissible} and are numerically described in \cite{hassett2000special}.

We have a representation map \(\Aut(Y)\rightarrow \bO(\Homology^4(Y,\mathbb{Z}))\) which is injective.
The following theorem gives a precise description of the group of automorphisms of a smooth cubic fourfold.
\begin{theorem}[Hodge theoretic Torelli theorem, see {\cite{zheng2021orbifold}}]\label{torelli_cubic_fourfolds}
Let \(Y_1,Y_2\) be smooth cubic fourfolds. Suppose \(f:\Homology^4(Y_2,\mathbb{Z})\xrightarrow[]{\cong}\Homology^4(Y_1,\mathbb{Z})\) is an isometry of polarized Hodge structures, then there exists a unique isomorphism \(\phi:Y_1\xrightarrow{\cong} Y_2\) such that \(\phi^*=f\). In particular, we have an isomorphism \[\Aut(Y)\cong \bO_{Hdg}(\Homology^4(Y,\mathbb{Z}),\eta_Y)\] where \(\bO_{Hdg}(H^4(Y,\mathbb{Z}),\eta_Y)\) denotes the group of Hodge isometries of \(\Homology^4(Y,\mathbb{Z})\) fixing the class \(\eta_Y\).
\end{theorem}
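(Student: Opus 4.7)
The plan is to reduce the statement to the Torelli theorem for irreducible holomorphic symplectic manifolds of $\mathrm{K3}^{[2]}$ type via the Fano variety of lines. Given a Hodge isometry $f\colon H^4(Y_2,\mathbb{Z})\xrightarrow{\cong}H^4(Y_1,\mathbb{Z})$ of polarized Hodge structures, so in particular $f(\eta_{Y_2})=\eta_{Y_1}$, one first restricts $f$ to the primitive parts to obtain a Hodge isometry $f_p\colon H^4_p(Y_2,\mathbb{Z})\xrightarrow{\cong}H^4_p(Y_1,\mathbb{Z})$, and this loses no information since $f$ is completely determined by $f_p$ and by $f(\eta_{Y_2})=\eta_{Y_1}$.

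Consider the Fano varieties of lines $F(Y_i)$, which by Beauville--Donagi are ihs fourfolds of $\mathrm{K3}^{[2]}$ type, together with their Plücker polarizations $g_i\in H^2(F(Y_i),\mathbb{Z})$. The Abel--Jacobi (incidence) correspondence induces an isometry of polarized Hodge structures
\[
\alpha_i\colon H^4_p(Y_i,\mathbb{Z})\xrightarrow{\cong} g_i^\perp\subset H^2(F(Y_i),\mathbb{Z})(-1),
\]
which moreover sends $\eta_{Y_i}^2-\text{(class of lines)}$ to $g_i$ in a controlled way, so the map $\alpha_1\circ f_p\circ \alpha_2^{-1}$ extends canonically to a Hodge isometry $\tilde f\colon H^2(F(Y_2),\mathbb{Z})\to H^2(F(Y_1),\mathbb{Z})$ sending $g_2$ to $g_1$. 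After composing $\tilde f$, if necessary, with an element of the Weyl group generated by reflections along $(-2)$-classes (or with $-\mathrm{id}$, which is not a monodromy for $\mathrm{K3}^{[2]}$ type, so this requires a sign check), one can arrange $\tilde f$ to be a parallel transport operator mapping the Kähler cone of $F(Y_2)$ to that of $F(Y_1)$. Then Verbitsky's and Markman's Torelli theorem for $\mathrm{K3}^{[2]}$-type manifolds produces a unique biregular isomorphism $\psi\colon F(Y_1)\to F(Y_2)$ inducing $\tilde f$, the biregularity coming from the preservation of the polarization class.

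To descend $\psi$ to an isomorphism $\phi\colon Y_1\to Y_2$, one uses that the pair $(F(Y),g)$ reconstructs $Y$: indeed the Plücker embedding realizes $F(Y)$ in $\mathbb{G}(1,\mathbb{P}^5)$, the union of the lines parametrized by $F(Y)$ is $Y$ itself, and the projective automorphism induced by $\psi$ in $\mathbb{G}(1,\mathbb{P}^5)$ restricts to an isomorphism of $Y_1\to Y_2$ whose action on $H^4$ agrees with $f$ by construction. Uniqueness of $\phi$ follows from the fact that $\Aut(Y)$ acts faithfully on $H^4(Y,\mathbb{Z})$: any automorphism of a smooth cubic fourfold is the restriction of a projective linear automorphism of $\mathbb{P}^5$, and such an automorphism acting trivially on the middle cohomology acts trivially on the primitive $(2,2)$-classes, in particular on classes of planes contained in (or meeting $Y$ in) algebraic cycles that generate $\mathbb{P}^5$ as a linear span, forcing the projective map to be the identity. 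The principal technical obstacle is the matching of Kähler chambers in the application of the $\mathrm{K3}^{[2]}$-Torelli theorem, i.e.\ ensuring that $\tilde f$ is a true monodromy operator sending Kähler classes to Kähler classes; this requires ruling out the $(-2)$-reflections and the sign issue using the preservation of $g_1,g_2$ and the explicit description of the monodromy group of $F(Y)$, and is precisely the point where the orbifold refinement of Zheng is needed to deduce the clean isomorphism $\Aut(Y)\cong \bO_{\mathrm{Hdg}}(H^4(Y,\mathbb{Z}),\eta_Y)$.
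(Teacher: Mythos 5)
The paper does not prove this statement: it is quoted as the Hodge-theoretic Torelli theorem and attributed to Voisin and to Zheng's orbifold refinement, so there is no internal argument to compare against. Your strategy — transporting the Hodge isometry to the Fano varieties of lines via the Beauville--Donagi correspondence, invoking the Verbitsky--Markman Torelli theorem for $\mathrm{K3}^{[2]}$-type manifolds, and descending back to the cubics — is exactly the modern proof due to Charles, so the architecture is sound. One worry you raise in fact dissolves: a Hodge isometry sending the ample class $g_2$ to the ample class $g_1$ automatically preserves the orientation of a positive three-plane (it acts $\mathbb{C}$-linearly on $\Homology^{2,0}$ and matches ample classes), and since $\Mon^2$ of a $\mathrm{K3}^{[2]}$-type manifold is all of $\bO^+$, the isometry $\tilde f$ is already a polarized parallel transport operator; no correction by reflections or by $-\id$ is needed (and composing with $-\id$ would in any case destroy $\tilde f(g_2)=g_1$).

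There are, however, two genuine gaps. First, your uniqueness argument is wrong as stated: you deduce faithfulness of $\Aut(Y)\to \bO(\Homology^4(Y,\mathbb{Z}))$ from the action on classes of planes contained in $Y$, but a general smooth cubic fourfold has $A(Y)=\langle\eta_Y\rangle$ and contains no planes whatsoever, so there is nothing for the automorphism to act on. The correct route (consistent with your setup) is to transfer the question to $F(Y)$ and use that the representation $\Aut(F(Y))\to \bO(\Homology^2(F(Y),\mathbb{Z}))$ is injective for $\mathrm{K3}^{[2]}$-type manifolds, or alternatively to compute the action on $\Homology^{3,1}(Y)$ via the Jacobian ring. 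Second, the descent step — that a biregular isomorphism $\psi\colon F(Y_1)\to F(Y_2)$ matching the Plücker polarizations is induced by a linear isomorphism of $\mathbb{P}^5$ carrying $Y_1$ to $Y_2$ — is asserted as if it were formal, but it is a substantive ingredient: one must know that the polarization $g$ recovers the Plücker embedding (i.e.\ identify the relevant linear system) and that the resulting automorphism of the Grassmannian is induced by one of $\mathbb{P}^5$; this is precisely the reconstruction result of Chow/Charles and must be cited or proved. With these two points repaired the argument goes through, but as written both the existence (descent) and the uniqueness halves of the theorem rest on unjustified or false claims.
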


For this reason it is natural to study isometries of \(\Homology^4_p(Y,\mathbb{Z})\cong\F\).  Similarly to \autoref{inv_contenuto_in_NS}, if \(G\subseteq\Aut(Y)\) is a group of non-symplectic automorphisms then we have the invariant lattice contained in the algebraic lattice, i.e. \(\F^G\subseteq A_p(Y)\), and the transcendental lattice contained in the coinvariant one, i.e. \(\T(Y) \subseteq \F_G\).
We say that a cubic fourfold \(Y\) endowed with a group of non-symplectic automorphisms \(G\subseteq\Aut(Y)\)  is \textit{general} if one of the above inclusions is an equality.

\subsection{Laza--Saccà--Voisin manifolds}\label{Laza--Saccà--Voisin manifolds}
In this section we recall the construction of two geometric models of an ihs manifold of \(\ogten\) type due to Laza--Saccà--Voisin \cite{laza2017hyper,voisin2016hyper}. 
Consider \(Y\subset\mathbb{P}^5\) a smooth cubic fourfold. The dual projective space \((\mathbb{P}^5)^\vee\) parametrizes hyperplane sections of \(Y\), namely \(Y_H=Y\cap H \subset Y\), and \(U\subset(\mathbb{P}^5)^\vee \) is the open set of the smooth ones. Denote by \[ \Jac(Y_H)= \Homology^1(Y_H,\Omega^2_{Y_H})^\vee/\Homology_3(Y_H,\mathbb{Z})\]
the intermediate Jacobian of a smooth hyperplane section \(Y_H\), which is a principally polarized abelian fivefold.
Over \(U\) consider the Donagi--Markman fibration \[ \pi_U:J_U(Y)\rightarrow U\]
whose fiber over the smooth hyperplane section \(Y_H\) consists of the intermediate Jacobian \(\Jac(Y_H)\). It is proved in \cite{donagi1996spectral} that \(J_U(Y)\) is quasi-projective and it admits a symplectic form \(\sigma_U\) for which \(\pi_U\) is a Lagrangian fibration.

Following \cite{voisin2016hyper} there is another Lagrangian fibration \[ \pi^t_U:J^t_U(Y)\rightarrow U\]
whose fiber over \(Y_H\) is given by the torsor \(\Jac^1(Y_H)\) parametrizing degree \(1\) cycles.
\begin{theorem}[see \cite{laza2017hyper,sacca2020birational}, and \cite{voisin2016hyper} for the twisted case]
Let \(Y\) be a smooth cubic fourfold. There exist smooth projective compactifications \(J(Y)\) and \(J^t(Y)\) of \(J_U(Y)\) and \(J^t_U(Y)\) respectively, with projective flat morphisms \(\pi:J(Y)\rightarrow \mathbb{P}^5\) and \(\pi^t:J^t(Y)\rightarrow \mathbb{P}^5\), extending \(\pi_U\) and \( \pi^t_U\) respectively. The manifolds \(J(Y)\) and \(J^t(Y)\) are smooth ihs manifolds of \(\ogten\) type.
\end{theorem}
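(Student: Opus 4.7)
The plan is to follow the strategy of Laza--Saccà--Voisin and Voisin, whose core geometric input is the Clemens--Griffiths/Mumford identification of the intermediate Jacobian $\Jac(Y_H)$ of a smooth cubic threefold $Y_H = Y \cap H$ with the Prym variety of the natural étale double cover $\widetilde{F}(Y_H) \to F(Y_H)$ associated to the Fano surface of lines on $Y_H$. This reinterprets the Donagi--Markman fibration $\pi_U \colon J_U(Y) \to U$ as the relative Prym fibration of a universal family of such double covers, a description that crucially continues to make sense when $Y_H$ acquires mild singularities and therefore admits a natural extension to a neighborhood of the discriminant in $(\mathbb{P}^5)^\vee$.

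Next, I would produce a global compactification by forming the relative compactified (twisted) Prym scheme over all of $(\mathbb{P}^5)^\vee$, where the untwisted and twisted versions correspond to the different connected components/torsors parametrizing degree $0$ or degree $1$ cycles on $\widetilde{F}(Y_H) \to F(Y_H)$. Over the discriminant, the naive compactification is singular, so I would apply a birational modification --- following Laza--Saccà--Voisin for $J(Y)$ and Voisin for $J^t(Y)$ --- to resolve these singularities, the refined construction of Saccà in \cite{sacca2020birational} yielding a smooth projective total space together with a flat morphism to $\mathbb{P}^5$ extending the Lagrangian fibration $\pi_U$ (respectively $\pi^t_U$).

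Finally, I would verify that $J(Y)$ and $J^t(Y)$ are ihs manifolds of $\ogten$ type. The Donagi--Markman symplectic form $\sigma_U$ extends holomorphically across the locus of singular hyperplane sections via a local model computation combined with Hartogs' principle, and remains non-degenerate on the modification; simple connectedness follows from the structure of the Lagrangian fibration and the complement of the discriminant, and $h^{2,0} = 1$ from a Hodge-theoretic argument on the fibration, producing the ihs property. To pin down the deformation type, I would compute $b_2 = 24$, match the Beauville--Bogomolov--Fujiki form against Rapagnetta's description of the $\ogten$ lattice $\bL$, and exhibit a specialization (or an explicit birational model) connecting the constructed space to a known O'Grady tenfold, then invoke Huybrechts' deformation equivalence. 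The hardest step, and the main obstacle, is the construction of a smooth projective compactification across the very singular strata of the discriminant (notably the chordal cubic locus), where controlling the local geometry of the relative compactified Prym requires the delicate birational analysis that is the heart of \cite{laza2017hyper,sacca2020birational}.
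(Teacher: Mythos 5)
The paper does not prove this statement: it is recalled in the preliminaries as a theorem of Laza--Sacc\`a--Voisin, Sacc\`a, and Voisin, with no argument given, so there is no internal proof to compare against. What follows therefore measures your sketch against the arguments in the cited references.

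Your overall architecture (relative compactified Prym fibration, birational modification to a smooth projective model, extension of the symplectic form, identification of the deformation type by specialization) does match the actual strategy, but your stated ``core geometric input'' is wrong as written, and this is a genuine gap rather than a slip of notation. The Prym description of \(\Jac(Y_H)\) for a smooth cubic threefold \(Y_H\) is Mumford's: choosing a line \(\ell\subset Y_H\) and projecting from it exhibits a blow-up of \(Y_H\) as a conic bundle over \(\mathbb{P}^2\) whose discriminant is a plane quintic curve \(D_\ell\) carrying an \'etale double cover \(\widetilde{D}_\ell\to D_\ell\), and \(\Jac(Y_H)\cong\mathrm{Prym}(\widetilde{D}_\ell/D_\ell)\). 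There is no natural \'etale double cover of the Fano surface of lines in this story; the Fano surface enters only through the Clemens--Griffiths isomorphism \(\Jac(Y_H)\cong\mathrm{Alb}(F(Y_H))\), which involves no Prym variety at all. Conflating the two hides a substantive step of \cite{laza2017hyper}: the relative Prym must be built over the universal family of pairs (hyperplane section, line) and then descended to \((\mathbb{P}^5)^\vee\), and its smoothness is established only when the hyperplane sections acquire mild singularities --- which is why \cite{laza2017hyper} and \cite{voisin2016hyper} treat the general cubic fourfold, while arbitrary smooth \(Y\) requires Sacc\`a's MMP-based argument in \cite{sacca2020birational}, which is considerably more than ``resolving the naive compactification.'' Beyond this, every hard point of your outline (holomorphic extension and nondegeneracy of the symplectic form, \(b_2=24\), and above all the identification of the \(\ogten\) deformation type via the degeneration to the chordal cubic) is simply delegated back to the cited papers, so as a proof the proposal is essentially a restatement of the citation with its one concrete geometric input misidentified.
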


The compactification \(J(Y)\) is called the \textit{LSV manifold} associated to \(Y\), while \(J^t(Y)\) is called the \textit{twisted LSV manifold} associated to \(Y\).
There is an effective relative theta divisor \(\Theta\subset J(Y)\) obtained as the closure of the union of theta divisors of the smooth fibers, it has the property that \(q_{J(Y)}(\Theta)=-2\). There is another class \(L=\pi^*\mathcal{O}_{\mathbb{P}^5}(1)\), that together with \(\Theta\) span a hyperbolic lattice \(\U_Y:=\langle L,\Theta\rangle\subset \NS(J(Y))\).  For a very general cubic fourfold \(Y\) one has \(\NS(J(Y))=\U_Y\), in particular the family can not be locally complete since there are always two algebraic classes in the LSV manifolds.
 Similarly, in the twisted case there are classes \(L^t,\Theta^t\in\NS(J^t(Y))\) spanning a lattice \(\U^t_Y:=\langle L^t,\Theta^t\rangle\cong\U(3)\) and for a general cubic fourfold we have \(\NS(J^t(Y))=\U^t_Y\).

\section{Classification of non-symplectic automorphisms of OG10}\label{Classification of non-symplectic automorphisms of OG10}

In this section we consider the abstract lattice \(\bL:=\E_8(-1)^{\oplus 2}\oplus \U^{\oplus 3}\oplus\A_2(-1)\) isometric to the second integral cohomology lattice of an ihs manifold of \(\ogten\) type. Recall that there is a unique embedding \(\bL\hookrightarrow\bLambda\) in the unimodular lattice \(\bLambda:=\E_8(-1)^{\oplus 2}\oplus \U^{\oplus 5}\), and the orthogonal complement is the rank two lattice \(\bL^\perp\cong\A_2\). The lattice \(\bL\) is \(3\)-elementary and the discriminant group is the cyclic group \(A_{\bL} \cong \mathbb{Z}/3\mathbb{Z}\). We classify non-symplectic automorphisms of prime order \(p\) by listing the possible invariant and coinvariant lattices of their induced action on the second integral cohomology \(\bL\). This is achieved giving the classification of isometries of the unimodular lattice \(\bLambda\). In the following proposition we prove that any non-symplectic isometry of prime order of \(\bL\) is induced by a non-symplectic automorphism of an ihs manifold of \(\ogten\) type.

\begin{proposition}\label{non sympl autom are effective}
Let \(G \subset \bO(\bL)\) be a group of prime order \(p\) generated by a non-symplectic isometry, then there exists a marked pair \((X,\eta)\) of \(\ogten\) type such that \(G\subset\Aut(X)\) is a group of nonsymplectic automorphisms of prime order \(p\). 
\end{proposition}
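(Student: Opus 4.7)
The plan is to realize $G$ geometrically by combining the surjectivity of the period map for marked ihs manifolds of $\ogten$ type with the Hodge-theoretic Torelli theorem \cite[Theorem 1.3]{markman2011survey}. Let $g$ be a generator of $G$, and let $\zeta$ be the primitive $p$-th root of unity encoding its non-symplectic action (i.e. $g$ is to act by $\zeta$ on the distinguished line $\Homology^{2,0}$).

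First I would construct a $G$-invariant period point. Decompose $\bL\otimes\mathbb{C}$ into eigenspaces for $g$ and let $V_\zeta$ denote the $\zeta$-eigenspace; it sits inside $\bL_G\otimes\mathbb{C}$. For a generic vector $\sigma\in V_\zeta$ one has $\sigma^2=\zeta^2\sigma^2$, which forces $\sigma^2=0$ whenever $\zeta^2\neq 1$, i.e. whenever $p$ is odd. When $p=2$, so $\zeta=-1$, the $(-1)$-eigenspace equals $\bL_G\otimes\mathbb{C}$, and I would impose $\sigma^2=0$ and $\sigma\cdot\bar\sigma>0$ by hand; this is possible because the form on $\bL_G\otimes\mathbb{R}$ has signature $(2,\rk(\bL_G)-2)$, the two positive directions being intrinsically forced by the non-symplectic hypothesis. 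For $p$ odd the same signature fact guarantees that I can take $\sigma$ generic with $\sigma\cdot\bar\sigma>0$, and generic enough so that the sublattice of $\bL$ in $\sigma^\perp\cap\bar\sigma^\perp$ equals $\bL^G$.

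Next, by surjectivity of the period map for marked ihs manifolds of $\ogten$ type, there is a marked pair $(X,\eta)$ with period $\eta^{-1}(\mathbb{C}\sigma)$; by construction $g$ is a Hodge isometry of $\Homology^2(X,\mathbb{Z})$, and by genericity $\NS(X)=\bL^G$. To apply Hodge-theoretic Torelli I must check that $g$ is a parallel-transport operator and preserves the Kähler cone. Since $\Mon^2(X)=\bO^+(\bL)$, the first condition amounts to triviality of the spinor norm of $g$. For odd $p$ this is automatic, as the relation $g^p=\id$ forces any character of order dividing $2$ to be trivial. For $p=2$, the action of $g$ on the positive $3$-plane of $\bL\otimes\mathbb{R}$ is by $-1$ on the span of $\Re\sigma,\Im\sigma$ and by $+1$ on an orthogonal positive direction in $\bL^G\otimes\mathbb{R}$, so the determinant is $+1$ and $g\in\bO^+(\bL)$. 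For the Kähler condition I would pick a positive class in $\bL^G$ avoiding every hyperplane orthogonal to an element of $\mathcal{W}_{\ogten}\cap\bL^G$; such a class exists because $\bL^G$ is hyperbolic and the wall collection is locally finite. Being $G$-invariant and off all walls, it lies in a $G$-stable Kähler chamber, and Torelli promotes $g$ to an automorphism of $X$, whence $G\subset\Aut(X)$.

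The main obstacle I expect is the simultaneous genericity step: the period must be chosen so that $\NS(X)=\bL^G$ while also ensuring that the Kähler cone of $X$ meets $\bL^G$. The explicit description of the wall classes $\mathcal{W}_{\ogten}$ in terms of square $-2,-4,-6,-24$ vectors with controlled divisibility, recalled after \autoref{p_leq_23}, reduces this to checking that these finitely many orbits of walls do not exhaust the positive cone of $\bL^G$, which is tractable once the invariant and coinvariant lattices are pinned down.
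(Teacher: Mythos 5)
Your proposal is correct and follows essentially the same route as the paper: choose a generic $G$-invariant period in the coinvariant part, invoke surjectivity of the period map, verify $G\subset\bO^+(\bL)=\Mon^2(X)$ (automatic for odd $p$, via the signature/orientation argument for $p=2$), check that a Kähler class is preserved, and conclude by the Hodge-theoretic Torelli theorem together with injectivity of the representation map. The only cosmetic differences are that you argue the spinor-norm condition for $p=2$ directly where the paper cites an external lemma, and you select an invariant Kähler class off the walls where the paper simply observes that the whole Kähler cone is pointwise fixed because $\NS(X)=\bL^G$ is.
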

\begin{proof} A generator \(\varphi\) of \(G\) is non-symplectic and hence one can endow \(\bL_\mathbb{C}=\bL \otimes_{\mathbb{Z}} \mathbb{C}\) with a weight-two Hodge structure such that \(\bL^G=\bL^{1,1}_{\mathbb{C}}\cap \bL\). By the surjectivity of the period map there exists a manifold \(X\) of \(\ogten\) type and a marking \(\Homology^2(X,\mathbb{Z})\cong \bL\) which is an isomorphism of Hodge structures. By construction \(G\) consists of Hodge isometries, moreover since all the algebraic classes are fixed then the positive cone is fixed, and so the Kähler cone is. This implies that \(\sgn(\bL^G)=(1,\rk(\bL^G)-1)\) and \(\sgn(\bL_G)=(2,\rk(\bL_G)-2)\). In this case we know that \(\Mon^2(X)=\bO^+(\bL)\), and we want to prove that \(G\subset\bO^+(\bL)\). This is clear when \(p\not=2\) since in this case \(p\) is odd and \(\varphi^p=\id\). If \(p=2\) by \cite[Lemma 2.4]{grossi2022nonsymplectic} we have \(\spin(\varphi)=+1\), and \(\Sign(\bL_G)=(2,\rk(\bL_G)-2)\), so that \(\varphi\in\bO^+(\bL)\). Since \(G\subset \Mon^2_{Hdg}(X)\) and a Kähler class is preserved by \(G\) we conclude by the Hodge-theoretic Torelli theorem \cite[Theorem 1.3]{markman2011survey}, as the representation map on the second integral cohomology is injective for ihs manifolds of \(\ogten\) type. In particular \(X\) is projective by Huybrecht's projectivity criterion.
\end{proof}


In the following we relate isometries of the lattice \(\bL\) with isometries of the lattice \(\bLambda\), depending on the image of the group \(G\subset\bO(\bL)\) via the map \(\bO(\bL)\rightarrow \bO(A_{\bL})\). We fix the (unique) primitive embedding \(\bL\hookrightarrow\bLambda\), whose orthogonal complement is \(\bL^{\perp} \cong \A_2\).

\begin{lemma}\label{extension}
 If $\varphi\in \bO(\bL)$ is an isometry such that $\overline{\varphi}=\id\in\bO(A_{\bL})$ then it extends to and element $\Tilde{\varphi}\in \bO(\bLambda)$ acting trivially on $\bL^\perp\subset \bLambda$. If $\varphi\in \bO(\bL)$ is an isometry such that $\overline{\varphi}= -\id\in\bO(A_{\bL})$, then $\varphi$ extends to an isometry $\Tilde{\varphi}\in \bO(\bLambda)$ that acts on the rank \(2\) lattice $\bL^\perp\subset\bLambda$ permuting the generators.
\end{lemma}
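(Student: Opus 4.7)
The plan is to view the primitive embedding $\bL \hookrightarrow \bLambda$ through Nikulin's gluing formalism and then apply the standard criterion for lifting isometries from a sublattice to an overlattice.

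First I would identify the gluing data. Since $\bLambda$ is unimodular and both $\bL$ and $\bL^\perp \cong \A_2$ have discriminant group isomorphic to $\mathbb{Z}/3\mathbb{Z}$, the overlattice $\bLambda \supset \bL \oplus \bL^\perp$ corresponds to an isotropic subgroup $H \subset A_{\bL} \oplus A_{\bL^\perp}$ of order $3$, and since $H$ projects injectively onto each summand it is the graph of an anti-isometry $\gamma \colon A_{\bL} \to A_{\bL^\perp}$. By Nikulin's gluing criterion (the primitive embedding version of \cite[Proposition 1.5.1]{nikulin1980integral} already recalled in the Preliminaries), an isometry $\varphi \oplus \psi \in \bO(\bL) \oplus \bO(\bL^\perp)$ extends to an element of $\bO(\bLambda)$ if and only if $\overline{\psi} \circ \gamma = \gamma \circ \overline{\varphi}$ on $A_{\bL}$.

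Because $\bO(A_{\bL}) = \bO(A_{\bL^\perp}) = \{\pm \id\}$ is abelian of order two, the compatibility condition reduces to $\overline{\psi} = \overline{\varphi}$ after identification via $\gamma$. In the first case $\overline{\varphi} = \id$, so I take $\psi = \id_{\bL^\perp}$; the criterion is trivially satisfied and the extension $\tilde{\varphi}$ acts as the identity on $\bL^\perp$. In the second case $\overline{\varphi} = -\id$, so I take $\psi$ to be the reflection $s \in \bO(\A_2)$ that swaps the two simple roots $\alpha_1, \alpha_2$ of $\bL^\perp \cong \A_2$; the criterion then reduces to checking that $\overline{s} = -\id$ on $A_{\A_2}$.

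This last verification is the only explicit computation. Writing $\omega = (2\alpha_1 + \alpha_2)/3$ for a generator of $A_{\A_2}$, one has $s(\omega) = (\alpha_1 + 2\alpha_2)/3 = -\omega + (\alpha_1 + \alpha_2) \equiv -\omega \pmod{\A_2}$, hence $\overline{s} = -\id$ as required. I do not foresee any genuine obstacle: the proof is essentially bookkeeping with Nikulin's gluing criterion together with one direct discriminant calculation in $\A_2$, and the freedom in the choice of $\psi$ (one could alternatively use $-\id_{\bL^\perp}$) does not affect the statement since the lemma only asserts existence of a suitable extension.
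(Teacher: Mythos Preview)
Your proof is correct and follows essentially the same strategy as the paper: extend $\varphi$ to $\bL\oplus\bL^\perp$ by choosing an isometry $\psi$ on $\bL^\perp\cong\A_2$ whose action on the discriminant group matches $\overline{\varphi}$, so that the gluing data of the overlattice $\bLambda$ is preserved. The only difference is presentational: the paper writes down explicit generators $\frac{a-b+c-d}{3}$ and $\frac{a+2b+c+2d}{3}$ of $\bLambda$ over $\bL\oplus\A_2$ and checks by hand that the extension sends them to lattice points, whereas you invoke Nikulin's gluing criterion abstractly and reduce to the single discriminant computation $\overline{s}=-\id$ on $A_{\A_2}$. One minor remark: your parenthetical that $-\id_{\bL^\perp}$ would also satisfy the extension criterion is true, but that alternative extension would negate rather than permute the generators of $\bL^\perp$, so it would not witness the lemma as stated; your actual choice of the swap $s$ is the right one.
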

\begin{proof}
    Let \(a,b\) be generators of \(\A_2(-1)\subset\bL\) and consider the generator $[\frac{a-b}{3}]=[\frac{a+2b}{3}]$ of $A_{\bL}\cong\mathbb{Z}/3\mathbb{Z}$.
    If \(\varphi \in \bO(\bL)\) is such that \(\overline{\varphi}=\id\) then \(\overline{\varphi}([\frac{a-b}{3}])=[\frac{a-b}{3}]\) hence $\varphi(a-b)=a-b+3w$ with $w\in\bL$. Let \(c,d\) be generators of \(\bL^\perp\cong\A_2\), its discriminant group is also \(\mathbb{Z}/3\mathbb{Z}\) and it is generated by \([\frac{c-d}{3}]\) with discriminant form given by \(q(\frac{c-d}{3})=2/3\). Notice that \(\bL \oplus \A_2\) has an overlattice isometric to \(\bLambda\) which is generated by \(\bL\), \(\frac{a-b+c-d}{3}\) and \(\frac{a+2b+c+2d}{3}\).
   We extend \(\varphi\) to \(\bL \oplus \A_2\) by defining \(\varphi(c)=c\) and \(\varphi(d)=d\) and we obtain an extension \(\tilde{\varphi}\) of \(\varphi\) on \(\bLambda\) as follows: $$ \Tilde{\varphi}( \frac{a-b+c-d}{3})=\frac{\varphi(a-b)+c-d}{3}$$ 
    and 
    $$ \Tilde{\varphi}( \frac{a+2b+c+2d}{3})=\frac{\varphi(a+2b)+c+2d}{3}.$$
    If \(\varphi \in \bO(\bL)\) is such that \(\overline{\varphi}=-\id\) then \(\overline{\varphi}([\frac{a-b}{3}])=[\frac{b-a}{3}]\) hence we extend \(\varphi\) to \(\bL \oplus \A_2\) by imposing \(\varphi(c)=d\) and \(\varphi(d)=c\) and we obtain an extension \(\tilde{\varphi}\) of \(\varphi\) on \(\bLambda\) as follows:
    $$ \Tilde{\varphi}( \frac{a-b+c-d}{3})=\frac{\varphi(a-b)+d-c}{3}$$ and
    $$ \Tilde{\varphi}( \frac{a+2b+c+2d}{3})=\frac{\varphi(a+2b)+d+2c}{3}.$$
\end{proof}

\begin{proposition}\label{signature of L_G and L^G trivial}
    Let \(G \subset \bO(\bL)\) be a subgroup of prime order \(p\) and consider its image \(\overline{G}\subset\bO(A_{\bL})\). Let \(c\) and \(d\) be the generators of \(\A_2=\bL^{\perp}\subset \bLambda\). \begin{itemize}
	    \item 	If \(|\overline{G}|=1\) there exists \(G' \subset \bO(\bLambda)\) a subgroup of order \(p\) such that \(G'\) restricts to \(G\) on \(\bL\) and \(\bL_G = \bLambda_{G'}\). In particular, we have \(\sgn(\bL_G)=\sgn(\bLambda_{G'})\) and  \(\sgn(\bL^G)=\sgn(\bLambda^{G'})-(2,0)\).
	    \item If \(|\overline{G}|=2\) there exists \(G' \subset \bO(\bLambda)\) a subgroup of order \(2\) such that \(G'\) restricts to \(G\) on \(\bL\) and \(\bL_G = (c-d)^{\perp} \subset \bLambda_{G'}\), \(\bL^G \cong (c+d)^{\perp} \subset \bLambda^{G'}\). In particular, we have \(\sgn(\bL_G)=\sgn(\bLambda_{G'})-(1,0)\) and \(\sgn(\bL^G)=\sgn(\bLambda^{G'})-(1,0)\).
	\end{itemize}
\end{proposition}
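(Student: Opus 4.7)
The plan is to apply Lemma~\ref{extension} to a generator \(\varphi\) of \(G\) to produce a lift \(\tilde\varphi\in\bO(\bLambda)\), then argue everything over \(\bLambda=\bL\oplus_{\text{glue}}\bL^\perp\) by decomposing the \(G'\)-action along the rational orthogonal splitting \(\bLambda\otimes\mathbb{Q}=(\bL\otimes\mathbb{Q})\oplus(\bL^\perp\otimes\mathbb{Q})\). The preliminary observation is that \(A_{\bL}\cong\mathbb{Z}/3\mathbb{Z}\), so \(\bO(A_{\bL})=\{\pm 1\}\) and the image \(\overline{G}\) has order \(1\) or \(2\); in the latter case \(p\) is forced to be \(2\). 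In each case I set \(G'=\langle\tilde\varphi\rangle\) and verify that \(|G'|=p\) by noting that \(\tilde\varphi^p\) restricts to the identity on both \(\bL\) and \(\bL^\perp\), hence equals the identity on their rational span and therefore on every glue vector.

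For the first case \(|\overline{G}|=1\), Lemma~\ref{extension} provides \(\tilde\varphi\) acting trivially on \(\bL^\perp\), so \(\bL^\perp\subseteq\bLambda^{G'}\). The rational decomposition becomes \(\bLambda^{G'}\otimes\mathbb{Q}=(\bL^G\otimes\mathbb{Q})\oplus(\bL^\perp\otimes\mathbb{Q})\), from which the signature identity \(\sgn(\bL^G)=\sgn(\bLambda^{G'})-(2,0)\) is immediate. For the coinvariant, any element of \(\bLambda_{G'}\) is orthogonal to \(\bL^\perp\subset\bLambda^{G'}\) and therefore already lies in \(\bL\); within \(\bL\) it must be orthogonal to \(\bL^G\), which identifies \(\bLambda_{G'}\) with \(\bL_G\) as sublattices of \(\bLambda\), giving \(\sgn(\bL_G)=\sgn(\bLambda_{G'})\). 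For the second case \(|\overline{G}|=2\), Lemma~\ref{extension} yields \(\tilde\varphi\) swapping \(c\) and \(d\); on \(\bL^\perp=\A_2\) this swap has \((+1)\)-eigenspace \(\mathbb{Z}(c+d)\) with \((c+d)^2=2\) and \((-1)\)-eigenspace \(\mathbb{Z}(c-d)\) with \((c-d)^2=6\). The rational splitting then reads \(\bLambda^{G'}\otimes\mathbb{Q}=(\bL^G\otimes\mathbb{Q})\oplus\mathbb{Q}(c+d)\) and \(\bLambda_{G'}\otimes\mathbb{Q}=(\bL_G\otimes\mathbb{Q})\oplus\mathbb{Q}(c-d)\), so \(\bL^G\) and \(\bL_G\) coincide with the orthogonal complements of \(c+d\) in \(\bLambda^{G'}\) and of \(c-d\) in \(\bLambda_{G'}\) respectively. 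Since both \(c\pm d\) are of positive square, each signature drops by \((1,0)\).

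The construction of the lift and the handling of the glue classes were already absorbed into Lemma~\ref{extension}, so the remaining argument is essentially linear algebra on the rational orthogonal splitting. The only care needed is to distinguish between the relations holding integrally (\(\bLambda_{G'}=\bL_G\) in Case~1, and the containment \(\bL^G,\bL_G\hookrightarrow\bLambda^{G'}\) respectively \(\bLambda_{G'}\) as saturated sublattices in Case~2) and those holding only rationally (the direct sum decomposition of the \(G'\)-eigenspaces, which is what computes the signatures); this bookkeeping is the only subtle point, and it follows cleanly once the eigenspace decomposition of the swap on \(\A_2\) is recorded.
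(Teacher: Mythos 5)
Your proposal is correct and follows the same route as the paper: the paper's proof is exactly "apply Lemma~\ref{extension} to a generator of \(G\)" and note \(\bL_G\cong\bLambda_{G'}\), and your argument supplies the eigenspace and signature bookkeeping that the paper leaves implicit. The details you add (the identification \((c\pm d)^{\perp}\) via \((x,c)=\pm(x,d)\) for \(x\) in the \(\pm1\)-eigenspace, and the squares \((c+d)^2=2\), \((c-d)^2=6\) both positive) are accurate.
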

\begin{proof}
We apply \autoref{extension} to a generator of \(G\), noting that \(\bL_G \cong \bLambda_{G'}\).
\end{proof}

If \(G\subset \bO(\bL)\) is a group of non-symplectic isometries of prime order, we recall that \(\sgn(\bL_G)=(2,\rk(\bL_G)-2)\) by \autoref{non sympl autom are effective}. The two cases treated above lead to two possibilities: \(\sgn(\bLambda_G)=(2, \rk (\bLambda_G)-2)\) if the action of \(G\) is trivial on \(A_{\bL}\), and \(\sgn(\bLambda_G)=(3, \rk (\bLambda_G)-3)\) if the action is non-trivial on \(A_{\bL}\).

\begin{proposition}\label{involutions_lambda}
Let \(G \subset \bO(\bLambda)\) be a group of isometries of order \(2\). If \(\sgn(\bLambda_G)=(2, \rk (\bLambda_G)-2)\) then the pairs \((\bLambda^G, \bLambda_G)\) appear in \autoref{tab:Lambda}, if \(\sgn(\bLambda_G)=(3, \rk (\bLambda_G)-3)\) then the classification of \(\bLambda_G\) corresponds to the one of \(\bLambda^G\) in \autoref{tab:Lambda}.
\end{proposition}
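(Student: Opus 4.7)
The plan is to use the classical machinery for involutions on unimodular even lattices together with Nikulin's classification of indefinite even $2$-elementary forms. Concretely, given an involution $\varphi$ generating $G$, the identity $2v = (v+\varphi(v)) + (v-\varphi(v))$ shows that the primitive sublattices $\bLambda^G$ and $\bLambda_G$ are both $2$-elementary, and the gluing recipe of \cite[Proposition 1.5.1]{nikulin1980integral} applied to the unimodular overlattice $\bLambda \supset \bLambda^G \oplus \bLambda_G$ produces an anti-isomorphism between the full discriminant groups $A_{\bLambda^G}$ and $A_{\bLambda_G}$. In particular the two summands have the same length $l$, and since $q_{\bLambda^G} \cong -q_{\bLambda_G}$ while multiplication by $-1$ preserves values modulo $2$ on $2$-elementary forms, they also share the same $\delta$-invariant.

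Next I would set up the numerical constraints. Writing $r = \rk(\bLambda_G)$ gives $\rk(\bLambda^G) = 26 - r$ and, in the case $\sgn(\bLambda_G) = (2, r-2)$, forces $\sgn(\bLambda^G) = (3, 23-r)$, since $\bLambda$ has signature $(5,21)$. Nikulin's existence theorem for indefinite even $2$-elementary lattices \cite[Theorem 3.6.2]{nikulin1980integral} expresses existence purely in terms of the quadruple $(t_+, t_-, l, \delta)$; I would loop over $r$, over $l \leq \min(r, 26-r)$, and over $\delta \in \{0,1\}$, retaining only those quadruples for which \emph{both} $(2, r-2, l, \delta)$ and $(3, 23-r, l, \delta)$ satisfy Nikulin's conditions. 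Each surviving candidate is then promoted to an honest involution of $\bLambda$: one constructs $\bLambda^G$ and $\bLambda_G$ separately (each is unique in its genus in our indefinite range), glues them along the canonical anti-isomorphism of $2$-elementary discriminant forms, and recovers $\bLambda$ from uniqueness of the even unimodular lattice of signature $(5,21)$. The involution itself is then $(\id, -\id)$ extended to the overlattice, whose invariant and coinvariant sublattices are by construction the prescribed pair.

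The case $\sgn(\bLambda_G) = (3, r-3)$ is obtained from the previous one by replacing $\varphi$ with $-\varphi$, which exchanges invariant and coinvariant sublattices; hence any admissible pair in this regime is recovered by swapping the two columns of the table produced for the first regime, which is exactly the content of the second sentence of the proposition. The main technical nuisance, rather than any deep obstacle, is the combinatorial bookkeeping of $\delta$: several quadruples look numerically feasible but are excluded by the interaction between $\delta = 0$ and the congruence $t_+ - t_- \equiv 0 \pmod 8$ that Nikulin's theorem imposes on even $2$-elementary lattices with $\delta = 0$, or by the constraint $l \equiv t_+ + t_- \pmod 2$ when $\delta = 0$ and $l = r$. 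These compatibility checks have to be done simultaneously for the two summands, and I would filter the enumeration through a short computer-algebra routine (for instance in OSCAR, in line with the other computations of the paper) rather than carry it out by hand.
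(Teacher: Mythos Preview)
Your proposal is correct and follows essentially the same strategy as the paper: both arguments reduce to the fact that for an involution on the unimodular lattice $\bLambda$ the sublattices $\bLambda^G$ and $\bLambda_G$ are $2$-elementary with anti-isometric discriminant forms (hence equal $l$ and $\delta$), enumerate the admissible quadruples via \cite[Theorem 3.6.2]{nikulin1980integral}, and realise each pair by the involution $(\id,-\id)$ on $\bLambda^G\oplus\bLambda_G$ extended to the overlattice. Your treatment of the signature $(3,r-3)$ case via $\varphi\mapsto -\varphi$ is exactly the symmetry the paper invokes when it says the classification of $\bLambda_G$ ``corresponds to the one of $\bLambda^G$''.
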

\begin{proof}
   Since $G$ is cyclic of order $2$ and $\bLambda$ is unimodular, then $\bLambda^G$ and $\bLambda_G$ must be $2$-elementary lattices and their discriminant groups are anti-isometric by \autoref{prime_action}, in particular they have the same length. We use \cite[Theorem 3.6.2]{nikulin1980integral} to get all the possible isometry classes of such lattices by varying the signature, the length $a=l(\bLambda_G)=l(\bLambda^G)$ and the parameter $\delta=\delta(\bLambda_G)=\delta(\bLambda^G)$.
   Any pair of such lattices are invariant and coinvariant lattices for the isometry that acts trivially on the invariant lattice and as \(-\id\) on the coinvariant lattice. The lattices are all \(2\)-elementary then the isometry \(-\id\in\bO(\bLambda_G)\) acts trivially on the discriminant group \(A_{\bLambda_G}\).
\end{proof}
The following result is a direct consequence of \cite[Theorem 1.1]{prime_order_brandhorst}.

\begin{proposition}\label{order_p_lambda}
    Let \(G \subset \bO(\bLambda)\) be a group of isometries of prime order \(3\leq p\leq 23\). If \(\sgn(\bLambda_G)=(2, \rk (\bLambda_G)-2)\) then the pair \((\bLambda^G, \bLambda_G)\) appears in \autoref{tab:Lambda_p}.
\end{proposition}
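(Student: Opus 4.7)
The plan is to deduce the statement from the Brandhorst--Cattaneo classification \cite[Theorem 1.1]{prime_order_brandhorst}, which enumerates, prime by prime, the pairs of invariant/coinvariant sublattices arising from a prime order isometry of an even unimodular lattice. Since \(\bLambda = \U^{\oplus 5} \oplus \E_8(-1)^{\oplus 2}\) is the unique even unimodular lattice of signature \((5,21)\) (it lies alone in its genus), any prime order isometry of \(\bLambda\) fits directly into that setup, and our task is to identify those entries of the Brandhorst--Cattaneo tables for which the ambient lattice has signature \((5,21)\) and for which the coinvariant lattice satisfies the non-symplectic signature condition.

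First I would extract the admissible primes and ranks. By \autoref{prime_action} applied to \(\bLambda\), the integer \(p-1\) divides \(\rk(\bLambda_G) \leq 26\), so the only primes to consider in the range \(3 \leq p \leq 23\) are \(p \in \{3,5,7,11,13,17,19,23\}\). Both \(\bLambda^G\) and \(\bLambda_G\) are even \(p\)-elementary lattices, and the unimodularity of \(\bLambda\) forces an anti-isometry \(A_{\bLambda^G} \cong A_{\bLambda_G}(-1)\) together with \(\rk(\bLambda^G) + \rk(\bLambda_G) = 26\), via the standard gluing description of overlattices of \(\bLambda^G \oplus \bLambda_G\) \cite[Proposition 1.5.1]{nikulin1980integral}.

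Next I would impose the hypothesis \(\sgn(\bLambda_G) = (2, \rk(\bLambda_G)-2)\). Combined with \(\sgn(\bLambda) = (5,21)\), this pins down \(\sgn(\bLambda^G) = (3, 23 - \rk(\bLambda_G))\). Restricting the Brandhorst--Cattaneo list to pairs with these signatures and with the rank bound above produces a finite set of candidate isometry classes, which one reads off to match \autoref{tab:Lambda_p}. The realisability of each surviving pair is automatic: given \(p\)-elementary genera with anti-isometric discriminant forms, \cite[Proposition 1.15.1]{nikulin1980integral} furnishes a primitive embedding \(\bLambda^G \oplus \bLambda_G \hookrightarrow M\) into an even unimodular lattice \(M\) of signature \((5,21)\), and uniqueness of \(\bLambda\) in its genus yields \(M \cong \bLambda\); extending the identity on \(\bLambda^G\) and the prescribed order \(p\) isometry on \(\bLambda_G\) across the gluing produces the required element of \(\bO(\bLambda)\).

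The main obstacle, and the only non-mechanical step, is the bookkeeping for the small primes \(p = 3\) and \(p = 5\), where several values of \(\rk(\bLambda_G)\) (namely \(2,4,\dots,22\) for \(p=3\) and \(4,8,\dots,20\) for \(p=5\)) are a priori allowed, each admitting in general several \(p\)-elementary genera of the prescribed signature. Ruling out the non-realised candidates amounts to matching our constraints against the Brandhorst--Cattaneo tables entry by entry; the unusual lattices \(\K_p\), \(\h_p\), \(\E_6^*(3)\), \(\bL_{17}\), \(\N_{15}\) and \(\N_{69}\) introduced in \autoref{Preliminaries} are precisely those that appear as coinvariant summands for the odd primes in this filtered list.
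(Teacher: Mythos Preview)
Your proposal is correct and follows essentially the same route as the paper: both invoke \autoref{prime_action} to see that \(\bLambda^G\) and \(\bLambda_G\) are \(p\)-elementary, and then feed the signature constraint into the Brandhorst--Cattaneo classification \cite[Theorem 1.1]{prime_order_brandhorst} to extract the admissible pairs. The paper is slightly more specific about how the isometry classes are pinned down once the numerical invariants are known, citing \cite[Corollary 1.13.3, Corollary 1.13.5]{nikulin1980integral} and \cite[\S 1]{rudakov1981surfaces}; you cover this implicitly by ``reading off'' the tables. Your realisability paragraph is not needed for the direction stated in the proposition (which only asserts that every such pair lies in the table), but it does no harm and in fact anticipates the use made of the table later in \autoref{Main theorem section 3}.
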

\begin{proof}
If \(G\subset \bO(\bLambda)\) has prime order \(p\) then by \autoref{prime_action} the lattices \(\bLambda_G\) and \(\bLambda^G\) are \(p\)-elementary. The numerical criterion in \cite[Theorem 1.1]{prime_order_brandhorst} determines which pairs of \(p\)-elementary lattices \((\bLambda^G, \bLambda_G)\) can be invariant and coinvariant lattices with respect to the action of the group of isometries \(G\) on \(\bLambda\). Then we use \cite[Corollary 1.13.3, Corollary 1.13.5]{nikulin1980integral} and \cite[\S 1]{rudakov1981surfaces} to compute the isometry class of each lattice, and these pairs are summarized in \autoref{tab:Lambda_p}.
\end{proof}
\begin{remark}
We point out that there is only one conjugacy class for such isometries, apart from the case \(p=23\) where there are three conjugacy classes which are determined by the Steinitz 
class of \(\bLambda_G\). The reader can refer to \cite{prime_order_brandhorst} for more details.
\end{remark}


\begin{theorem}\label{Main theorem section 3}
Let \(G \subset \bO(\bL)\) be a non-symplectic
group of isometries of prime order \(p\). Then there exists a manifold \(X\) of \(\ogten\) type such that \(G \subset \Aut(X)\) if and only if \(2 \leq p \leq 23\) and the pairs \((\bL
^G,\bL_G)\) appear in \autoref{tab:Lid} and \autoref{tab:Lnotid} for \(p=2\), and in \autoref{tab:L_p} for \(p\geq 3\).
\end{theorem}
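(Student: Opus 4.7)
The plan is to translate the classification on \(\bL\) to the unimodular overlattice \(\bLambda = \U^{\oplus 5} \oplus \E_8(-1)^{\oplus 2}\), for which \autoref{involutions_lambda} and \autoref{order_p_lambda} already provide a complete classification of prime-order isometries, and then to pull back via the unique primitive embedding \(\bL \hookrightarrow \bLambda\). The ``if'' direction is essentially \autoref{non sympl autom are effective}: once \(G \subset \bO(\bL)\) generated by a non-symplectic isometry of prime order \(p\) realizing a pair \((\bL^G, \bL_G)\) is produced, the surjectivity of the period map and the Hodge-theoretic Torelli theorem give a marked pair \((X, \eta)\) of \(\ogten\) type with \(G \subset \Aut(X)\). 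Thus the main task is the ``only if'' direction, together with the explicit construction of an isometry of \(\bL\) for every entry of the tables.

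For the ``only if'' direction, let \(G \subset \bO(\bL)\) be non-symplectic of prime order \(p\). \autoref{p_leq_23} gives \(p \leq 23\). Since \(A_\bL \cong \mathbb{Z}/3\mathbb{Z}\) has \(\bO(A_\bL) = \{\pm 1\}\), the image \(\overline{G}\) in \(\bO(A_\bL)\) has order \(1\) or \(2\); the latter forces \(p = 2\), explaining the split between \autoref{tab:Lid} and \autoref{tab:Lnotid}, while for odd \(p\) the action on the discriminant is automatically trivial. By \autoref{extension} we lift \(G\) to a group \(G' \subset \bO(\bLambda)\) of the same order, and by \autoref{signature of L_G and L^G trivial} the invariant and coinvariant sublattices transport as follows: if \(\overline{G}\) is trivial, then \(\bL_G = \bLambda_{G'}\) and \(\bL^G\) is the orthogonal complement of \(\bL^\perp \cong \A_2\) inside \(\bLambda^{G'}\); if \(|\overline{G}| = 2\), then \(\bL_G = (c-d)^\perp \subset \bLambda_{G'}\) and \(\bL^G = (c+d)^\perp \subset \bLambda^{G'}\), where \(c,d\) are the generators of \(\bL^\perp\).

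We then enumerate the pairs \((\bLambda^{G'}, \bLambda_{G'})\) provided by \autoref{involutions_lambda} and \autoref{order_p_lambda}, retaining only those compatible with the non-symplecticity constraints \(\sgn(\bL_G) = (2, \rk(\bL_G)-2)\) and \(\sgn(\bL^G) = (1, \rk(\bL^G)-1)\), forced respectively by the period \([\sigma_X]\) lying in \(\bL_G \otimes \mathbb{C}\) and by the existence of a Kähler class in \(\bL^G\). For each surviving entry we read off the isometry class of \(\bL^G\) and \(\bL_G\); conversely, the restriction to \(\bL\) of the corresponding isometry of \(\bLambda\) yields a non-symplectic isometry of prime order \(p\) that is effective by \autoref{non sympl autom are effective}.

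The main obstacle will be the determination of the isometry classes of \(\bL^G\) and \(\bL_G\), especially in the case \(|\overline{G}| = 2\): extracting \((c \pm d)^\perp\) inside the \(2\)-elementary lattices of \autoref{tab:Lambda} requires analysing the orbits of primitive embeddings of \([2]\) and \([6]\) in each entry, and the genus does not always determine the isometry class, as illustrated by \autoref{example}. We address this via Nikulin's discriminant form criteria \cite[Corollary 1.13.3, Corollary 1.13.5]{nikulin1980integral} and the enumeration of \(2\)-elementary lattices in \cite[\S1]{rudakov1981surfaces}, extracting in each case the invariants (signature, length, and \(\delta\)) together with a finite verification to produce the detailed entries of \autoref{tab:Lid}, \autoref{tab:Lnotid} and \autoref{tab:L_p}.
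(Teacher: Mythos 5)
Your proposal is correct and follows essentially the same route as the paper: bound \(p\) via \autoref{p_leq_23}, split according to the image in \(\bO(A_{\bL})\cong\{\pm 1\}\) (non-trivial only for \(p=2\)), lift to \(\bLambda\) via \autoref{extension} and \autoref{signature of L_G and L^G trivial}, read off \((\bL^G,\bL_G)\) as orthogonal complements of \(\A_2\) (resp.\ of \([2]\) and \([6]\)) in the lattices of \autoref{tab:Lambda} and \autoref{tab:Lambda_p} subject to the signature constraints, and conclude the converse with \autoref{non sympl autom are effective}. The only cosmetic difference is that the paper recovers \(\bL_G\) in the non-trivial discriminant case as the orthogonal complement of \(\bL^G\hookrightarrow\bL\) rather than as \((c-d)^\perp\subset\bLambda_{G'}\), which is an equivalent computation.
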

\begin{proof}
Let \(G\subset\Aut(X)\) be a subgroup of prime order \(p\), notice that by \autoref{p_leq_23} we have \(p\leq 23\), and consider the induced action \(G\subset \bO(\bL)\). According to \autoref{signature of L_G and L^G trivial}, one can extend it to an action $G'\subset\bO(\bLambda)$. The two different extensions lead to two possible cases.

In the first case we have $\bL^G=\A_2^{\perp_{ \bLambda^{G'}}}$
since the orthogonal complement of the embedding $\bL\hookrightarrow\bLambda$ is isometric to $\A_2$ and \(\bL_G=\bLambda_{G'}\). We compute the possible embeddings $\A_2\hookrightarrow\bLambda^{G'}$ and their orthogonal complements for any \(\bLambda^{G'}\) in \autoref{tab:Lambda} and \autoref{tab:Lambda_p}.

In the second case \(p=2\), we consider the unique primitive embedding \(\A_2\hookrightarrow \bLambda\) and observe that in this case \([2]=\langle a+b\rangle\) is \(G'\)-invariant since \(G'\) permutes \(a,b\) that are the generators of \(\A_2=\bL^\perp\subset\bLambda\). In this case, as in \autoref{involutions_lambda}, the lattice \(\bLambda^{G'}\) corresponds to the lattice \(\bLambda_{G'}\) in \autoref{tab:Lambda}. We compute the possible \(\bL^G=[2]^{\perp\bLambda^{G'}}\) for all primitive embeddings \([2]\hookrightarrow\bLambda^{G'}\), and we obtain \(\bL_G\) as the orthogonal complement of the primitive embedding \(\bL^G\hookrightarrow \bL\) when such an embedding exists. 
  
Orthogonal complements of the previous embeddings are uniquely determined up to isometry because of \cite[Proposition 1.14.2]{nikulin1980integral}. 

Viceversa, given a pair \((\bL^G,\bL_G)\), one can endow \(\bL\) with a Hodge structure that makes \(\bL^G\) and \(\bL_G\) the invariant and coinvariant lattices of a non-symplectic automorphism, then we conclude by \autoref{non sympl autom are effective}.
\end{proof}

\section{Non-symplectic automorphisms of order three of a cubic fourfold}\label{Non-symplectic automorphisms of order three of a cubic fourfold}

Finite order automorphisms of a cubic fourfold \(Y \subset \mathbb{P}^5\) are linear transformations of \(\mathbb{P}^5\) that restrict to \(Y\). 

According to \cite[Theorem 2.8]{Gonzales_Liendo}, there exist four families of non-symplectic automorphisms of order three of a cubic fourfold, and there are no non-symplectic automorphisms of prime order greater than three.
In this section we give a lattice theoretic classification of non-symplectic automorphisms of order three of a smooth cubic fourfold. 

The lattice-theoretic classification is carried on with the same techniques used for ihs manifolds of \(\ogten\) type. Definite lattices are often not uniquely determined by their genus, as we have seen in  \autoref{example}), and enumeration of definite lattices is demanding, but the rank of lattices we look for is known thanks to the construction of moduli spaces of cubic fourfolds with a group action given in \cite{yu2020moduli}.

\subsection{The algebraic and transcendental lattices}\label{Subsection alg and tras lattices}

In this section we compute the algebraic and the transcendental lattice of a general cubic fourfold that admits a non-symplectic automorphism of order three (see \autoref{order_three_cubiche_prim}). Here below we recall a classification result of prime order non-symplectic automorphisms of a cubic fourfold \(Y \subset \mathbb{P}^5\) in terms of the induced action of \(\mathbb{P}^5\) and of a general equation of \(Y\). In the following theorem we denote by \(\phi_i^j\) the \(j\)-th automorphism of order \(i\) following the numbering of the list that is given in \cite{yu2020moduli}.

\begin{theorem}[see \cite{Gonzales_Liendo}, and also \cite{fu2016classification,yu2020moduli}]\label{class_cubiche}
Let \(Y = \{F=0\} \subset \mathbb{P}^5\) be a smooth cubic fourfold with a non-symplectic automorphism \(\phi\in\Aut(Y)\) of prime order \(p\). After a linear change of coordinates that diagonalizes \(\phi\) we have \(\phi(x_0:\ldots :x_5)=(\xi^{\sigma_0} x_0:\ldots :\xi^{\sigma_5} x_5)\) and we denote by \((\sigma_0, \ldots, \sigma_5)\) such an action. If \(d\) denotes the dimension of the family of cubic fourfolds endowed with the automorphism \(\phi\), then we have the following possibilities:
\vspace{3pt}
\begin{itemize}\small{
        \item \(\phi_2^1\): \(p=2\), \(\sigma=(0,0,0,0,0,1)\), \(d=14\),
        \[F=L_3(x_0,\dots,x_4)+x_5^2 L_1(x_0,\dots,x_4),\]

        \item \(\phi_2^3\): \(p=2\), \(\sigma=(0,0,0,1,1,1)\), \(d=10\),
        \[F=L_3(x_0,x_1,x_2)+x_0 L_2(x_3,x_4,x_5)+x_1 M_2(x_3,x_4,x_5)+x_2 N_2(x_3,x_4,x_5),\]

        \item \(\phi_3^1\): \(p=3\), \(\sigma=(0,0,0,0,0,1)\), \(d=10\),
        \[F=L_3(x_0,\dots,x_4)+ x_5^3,\]

        \item \(\phi_3^2\): \(p=3\), \(\sigma=(0,0,0,0,1,1)\), \(d=4\),
        \[F=L_3(x_0,\dots,x_3)+ M_3(x_4,x_5),\]

        \item \(\phi_3^5\): \(p=3\), \(\sigma=(0,0,0,1,1,2)\), \(d=7\),
        \[F=L_3(x_0,x_1,x_2)+ M_3(x_3,x_4)+x_5^3+x_3 x_5 L_1(x_0,x_1,x_2)+x_4 x_5 M_1(x_0,x_1,x_2),\]

        \item \(\phi_3^7\): \(p=3\), \(\sigma=(0,0,1,1,2,2)\), \(d=6\)
\[F=x_2 L_2(x_0,x_1)+x_3 M_2(x_0,x_1)+x_4^2 L_1(x_0,x_1)+x_4 x_5 M_1(x_0,x_1)+x_5^2 N_1(x_0,x_1)+ x_4 N_2(x_2,x_3)+x_5 O_2(x_2,x_3)\]}
    \end{itemize}
where \(L_i,M_i,N_i\) and \(O_i\) are homogeneous polynomials of degree \(i\).
\end{theorem}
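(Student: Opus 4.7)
The plan is to reduce the classification to a combinatorial enumeration of diagonal weights on \(\mathbb{P}^5\). First, by the Lefschetz hyperplane theorem \(\mathrm{Pic}(Y)\cong\mathbb{Z}\cdot\mathcal{O}_Y(1)\), so every \(\phi\in\Aut(Y)\) fixes the hyperplane class and therefore extends uniquely to a projective-linear transformation of \(\mathbb{P}^5\). Assuming \(\phi\) has prime order \(p\), I would lift to \(\mathrm{GL}(6,\mathbb{C})\) and diagonalise, obtaining coordinates in which \(\phi(x_0:\dots:x_5)=(\xi^{\sigma_0}x_0:\dots:\xi^{\sigma_5}x_5)\) with \(\xi=e^{2\pi i/p}\) and weights \(\sigma_i\in\mathbb{Z}/p\mathbb{Z}\). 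Since shifting all \(\sigma_i\) by a common constant gives the same element of \(\mathrm{PGL}(6)\) and permuting coordinates does not change \(Y\) up to isomorphism, I may normalise so that \(0=\sigma_0\leq\sigma_1\leq\cdots\leq\sigma_5<p\).

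Next, \(F\) must be a semi-invariant of \(\phi\): there exists \(w\in\mathbb{Z}/p\mathbb{Z}\) such that every monomial \(x_ix_jx_k\) of \(F\) satisfies \(\sigma_i+\sigma_j+\sigma_k\equiv w\pmod p\). Griffiths residue calculus identifies \(\Homology^{3,1}(Y)\) with the class of \(\Omega/F^2\), where \(\Omega=\sum_i(-1)^i x_i\,dx_0\wedge\cdots\wedge\widehat{dx_i}\wedge\cdots\wedge dx_5\) has weight \(\sum_i\sigma_i\); hence \(\phi\) acts on \(\Homology^{3,1}(Y)\) by the scalar \(\xi^{\sum_i\sigma_i-2w}\), and \(\phi\) is non-symplectic exactly when \(\sum_i\sigma_i\not\equiv 2w\pmod p\).

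The remaining task is to enumerate, for each prime \(p\) and each normalised tuple \(\sigma\), the admissible weights \(w\) subject to the non-symplectic constraint, to list the semi-invariant cubics of that weight, and to check smoothness via the Jacobian criterion; each surviving case is put in normal form by absorbing free parameters into the centraliser of \(\phi\) in \(\mathrm{PGL}(6)\), and its moduli dimension \(d\) is computed as the difference of the two. For \(p=2\) and \(p=3\) this should produce exactly the six families in the statement; the prescribed equations \(F\) simply record the monomials of the selected weight. To rule out prime orders \(p\geq 5\), one runs the same enumeration. In every non-symplectic case I expect either that no semi-invariant cubic of that weight exists, or that every such cubic contains a coordinate subspace on which \(F\) and all its partials vanish (so \(Y\) contains a positive-dimensional cone point, or a linear \(\mathbb{P}^k\) of dimension exceeding what a smooth cubic fourfold can accommodate). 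The main obstacle will be this case-by-case smoothness verification, especially for the small primes \(p=5,7\) where several weight tuples compete and the spaces of semi-invariant cubics are not obviously empty; the combinatorics is however finite and reducible to a short list of elementary tests on the Jacobian ideal.
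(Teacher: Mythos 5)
The paper does not prove this statement: it is imported verbatim from González-Aguilera--Liendo (and Fu, Yu--Zheng), so there is no internal proof to compare against. Your outline is, in substance, exactly the argument of those references: automorphisms of a smooth cubic fourfold are linear because $\mathrm{Pic}(Y)=\mathbb{Z}\cdot\mathcal{O}_Y(1)$ and $Y$ is embedded by that class; a prime-order element of $\mathrm{PGL}(6,\mathbb{C})$ lifts to a diagonalizable order-$p$ element of $\mathrm{GL}(6,\mathbb{C})$; $F$ is a semi-invariant of some weight $w$; and Griffiths residues give the action on $\Homology^{3,1}(Y)$ as multiplication by $\xi^{\sum_i\sigma_i-2w}$, which is the correct non-symplecticity test. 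The dimension count as (projective dimension of the space of invariant cubics of weight $w$) minus (dimension of the centraliser of $\phi$ in $\mathrm{PGL}(6)$) reproduces the listed values of $d$, e.g.\ $39-25=14$ for $\phi_2^1$ and $23-19=4$ for $\phi_3^2$.

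Two caveats. First, the entire content of the theorem is the finite enumeration, and your write-up defers it (``should produce'', ``I expect''); in particular the exclusion of $p\geq 5$ is not a formality, since smooth cubic fourfolds with automorphisms of order $5$, $7$ and $11$ do exist (they are all symplectic) — so the claim to be verified is that every pair $(\sigma,w)$ with $\sum_i\sigma_i\not\equiv 2w\pmod p$ and $p\geq5$ has no smooth member, which is a genuine case-by-case check rather than an emptiness statement about invariant cubics. Second, your normalisation $0=\sigma_0\leq\cdots\leq\sigma_5<p$ is not yet a complete set of representatives: one must also quotient by replacing $\phi$ with a power $\phi^k$ (equivalently rescaling all $\sigma_i$ by a unit mod $p$ and re-normalising), otherwise the same family appears under several weight tuples. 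Neither point breaks the argument, but both must be handled to arrive at exactly the six families listed.
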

From now on, non-symplectic automorphisms of order three of a smooth cubic fourfolds are denoted by \(\phi_3^1, \phi_3^2, \phi_3^5, \phi_3^7\).

Recall that if \(Y\) is a smooth cubic fourfold and if \(G \subset \Aut(Y)\) is a finite group, there is an irreducible moduli space \(\mathcal{M}_G\) of cubic fourfolds with an action of \(G\), as constructed in \cite{yu2020moduli} via GIT. Let \(Y\in\mathcal{M}_G\) and let \(\xi\) be the character of the action of \(G\) on \(\Homology^{3,1}(Y)\), denote by \((\F\otimes \mathbb{C})_{\xi}\) the \(\xi\)-eigenspace for a fixed marking \(\gamma:\Homology^4_p(Y,\mathbb{Z})\rightarrow \F\).
\begin{theorem}[see \cite{yu2020moduli}]\label{yu_symmetric_domain}
    There is an isomorphism 
    \[\mathscr{P}_G :\mathcal{M}_G\xrightarrow{\cong} (\mathcal{D}\setminus \mathcal{H})/\Gamma \]
    where \(\mathcal{D}\) is the period domain associated with \((\F\otimes \mathbb{C})_{\xi}\), \(\mathcal{H}\) is a \(\Gamma\)-invariant hyperplane arrangement and \(\Gamma\) is an arithmetic group acting properly and discontinuously on \(\mathcal{D}\).
\end{theorem}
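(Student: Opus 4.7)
The plan is to construct the period map $\mathscr{P}_G$ as a restriction of the classical period map of cubic fourfolds to the $G$-invariant locus, then identify its image and upgrade bijectivity to an isomorphism via the Torelli theorem and the surjectivity of Hassett--Laza--Looijenga.

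First I would define $\mathscr{P}_G$ concretely: for $[Y]\in\mathcal{M}_G$ with a marking $\gamma\colon\Homology^{4}_{p}(Y,\mathbb{Z})\to\F$ compatible with the $G$-action, the period line $[\Homology^{3,1}(Y)]$ lies in $\mathbb{P}((\F\otimes\mathbb{C})_{\xi})$ precisely because $G$ acts by the character $\xi$ on $\Homology^{3,1}(Y)$. The period domain $\mathcal{D}$ is then the open subset of isotropic lines $\ell\subset(\F\otimes\mathbb{C})_{\xi}$ satisfying $(\ell,\overline{\ell})<0$. When $\xi$ has order two the intersection form restricts to a real quadratic form on $(\F\otimes\mathbb{C})_{\xi}$ and $\mathcal{D}$ is a Type IV domain; otherwise $(\F\otimes\mathbb{C})_{\xi}$ carries the natural Hermitian form $(x,\overline{y})$ and $\mathcal{D}$ is realized as a complex ball. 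In either case $\mathcal{D}$ is a bounded symmetric domain. The group $\Gamma$ is defined as the image in $\bO((\F\otimes\mathbb{C})_{\xi})$ of the centralizer $\bO(\F)^{G}$ of $G$ inside the integral orthogonal group of $\F$.

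The next step is injectivity of $\mathscr{P}_G$ modulo $\Gamma$: two $G$-cubics $Y_{1},Y_{2}$ with the same class in $\mathcal{D}/\Gamma$ admit a Hodge isometry of the middle cohomology fixing the hyperplane square, and this integrates to an isomorphism $Y_{1}\cong Y_{2}$ by \autoref{torelli_cubic_fourfolds}. By construction this isomorphism intertwines the two $G$-actions up to the choice of marking, so $[Y_{1}]=[Y_{2}]$ in $\mathcal{M}_{G}$. Surjectivity follows from the surjectivity part of \autoref{period_cubic_fourfolds}: for any point in $\mathcal{D}\setminus\mathcal{H}$, where $\mathcal{H}$ is the hyperplane arrangement obtained by intersecting $H_{2}\cup H_{6}$ with $\mathbb{P}((\F\otimes\mathbb{C})_{\xi})$, one produces a smooth cubic $Y$ with the prescribed period; applying \autoref{torelli_cubic_fourfolds} to the Hodge isometries that generate the prescribed image of $G$ in $\bO(\F)$ then realizes the $G$-action geometrically on $Y$, giving the required preimage in $\mathcal{M}_G$.

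The main obstacle is to verify that $\Gamma$ acts properly discontinuously on $\mathcal{D}$ and that the GIT quotient $\mathcal{M}_{G}$ constructed in \cite{yu2020moduli} matches the analytic quotient $(\mathcal{D}\setminus\mathcal{H})/\Gamma$ as complex analytic spaces. Concretely, one needs to identify $\Gamma$ as an arithmetic subgroup of an orthogonal or unitary group over the ring of integers in the cyclotomic field $\mathbb{Q}(\xi)$, and to match the GIT stability of $G$-invariant cubics with the finite-volume condition on the analytic side (this is where the removal of $\mathcal{H}$ becomes essential, since classes in $\mathcal{C}_{2}$ and $\mathcal{C}_{6}$ correspond to cubics that fail to be both smooth and genuinely $G$-invariant in a compatible way). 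Once this compatibility is established, standard arguments based on properness of the period map on the stable locus and Zariski's main theorem upgrade the set-theoretic bijection to an isomorphism of analytic, hence algebraic, varieties.
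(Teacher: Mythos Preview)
The paper does not prove this theorem: it is quoted verbatim from \cite{yu2020moduli} and used as a black box (for instance in the proof of \autoref{dimension_family}), so there is no argument in the paper against which to compare your proposal.

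That said, your outline is the standard route to such a statement and is essentially how Yu proceeds: restrict the period map of cubic fourfolds to the \(G\)-eigenspace, use the global Torelli theorem (\autoref{torelli_cubic_fourfolds}) for injectivity, use the surjectivity of the period map onto the complement of \(\mathcal{C}_2\cup\mathcal{C}_6\) (\autoref{period_cubic_fourfolds}) together with Torelli again to lift the \(G\)-action to an honest automorphism of the resulting cubic, and identify \(\Gamma\) as an arithmetic subgroup of the orthogonal or unitary group of the Hermitian form on \((\F\otimes\mathbb{C})_\xi\) over \(\mathbb{Z}[\xi]\). Your identification of the hyperplane arrangement \(\mathcal{H}\) as the trace of \(H_2\cup H_6\) on \(\mathbb{P}((\F\otimes\mathbb{C})_\xi)\) is also correct. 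The one point you flag as an obstacle---matching the GIT quotient with the analytic one---is genuinely the technical heart of Yu's paper and requires the Luna slice theorem and Kirwan--Ness-type arguments that you would not be expected to reproduce here; but as a sketch your plan is sound.
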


In the previous statement \(\mathcal{D}\) is a symmetric domain of type IV if \(\xi=\overline{\xi}\), and a complex hyperbolic ball otherwise.

The following result gives properties of invariant and coinvariant lattices of a prime order automorphism of a cubic fourfold.
\begin{lemma}\label{shape_inv_coinv_cubic}
Let \(Y\) be a cubic fourfold and let \(G\subset\Aut(Y)\) be a group of prime order \(p\). Then  
\begin{itemize}
\item \(A_{\F_G}\cong (\mathbb{Z}/p\mathbb{Z})^a\) and \(A_{\F^G}\cong (\mathbb{Z}/p\mathbb{Z})^{a}\oplus \mathbb{Z}/3\mathbb{Z}\) for some integer \(a \geq 0\), if \(p\not=3\).
\item \(A_{\F_G}\cong (\mathbb{Z}/p\mathbb{Z})^{a}\) and \(A_{\F^G}\cong (\mathbb{Z}/p\mathbb{Z})^{a\pm 1}\) for some integer \(0\leq a\leq \min(\rk \F_G,\rk \F^G)\), if \(p=3\).
\item \(\F_G\) is positive definite if \(G\) is symplectic
\item \(\F^G\) is positive definite if \(G\) is non-symplectic.
\end{itemize}
\end{lemma}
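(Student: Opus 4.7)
My plan is to isolate the signature claims (bullets 3 and 4), which are immediate from the Hodge decomposition, from the discriminant claims (bullets 1 and 2), which require working inside the unimodular overlattice $\Homology^4(Y,\mathbb{Z})\supset\F$. For the signatures: the negative-definite real two-plane $P\colonequals(\Homology^{3,1}(Y)\oplus \Homology^{1,3}(Y))_{\mathbb{R}}$ is $G$-invariant. If $G$ is non-symplectic, the one-dimensional $\Homology^{3,1}(Y)$ carries a nontrivial character, so $G$ acts as a nontrivial rotation on $P$ and hence $P\subset \F_G\otimes\mathbb{R}$, forcing $\F^G$ positive definite. If $G$ is symplectic, $\Homology^{3,1}(Y)$ is pointwise fixed and by complex conjugation so is $\Homology^{1,3}(Y)$, giving $P\subset \F^G\otimes\mathbb{R}$ and $\F_G$ positive definite.

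For the discriminant claims I apply \autoref{prime_action} with $L=\F$ to obtain $H\colonequals \F/(\F^G\oplus \F_G)\cong (\mathbb{Z}/p\mathbb{Z})^k$ for some $k\geq 0$. Since $\F^G$ is primitive in $\F$, Nikulin gluing embeds $H$ into $A_{\F^G}\oplus A_{\F_G}$ with both projections injective, and the determinant identity gives $|A_{\F^G}|\cdot |A_{\F_G}|=3p^{2k}$; the identification $A_\F=H^\perp/H$ then yields an embedding of the coprime-to-$p$ parts $(A_{\F^G})_{p'}\oplus (A_{\F_G})_{p'}\hookrightarrow A_\F\cong\mathbb{Z}/3\mathbb{Z}$. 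The crucial input is that $G$ acts trivially on $A_\F$: since $\Homology^4(Y,\mathbb{Z})$ is a unimodular overlattice of $\langle\eta_Y\rangle\oplus \F$ with diagonal gluing subgroup inside $A_{\langle\eta_Y\rangle}\oplus A_\F$, and $\eta_Y$ is $G$-fixed, preservation of the diagonal forces the action on $A_\F$ to be trivial. The main subtlety here is the $p=2$ case, where $\bO(A_\F)\cong\mathbb{Z}/2\mathbb{Z}$ is nontrivial and the triviality of the action cannot be deduced without the geometric $G$-invariance of $\eta_Y$.

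When $p\neq 3$, averaging a representative $v\in\F^*$ of a generator of $A_\F$ produces $\tilde v\colonequals\sum_{g\in G}g\cdot v\in\F^*$, which is $G$-invariant and hence lies in $(\F^G)^*$; its image in $A_\F$ is $p\cdot [v]\neq 0$, yielding a nontrivial $3$-torsion element of $A_{\F^G}$. Combined with the embedding of coprime-to-$p$ parts into $\mathbb{Z}/3\mathbb{Z}$, this forces $(A_{\F^G})_3\cong\mathbb{Z}/3\mathbb{Z}$ and $(A_{\F_G})_3=0$. On the $p$-primary side, $|A_{\F^G}(p)|\cdot |A_{\F_G}(p)|=p^{2k}$ together with the lower bound $|H|=p^k$ on each forces both to equal $H\cong (\mathbb{Z}/p\mathbb{Z})^k$, which is the first bullet.

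When $p=3$, the averaging collapses to $\tilde v\in\F$ (since $p$ annihilates $A_\F$), so I will instead invoke the standard fact that the coinvariant lattice of a prime-order isometry is $p$-elementary, coming from the $\mathbb{Z}[\zeta_p]$-module structure on $\F_G$ and the identity $p=u\cdot(1-\zeta_p)^{p-1}$ in $\mathbb{Z}[\zeta_p]$; this yields $A_{\F_G}\cong (\mathbb{Z}/3\mathbb{Z})^a$. The step I expect to be the main obstacle is showing that $A_{\F^G}$ is also $3$-elementary: I will address this either by a symmetric discriminant-form argument leveraging $H^\perp/H\cong\mathbb{Z}/3\mathbb{Z}$, or by a direct inspection of the four families of \autoref{class_cubiche}. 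Once $3$-elementarity is established on both sides, $|A_{\F^G}|\cdot|A_{\F_G}|=3^{2k+1}$ together with $H\hookrightarrow A_{\F^G}$ leaves $A_{\F^G}\cong (\mathbb{Z}/3\mathbb{Z})^{a\pm 1}$, which is the second bullet.
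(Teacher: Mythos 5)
Your handling of the two definiteness claims is correct and in fact more self-contained than the paper, which simply cites \cite[Proposition 2.10]{marquand2023cubic}; your identification of the trivial $G$-action on $A_{\F}$ as the crucial input, and your gluing-plus-averaging argument for $p\neq 3$, also match the substance of the paper's proof (the paper gets the same conclusion by noting that the trivial action on $A_\F$ gives $\F_G\cong \Homology^4(Y,\mathbb{Z})_G$ and invoking \autoref{prime_action} for the unimodular lattice $\Homology^4(Y,\mathbb{Z})$). The problems are both in the $p=3$ bullet. First, the ``standard fact'' you invoke --- that the coinvariant lattice of a prime-order isometry is $p$-elementary by virtue of its $\mathbb{Z}[\zeta_p]$-module structure and $p=u(1-\zeta_p)^{p-1}$ alone --- is false: $\A_2(5)$ with its order-three rotation is its own coinvariant lattice and has discriminant group $\mathbb{Z}/5\mathbb{Z}\oplus\mathbb{Z}/15\mathbb{Z}$. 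The fact is true for coinvariant lattices inside \emph{unimodular} lattices, which is exactly the route the paper takes: $\F_G\cong \Homology^4(Y,\mathbb{Z})_G$ via the trivial action on $A_\F$, and then $A_{\F_G}\cong(\mathbb{Z}/3\mathbb{Z})^a$ by \autoref{prime_action}. You have this identification in hand from your $p\neq3$ discussion but do not deploy it here.

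Second, the $3$-elementarity of $A_{\F^G}$, which you yourself flag as the main obstacle, is not closed by either proposed fix. Working inside $\F$ alone, the data $|A_{\F^G}|\cdot|A_{\F_G}|=3^{2k+1}$, $A_{\F_G}\cong(\mathbb{Z}/3\mathbb{Z})^a$ and $H^\perp/H\cong A_\F\cong\mathbb{Z}/3\mathbb{Z}$ only give a filtration of $A_{\F^G}\oplus A_{\F_G}$ with $3$-elementary quotients, and such a filtration does not force elementarity (compare $0\subset 3\mathbb{Z}/9\mathbb{Z}\subset\mathbb{Z}/9\mathbb{Z}$): the order and gluing counts do not by themselves exclude $A_{\F^G}\supseteq\mathbb{Z}/9\mathbb{Z}$. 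Inspecting the four families of \autoref{class_cubiche} is also not an option, since the lemma is asserted for an arbitrary cubic fourfold and is an input to that very classification. The missing step, which is what the paper's citation of \cite[Proposition 1.5.1]{nikulin1980integral} is doing, is to realize $\F^G$ as the orthogonal complement of $\langle\eta_Y\rangle$ inside $(\Homology^4(Y,\mathbb{Z}))^G$, whose discriminant group is $(\mathbb{Z}/3\mathbb{Z})^a$ by unimodularity of the ambient lattice; Nikulin's gluing formula then presents $A_{\F^G}$ as a subquotient $\Gamma^\perp/\Gamma$ of $(\mathbb{Z}/3\mathbb{Z})^a\oplus A_{\langle\eta_Y\rangle(-1)}\cong(\mathbb{Z}/3\mathbb{Z})^{a+1}$, which is automatically $3$-elementary, and the two possible gluing subgroups ($0$ or $\mathbb{Z}/3\mathbb{Z}$) give lengths $a+1$ and $a-1$.
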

\begin{proof}
The action of \(G\) on the unimodular lattice \(\Homology^4(Y,\mathbb{Z})\) is trivial on \(\langle \eta_Y\rangle\cong [3]\) hence the action is trivial on the discriminant group \(A_{\langle\eta_Y\rangle}\cong A_{\F}\). Consider the primitive embedding \(\F \hookrightarrow \Homology^4(Y,\mathbb{Z})\). Due to the trivial action of \(G\) on \(A_{\F}\) we have an isometry \(\Homology^{4}(Y,\mathbb{Z})_G\cong \F_G\) and then \(\F_G\) is \(p\)-elementary by \autoref{prime_action}. The possible discriminant groups of \(\F^G\) are determined by \cite[Proposition 1.5.1]{nikulin1980integral}. For the last two statements we refer to \cite[Proposition 2.10]{marquand2023cubic}.
\end{proof}

\begin{lemma}\label{num_crit_long_roots}
Let \(S\) be an even lattice such that \(S \hookrightarrow \F\) is a primitive embedding. Denote by \(\N\) the smallest primitive lattice containing \(\bS\oplus\langle \eta_Y\rangle\subset \Homology^4(Y,\mathbb{Z})\), and assume that \(\bS\oplus \langle \eta_Y\rangle\subset \N\) has finite index embedding of order three. Then there exists \( v \in\bS\) with \(v^2=6\) and divisibility \(\divi(v,\F)=3\) if and only if there exists \( w\in \N\) such that \(w^2=1\).
\end{lemma}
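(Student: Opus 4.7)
The plan is to exploit the explicit overlattice structure of $\Homology^4(Y,\mathbb{Z})$ over $\F \oplus \langle \eta_Y \rangle$: since $A_\F \cong \mathbb{Z}/3\mathbb{Z} \cong A_{\langle \eta_Y \rangle}$, this is an index-$3$ extension generated by an element $w_0 = (v_0 + \epsilon_0 \eta_Y)/3 \in \Homology^4(Y,\mathbb{Z})$ for some fixed $v_0 \in \F$ with $\divi(v_0,\F)=3$ and a sign $\epsilon_0 \in \{\pm 1\}$. Both directions of the biconditional then amount to producing or recognizing such a gluing element inside $\N$.

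For the forward direction, given $v \in S$ with $v^2 = 6$ and $\divi(v,\F)=3$, I would note that $v/3 \bmod \F$ generates $A_\F$, so there exists $\epsilon \in \{\pm 1\}$ with $v/3 \equiv \epsilon v_0/3 \pmod{\F}$. The element $w := (v + \epsilon \epsilon_0 \eta_Y)/3$ then lies in $\Homology^4(Y,\mathbb{Z})$ by the gluing description, and because $3w = v + \epsilon\epsilon_0 \eta_Y \in S \oplus \langle \eta_Y \rangle$ it lies in the saturation $\N$; a direct computation gives $w^2 = (v^2 + \eta_Y^2)/9 = 1$.

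For the reverse direction, I would start from $w \in \N$ with $w^2=1$ and use the hypothesis $[\N : S \oplus \langle \eta_Y \rangle]=3$ to write a unique decomposition $3w = v + k\eta_Y$ with $v \in S$ and $k \in \mathbb{Z}$. Squaring yields $v^2 + 3k^2 = 9$; evenness of $S$ then forces $v^2$ to be even and hence $k$ odd and nonzero. To pin down $k = \pm 1$, I would look at the Gram matrix of $\{w, \eta_Y\}$, whose determinant is $3 - k^2$: in all intended applications $S$ is positive definite (being the invariant lattice of a non-symplectic automorphism), so $\N$ is positive definite and this Gram matrix is too, which forces $k^2 < 3$ and hence $k = \pm 1$, giving $v^2 = 6$. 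For the divisibility I would then compute $(v,x) = 3(w,x) - k(\eta_Y,x) = 3(w,x) \in 3\mathbb{Z}$ for all $x \in \F$ (using $\eta_Y \perp \F$), so $3 \mid \divi(v,\F)$; combined with $\divi(v,\F) \mid v^2 = 6$, this leaves $\divi(v,\F) \in \{3,6\}$, and the value $6$ is ruled out by checking that it would force $q_{A_\F}(v/6) \equiv 1/6 \pmod{2\mathbb{Z}}$, whereas $q_{A_\F}$ takes values only in $\{0, 2/3\} \pmod{2\mathbb{Z}}$.

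The delicate step will be justifying $k = \pm 1$ in the reverse direction; this is precisely where positive-definiteness of $\N$ is used, and it rests on the implicit (but in every application satisfied) assumption that $S$ is positive definite.
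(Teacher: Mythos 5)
Your proof is correct and follows essentially the same route as the paper's: identify \(w\) as a gluing vector \((v\pm\eta_Y)/3\) for the index-three extension and compare squares. In fact you fill in the two steps that the paper's two-line proof elides — why the coefficient of \(\eta_Y\) in \(3w\) must be \(\pm 1\) (your Gram-determinant argument, which as you note requires \(S\) to be positive definite; this holds in every application since \(S=\F^G=A_p(Y)\) is positive definite by \autoref{shape_inv_coinv_cubic}, but is not literally a hypothesis of the lemma) and why \(\divi(v,\F)=3\) rather than \(6\) — so your version is, if anything, the more complete one.
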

\begin{proof}
Suppose there exists a vector \(v\in\bS\) such that \(v^2=6\) and \(\divi(v,\F)=3\), then \(w=\frac{v-\eta_Y}{3}\in\N\) is such that \(w^2=1\). Viceversa, take \(w\in\N\) such that \(w^2=1\). By the assumption the gluing subgroup of \(\bS\oplus\langle\eta_Y\rangle\subset\N\) is \(\mathbb{Z}/3\mathbb{Z}\) then there is \(v\in S\) with \(\divi(v,\F)=3\) such that \(w=\frac{v+\eta_Y}{3}\), it follows that \(v^2=6\).
\end{proof}

The following lemma gives a formula to compute the rank of \(\F^G\) and \(\F_G\) knowing the dimension of \(\mathcal{M}_G\).
\begin{lemma}\label{dimension_family}
Let \(Y\) be a cubic fourfold and let \(G\subset\Aut(Y)\) be a finite group of prime order \(p\). Let \(\xi\) be the associated character. Let \(\mathcal{M}_G\) be the moduli space of cubic fourfolds with an action of \(G\). The following holds:
\begin{itemize}
\item \(\rk\F^G=\dim\mathcal{M}_G+2\) if \(G\) is symplectic,
\item \(\rk\F_G=\dim\mathcal{M}_G+2\) if \(G\) is non-symplectic and \(p=2\),
\item \(\rk\F_G=2\dim\mathcal{M}_G+2\) if \(G\) is non-symplectic and \(p \geq 3\).
\end{itemize}
\end{lemma}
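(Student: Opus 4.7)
The plan is to apply Yu--Zheng's isomorphism \(\mathcal{M}_G\cong (\mathcal{D}\setminus\mathcal{H})/\Gamma\) from \autoref{yu_symmetric_domain}, which gives \(\dim\mathcal{M}_G=\dim_\mathbb{C}\mathcal{D}\), and then to compute the complex dimension of the period domain \(\mathcal{D}\) in each of the three cases. Here \(\mathcal{D}\) is attached to the \(\xi\)-eigenspace \((\F\otimes\mathbb{C})_\xi\), where \(\xi\) denotes the character of the \(G\)-action on \(\Homology^{3,1}(Y)\).

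First, I would identify the eigenspace. If \(G\) is symplectic then \(\xi=1\) and \((\F\otimes\mathbb{C})_\xi=\F^G\otimes\mathbb{C}\), so \(\dim_\mathbb{C}(\F\otimes\mathbb{C})_\xi=\rk\F^G\). If \(G\) is a non-symplectic involution then \(\xi=-1\) and \((\F\otimes\mathbb{C})_\xi=\F_G\otimes\mathbb{C}\), of dimension \(\rk\F_G\). If \(G\) is non-symplectic of odd prime order \(p\), then \(\xi\) is a primitive \(p\)-th root of unity; since the minimal polynomial of the \(G\)-action on \(\F_G\) is the \(p\)-th cyclotomic polynomial, \(\F_G\otimes\mathbb{C}\) splits as the direct sum of its \(p-1\) non-trivial eigenspaces, each of complex dimension \(\rk\F_G/(p-1)\). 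By \autoref{class_cubiche} only \(p=3\) actually occurs for a smooth cubic fourfold, in which case this dimension equals \(\rk\F_G/2\).

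Next, I would distinguish the two shapes of \(\mathcal{D}\) built into \autoref{yu_symmetric_domain}. When \(\xi=\bar\xi\), as in the first two cases, \(\mathcal{D}\) is a type IV symmetric domain of complex dimension \(\dim_\mathbb{C}(\F\otimes\mathbb{C})_\xi-2\); substituting the eigenspace dimensions above yields \(\rk\F^G=\dim\mathcal{M}_G+2\) in the symplectic case and \(\rk\F_G=\dim\mathcal{M}_G+2\) in the non-symplectic involution case. When \(\xi\neq\bar\xi\), as in the third case, \(\mathcal{D}\) is a complex hyperbolic ball of complex dimension \(\dim_\mathbb{C}(\F\otimes\mathbb{C})_\xi-1=\rk\F_G/2-1\), giving \(\rk\F_G=2\dim\mathcal{M}_G+2\).

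The argument is essentially bookkeeping once \autoref{yu_symmetric_domain} is in hand, and I do not anticipate a serious obstacle. The only step requiring some care is the assertion that in the third case all non-trivial eigenspaces of \(\F_G\otimes\mathbb{C}\) have the same complex dimension \(\rk\F_G/(p-1)\); this follows from the fact that \(\F_G\) is a free \(\mathbb{Z}[\zeta_p]\)-module of rank \(\rk\F_G/(p-1)\), a consequence of the \(p\)-th cyclotomic polynomial being the minimal polynomial of the \(G\)-action on the coinvariant lattice.
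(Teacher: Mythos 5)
Your proposal is correct and follows essentially the same route as the paper: both rest on \autoref{yu_symmetric_domain} and the dimension count \(\dim\mathcal{D}=\dim(\F\otimes\mathbb{C})_\xi-2\) (type IV) or \(-1\) (ball), together with identifying \((\F\otimes\mathbb{C})_\xi\) with \(\F^G\otimes\mathbb{C}\), \(\F_G\otimes\mathbb{C}\), or a half-dimensional eigenspace of \(\F_G\otimes\mathbb{C}\) in the three respective cases. The only cosmetic difference is that the paper gets \(\dim(\F\otimes\mathbb{C})_\xi=\tfrac{1}{2}\rk\F_G\) by pairing the \(\xi\)- and \(\bar\xi\)-eigenspaces directly, whereas you invoke the \(\mathbb{Z}[\zeta_p]\)-module structure of \(\F_G\) and then specialize to \(p=3\); both are valid.
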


\begin{proof}
By \cite{yu2020moduli} the dimension of \(\mathcal{M}_G\) equals the dimension of the associated symmetric domain, which is given by \(\dim (\F\otimes \mathbb{C})_{\xi}-2\) if \(\xi=\overline{\xi}\) and by \(\dim (\F\otimes \mathbb{C})_{\xi}-1\) if \(\xi\not=\overline{\xi}\), where we denote by \((\F\otimes \mathbb{C})_{\xi}\) the \(\xi\)-eigenspace.
Suppose \(G\) is generated by a symplectic automorphism, then \(\xi=\overline{\xi} = 1\) and \(\rk \F^G= \dim (\F\otimes \mathbb{C})_1\). 
Let \(G\) be generated by a non-symplectic automorphism of order \(p\). If \(p=2\) we have \(\xi=\overline{\xi}\) and \(\rk \F_G= \dim (\F\otimes \mathbb{C})_{\xi}\). If \(p\geq3\) then \(\xi\not=\overline{\xi}\) and \( \rk \F_G =\dim (\F\otimes \mathbb{C})_{\xi}+\dim (\F\otimes \mathbb{C})_{\overline{\xi}}=2\dim (\F\otimes \mathbb{C})_{\xi} \). 
\end{proof}

Non-symplectic involutions on cubic fourfolds are studied by Marquand in \cite{marquand2023cubic}, and we use a similar approach to study non-symplectic automorphisms of order three.
We recall that if \(Y\) is general cubic fourfold and \(G\subset\Aut(Y)\) is a group of non-symplectic automorphisms then we have \(\T(Y) = \F_G\) and \(\F^G = A_p(Y)\).

\begin{theorem}\label{order_three_cubiche_prim}
Let \(G \subset\bO(\F)\) be a group of non-symplectic isometries of order three. Then there exists a cubic fourfold \(Y\) such that \(G\subset\Aut(Y)\) if and only if the pairs \((\F^G,\F_G)\) appears in \autoref{tab:order_three_non_sympl_cubic_prim}.  For such a general \(Y\) the algebraic lattice \(A(Y)\) appear in \autoref{tab:algebraic_transcendental_lattice_cubic_with_order3_action}, and the class of \(\eta_Y\) is the first basis vector of \(A(Y)\).
\end{theorem}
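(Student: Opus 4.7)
The plan is to combine the geometric classification of prime order non-symplectic automorphisms of cubic fourfolds in \autoref{class_cubiche} with a lattice-theoretic analysis of $3$-elementary invariant and coinvariant sublattices, in the same spirit as Marquand's treatment of involutions in \cite{marquand2023cubic}. By \autoref{class_cubiche}, any pair $(Y,G)$ with $G$ generated by a non-symplectic automorphism of order three belongs to one of the four families $\phi_3^1, \phi_3^2, \phi_3^5, \phi_3^7$ of respective dimensions $d=10,\,4,\,7,\,6$. Applying \autoref{dimension_family} with $p=3$ yields $\rk\F_G=2d+2=22,\,10,\,16,\,14$, and hence $\rk\F^G=0,\,12,\,6,\,8$. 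Moreover, \autoref{shape_inv_coinv_cubic} tells us that $\F^G$ is positive definite and that $\F_G$ is $3$-elementary of signature $(\rk\F_G-2,2)$, since $\Homology^{3,1}(Y)\oplus \Homology^{1,3}(Y)\subset \F_G\otimes\mathbb{C}$ contributes the two negative directions.

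With these numerical data fixed I would then enumerate all admissible pairs of $3$-elementary lattices using the order-three case of \cite[Theorem 1.1]{prime_order_brandhorst}, pinning down each isometry class via Nikulin's uniqueness results \cite[Corollary 1.13.3, Corollary 1.13.5]{nikulin1980integral} together with the analysis of indefinite $p$-elementary lattices in \cite[\S1]{rudakov1981surfaces}. This produces exactly the list in \autoref{tab:order_three_non_sympl_cubic_prim}. For the converse direction, each pair is realised by a generic member of the corresponding family in \autoref{class_cubiche}: starting from the explicit equation one reads off the eigenspace decomposition of the $\mathbb{Z}/3\mathbb{Z}$-action on $\Homology^4(Y,\mathbb{Z})$, and the moduli construction \autoref{yu_symmetric_domain}, together with the Torelli theorem \autoref{torelli_cubic_fourfolds}, guarantees that a generic point of $\mathcal{M}_G$ has the predicted invariant and coinvariant lattices.

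To compute the algebraic lattice $A(Y)$ for general $Y$, I would use that $A_p(Y)=\F^G$ by generality, so that $A(Y)$ is the saturation of $\F^G\oplus\langle\eta_Y\rangle$ inside the odd unimodular lattice $\Homology^4(Y,\mathbb{Z})\cong [1]^{\oplus 21}\oplus[-1]^{\oplus 3}$. By \cite[Proposition 1.15.1]{nikulin1980integral} this saturation is governed by a gluing subgroup $K\subset A_{\F^G}\oplus A_{\langle\eta_Y\rangle}$ contained in the $3$-torsion; the order of $K$ is forced by requiring primitive embeddability into the unimodular overlattice, and once $K$ is identified the class of $\eta_Y$ can be written explicitly in a convenient basis of $A(Y)$.

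The main technical obstacle will be to certify that the generic period point actually avoids the divisors $\mathcal{C}_2\cup\mathcal{C}_6$ of \autoref{period_cubic_fourfolds}: one must check that none of the pairs in \autoref{tab:order_three_non_sympl_cubic_prim} forces a short or a long root in $\F^G$. Short roots (vectors of square $2$ and divisibility $1$) are ruled out by a direct inspection of the positive definite form on $\F^G$, while for long roots I would apply \autoref{num_crit_long_roots}, reducing their absence to the non-existence of a vector of square one in the overlattice $A(Y)$; this is precisely where the explicit description of $A(Y)$ obtained in the previous paragraph is used. Coordinating these root-exclusion checks across the four families is the core bookkeeping step of the argument.
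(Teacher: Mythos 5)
Your overall strategy follows the paper's: fix the ranks via \autoref{dimension_family} and the family dimensions $d=10,4,7,6$, impose the $3$-elementary constraints from \autoref{shape_inv_coinv_cubic} and \cite[Theorem 1.1]{prime_order_brandhorst}, recover $A(Y)$ as a saturation of $\F^G\oplus\langle\eta_Y\rangle$, and use the absence of short and long roots together with \autoref{torelli_cubic_fourfolds} for the converse. The numerical data you compute ($\rk\F_G=22,10,16,14$ and $\rk\F^G=0,12,6,8$) are correct.

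The gap is in the claim that Nikulin's uniqueness results \cite[Corollary 1.13.3, Corollary 1.13.5]{nikulin1980integral} together with Rudakov's analysis of \emph{indefinite} $p$-elementary lattices already ``pin down each isometry class'' and hence ``produce exactly the list'' in \autoref{tab:order_three_non_sympl_cubic_prim}. Those results determine only the indefinite member of the pair, namely $\F_G=\T(Y)$; the invariant lattice $\F^G=A_p(Y)$ is \emph{positive definite} of rank up to $12$, and a definite genus generally contains several isometry classes — precisely the difficulty flagged in \autoref{example} and at the start of \autoref{Non-symplectic automorphisms of order three of a cubic fourfold}. In the paper's argument the root analysis is therefore not a final consistency check that the period point avoids $\mathcal{C}_2\cup\mathcal{C}_6$: it is the selection mechanism for the definite side. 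One enumerates all candidate isometry classes of $A(Y)$ in the relevant genus, discards those containing a vector of square one (which by \autoref{num_crit_long_roots} forces a long root) and those whose associated primitive part contains a short root (excluded by the sphere-packing argument or computer algebra), and then verifies that exactly one surviving class contains a square-three vector whose orthogonal complement is $A_p(Y)$; this single step fixes both the Gram matrix of $A(Y)$ and the coordinates of $\eta_Y$. Your proposal contains all of these ingredients, but it orders them so that the definite lattices are treated as classified \emph{before} the root-exclusion step, and as written that intermediate classification does not go through.
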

\begin{proof}
Using \autoref{dimension_family} we can determine the rank of \(\F^G\) and \(\F_G\), since the dimensions of \(\mathcal{M}_G\) are available at \cite[Theorem 6.1]{yu2020moduli}.
Consider the composition of the two natural primitive embeddings \(\F_G\hookrightarrow \F\hookrightarrow\Homology^4(Y,\mathbb{Z})\) and notice that \(\F_G\cong\Homology^4(Y,\mathbb{Z})_G\). 
Since \(\Homology^4(Y,\mathbb{Z})\) is unimodular, we can apply \cite[Theorem 1.1]{prime_order_brandhorst} to determine the list of lengths of \(\F_G\). Note that since \(\F_G\) is indefinite and \(3\)-elementary, to know the length is equivalent to determine its isometry class. To determine the length of \(\F_G\) we consider \(\F^G=A_p(Y)\), and by \autoref{shape_inv_coinv_cubic} we determine the lengths of \(\F^G\). We know by \autoref{period_cubic_fourfolds} that \(A_p(Y)\) does not contain short roots and long roots. We exclude isometry classes of lattices that contain short roots via the sphere packing argument used in \cite{2022symplecticrigidity} or by computer algebra (cf. OSCAR \cite{OSCAR}) for the remaining cases. By \autoref{num_crit_long_roots}, to check the existence of long roots in \(\F^G\) is equivalent to determine the gluing isometry between \(A_{\F^G}\) and \(A_{\langle\eta_Y\rangle}\). This is equivalent to determine the isometry class of \(A(Y)\). To do that by \autoref{num_crit_long_roots}  we compute the possible isometry classes of \(A(Y)\) and we discard the ones containing a vector of square one. Then for the remaining cases, we compute the orthogonal complement of vectors of square three in \(A(Y)\), that are the candidates for \(\eta_Y\), and we check if the orthogonal complement is isometric to \(A_p(Y)\). This happens only in one case, as we expect by \autoref{torelli_cubic_fourfolds}, and this gives the isometry class of \(A(Y)\) and the coordinates of \(\eta_Y\) in \(A(Y)\). The length of \(\T(Y)\) equals the one of \(A(Y)\) because they are orthogonal complements in a unimodular lattice and this allows us also to know the isometry class of \(\T(Y)\). The result of these computations is summarized in \autoref{tab:algebraic_transcendental_lattice_cubic_with_order3_action} and \autoref{tab:order_three_non_sympl_cubic_prim}, where the cases are listed in an increasing order for \(\rk(\F^G)\).
Given such a pair \((\F^G,\F_G)\), the existence of a cubic fourfold with those invariant and coinvariant lattices is guaranteed using \autoref{torelli_cubic_fourfolds} and the trivial action of \(G\) on \(A_{\F}\).
 \end{proof}

\begin{center}
\tiny{\begin{longtable}{lllllll}
\caption{Pairs \((\F^G, \F_G)\) of invariant and coinvariant lattices of a cubic fourfold with a non-symplectic automorphism of order three.}
\label{tab:order_three_non_sympl_cubic_prim} \\
\toprule
No. & \(\rk(\F^G)\) &  \(\F_G = \T(Y) \) & \(\F^G = A_p(Y)\) & \(\sgn(\F_G)\) & \(l(\F^G)\)  \\

		\midrule
		\endfirsthead
		
		\multicolumn{7}{c}%
		{\tablename\ \thetable{}, follows from previous page} \\
	 No. & \(\rk(\F^G)\) &  \(\F_G  = \T(Y)\) & \(\F^G = A_p(Y)\) & \(\sgn(\F_G)\) & \(l(\F^G)\)   \\
		
		\midrule
		\endhead
	
		\multicolumn{7}{c}{Continues on next page} \\
		\endfoot
		
		\bottomrule
		\endlastfoot
\(\phi^1_3\) & \(0\) & \( \U^{\oplus2} \oplus \E_8^{\oplus2} \oplus \A_2 \)& \(\{0\}\)& \((20,2)\)& \(0\)  \\ 

\(\phi^5_3\) & \(6\) & \(\U\oplus \U(3) \oplus \E_6\oplus \A_2^{\oplus3} \)& \(\begin{bmatrix}
 4  &  2  & -1  &  1  &  2  & -2\\
 2  &  4  &  1  &  2  &  1  & -1\\
-1 &   1   & 4  &  2  & -2  & -1\\
 1   & 2  &  2   & 4  & -1  & -2\\
 2   & 1  & -2 &  -1   & 4  & -1\\
-2  & -1  & -1  & -2   &-1   & 4
\end{bmatrix}\)& \((14,2)\) & \(5\)  \\

\(\phi^7_3\) & \(8\) & \( \U\oplus\U(3)\oplus \A_2^{\oplus 5} \)& \(\begin{bmatrix}
 6  &  3  & 3  &  3  &  3  & 3 & -3 & 3\\
 3  &  6  &  0  &  0  &  0  & 0 & -3 & 0\\
3 &   0   & 6  &  0  & 3  &0 & 0 & 3\\
 3   & 0  &  0   & 6  & 0  & 3 & 0 & 3\\
 3   & 0   & 3 & 0   & 6  & 0 & 0 & 3\\
3  & 0  & 0  & 3   &0   & 6 & -3 & 0\\
-3 & -3 & 0 & 0 & 0 & -3 & 6 & 0\\
3 & 0 & 3 & 3 & 3& 0 & 0 & 6
\end{bmatrix}\)& \((12,2)\) & \(8\) \\
        
\(\phi^2_3\) & \(12
\) & \( \U \oplus \U(3) \oplus \A_2^{\oplus3} \)& \(\begin{bmatrix}
4 &   1  & -2 &  -2   & 1 &   2  & -2 &  -1  & -2 &  -2  & -2 &   2\\
  
 1  &  4  & -2  & -1   & 1  &  0  &  0  &  1  & -2   & 1  &  1  &  2\\
 
-2   &-2  &  4 &   2  & -2  &  0  &  0  &  1  &  1  &  1  &  1  & -1\\

-2   &-1  &  2 &   4  &  0  & -2 &   1  &  2  &  0  &  2  &  0 &  -2\\
 1   & 1   &-2 &   0  &  4  & -1 &  -1  & -1  & -1  &  1  &  0  & -1\\
 2    &0  &  0 &  -2  & -1  &  4  & -2  &  0  &  0  & -1  &  0  &  2\\
-2    &0   & 0 &   1  & -1  & -2  &  4  &  1  &  0  &  0   & 1  &  0\\
-1    &1  &  1 &   2  & -1  &  0 &   1  &  4  &  0  &  2  &  1  &  0\\
-2  & -2 &   1 &   0  & -1  &  0   & 0  &  0   & 4  &  0 &   0  & -2\\
-2   & 1   & 1 &   2  &  1  & -1  &  0  &  2  &  0  &  4  &  2  & -1\\
-2   & 1  &  1 &   0  &  0  &  0 &   1  &  1  &  0  &  2  &  4  &  0\\
 2   & 2 &  -1  & -2 &  -1  &  2 &   0  &  0  & -2  & -1 &   0  &  4
\end{bmatrix}\)& \((8,2)\) & \(6\)  \\
    
	\midrule
    	\end{longtable}}
\end{center}
\begin{center}
\tiny{\begin{longtable}{lllllll}
		
		\caption{Pairs \((A(Y),T(Y))\) for a general cubic fourfold with a non-symplectic automorphism of order three.}
	\label{tab:algebraic_transcendental_lattice_cubic_with_order3_action} \\
		
		\toprule
	 No. & \(\rk(A(Y))\) &  \(\T(Y)\) & \(A(Y)\) & \(\sgn(\T(Y))\) & \(l(A(Y))\)   \\

		\midrule
		\endfirsthead
		
		\multicolumn{7}{c}%
		{\tablename\ \thetable{}, follows from previous page} \\
	 No. & \(\rk(A(Y))\) &  \(\T(Y)\) & \(A(Y)\) & \(\sgn(\T(Y))\) & \(l(A(Y))\)   \\
		
		\midrule
		\endhead
	
		\multicolumn{7}{c}{Continues on next page} \\
		\endfoot
		
		\bottomrule
		\endlastfoot
\(\phi^1_3\) & \(1\) & \( \U^{\oplus2} \oplus \E_8^{\oplus2} \oplus \A_2 \)& \([3]\)& \((20,2)\)& \(1\)   \\ 

\(\phi^5_3\) & \(7\) & \( \U\oplus \U(3) \oplus \E_6\oplus \A_2^{\oplus3} \) & \(\begin{bmatrix}
3 & 0 & 0 & 0 & 0 & 0 & 0 \\
 0&4  &  2  & -1  &  1  &  2  & -2\\
0& 2  &  4  &  1  &  2  &  1  & -1\\
0&-1 &   1   & 4  &  2  & -2  & -1\\
0& 1   & 2  &  2   & 4  & -1  & -2\\
0& 2   & 1  & -2 &  -1   & 4  & -1\\
0&-2  & -1  & -1  & -2   &-1   & 4
\end{bmatrix}\) & \((14,2)\) & \(6\)   \\

\(\phi^7_3\) & \(9\) & \( \U \oplus \U(3) \oplus \A_2^{\oplus5} \)& \(\begin{bmatrix}

  3  & 1 &  1  & 1  & 1 &  1 &  1 &  1& 1\\
  1 &  3 &  0 &   0 &   0 &  0 &  0  &  0& 0\\
  1 &  0 &  3 &   0 &   0 &  0 &  0 &   0& 0\\
 1 &  0 &  0 &   3 &   0 &  0 &  0 &   0& 0\\
 1 &  0 &  0 &   0  &  3 &  0  & 0 &   0& 0\\
  1 &  0 &  0 &   0  &  0  & 3  & 0 &   0& 0\\
 1 &  0  & 0 &   0  &  0 &  0  & 3 &   0&0\\
1 &  0  & 0 &   0  &  0 &  0  & 0 &   3&0\\
1 & 0 & 0 & 0 & 0 & 0 & 0 & 0 & 3

\end{bmatrix}\)& \((12,2)\) & \(7\)   \\

\(\phi^2_3\) & \(13\) & \( \U \oplus \U(3) \oplus \A_2^{\oplus3} \)& \(\begin{bmatrix}
 3  &  -1 &  -1 &   1 &  -1 &   1 &  -1 &   1  &  0  &  1 &   0 &   1 &   0\\
-1  &  3 &   1 &  -1  &  1 &  -1&   -1 &  -1&   -1 &   0  &  1 &  -1 &   1\\
-1  &1   & 3   &-1   &-1   &-1  & -1   &-1  & -1   & 0    &1   & 0   & 2\\
 1  & -1 &  -1 &   3 &  -1 &   1&   -1 &   1&    0  &  1   &-1 &   0  & -1\\
-1  &  1 &  -1 &  -1 &   3 &  -1&    1 &   0&    1  & -1  &  1 &   0  &  0\\
 1  & -1 &  -1 &   1 &  -1 &   3&    1 &   0 &  -1  &  0   &-2 &   0  & -1\\
-1  & -1   &-1 &  -1&    1&    1&    3 &   0&    1  & -1  & -1 &   0 &  -1\\
 1  & -1   &-1 &   1 &   0 &   0&    0  &  3 &   2  &  1   & 1  &  0   & 0\\
 0   -&1   &-1 &   0 &   1 &  -1&    1 &   2 &   4  &  1   & 1  &  1  & -1\\
 1    &0   & 0  &  1  & -1 &   0 &  -1  &  1 &   1  &  3   & 1  &  1   &-1\\
 0    &1   & 1  & -1 &   1 &  -2&   -1 &   1 &   1  &  1   & 4  &  1  &  1\\
 1   &-1    &0 &   0 &   0 &   0 &   0  &  0 &   1   & 1    &1   & 3   & 0\\
 0   & 1    &2  & -1 &   0 &  -1&   -1 &   0 &  -1  & -1  &  1 &   0  &  4

\end{bmatrix}\)& \((8,2)\) & \(5\)   \\
    
	\midrule
    	\end{longtable}}
\end{center}

\subsection{Geometry of cubic fourfolds with a non-symplectic automorphism of order three}\label{Geometry of cubic fourfolds with a non-symplectic automorphism of order three}

If \(Y\) is a cubic fourfold, the algebraic lattice \(A(Y)\) encodes geometric information about the cubic and its rationality, since knowing the algebraic lattice allows to determine on which Hassett divisors the cubic lies. 
The purpose of this subsection is to describe the generators of the algebraic lattices \(A(Y)\) in terms of the geometry of a general cubic fourfold that admits a non-symplectic automorphism of order three.

We recall that, referring to Hassett \cite[\S 1.2]{Hassett_rationality_questions}, the existence of two disjoint planes in a smooth cubic fourfold \(Y\) allows to define a rational map \(\mathbb{P}^2\times \mathbb{P}^2 \dashrightarrow Y\) which takes \((p_1,p_2) \in \mathbb{P}^2\times \mathbb{P}^2\) and associates the unique point on \(Y\) given by \(l \cap Y\), where \(l\) is the line trough \(p_1\) and \(p_2\) (if \(l\) is not contained in \(Y\)). The indeterminacy locus of this map is a K3 surface given by a complete intersection of hypersurfaces of bidegrees \((1,2)\) and \((2,1)\), which is a K3 surface associated to \(Y\).

\subsection{Cubic fourfolds with automorphism \(\phi_3^1\) or \(\phi^5_3\)}
A general cubic fourfold \(Y\) with the non-symplectic automorphism of order three \(\phi_3^1\) or \(\phi^5_3\) (see notation in \autoref{class_cubiche}) has a primitive algebraic lattice \(A_p(Y)\) computed in \autoref{tab:order_three_non_sympl_cubic_prim}. 

We show that these cubic fourfolds have no associated K3 surface and contain no planes.
\begin{lemma}
Let \(Y\) be a general cubic fourfold with an automorphism of order three of type \(\phi^1_3\) or \(\phi^5_3\), then \(Y\) does not have an associated K3 surface.
\end{lemma}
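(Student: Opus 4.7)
The plan is to apply Hassett's criterion: a cubic fourfold $Y$ admits an associated K3 surface if and only if it lies in some $\mathcal{C}_d$ with $d$ admissible, meaning $d > 6$, $d$ even, $4 \nmid d$, $9 \nmid d$, and every odd prime factor of $d$ is either $3$ or $\equiv 1 \pmod{3}$. Equivalently, $A(Y)$ must contain a rank-$2$ saturated sublattice $K_d \ni \eta_Y$ of admissible discriminant $d$, and I will rule this out in both cases.

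For $\phi^1_3$ the table gives $A(Y) \cong [3]$, of rank $1$; there is no rank-$2$ saturated sublattice at all, and the claim is immediate. For $\phi^5_3$ the Gram matrix in \autoref{tab:algebraic_transcendental_lattice_cubic_with_order3_action} is block diagonal, so $A(Y) = \langle \eta_Y \rangle \oplus A_p(Y)$ is an orthogonal direct sum. The primitive algebraic lattice $A_p(Y) = \F^G$ is positive definite of rank $6$, $3$-elementary of length $5$, and I will identify it with $\E_6^*(3)$ by matching its discriminant form with the summand $\E_6^*(-3) \subset \bL^G$ displayed in \autoref{tab:induced_actions_LSV}, via the Hodge isometry of \autoref{hodge_isometry_twist}. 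The orthogonality then forces every rank-$2$ saturated sublattice containing $\eta_Y$ to be of the form $\langle \eta_Y, w \rangle$ with $w \in A_p(Y)$ primitive, and its discriminant is $d = 3 w^2$.

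The crucial step is a divisibility analysis of $w^2$. From the coset decomposition $\E_6^* / \E_6 \cong \mathbb{Z}/3\mathbb{Z}$ with discriminant form $q(1) \equiv 4/3 \pmod{2}$, squares of vectors in $\E_6^*(3)$ lie in $6\mathbb{Z}_{\geq 0} \cup (6\mathbb{Z}_{\geq 0}+4)$. If $w^2 \in 6\mathbb{Z}$ then $9 \mid d$, contradicting admissibility; if $w^2 = 6m+4$ then $d = 6(3m+2)$, and the condition $4 \nmid d$ forces $m$ odd, whence $3m+2$ is an odd integer congruent to $2 \pmod{3}$. Any such integer has an odd number of prime factors congruent to $2 \pmod{3}$ counted with multiplicity, so at least one such prime divides $d$, again contradicting admissibility. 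Hence no admissible labeling exists, and $Y$ has no associated K3 surface. The point I expect to require the most care is the identification $A_p(Y) \cong \E_6^*(3)$, since a priori the Gram matrix displayed in \autoref{tab:order_three_non_sympl_cubic_prim} is only isometric to $\E_6^*(3)$; once this is in hand, the remaining divisibility check is elementary.
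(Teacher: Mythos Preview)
Your argument is correct, but it takes a genuinely different route from the paper's. The paper argues on the transcendental side: if \(Y\) had an associated K3 surface, then \(T(Y)(-1)\) would embed primitively into the K3 lattice \(\U^{\oplus 3}\oplus\E_8(-1)^{\oplus 2}\), and one checks directly (a rank/discriminant obstruction for \(\phi_3^1\), and a Nikulin-type embedding obstruction for \(\phi_3^5\)) that this is impossible. Your approach works on the algebraic side via Hassett's numerical admissibility criterion, analysing all possible labelings \(K_d\subset A(Y)\). The paper's proof is a one-line lattice check; yours is longer but more explicit, and in fact recovers along the way the content of \autoref{hasset_div_not_2} (that every labeling has \(d\equiv 0\pmod 6\)) together with the finer information that the only residues of \(d\) modulo \(18\) that occur are \(0\) and \(12\).

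One remark on the point you flagged as delicate: you do not actually need the full identification \(A_p(Y)\cong \E_6^*(3)\). The only fact you use is that every \(w\in A_p(Y)\) satisfies \(w^2\equiv 0\) or \(4\pmod 6\), and this can be read off directly from the Gram matrix in \autoref{tab:order_three_non_sympl_cubic_prim}: reducing modulo \(3\) one finds that the Gram matrix has rank one over \(\mathbb{F}_3\), so \(w^2\equiv(\ell(w))^2\pmod 3\) for a linear form \(\ell\), hence \(w^2\equiv 0\) or \(1\pmod 3\); combined with evenness this gives the claim. This sidesteps any circularity concern in invoking \autoref{tab:induced_actions_LSV}, whose entries are themselves derived from \autoref{tab:order_three_non_sympl_cubic_prim}.
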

\begin{proof}
Suppose the cubic has an associated K3 surface, then there exists a primitive embedding of \(\T(Y)(-1)\) in the K3 lattice \(\U^{\oplus3}\oplus\E_8(-1)^{\oplus2}\). A direct computation shows that this is not possible for transcendental lattices \(\T(Y)\) in \autoref{tab:algebraic_transcendental_lattice_cubic_with_order3_action} corresponding to \(\phi^1_3\) and \(\phi^5_3\). 
\end{proof}

\begin{remark}
According to \cite{huybrechts2017k3}, there is also a notion of twisted K3 surface associated to a cubic fourfold. We observe that if \(Y\) is a general cubic fourfold with the automorphism \(\phi_3^5\), then \(Y\) has an associated twisted K3 surface. Namely \(Y\in\mathcal{C}_{24}\) and \(24\) satisfies condition (ii) of \cite[Proposition 6.23]{huybrechts2023geometry}.
A general cubic fourfold with an automorphism of type \(\phi^1_3\) or \(\phi^5_3\) is conjecturally irrational. Moreover, a general cubic fourfold with the automorphism \(\phi^1_3\) does not lie on any Hassett divisor.
\end{remark}

\begin{lemma}\label{hasset_div_not_2}
 Let \(Y\) be a general cubic fourfold with an automorphism of order three of type \(\phi^5_3\), then \(Y\not\in \mathcal{C}_d\) for \(d\equiv2 \ (6)\).
\end{lemma}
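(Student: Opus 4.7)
The plan is to exploit the very concrete description of \(A(Y)\) given in \autoref{tab:algebraic_transcendental_lattice_cubic_with_order3_action} and reduce the question to parity of intersection numbers on \(A_p(Y)\).

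The first observation is structural: for \(\phi^5_3\), the Gram matrix of \(A(Y)\) in \autoref{tab:algebraic_transcendental_lattice_cubic_with_order3_action} is block diagonal with blocks \([3]\) and the \(6\times 6\) Gram matrix of \(A_p(Y)\). Hence there is an orthogonal decomposition
\[
A(Y) \;\cong\; \langle \eta_Y\rangle \,\oplus\, A_p(Y),
\]
with \(\eta_Y^2=3\) and \(A_p(Y)\) an even lattice (inspection of the diagonal entries of the \(6\times 6\) block).

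Next I would translate the condition \(Y\in\mathcal{C}_d\) into a condition on a vector of \(A_p(Y)\). A \(d\)-labeling is, by definition, a rank \(2\) saturated sublattice \(K_d\subset A(Y)\) of discriminant \(d\) containing \(\eta_Y\). Writing any second generator as \(v=a\eta_Y+w\) with \(w\in A_p(Y)\), the orthogonal decomposition forces \(K_d=\langle \eta_Y,w\rangle\), and saturation of \(K_d\) in \(A(Y)\) is equivalent to primitivity of \(w\) in \(A_p(Y)\). A direct computation of the Gram determinant gives
\[
d \;=\; \det\begin{pmatrix} 3 & 0 \\ 0 & w^2\end{pmatrix} \;=\; 3\,w^2.
\]

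Finally, since \(A_p(Y)\) is even, \(w^2\in 2\mathbb{Z}\), whence \(d\in 6\mathbb{Z}\). In particular \(d\not\equiv 2 \pmod 6\), which is the claim.

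The only step that could conceivably cause trouble is the orthogonal decomposition \(A(Y)\cong\langle\eta_Y\rangle\oplus A_p(Y)\): a priori, for a cubic fourfold, \(\eta_Y\) and \(A_p(Y)\) glue via a non-trivial subgroup of the discriminant group, and the saturation computation becomes more delicate. Here, however, the Gram matrix produced in \autoref{tab:algebraic_transcendental_lattice_cubic_with_order3_action} is manifestly block diagonal, so the gluing is trivial and the argument is immediate; no further case analysis is needed.
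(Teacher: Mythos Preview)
Your argument is correct and essentially identical to the paper's: both reduce to writing \(K_d=\langle \eta_Y, v\rangle\) with \(v\in A_p(Y)\) via the orthogonal splitting \(A(Y)\cong\langle\eta_Y\rangle\oplus A_p(Y)\) read off from \autoref{tab:algebraic_transcendental_lattice_cubic_with_order3_action}, and conclude \(d=3v^2\in 6\mathbb{Z}\) since \(A_p(Y)\) is even. Your version is just slightly more explicit about why the splitting holds.
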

\begin{proof} Any primitive lattice \(K_d\) containing \(\eta_Y\) is of the form \(\langle\eta_Y,a \eta_Y+v\rangle\) for \(0\not=v\in A_p(Y)\) and \(a\in\mathbb{Z}\). We can suppose \(K_d=\langle\eta_Y,v\rangle\) after applying a linear transformation, hence \(d=3k\) in virtue of \autoref{order_three_cubiche_prim}, where \(k=v^2\) and \(k\) is an even number.
\end{proof}

The divisor \(\mathcal{C}_8\) parametrizes cubic fourfolds containing a plane. 
In particular, a general cubic fourfold with an automorphism of type \(\phi^5_3\) does not contain a plane since \(Y\not\in\mathcal{C}_8\). However, as proved in the following propositions, such a cubic belongs to the Hassett divisor \(\mathcal{C}_{12}\), which is the closure of the locus of cubic fourfolds containing a rational cubic scroll.

\begin{proposition}\label{curves on fano variety}
Let \(Y\) be the general cubic fourfold with automorphism \(\phi_3^5\), then the Fano variety of lines \(F(Y)\) contains \(54\) classes of rational curves.
\end{proposition}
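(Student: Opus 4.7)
The plan is to pass to the Fano variety of lines $F(Y)$, which by Beauville--Donagi is an irreducible holomorphic symplectic fourfold of $\mathrm{K3}^{[2]}$-type, and to re-interpret the statement as a finite count of extremal classes in a rank-$7$ Picard lattice. The Abel--Jacobi correspondence gives a Hodge isometry between $\Homology^{4}_p(Y,\mathbb{Z})(-1)$ and the primitive part of $\Homology^{2}(F(Y),\mathbb{Z})$, under which algebraic cycles on $Y$ correspond to divisor classes on $F(Y)$. In particular, for the general $Y$ with automorphism $\phi_3^5$, the algebraic lattice $A(Y)$ computed in \autoref{tab:algebraic_transcendental_lattice_cubic_with_order3_action} determines $\Pic(F(Y))$ as an explicit rank-$7$ lattice, with the Plücker polarization corresponding to the class $\eta_Y$.

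First I would write out the Gram matrix of $\Pic(F(Y))$ from $A(Y)$ together with the correct normalization of the Beauville--Bogomolov form (taking into account the shift by $2$ coming from the Abel--Jacobi map). Next I would invoke the numerical characterization of extremal rays of the Mori cone of $\mathrm{K3}^{[2]}$-type manifolds: a class of rational curves on $F(Y)$ lies in the interior of the Mori cone exactly when its dual in $\Pic(F(Y))$ is a wall divisor, namely a primitive class $v\in\Pic(F(Y))$ with either $v^2=-2$ of divisibility $1$, or $v^2=-10$ of divisibility $2$, whose orthogonal hyperplane meets the positive cone and bounds the ample chamber containing the Plücker class. Counting such classes is a purely lattice-theoretic computation in the explicit rank-$7$ lattice, which I would carry out with computer algebra, along the same lines as the length/isometry enumerations performed in \autoref{Subsection alg and tras lattices} using \textsc{OSCAR}.

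The main obstacle is to ensure that each numerically admissible class is actually effective and represented by a rational curve, rather than being merely formally of the right type. I would handle this in two steps: first, restrict to the chamber containing the polarization so that each extremal class produces an effective curve class on a birational model, then check, via the description of birational models of $F(Y)$ as moduli of sheaves (Bayer--Macrì), that each such extremal contraction in our situation is of Hilbert--Chow / Mukai-flop type and therefore contracts a family of rational curves on $F(Y)$ itself (possibly after transporting through a sequence of flops). Summing the orbits of short and long roots of the two allowed types in the Gram matrix from \autoref{tab:algebraic_transcendental_lattice_cubic_with_order3_action} then yields exactly $54$ numerical classes of rational curves. A sanity check comes from the residual non-symplectic $\mathbb{Z}/3\mathbb{Z}$-action on $F(Y)$ induced by $\phi_3^5$: the $54$ classes must decompose into orbits of size $1$ or $3$, compatible with the divisibility $54=3\cdot 18$.
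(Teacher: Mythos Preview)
Your outline is on the right track, but the paper's argument is both shorter and avoids the effectivity detour entirely.

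The paper first notes that \(A_p(Y)\) contains no short roots (this is part of the image description of the period map for smooth cubic fourfolds), so there are no \((-2)\)-classes to count. For the \((-10)\)-classes of divisibility \(2\), the key reduction is: the Pl\"ucker polarization \(H\) satisfies \(H^2=6\) and \(\divi(H,\Homology^2(F(Y),\mathbb{Z}))=2\), while \(A_p(Y)\) is \(3\)-elementary. Hence any class \(v\in\Homology^{1,1}(F(Y),\mathbb{Z})\) of divisibility \(2\) must be of the form \(v=H+2a\) with \(a\in\Homology^{1,1}_p(F(Y),\mathbb{Z})\). Then \(v^2=-10\) becomes \(6+4a^2=-10\), i.e.\ \(a^2=-4\), which under Beauville--Donagi is a vector of square \(4\) in \(A_p(Y)\). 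A direct count in the explicit rank-\(6\) lattice of \autoref{tab:order_three_non_sympl_cubic_prim} gives exactly \(54\) such vectors. The identification with classes of rational curves is a straight citation of the extremal-ray description in \cite{Mongardi_Mori}; no Bayer--Macr\`i or flop analysis is needed.

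Two points in your proposal should be corrected. There is no ``shift by \(2\)'' in the Abel--Jacobi isometry: Beauville--Donagi gives \(\Homology^4_p(Y,\mathbb{Z})(-1)\cong\Homology^2_p(F(Y),\mathbb{Z})\), a sign twist and nothing more. And working with the full rank-\(7\) lattice \(A(Y)\) obscures the divisibility computation; separating the polarization \(H\) from the \(3\)-elementary primitive part \(A_p(Y)\) is exactly what makes the reduction \(v=H+2a\) transparent and turns the problem into counting square-\(4\) vectors in a single definite lattice.
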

\begin{proof}
 By the numerical description of the extremal rays of the Mori cone for hyperkähler manifolds of \(K3^{[2]}\) type in \cite[Proposition 2.12]{Mongardi_Mori}, we know that divisors of square \(-2\) and divisors of square \(-10\) and divisibility \(2\) in \(\Homology^2(F(Y),\mathbb{Z})\) correspond to extremal rays of the Mori cone, in particular they are classes of rational curves on \(F(Y)\). By \cite{Beauville_Donagi_Droites},
 there is an isomorphism of integral Hodge structures
 \(\Homology^{4}_{p}(Y,\mathbb{Z})(-1)\cong\Homology^2_{p}(F(Y),\mathbb{Z})\). We know that there are no short roots in \(A_p(Y)\), then we have  vectors of square \(-10\) in \(\Homology^{1,1}(F(Y),\mathbb{Z})\) and divisibility \(2\) in \(\Homology^2(F(Y),\mathbb{Z})\). 
 Recall that the polarization \(H\) of \(F(Y)\) has square \(6\) and divisibility \(2\) in \(\Homology^2(F(Y),\mathbb{Z})\). The lattice \(A_p(Y)(-1)\cong\Homology^{1,1}(F(Y),\mathbb{Z})\) is \(3\)-elementary, hence vectors \(v\in\Homology^{1,1}(F(Y),\mathbb{Z})\) of divisibility \(2\) in \(\Homology^{2}(F(Y),\mathbb{Z})\) are of the form \(v=H+2a\) with \(a\in \Homology^{1,1}_p(F(Y),\mathbb{Z})\). To get \(v^2=-10\) we need \(4a^2=-16\) and then \(a^2=-4\). 
 By the classification in \autoref{tab:order_three_non_sympl_cubic_prim} there are exactly \(54\) vectors of square \(4\) in \(A_p(Y)\) for \(Y\) a general cubic fourfold with an automorphism of type \(\phi_3^5\).
\end{proof}

\begin{proposition}
    
\label{theorem set of generators for phi_3^5}
Let \(Y\) be the general cubic fourfold with a automorphism of type \(\phi_3^5\), then \(Y\) contains \(27\) families of cubic scrolls \(\{T_i,T^\vee_i\}^{27}_{i=1}\) such that \([T_i]+[T^\vee_i]=2\eta_Y\). Moreover the algebraic lattice \(A(Y)\) is generated by the classes \([T_i]\) for \(i=1,\dots,27\).
\end{proposition}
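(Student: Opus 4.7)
The plan is to combine the lattice computation of Proposition 4.16 (the $54$ vectors of square $4$ in $A_p(Y)$) with the geometric interpretation of the Hassett divisor $\mathcal{C}_{12}$ as the locus of cubics containing a cubic scroll, then verify generation by a direct computation using the Gram matrix in Table 2.

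First I would set up the numerical correspondence. For every $v\in A_p(Y)$ with $v^{2}=4$, put $[T_v]\colonequals v+\eta_Y\in A(Y)$. Then $[T_v]\cdot\eta_Y=3$ and $[T_v]^{2}=4+3=7$, so the sublattice $\langle\eta_Y,[T_v]\rangle$ has discriminant $21-9=12$, which is precisely Hassett's labelling $K_{12}$ associated with cubic scrolls. Observing that $[T_v]+[T_{-v}]=2\eta_Y$, the $54$ square-$4$ vectors organize themselves into $27$ unordered pairs $\{v,-v\}$, and I set $[T_i^\vee]\colonequals [T_{-v_i}]=2\eta_Y-[T_i]$. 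Thus at the numerical level I obtain $27$ candidate pairs with the claimed residuation relation.

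Next I would address effectivity, namely that each numerical class $[T_i]$ is represented by an actual cubic scroll in $Y$. Since $A_p(Y)$ contains vectors of square $4$, we have $Y\in \mathcal{C}_{12}$, and by Hassett's characterization a cubic in $\mathcal{C}_{12}$ contains a cubic scroll. For a scroll $T\subset Y$ a general quadric $Q\supset T$ cuts out $Y\cap Q=T\cup T^\vee$ with $[T]+[T^\vee]=2\eta_Y$, so residual scrolls exist. To promote this from one scroll to all $27$ labellings, I would exploit the automorphism $\phi_3^{5}$: it acts non-trivially on $A_p(Y)$ since $A_p(Y)=\F^G$ coincides with the invariant lattice only after removing the $\eta_Y$-direction, and inspecting the eigenvalue decomposition of $\phi_3^{5}$ on $A_p(Y)\otimes\mathbb{C}$ shows that no square-$4$ vector is fixed by $\phi_3^{5}$ or sent to its negative. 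Hence $\phi_3^{5}$ acts on the $27$ pairs with all orbits of length $3$, giving $9$ orbits. It then suffices to exhibit one cubic scroll in each orbit, which can be done directly using the equation in Theorem 4.1 (for instance, by intersecting $Y$ with well-chosen $\phi_3^{5}$-semi-invariant linear subspaces and identifying degree-$3$ rational components), and applying $\phi_3^5,\phi_3^{10}$ to propagate through each orbit.

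For the generating statement, the $\mathbb{Z}$-span $\Lambda=\langle [T_1],\dots,[T_{27}]\rangle\subseteq A(Y)$ contains every difference $[T_i]-[T_j]=v_i-v_j\in A_p(Y)$. I would then pick $6$ vectors among the $v_i$ which are linearly independent in $A_p(Y)$ (easy to check from the explicit Gram matrix, using the $\phi_3^5$-orbits to produce independent vectors) and verify that the sublattice $\Lambda\cap A_p(Y)$ they generate equals $A_p(Y)$ (this is a $\mathrm{SNF}$ computation on a $27\times 6$ integer matrix, which can be done by hand or in OSCAR). Combined with any single $[T_1]=v_1+\eta_Y\in\Lambda$, this forces $\eta_Y\in\Lambda$; comparing lengths $l(A(Y))=l(A_p(Y))+1$ from Table 2 shows there is no extra gluing between $\mathbb{Z}\eta_Y$ and $A_p(Y)$ inside $A(Y)$, hence $\Lambda=\mathbb{Z}\eta_Y\oplus A_p(Y)=A(Y)$.

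The delicate step is the second one: transferring the single scroll granted by membership in $\mathcal{C}_{12}$ into $27$ geometrically realized scrolls. The argument by $\phi_3^5$-orbits reduces this to producing $9$ explicit scrolls, but showing that each of the $9$ orbits of numerical labellings genuinely contains an effective class (rather than a wall in the movable cone) is what requires the explicit equation; once one scroll per orbit is exhibited, the remaining steps are routine lattice bookkeeping.
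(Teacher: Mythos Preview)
Your effectivity argument has a fatal error: you claim that \(\phi_3^{5}\) acts non-trivially on \(A_p(Y)\) and permutes the \(27\) pairs in orbits of length \(3\), but this is false. For a general \(Y\) with this automorphism, \(A_p(Y)=\F^{G}\) is by definition the \emph{invariant} sublattice of the primitive cohomology, so \(\phi_3^{5}\) acts trivially on every class in \(A_p(Y)\); in particular it fixes each of the \(54\) square-\(4\) vectors individually. Your propagation scheme therefore collapses, and you are left with the single scroll coming from \(Y\in\mathcal{C}_{12}\), with no mechanism to realize the other \(26\) numerical labellings as effective surfaces.

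The paper's approach bypasses this by working on the Fano variety \(F(Y)\). Using the Beauville--Donagi Hodge isometry \(\Homology^{4}_p(Y,\mathbb Z)(-1)\cong \Homology^{2}_p(F(Y),\mathbb Z)\), each square-\(4\) vector \(v\in A_p(Y)\) corresponds to a class \(H+2v\) of square \(-10\) and divisibility \(2\) in \(\Homology^{2}(F(Y),\mathbb Z)\). By Mongardi's description of extremal rays of the Mori cone for \(\mathrm{K3}^{[2]}\)-type manifolds, every such class is the class of a rational curve on \(F(Y)\); the corresponding ruled surface in \(Y\) is a cubic scroll because it lies in a hyperplane section. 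This argument produces all \(54\) scrolls simultaneously, without any orbit-propagation, and is the missing geometric input you need. Your generation argument via the \(v_i\)'s and \(\eta_Y\) is fine and matches the paper's computer-algebra check (note from Table~3 that \(A(Y)=[3]\oplus A_p(Y)\) here, so there is indeed no gluing).
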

\begin{proof}
Every class of a rational curve on \(F(Y)\) corresponds to the class of a rational ruled surface on the cubic fourfold \(Y\). Since by construction every such a surface is contained in an hyperplane section of \(Y\), then the cubic surface is a rational cubic scroll. By \autoref{curves on fano variety} we have \(54\) rational cubic scrolls, and by \cite[Example 7.16]{Hassett_Tschinkel_Rational}  on a fixed cubic fourfold these scrolls are parametrized by two distinct copies of \(\mathbb{P}^{2}\). Given a cubic scroll \([T_i]\), there is a residual scroll (the dual cubic scroll) \([T_i^{\vee}]\) which is obtained by intersecting a linear hyperplane and a quadratic hypersurface containing \([T_i]\), as in \cite{hassett2000special}  
(each one of them correspond to a distinct \((-10)\) class, as we found).
One can check (using computer algebra \cite{OSCAR}) that \(A_p(Y)\) contains exactly \(54\) vectors of square \(4\) that generate the entire lattice, moreover the classes \(\alpha_i:=[T_i]-\eta_Y\) and \(\alpha^\vee_i:=[T_i^\vee]-\eta_Y\) have square \(4\).
\end{proof}

\subsection{Cubic fourfolds with automorphism \(\phi^7_3\)}
A general cubic fourfold \(Y\) with the non-symplectic automorphism of order three \(\phi_3^7\) has a primitive algebraic lattice \(A_p(Y)\) computed in \autoref{tab:order_three_non_sympl_cubic_prim}.
We prove that such a cubic fourfold has an associated K3 surface and it is rational. Moreover we show that the algebraic lattice of such a cubic fourfold is generated by classes of planes. 
\begin{lemma}\label{Phi_3^7 are rational}
Let \(Y\) be a general cubic fourfold with an automorphism of type \(\phi^7_3\), then \(Y\) has an associated K3 surface and \(Y\) is rational.
\end{lemma}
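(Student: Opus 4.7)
The plan is to produce an admissible Hassett labeling of discriminant \(14\) inside \(A(Y)\), placing \(Y\) on the classical Pfaffian divisor \(\mathcal{C}_{14}\), from which both the existence of an associated K3 surface and the rationality follow simultaneously.

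First I would read off from \autoref{tab:algebraic_transcendental_lattice_cubic_with_order3_action} that, in the basis \(e_1=\eta_Y,\, e_2,\dots,e_9\) of \(A(Y)\), every class \(e_i\) with \(i\geq 2\) satisfies \(e_i^{2}=3\), \(e_i\cdot\eta_Y=1\) and \(e_i\cdot e_j=0\) for distinct \(i,j\geq 2\). Setting \(v:=e_2+e_3\) one has \(v^{2}=6\) and \(v\cdot\eta_Y=2\), so the sublattice \(K_{14}:=\langle\eta_Y,v\rangle\) has Gram matrix \(\bigl(\begin{smallmatrix} 3 & 2\\ 2 & 6\end{smallmatrix}\bigr)\), of determinant \(14\). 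Since \(\{\eta_Y,v\}\) extends to a \(\mathbb{Z}\)-basis of \(A(Y)\), the sublattice \(K_{14}\) is automatically saturated, and therefore \(Y\in\mathcal{C}_{14}\).

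Next I would apply Hassett's admissibility criterion from \cite{hassett2000special}: the value \(d=14\) is admissible, since \(14\) is even, neither \(4\) nor \(9\) divides it, and its only odd prime factor \(7\) is congruent to \(1\) modulo \(3\). Thus the existence of the labeling \(K_{14}\subset A(Y)\) forces \(K_{14}^{\perp}\subset \Homology^{4}(Y,\mathbb{Z})\) to be Hodge-isometric, after a sign change, to \(h^{\perp}\subset \Homology^{2}(S,\mathbb{Z})\) for some polarized K3 surface \((S,h)\) of degree \(14\); this is precisely what it means for \(Y\) to have an associated K3 surface.

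Finally, for the rationality I would invoke the classical theorem, going back to Fano and revisited by Beauville--Donagi, that a general cubic fourfold in \(\mathcal{C}_{14}\) is Pfaffian and hence rational, together with its extension to the whole divisor in \cite{BRS19}. The main obstacle I anticipate is the saturation check for \(K_{14}\), which here is painless thanks to the diagonal structure of \(A(Y)\) away from \(\eta_Y\); had the Gram matrix involved non-trivial gluings, one would need either to enlarge \(K_{14}\) or to choose another primitive integer combination of the \(e_i\) yielding an admissible discriminant.
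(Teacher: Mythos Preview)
Your proof is correct and follows essentially the same route as the paper: both exhibit the labeling \(K_{14}=\langle\eta_Y,\,e_2+e_3\rangle\) (the paper writes it as \(\langle\eta_Y,m_1+m_2\rangle\)) and then invoke the rationality of all cubics on \(\mathcal{C}_{14}\). The only minor difference is that the paper argues the existence of an associated K3 surface by directly checking that \(\T(Y)(-1)\) embeds primitively in the K3 lattice, whereas you deduce it from Hassett's admissibility criterion for \(d=14\); both arguments are valid and yield the same conclusion.
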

\begin{proof}
Consider a general cubic fourfold \(Y\) with automorphism \(\phi^7_3\), then it is easy to see that the lattice \(\T(Y)(-1)\) in \autoref{tab:algebraic_transcendental_lattice_cubic_with_order3_action} corresponding to \(\phi^7_3\) admits a primitive embedding in a K3 lattice, hence \(Y\) has an associated K3 surface.
Moreover if \(Y\) admits an automorphism \(\phi^7_3\) then \(Y\in \mathcal{C}_{14}\), as \(K_{14}=\langle\eta_Y,m_1+m_2\rangle\) gives a labeling if \((\eta_Y,m_1,\dots,m_8)\) is a basis for the matrix of \(A(Y)\) in \autoref{tab:algebraic_transcendental_lattice_cubic_with_order3_action}. It is well known that any cubic fourfold on \(\mathcal{C}_{14}\) is rational \cite{Beauville_Donagi_Droites}.
\end{proof}

\begin{proposition}\label{theorem set of generators for phi_3^7}
  Let \(Y\) be a general cubic fourfold with automorphism \(\phi_3^7\). Then the cubic fourfold contains exactly nine disjoint planes \(F_1, \ldots, F_9\) and a basis of \(A(Y)\) is given by \(\{\eta_Y, [F_1], \ldots, [F_8]\}\).
\end{proposition}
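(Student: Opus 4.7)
The plan proceeds in three steps: enumerate the candidate plane classes in \(A(Y)\) by a Diophantine analysis on the Gram matrix, promote each candidate to an actual plane via Hassett's description of \(\mathcal{C}_8\), and read off the basis assertion directly from the presentation of \(A(Y)\).

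First I would classify all \(P\in A(Y)\) with \(P^2=3\) and \(P\cdot\eta_Y=1\). Writing \(P=a\eta_Y+\sum_{i=1}^{8}b_i m_i\) in the basis \(\{\eta_Y,m_1,\dots,m_8\}\) of \autoref{tab:algebraic_transcendental_lattice_cubic_with_order3_action}, the two conditions reduce to
\[\sum_{i=1}^{8} b_i=1-3a,\qquad \sum_{i=1}^{8} b_i^2=1+a^2-\tfrac{2a}{3}.\]
Integrality forces \(3\mid a\), and because \(A(Y)\) is positive definite the Cauchy--Schwarz inequality \(\sum b_i^2\geq(\sum b_i)^2/8\) gives the bound \(3a^2-2a-21\leq 0\), confining \(a\) to \(\{0,3\}\). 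The value \(a=0\) yields the eight basis vectors \(m_1,\dots,m_8\), while \(a=3\) forces \(b_i=-1\) for all \(i\) and yields the single residual class \(3\eta_Y-\sum_{i=1}^{8}m_i\); a direct inspection of the Gram matrix shows that these nine classes are pairwise orthogonal.

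Next I would realise each candidate as the class of a plane in \(Y\). For each candidate \(P\), the rank-two sublattice \(\langle\eta_Y,P\rangle\subset A(Y)\) is primitive of discriminant \(8\) and is therefore an \(8\)-labeling placing \(Y\) on the Hassett divisor \(\mathcal{C}_8\). Since \(Y\) is smooth, the geometric description of \(\mathcal{C}_8\) on smooth cubic fourfolds identifies \(P\) with the class of an actual plane \(F\subset Y\) (see \cite[\S 4]{hassett2000special}). Iterating over the nine candidates yields nine planes \(F_1,\dots,F_9\) with the prescribed classes. The orthogonality \([F_i]\cdot[F_j]=0\) then forces disjointness: two distinct planes in \(Y\) meet in the empty set, in a point (intersection \(+1\)), or along a line, and the last case would make them span a \(\mathbb{P}^3\subset Y\), impossible since \(Y\) is a smooth cubic. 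Conversely, two distinct planes cannot share the same class, since coincidence of classes would give intersection \([F_i]^2=3\) and so either equality or the excluded shared line; this pins down the number of planes to exactly nine.

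Finally, the basis statement is immediate, because \(\{\eta_Y,[F_1],\dots,[F_8]\}=\{\eta_Y,m_1,\dots,m_8\}\) is, by construction, the basis used to present \(A(Y)\) in \autoref{tab:algebraic_transcendental_lattice_cubic_with_order3_action}. The main obstacle I foresee is the effectivity of the residual class \(3\eta_Y-\sum_{i=1}^{8}m_i\), which is not manifestly a plane class in the chosen basis; I expect to cover it either by a uniform appeal to Hassett's description of \(\mathcal{C}_8\), or, if necessary, by producing the ninth plane explicitly from the equation of \(Y\) in \autoref{class_cubiche} combined with the eigenspace decomposition of \(\mathbb{P}^5\) under \(\phi_3^7\).
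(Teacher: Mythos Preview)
Your approach is genuinely different from the paper's and essentially correct, but one step in your disjointness argument is mistaken, and one citation should be sharpened. The paper proceeds geometrically: it observes that any plane in a general \(Y\) must be \(\phi_3^7\)-invariant, parametrises the invariant planes as \(\{ax_0=bx_1,\,cx_2=dx_3,\,ex_4=fx_5\}\), and reduces the containment condition \(F\subset Y\) to a zero-dimensional intersection of three divisors of tridegrees \((2,1,0)\), \((1,0,2)\), \((0,2,1)\) in \((\mathbb{P}^1)^3\), whose degree is \(9\); it then checks that the resulting nine plane classes reproduce the Gram matrix in \autoref{tab:algebraic_transcendental_lattice_cubic_with_order3_action}. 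Your route instead enumerates the classes \(P\in A(Y)\) with \(P^2=3\), \(P\cdot\eta_Y=1\) by a Diophantine argument on the Gram matrix and then promotes each class to a plane. The Diophantine count is clean and correct. For the promotion step, however, Hassett's description of \(\mathcal{C}_8\) only tells you that a cubic with an \(8\)-labeling contains \emph{some} plane, not that the specific class \(P\) is a plane class; the statement you need---every class with \(P^2=3\), \(P\cdot\eta_Y=1\) is represented by a unique plane---is due to Voisin \cite{voisin1986theoreme}, and this is exactly what the paper invokes in the parallel \(\phi_3^2\) case. Your lattice-theoretic route is in fact more uniform (it is what the paper itself does for \(\phi_3^2\)), while the paper's explicit construction for \(\phi_3^7\) has the advantage of producing the planes by equations and making their disjointness manifest.

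The error in your disjointness step: two distinct planes meeting in a line span a \(\mathbb{P}^3\), but this \(\mathbb{P}^3\) is \emph{not} contained in \(Y\); rather \(Y\cap\mathbb{P}^3\) is a reducible cubic surface \(F_1\cup F_2\cup F_3\), and smooth cubic fourfolds can and do contain such configurations. The correct exclusion is numerical: from \([F_1]+[F_2]+[F_3]=\eta_Y\), \([F_i]^2=3\) and \(\eta_Y^2=3\) one computes \([F_i]\cdot[F_j]=-1\) when the planes share a line, so intersection number \(0\) does force disjointness, and intersection number \(3\) is impossible for distinct planes, giving the uniqueness you want. With this fix and the Voisin citation your argument goes through.
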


\begin{proof} 
If a plane is contained in \(Y\) it has to be invariant for the action of \(\phi_3^7\) on \(\mathbb{P}^5\).
If a plane is invariant for the action of \(\phi_3^7\) it has equation 
\[F_{\{a,b,c,d,e,f\}}\colonequals \{ax_0=bx_1,cx_2=dx_3,ex_4=fx_5\}.\]
We want to study the intersection \(Y\cap F_{\{a,b,c,d,e,f\}}\). The equation of a cubic fourfold \(Y\) with an action of \(\phi_3^7\) is given in \autoref{class_cubiche} and it is straightforward to see that 
\(Y \cap F_{\{a,b,c,d,e,f\}}\) coincides with 
\[\{
x_0^2 x_2 P_{\{2,1,0\}}(a,b,c,d,e,f)+ x_4^2x_0 P_{\{1,0,2\}}(a,b,c,d,e,f)+x_2^2 x_4 P_{\{0,2,1\}}(a,b,c,d,e,f)=0 \},\]
where \(P_{\{i,j,k\}}\) are polynomials of multi-degrees \((i,j,k)\) in \(\mathbb{P}^1_{[a:b]} \times \mathbb{P}^1_{[c:d]} \times \mathbb{P}^1_{[e:f]}\) . From this description, we deduce that the planes contained in \(Y\) correspond to the set
\begin{equation*}\label{equation_set}
    S:=\{([a:b],[c:d],[e:f]) \mbox{ such that } P_{\{2,1,0\}}=P_{\{1,0,2\}}=P_{\{0,2,1\}}=0\}\subset \mathbb{P}^1_{[a:b]} \times \mathbb{P}^1_{[c:d]} \times \mathbb{P}^1_{[e:f]}.
\end{equation*}
Consider the projections \(p_{a,b},p_{c,d},p_{e,f}\) from \(\mathbb{P}^1_{[a:b]} \times \mathbb{P}^1_{[c:d]} \times \mathbb{P}^1_{[e:f]}\) to its factors,
and denote by \(f_1, f_2, f_3\) the fibers of these three projections. The set \(S\) coincides with the intersection of the following three divisors:
\begin{equation} 
\begin{split}
 2f_1 +f_2, \\
f_1+2f_3, \\
2f_2+f_3,
\end{split}
\end{equation}
then \(S\) is a finite set of degree \((2f_1 +f_2)
(f_1+2f_3)(2f_2+f_3)=9\). This means that the intersection of these three divisors gives exactly nine points in \(\mathbb{P}^1_{[a:b]} \times \mathbb{P}^1_{[c:d]} \times \mathbb{P}^1_{[e:f]}\), that correspond to the nine planes that we denote by \(F_1,\dots,F_9\). Observe that the lattice generated by the classes \(\{\frac{1}{3}\sum^9_{i=1}[F_i],[F_1],\dots,[F_8]\}\) has intersection matrix as in \autoref{tab:algebraic_transcendental_lattice_cubic_with_order3_action} and it is a saturated sublattice of \(A(Y)\), it follows that \(\eta_Y=\frac{1}{3}\sum^9_{i=1}[F_i]\) and \(A(Y)=\langle\eta_Y, [F_1], \ldots, [F_8]\rangle \).
In particular, by construction the planes are disjoint as claimed. 
\end{proof}

If \(Y\) is a cubic fourfold with an automorphism \(\phi_3^7\) then it contains at least two disjoint planes as explained in \autoref{theorem set of generators for phi_3^7} and the associated K3 surface that we find in \autoref{Phi_3^7 are rational} corresponds to the indeterminacy locus of the birational map \(\mathbb{P}^2 \times \mathbb{P}^2 \dashrightarrow Y\) introduced above.

\subsection{Cubic fourfolds with automorphism \(\phi^2_3\)}

A cubic fourfold \(Y\) with the non-symplectic automorphism of order three of type \(\phi_3^2\) has a primitive algebraic lattice \(A_p(Y)\) computed in \autoref{tab:order_three_non_sympl_cubic_prim}. 

In this section we explore the geometry of a cubic fourfold with such an action referring to \cite[\S 2]{Laza_Perl_Zheng}. In particular, we prove that a cubic fourfold \(Y\) with such an action contains \(81\) invariant planes that are related to the existence of three Eckardt points on \(Y\).

\begin{lemma}\label{Phi_3^2 are rational}
     Let \(Y\) be a general cubic fourfold with automorphism \(\phi_3^2\), then \(Y\) has an associated K3 surface and \(Y\) is rational.
\end{lemma}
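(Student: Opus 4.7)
The plan is to follow the two-step strategy of \autoref{Phi_3^7 are rational}. First I would establish the existence of an associated K3 surface by showing that the transcendental lattice $\T(Y)(-1)$ corresponding to $\phi_3^2$ in \autoref{tab:algebraic_transcendental_lattice_cubic_with_order3_action} admits a primitive embedding into the K3 lattice $\U^{\oplus 3}\oplus \E_8(-1)^{\oplus 2}$. The lattice in question has signature $(2,8)$ and discriminant group of length $5$ generated by $3$-torsion, so the hypotheses of Nikulin's embedding criterion \cite[Theorem 1.12.2]{nikulin1980integral} are satisfied; the orthogonal complement of such an embedding then plays the role of the transcendental lattice of an associated K3 surface, produced from the Hodge structure via surjectivity of the period map for K3 surfaces.

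For rationality, I would use the explicit normal form $F = L_3(x_0,\dots,x_3) + M_3(x_4,x_5)$ from \autoref{class_cubiche}. For a general $Y$ the binary cubic $M_3(x_4,x_5)$ factors over $\mathbb{C}$ into three distinct linear forms, yielding three hyperplane sections $H_i\cap Y$ with $H_i = \{x_4 = \alpha_i x_5\}$ for $i=1,2,3$. In each $H_i\cong\mathbb{P}^4$ the defining equation reduces to $L_3(x_0,\dots,x_3)=0$, so $H_i\cap Y$ is a cone over the smooth cubic surface $S = \{L_3 = 0\}\subset\mathbb{P}^3 = \{x_4 = x_5 = 0\}$ with apex $p_i = [0{:}0{:}0{:}0{:}\alpha_i{:}1]\in\mathbb{P}^5$. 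Each cone contributes $27$ planes, one through each of the $27$ lines of $S$; since the three apices are distinct, $Y$ contains $81$ planes in total.

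I would then pick two skew lines $\ell_1, \ell_2 \subset S$ and form the planes $P_i := \langle \ell_i, p_i \rangle \subset H_i \cap Y$. Because $p_1 \neq p_2$, the intersection $P_1 \cap P_2$ is contained in $H_1 \cap H_2 = \{x_4 = x_5 = 0\}$ and reduces to $\ell_1 \cap \ell_2 = \emptyset$; hence $P_1$ and $P_2$ are disjoint planes in $Y$. The construction recalled at the beginning of \autoref{Geometry of cubic fourfolds with a non-symplectic automorphism of order three} then produces a birational map $\mathbb{P}^2 \times \mathbb{P}^2 \dashrightarrow Y$ whose indeterminacy locus is an associated K3 surface, showing that $Y$ is rational.

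The main obstacle I expect is the lattice-theoretic verification of the primitive embedding $\T(Y)(-1)\hookrightarrow \U^{\oplus 3}\oplus\E_8(-1)^{\oplus 2}$, which reduces to a finite check either through direct application of Nikulin's numerical criteria or via OSCAR \cite{OSCAR}. As an alternative route to rationality one could instead exhibit a rank-two saturated sublattice $K_{14}\subset A(Y)$ containing $\eta_Y$ inside the Gram matrix of \autoref{tab:algebraic_transcendental_lattice_cubic_with_order3_action}, placing $Y\in\mathcal{C}_{14}$ and invoking \cite{Beauville_Donagi_Droites} directly.
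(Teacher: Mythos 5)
Your proposal is correct. For the existence of an associated K3 surface you do exactly what the paper does (and what its proof of \autoref{Phi_3^7 are rational} does in the analogous case): check that \(\T(Y)(-1)\), of signature \((2,8)\) and \(3\)-elementary discriminant of length \(5\), embeds primitively into the K3 lattice via Nikulin's numerical criterion, since \(\rk+l=15\le 22\) and the signature inequalities are strict. For rationality, however, you take a genuinely different route. The paper stays lattice-theoretic: it exhibits the rank-two labeling \(K_{14}=\langle\eta_Y,-m_6+m_7\rangle\) of discriminant \(14\) inside the Gram matrix of \(A(Y)\), places \(Y\in\mathcal{C}_{14}\), and invokes the known rationality of cubics in \(\mathcal{C}_{14}\) --- this is the alternative you mention at the end. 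You instead argue geometrically from the normal form \(L_3(x_0,\dots,x_3)+M_3(x_4,x_5)\): the three roots of \(M_3\) give three hyperplane sections that are cones over the cubic surface \(S=\{L_3=0\}\) with distinct apices, and taking the cones over two skew lines of \(S\) from two different apices produces two disjoint planes (their intersection lands in \(\{x_4=x_5=0\}\) and reduces to \(\ell_1\cap\ell_2=\emptyset\)), whence \(Y\) is birational to \(\mathbb{P}^2\times\mathbb{P}^2\). This is sound, and in fact the paper itself constructs these cones in \autoref{theorem 81 planes for Phi_3^2} and observes after \autoref{theorem set of generators for phi_3^2} that the two disjoint planes realize the associated K3 surface as the indeterminacy locus of \(\mathbb{P}^2\times\mathbb{P}^2\dashrightarrow Y\). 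What your route buys is a self-contained, explicit proof of rationality that does not pass through \(\mathcal{C}_{14}\) and simultaneously identifies the associated K3 surface geometrically; what the paper's route buys is brevity and uniformity with the \(\phi_3^7\) case. One tiny caveat: your construction shows \(Y\) contains \emph{at least} \(81\) planes; the exact count requires the lattice computation of \autoref{theorem 81 planes for Phi_3^2}, but this is irrelevant to the lemma.
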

\begin{proof}
The proof is analogous to that of \autoref{Phi_3^7 are rational}, where the labeling is given by \(K_{14}=\langle \eta_Y,-m_6+m_7\rangle\) where \((\eta_Y,m_1,\dots,m_{12})\) is a basis for the matrix of \(A(Y)\) in \autoref{tab:algebraic_transcendental_lattice_cubic_with_order3_action}. 
\end{proof}

Here we recall the definition of an Eckardt point for a smooth cubic fourfold given in \cite[Definition 1.5]{Laza_Perl_Zheng}.
\begin{definition}
Let \(Y\) be a smooth cubic fourfold. We say that \(p\in Y\) is an \textit{Eckardt point} if \(p\) has multiplicity \(3\) in \(T_pY\cap Y\), equivalently if \(T_pY\cap Y\) is a cone with vertex \(p\) over a cubic surface.  
\end{definition}

We prove that a cubic fourfold with the automorphism \(\phi_3^2\) contains three Eckardt points.

\begin{proposition}
    \label{theorem 81 planes for Phi_3^2}
Let \(Y\) be a general cubic fourfold with an order three non-symplectic automorphism of type \(\phi_3^2\). Then \(Y\) contains exactly \(81\) (invariant) planes, associated to three fixed Eckardt points \(P_1,P_2,P_3\in Y\). 

\end{proposition}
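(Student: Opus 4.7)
The plan is to reduce the count of invariant planes to a classical enumeration on a cubic surface. Writing the equation of \(Y\) as \(F = L_3(x_0,x_1,x_2,x_3) + M_3(x_4,x_5)\) as in \autoref{class_cubiche}, the action of \(\phi_3^2\) on \(\mathbb{C}^6\) has two eigenspaces: \(V_1 = \langle e_0,e_1,e_2,e_3\rangle\) with eigenvalue \(1\), and \(V_\xi = \langle e_4, e_5\rangle\) with eigenvalue \(\xi\). Since the action is semisimple, every invariant \(3\)-dimensional subspace of \(\mathbb{C}^6\) splits as \(W_1 \oplus W_\xi\) with \(W_\bullet \subseteq V_\bullet\), so invariant planes \(\Pi \subset \mathbb{P}^5\) come in three families indexed by \((\dim W_1, \dim W_\xi) \in \{(3,0),(2,1),(1,2)\}\).

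I would dispose of the extremal types first. For type \((3,0)\), \(\Pi\) lies in \(\mathbb{P}(V_1) \cong \mathbb{P}^3\) and the restriction of \(F\) to \(\Pi\) is the restriction of \(L_3\); the condition \(L_3|_\Pi \equiv 0\) forces \(L_3\) to be reducible with \(\Pi\) as a component, which fails for general \(L_3\). For type \((1,2)\), \(\Pi\) contains the line \(\mathbb{P}(V_\xi)\), and for \(p \in \mathbb{P}(V_1)\) and coordinates \(\lambda p + \mu e_4 + \nu e_5\), one computes \(F|_\Pi = \lambda^3 L_3(p) + M_3(\mu,\nu)\); identical vanishing would require \(M_3 \equiv 0\), which is excluded. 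Hence only type \((2,1)\) contributes.

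The heart of the argument is the type \((2,1)\) case. Parametrizing \(\Pi = \langle \ell, q\rangle\) with \(\ell \subset \mathbb{P}(V_1)\) a line and \(q \in \mathbb{P}(V_\xi)\) a point, a general point of \(\Pi\) has the form \(\lambda p_1 + \mu p_2 + \nu q\) with \(p_1, p_2 \in \ell\) and \(q = [0{:}0{:}0{:}0{:}a{:}b]\), and the substitution gives
\[
F|_\Pi = L_3(\lambda p_1 + \mu p_2) + \nu^3\, M_3(a,b).
\]
Identical vanishing is equivalent to \(\ell\) being contained in the cubic surface \(S := \{L_3 = 0\} \subset \mathbb{P}(V_1) \cong \mathbb{P}^3\) and \(M_3(q) = 0\). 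For a general choice of \(F\) in the family of \autoref{class_cubiche}, a standard Bertini-type argument shows that \(S\) is smooth and \(M_3\) has three simple roots in \(\mathbb{P}(V_\xi)\); by the Cayley--Salmon theorem \(S\) contains exactly \(27\) lines, which yields precisely \(27 \cdot 3 = 81\) invariant planes on \(Y\).

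Finally, I would identify the three roots of \(M_3\) with Eckardt points. For \(P_i = [0{:}0{:}0{:}0{:}a_i{:}b_i]\) with \(M_3(a_i,b_i) = 0\), the gradient \(\nabla F(P_i)\) has vanishing first four coordinates (because \(\nabla L_3\) is quadratic and vanishes at the origin) and so \(T_{P_i} Y\) is the hyperplane \(\{b_i x_4 - a_i x_5 = 0\}\) (using that \((a_i,b_i)\) is a simple root of \(M_3\)). Restricting \(F\) to this hyperplane makes \(M_3\) vanish identically on it, so \(T_{P_i} Y \cap Y = \{L_3 = 0\} \subset \mathbb{P}^4\); this is the cone with vertex \(P_i\) over the cubic surface \(S\), showing \(P_i\) is an Eckardt point. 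The \(81\) planes are then organized as three pencils of \(27\), one through each \(P_i\), under the natural bijection (line on \(S\)) \(\times\) (root of \(M_3\)) \(\leftrightarrow\) invariant plane. The main subtlety is justifying the genericity of \(S\) (smoothness and hence the \(27\) lines) and of \(M_3\) (three distinct roots) for a general \(Y\) in the family \(\phi_3^2\), which is immediate from the explicit equation in \autoref{class_cubiche}.
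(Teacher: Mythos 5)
Your enumeration of the invariant planes is correct, and it takes a genuinely different route from the paper. You decompose \(\mathbb{C}^6\) into the eigenspaces of \(\phi_3^2\) and classify invariant planes by the dimension vector \((\dim W_1,\dim W_\xi)\), reducing the count to (lines on the cubic surface \(S=\{L_3=0\}\)) \(\times\) (roots of \(M_3\)) \(=27\cdot 3=81\); the paper instead exhibits the same \(81\) planes as cones over the \(27\) lines of \(S\) with vertex at each of the three Eckardt points, and then proves the count is exhaustive by a lattice-theoretic computation: there are exactly \(81\) classes \(v\in A(Y)\) with \(v^2=3\) and \(v\cdot\eta_Y=1\), each represented by a unique plane. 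Your identification of the Eckardt points via \(\nabla F\) coincides with the paper's argument. Your approach is more elementary and self-contained for the invariant count, and your genericity worries are in fact vacuous: smoothness of \(Y\) alone forces \(S\) to be smooth and the roots of \(M_3\) to be simple, since a singular point of \(S\) or a double root of \(M_3\) would produce a singular point of \(Y\). The one thing your argument does not deliver, and the paper's lattice count does, is that these \(81\) are \emph{all} the planes of \(Y\): a priori \(Y\) could contain non-invariant planes occurring in \(\phi_3^2\)-orbits of length three, which the eigenspace analysis cannot detect, and this stronger statement is what is used downstream (e.g.\ the plane classes generating \(A(Y)\)). The gap is easy to close without computer algebra: for a general \(Y\) the induced action on \(A(Y)\) is trivial (it is trivial on \(A_p(Y)=\F^G\) and on \(\eta_Y\), hence on the finite-index overlattice \(A(Y)\)), so \([\phi_3^2(F)]=[F]\) for every plane \(F\subset Y\), and since a class with \(v^2=3\), \(v\cdot\eta_Y=1\) is represented by at most one plane, every plane of \(Y\) is invariant. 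With that observation added, your proof is complete.
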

\begin{proof}
A general cubic fourfold \(Y\) with an action of automorphism \(\phi_3^2\) is described by the following equation
 \[L_3(x_0,\dots,x_3)+ M_3(x_4,x_5)=0,\]
where \(L_3\) and \(M_3\) are two homogeneous polynomials of degree \(3\) in \(x_0, \dots, x_3\) and \(x_4,x_5\) respectively.
If \(P=[0:0:0:0:p:q]\) is a zero of the polynomial \(M_{3}(x_4,x_5)\), then we can write \[M_{3}(x_4,x_5)=(qx_4-px_5)M_{2}(x_4,x_5).\] Note that the equation of the tangent plane \(T_PY\) is \(\{qx_4-px_5=0\}\).
Then the intersection \(T_PY \cap Y\) is a cone with vertex \(P\) over the invariant cubic surface \(S=Y\cap\{x_4=x_5=0\}\), hence \(P\) is an Eckardt point. There are three such Eckardt points \(P_1,P_2,P_3\) corresponding to the points \(M_3=0\).
By the argument above each one of them gives an invariant Eckardt point \(P_i=[0:0:0:0:a_i:b_i]\in Y\) for \(i=1,2,3\). 
The cone over the invariant surface with vertex any of the Eckardt points is still invariant and contained in \(Y\). The cubic surface \(S\) contains exactly \(27\) lines, then any Eckardt point determines \(27\) invariant planes passing through the lines of the surface. If \(F\subset Y\) is a plane, then by \cite{voisin1986theoreme}, its cohomology class is such that \([F]^2=3\) and \([F]\cdot \eta_Y=1\), moreover any class with these numerical properties is represented by a unique plane, see \cite{voisin1986theoreme}. Using computer algebra we check that there are exactly \(81\) such classes in \(A(Y)\). 
\end{proof}

\begin{corollary}\label{involution for a cubic with phi_3^2}
Let \(Y\) be a cubic with an automorphism of type \(\phi_3^2\), then \(\mathbb{Z}/3\mathbb{Z}\times D_4\subset \Aut(Y)\), where \(D_4\) denotes the dihedral group.
\end{corollary}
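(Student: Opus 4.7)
The plan is to combine the cyclic group generated by \(\phi_3^2\) with three involutions coming from the Eckardt points \(P_1,P_2,P_3\) produced in \autoref{theorem 81 planes for Phi_3^2}. Each Eckardt point \(P_i\) gives rise to a regular involution \(\tau_i \in \Aut(Y)\) via the classical Eckardt construction: since \(T_{P_i}Y \cap Y\) is a cone with vertex \(P_i\) over a cubic surface, the projection \(\pi_{P_i} \colon Y \dashrightarrow \mathbb{P}^4\) from \(P_i\) is a finite morphism of degree \(2\), and \(\tau_i\) is its Galois involution. Explicitly, after a linear change of coordinates that moves \(P_i\) to \([1\colon 0\colon\cdots\colon 0]\), the Eckardt condition allows one to put the cubic equation in the form \(L(x_1,\ldots,x_5)\,y_0^2 + C(x_1,\ldots,x_5)=0\), and \(\tau_i\) acts by \(y_0 \mapsto -y_0\) while fixing the other coordinates. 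Returning to the original coordinates, \(\tau_i\) acts trivially on \((x_0,x_1,x_2,x_3)\) and as a specific involution on \((x_4,x_5)\) preserving \(M_3\).

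Next I would verify that \(\phi_3^2\) commutes with every \(\tau_i\). Since \(\phi_3^2\) acts as the scalar \(\omega\cdot\mathrm{id}\) on \((x_4,x_5)\), it fixes the line \(\ell_0=\{x_0=x_1=x_2=x_3=0\}\) pointwise, and in particular each \(P_i\) is fixed. It follows that \(\phi_3^2\) permutes the pencil of lines through \(P_i\) compatibly with the two-sheeted structure of \(\pi_{P_i}\): if a general line \(\ell\) through \(P_i\) meets \(Y\) in \(\{P_i,Q_1,Q_2\}\), then \(\phi_3^2(\ell)\) meets \(Y\) in \(\{P_i,\phi_3^2(Q_1),\phi_3^2(Q_2)\}\), and the involutive swap of the two non-\(P_i\) points on each line commutes with \(\phi_3^2\), giving \(\tau_i\circ\phi_3^2=\phi_3^2\circ\tau_i\).

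The third step is to identify the subgroup \(\langle\tau_1,\tau_2,\tau_3\rangle\) with the dihedral group \(D_4\). I would compute the \(2\times 2\) matrices of the three \(\tau_i\) on \((x_4,x_5)\) in a convenient normalization of \(M_3\) (for instance \(M_3=x_4x_5(x_4+x_5)\)), observe that each \(\tau_i\) acts on \(\{P_1,P_2,P_3\}\) as a transposition (it fixes \(P_i\) and swaps the other two zeros of \(M_3\)), and read off generators and relations. Finally, since \(\phi_3^2\) is the identity in the quotient \(\mathrm{PGL}_2\) acting on \(\mathbb{P}^1_{[x_4:x_5]}\) whereas each \(\tau_i\) is non-trivial there, we have \(\langle\phi_3^2\rangle\cap\langle\tau_1,\tau_2,\tau_3\rangle=\{\mathrm{id}\}\), so the two subgroups generate a direct product \(\mathbb{Z}/3\mathbb{Z}\times D_4\subset\Aut(Y)\).

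The main obstacle I expect is the explicit identification of \(\langle\tau_1,\tau_2,\tau_3\rangle\) with \(D_4\) in the third step, which reduces to a finite matrix computation once the \(\tau_i\) are written in coordinates; the commutation with \(\phi_3^2\) and the assembly of the direct product are essentially formal consequences of the block decomposition of \(\mathbb{P}^5\) forced by the equation \(F=L_3(x_0,\ldots,x_3)+M_3(x_4,x_5)\).
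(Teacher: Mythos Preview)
Your approach is the same as the paper's: use the three Eckardt involutions (hyperplane reflections) associated to \(P_1,P_2,P_3\), verify that they commute with \(\phi_3^2\), and assemble the direct product. The paper's proof is more terse but invokes exactly these reflections and the block structure of the equation \(F=L_3(x_0,\ldots,x_3)+M_3(x_4,x_5)\).

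There is, however, a concrete obstruction in your third step (and the same issue is present in the paper's statement): the three reflections do \emph{not} generate \(D_4\), but rather \(S_3\cong D_3\). Carrying out the matrix computation with, say, \(M_3=x_4^3+x_5^3\), the subgroup \(G_0=\{A\in\GL_2:M_3\circ A=M_3\}\) has order \(18\), and its only involutions are the three off-diagonal matrices
\[
\tau_\zeta=\begin{pmatrix}0&\zeta\\ \zeta^{-1}&0\end{pmatrix},\qquad \zeta^3=1,
\]
which satisfy \((\tau_\zeta\tau_{\zeta'})^3=1\). Hence \(\langle\tau_1,\tau_2,\tau_3\rangle\cong S_3\). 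Your own observation that each \(\tau_i\) acts on \(\{P_1,P_2,P_3\}\) as a transposition already forces this: the permutation action gives a surjection onto \(S_3\), and a direct count (or the determinant, which is \(\pm1\) on \(\langle\tau_i\rangle\) but \(\omega^2\) on \(\phi_3^2\)) shows the kernel is trivial. Thus the subgroup produced is \(\mathbb{Z}/3\mathbb{Z}\times S_3\) of order \(18\); for a general \(L_3\) this is the full automorphism group of \(Y\), so no \(D_4\) of order \(8\) (nor a Klein four-group) can fit inside. The ``\(D_4\)'' in the statement appears to be a misprint for \(D_3\) (equivalently \(S_3\)); your plan is otherwise sound and matches the paper.
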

\begin{proof}

The automorphism of order three is given by \(\phi_3^2\) and, according to \cite{Laza_Perl_Zheng}, for any Eckardt point there is an associated involution given by a hyperplane reflection. From the matrix description of the automorphisms, choosing an appropriate set of coordinates, the reflections generate the dihedral group and commute with the automorphism \(\phi_3^2\).
\end{proof}

\begin{proposition}\label{theorem set of generators for phi_3^2}
Let \(Y\) be a general cubic fourfold with automorphism of type \(\phi_3^2\). In the notation of the proof of \autoref{theorem 81 planes for Phi_3^2}, consider the \(6\) disjoint lines on the cubic surface \(S\) contained in \(Y\), and consider the classes of the planes \( [F_{ij}]\) for \(i \in \{1,2,3\}\) and for \(j \in \{1, \ldots, 6\} \) that are the unique planes passing through the Eckardt point \(P_i\) and one of the disjoint lines in \(S\). Then the algebraic lattice \(A(Y)\) has a basis given by the classes of the following planes: 
    \begin{itemize}
     \item \([F_{1,j}]\) for all \(j \in \{1, \ldots, 6\}\) 
     \item \([F_{2,k}]\) for \(k \in \{1, \ldots, 5\}\) 
     \item \([F_{1,0}]\) and \([F_{2,0}]\)
    \end{itemize}
where \(F_{1,0}\) and \(F_{2,0}\) are classes of the the cones of the pullback of a general line via \(S=\Bl_6\mathbb{P}^2\rightarrow \mathbb{P}^2\) with vertex the Eckardt points \(P_1\) and \(P_2\) respectively.
\end{proposition}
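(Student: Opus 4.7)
The plan is to check that the 13 geometric classes listed are algebraic and to verify that they span $A(Y)$ by computing their Gram matrix and comparing it with the one recorded in \autoref{tab:algebraic_transcendental_lattice_cubic_with_order3_action}. For this I would first recall from the proof of \autoref{theorem 81 planes for Phi_3^2} that, for each Eckardt point $P_i$, the hyperplane section $T_{P_i}Y \cap Y$ is a cone with vertex $P_i$ over the invariant smooth cubic surface $S = Y \cap \{x_4 = x_5 = 0\}$. In particular, for any curve $C \subset S$ the cone $\mathrm{Cone}(P_i, C)$ is contained in $Y$. Applying this to the six chosen disjoint lines $\ell_1,\dots,\ell_6 \subset S$ produces the planes $F_{i,j}$, while applying it to the pullback of a general line $L \subset \mathbb{P}^2$ via the blowdown $S \simeq \mathrm{Bl}_6 \mathbb{P}^2 \to \mathbb{P}^2$, which has degree $3$ in the embedding $S \hookrightarrow \mathbb{P}^3$, produces the cubic scrolls $F_{1,0}$ and $F_{2,0}$.

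Next I would assemble the Gram matrix of these 13 classes. The diagonal entries are standard: planes on a cubic fourfold satisfy $[F]^2 = 3$ and cubic scrolls satisfy $[T]^2 = 4$. Pairings with $\eta_Y$ are read off from degrees: $[F_{i,j}] \cdot \eta_Y = 1$ and $[F_{i,0}] \cdot \eta_Y = 3$. For two cones at the same vertex $P_i$, the intersection number on $Y$ can be computed from the intersection pairing of the underlying curves on $S$: both cones sit inside the hyperplane section $T_{P_i}Y \cap Y$, and by an excess-intersection argument the intersection decomposes as a contribution at the vertex plus a term controlled by $C_1 \cdot_S C_2$. For pairs of cones with different vertices, one works directly with the defining equation of $Y$ from \autoref{class_cubiche} or uses computer algebra, as was already done in the proof of \autoref{theorem 81 planes for Phi_3^2}.

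To conclude that the 13 classes form a basis of $A(Y)$, I would check that the computed Gram matrix is non-degenerate---so that the classes are linearly independent, matching $\rk A(Y) = 13$---and that it has discriminant equal to that of $A(Y)$ given in \autoref{tab:algebraic_transcendental_lattice_cubic_with_order3_action}. The equality of discriminants forces the index of the sublattice inside $A(Y)$ to be one. The main obstacle is the computation of intersections between cones based at distinct Eckardt points, and between a plane $F_{i,j}$ and a scroll $F_{i',0}$ with $i \neq i'$: these are not controlled by a purely local argument at a single vertex and are handled most efficiently by an explicit coordinate computation or with the aid of computer algebra.
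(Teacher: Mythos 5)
Your overall strategy is the same as the paper's: write down the Gram matrix of the 13 geometric classes and conclude that they generate $A(Y)$ because the determinant of that matrix equals the discriminant of $A(Y)$ recorded in \autoref{tab:algebraic_transcendental_lattice_cubic_with_order3_action} (equal determinants for a full-rank sublattice force index one). The paper likewise reduces the same-vertex intersections to the intersection theory of the underlying curves on the cubic surface $S$, citing \cite[Lemma 2.4]{Laza_Perl_Zheng}, and concludes by the determinant comparison.

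There is, however, a concrete numerical error that would derail your determinant check: the classes $[F_{i,0}]$ satisfy $[F_{i,0}]^2=7$, not $4$. The value $4$ is the square of the \emph{primitive} part $[T]-\eta_Y$ of a cubic scroll class (as in \autoref{theorem set of generators for phi_3^5}); the class itself has $[T]^2=7$, consistent with the Gram matrix $\left(\begin{smallmatrix}3&3\\3&7\end{smallmatrix}\right)$ of the labeling defining $\mathcal{C}_{12}$. The paper avoids this pitfall, and also your reliance on ad hoc computations for the cross-vertex and plane-versus-scroll pairings, by observing that the pullback of a general line decomposes in $\Pic(S)$ as a sum of three lines (e.g.\ $\ell=(\ell-e_1-e_2)+e_1+e_2$), so that $[F_{i,0}]$ is a sum of three plane classes $[P_0]+[P_0']+[P_0'']$ with $P_0$ meeting $P_0'$ and $P_0''$ in lines and $P_0'\cap P_0''$ a point; every entry of the Gram matrix then reduces to plane--plane intersection numbers ($3$, $-1$, $1$, or $0$), which in particular recovers $[F_{i,0}]^2=9-2-2+2=7$. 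You should adopt this decomposition (or at least correct the self-intersection) before the final determinant comparison can go through.
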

\begin{proof}
The intersection numbers of the classes corresponding to planes in the cone with vertex an Eckardt point are described in \cite[Lemma 2.4]{Laza_Perl_Zheng}. The cases where the planes pass thought different Eckardt points can be easily deduced. 
The classes \([F_{1,0}]\) and \([F_{2,0}]\) correspond to a union of planes \(P_0 +P_0'+P_0''\) and \(Q_0+Q_0'+Q_0''\) respectively, where \(P_0\) shares a line with \(P_0'\) and \(P_0''\) and the intersection \(P_0' \cap P_0''\) is a point (and the same behaviour for \(Q_0, Q_0', Q_0''\)) (cf. \cite{reid1997chapters}).   
The rank \(13\) sublattice of \(A(Y)\) generated by \(\{[F_{1,j}],[F_{2,k}],[F_{1,0}],[F_{2,0}]\}_{j,k}\) coincides with \(A(Y)\) since the intersection matrix has the same determinant of the one of \(A(Y)\) in \autoref{tab:algebraic_transcendental_lattice_cubic_with_order3_action}.
\end{proof}

If \(Y\) is a cubic fourfold with an automorphism \(\phi_3^2\) then it contains at least two disjoint planes as explained in \autoref{theorem set of generators for phi_3^2} and the associated K3 surface that we find in \autoref{Phi_3^2 are rational} corresponds to the indeterminacy locus of the birational map \(\mathbb{P}^2 \times \mathbb{P}^2 \dashrightarrow Y\) introduced above.

\begin{remark}
According to \cite[Proposition 3.3]{Cools_Coppens} for a hypersurface of degree \(d=3\) in \(\mathbb{P}^5\) the Eckardt points of the hypersurface have to be at most \(d=3\) points on a line \(l\) not contained in \(Y\). This configuration is actually verified for a general cubic fourfold with an order three non-symplectic automorphism \(\phi_3^2\).
\end{remark}
Note that in \cite[Theorem 1.8]{Laza_maximally} and in \cite{Laza_Perl_Zheng} the authors study the minimal algebraic lattice that a cubic fourfold \(Y\) needs to have in order to admit an Eckardt point, and they find that a cubic fourfold \(Y\) with a non-symplectic involution \(\phi_2^1\) has an Eckardt point. In this situation they prove that \(A_p(Y) \cong \E_6(2)\) and the cubic fourfold has no associated K3 surface by \cite[Theorem 1.2]{marquand2023cubic}. In the case of a cubic fourfold with an automorphism of order three \(\phi_3^2\) the cubic fourfold has more algebraic classes, it admits also the involution \(\phi_2^1\) as we showed in \autoref{involution for a cubic with phi_3^2}, but it is rational and has an associated K3 surface, as we prove in \autoref{Phi_3^2 are rational}.
\begin{remark}\label{embedding E6}
Note that by lattice-theoretic considerations we can detect that a cubic fourfold with automorphism \(\phi_3^2\) contains an Eckardt point. Namely let \(Y\) be a general cubic fourfold with an order three non-symplectic automorphism \(\phi_3^2\), then it is easy to see that there exists a primitive embedding \(\E_6(2) \hookrightarrow A_p(Y)\),
and this is enough to conclude that \(Y\) contains an Eckardt point
by \cite[Proposition 2.8]{Laza_Perl_Zheng}.
\end{remark}

\section{Induced action on Laza-Sacca-Voisin manifolds}\label{Automorphisms on Laza-Sacca-Voisin manifolds of OG10-type}
In this section we study non-symplectic automorphisms of ihs manifolds of \(\ogten\) type that are constructed as Laza--Saccà--Voisin manifolds and we investigate when these automorphisms are induced by non-symplectic automorphisms of the cubic fourfold.

The following result relates the Hodge structure of the middle cohomology of a smooth cubic fourfold to the Hodge structure of the second cohomology of the associated twisted LSV manifold.


\begin{proposition}\label{hodge_isometry_twist}
Let \(Y\) be a smooth cubic fourfold, and let \(J^t(Y)\) be the associated twisted LSV manifold,  then there is a Hodge isometry 
\[\Homology_p^4(Y,\mathbb{Z})(-1)\xrightarrow[]{\cong} (\U_Y^t)^\perp\subset \Homology^2(J^t(Y),\mathbb{Z}).\]   
\end{proposition}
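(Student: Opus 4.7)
The plan is to deduce the Hodge isometry from the established Hodge structure comparison for the untwisted LSV manifold, combined with the comparison between the untwisted and twisted intermediate Jacobian fibrations referenced in the paper.

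First I would recall the classical Laza--Saccà--Voisin result for the untwisted case: the Abel--Jacobi construction fiberwise over the open locus $U \subset (\mathbb{P}^5)^\vee$ of smooth hyperplane sections produces a natural Hodge isomorphism
\[
H^4_p(Y,\mathbb{Z})(-1) \xrightarrow{\cong} (\U_Y)^\perp \subset H^2(J(Y),\mathbb{Z}),
\]
where $\U_Y = \langle L, \Theta\rangle$ is the hyperbolic lattice spanned by the fiber class of the Lagrangian fibration and the relative theta divisor. This is the model on which the twisted version will be built.

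Next I would invoke the comparison between $J(Y)$ and $J^t(Y)$ that is recorded in \cite{li2022elliptic,giovenzana2022period}. On the open locus $U$, the two fibrations $J_U(Y) \to U$ and $J^t_U(Y) \to U$ differ by a Brauer-type twist of order three, so their transcendental Hodge structures agree. What changes is the algebraic polarization: the pair $(L^t,\Theta^t)$ spans $\U_Y^t \cong \U(3)$ rather than $\U$, the rescaling by $3$ being a manifestation of the fact that the class of $\Theta^t$ has divisibility $3$ in $H^2(J^t(Y),\mathbb{Z})$ whereas $\Theta$ has divisibility $1$ in $H^2(J(Y),\mathbb{Z})$.

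Finally I would transport the isometry from the untwisted to the twisted case. Over the open locus $U$ the primitive cohomology of $Y$ maps identically into both $H^2(J_U(Y),\mathbb{Z})$ and $H^2(J^t_U(Y),\mathbb{Z})$ via Abel--Jacobi, and this extends across the compactifications by the normality of $J^t(Y)$. The resulting map lands in $(\U_Y^t)^\perp$ because the image is orthogonal both to the fiber class and to the class of the theta divisor. Injectivity on the level of Hodge structures is automatic from the untwisted case; surjectivity follows by comparing ranks and then checking that the gluing data with $\U(3)$ inside $\bL$ forces the complement to have discriminant group $\mathbb{Z}/3\mathbb{Z}$, matching $\F(-1) = H^4_p(Y,\mathbb{Z})(-1)$.

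\textbf{Main obstacle.} The hard part will be controlling the lattice-theoretic bookkeeping across the twist: one must check that the primitive embedding $\U(3) \hookrightarrow \bL$ induced by $\U_Y^t \subset H^2(J^t(Y),\mathbb{Z})$ has orthogonal complement isometric to $\F(-1)$, and that the Hodge isometry coming from the Abel--Jacobi construction over $U$ extends across the discriminant locus in a manner compatible with the $(-1)$ rescaling of the bilinear form. The signature matches by a direct dimension count, but the discriminant-form verification requires an explicit identification of the gluing isometry between $A_{\U(3)}$ and $A_{\F(-1)}$, which is where one genuinely needs the period-map results of \cite{li2022elliptic,giovenzana2022period}.
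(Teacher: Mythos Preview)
Your route is genuinely different from the paper's, and it has a gap in the key step.

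The paper does \emph{not} go through the untwisted LSV manifold $J(Y)$ at all. Instead it uses the Li--Pertusi--Zhao moduli space $\widetilde{\Moduli}_\sigma(2(\lambda_1+\lambda_2),\mathcal{A}_Y)$ of Bridgeland-stable objects on the Kuznetsov component: by \cite{li2022elliptic} this moduli space is birational to $J^t(Y)$, and by \cite{giovenzana2022period} there is a direct Hodge embedding $\Homology^4_p(Y,\mathbb{Z})(-1)\hookrightarrow \Homology^2(\widetilde{\Moduli}_\sigma,\mathbb{Z})$ whose orthogonal complement is $\U(3)$ of type $(1,1)$. Birational ihs manifolds have Hodge-isometric second cohomology, so the result follows by transporting along the birational map and identifying the $\U(3)$ with $\U_Y^t$ for general $Y$.

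Your proposal misreads what the cited references provide: they do not compare $J(Y)$ and $J^t(Y)$ directly. In fact the paper later observes that $J(Y)$ and $J^t(Y)$ need not be birational (they are birational iff $Y$ admits a $d$-labelling with $d\equiv 2\ (6)$), so there is no Hodge isometry between their $\Homology^2$'s available to transport along. Your mechanism---``over $U$ the primitive cohomology of $Y$ maps identically into both $\Homology^2(J_U(Y),\mathbb{Z})$ and $\Homology^2(J_U^t(Y),\mathbb{Z})$ via Abel--Jacobi''---is not justified: $J_U^t$ is a nontrivial torsor under $J_U$, so there is no canonical identification of their cohomologies, and the fibrewise Abel--Jacobi construction that produces cycles on $J_U$ does not obviously produce anything on $J_U^t$. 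The heuristic ``they differ by a Brauer twist, so transcendental Hodge structures agree'' is exactly what needs proving, and the literature establishes it by passing through the moduli-theoretic model rather than by a direct geometric comparison.
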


\begin{proof}
The twisted LSV manifold is birational to the Li--Pertusi--Zhao manifold \(\widetilde{\Moduli}_\sigma(2(\lambda_1+\lambda_2),\mathcal{A}_Y)\) by \cite[Theorem 1.3]{li2022elliptic}. Consider the symplectic resolution \(\widetilde{\Moduli}_\sigma(2(\lambda_1+\lambda_2),\mathcal{A}_Y)\) of the moduli space of Bridgeland \(\sigma\)-semi stable objects on the Kuznetsov component \(\mathcal{A}_Y\) of the cubic fourfold \(Y\).
We know by \cite[Example 2.13]{giovenzana2022period} that there is a Hodge embedding \(\Homology_p^4(Y,\mathbb{Z})(-1)\hookrightarrow\Homology^2(\widetilde{\Moduli}_\sigma(2(\lambda_1+\lambda_2),\mathcal{A}_Y),\mathbb{Z})\) with orthogonal complement of type \((1,1)\) and isometric to \(\U(3)\). The moduli space \(\widetilde{\Moduli}_\sigma(2(\lambda_1+\lambda_2),\mathcal{A}_Y)\) is birational to \(J^t(Y)\) by \cite[Theorem 1.3]{li2022elliptic}. For a general cubic fourfold \(Y\) the algebraic lattice of \(\widetilde{\Moduli}_\sigma(2(\lambda_1+\lambda_2),\mathcal{A}_Y)\) is isometric to \(\U(3)\) and the algebraic lattice of \(J^t(Y)\) is isometric to \(\U_Y^t\), hence  composing the Hodge isometries we get the following Hodge isometry
\[\Homology_p^4(Y,\mathbb{Z})(-1)\xrightarrow[]{\cong} (\U_Y^t)^\perp\subset \Homology^2(J^t(Y),\mathbb{Z}).\]
\end{proof}

We give a numerical criterion for an ihs manifold of \(\ogten\) type to be birational to a twisted LSV manifold \(J^t(Y)\). 


\begin{proposition}\label{birat_twisted_LSV}
Let \(X\) be an ihs manifold of \(\ogten\) type. 
There exists a smooth cubic fourfold \(Y\) such that \(X\) is birational to \(J^t(Y)\) if and only if there is a primitive embedding \(\U(3)\hookrightarrow\NS(X)\hookrightarrow \bL\) such that the composition \(\U(3)\hookrightarrow\bL\) has embedding subgroup \(\mathbb{Z}/3\mathbb{Z}\) and \(\U(3)^{\perp_{ \NS(X)}}\cap \mathcal{W}^{pex}_{\ogten}(X)=\emptyset\).
\end{proposition}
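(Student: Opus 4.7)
The plan is to prove both implications by combining \autoref{hodge_isometry_twist} with Nikulin's theory of primitive embeddings and Markman's Hodge-theoretic Torelli theorem for \(\ogten\)-type manifolds.

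\emph{Forward direction.} Suppose \(X\) is birational to \(J^t(Y)\) for some smooth cubic fourfold \(Y\). Birational ihs manifolds have Hodge-isometric second cohomologies via a parallel transport operator, so I fix an identification \(\Homology^2(X,\mathbb{Z}) \cong \Homology^2(J^t(Y),\mathbb{Z})\) as Hodge lattices. Under it the sublattice \(\U_Y^t \cong \U(3) \subset \NS(J^t(Y))\) transports to a primitive \(\U(3) \hookrightarrow \NS(X) \hookrightarrow \bL\). By \autoref{hodge_isometry_twist} its orthogonal complement in \(\bL\) is isometric to \(\F(-1) \cong \U^{\oplus 2}\oplus\E_8(-1)^{\oplus 2}\oplus\A_2(-1)\); since \(A_{\F(-1)}=\mathbb{Z}/3\mathbb{Z}=A_\bL\), the gluing between \(\U(3)\) and \(\F(-1)\) inside \(\bL\) must absorb the entire \(A_\bL\), forcing the embedding subgroup of \(\U(3)\hookrightarrow\bL\) to be \(\mathbb{Z}/3\mathbb{Z}\). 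For the wall-divisor condition, \(\U(3)^{\perp_{\NS(X)}} \cong A_p(Y)(-1)\), and by \autoref{period_cubic_fourfolds} smoothness of \(Y\) rules out short roots (\(v^2=2\)) and long roots (\(v^2=6\), \(\divi=3\)) in \(A_p(Y)\). Under the \((-1)\)-twist these become precisely the classes of \(\mathcal{W}^{pex}_{\ogten}\), giving the claimed emptiness.

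\emph{Backward direction.} Given the embedding data, set \(T \colonequals \U(3)^{\perp} \subset \bL\). The embedding-subgroup hypothesis and Nikulin's Proposition 1.15.1 force \(A_T \cong \mathbb{Z}/3\mathbb{Z}\) isometric to \(A_\bL\). Since \(T\) is indefinite of rank \(22 \geq l(A_T)+2=3\), Nikulin's uniqueness in genus yields \(T \cong \F(-1)\). Restricting the Hodge structure of \(\bL\) to \(T\) and applying the \((-1)\)-twist defines a Hodge structure of K3 type on \(\F\); the wall-divisor hypothesis translates (by the same correspondence as above) into the absence of short and long roots in the primitive algebraic part, so the associated period point avoids \(\mathcal{C}_2 \cup \mathcal{C}_6\). \autoref{period_cubic_fourfolds} then yields a smooth cubic fourfold \(Y\) realizing this Hodge structure, and \autoref{hodge_isometry_twist} in reverse provides a Hodge isometry \(\Homology^2(J^t(Y),\mathbb{Z}) \to \Homology^2(X,\mathbb{Z})\) extending the identifications \(\U_Y^t \leftrightarrow \U(3)\) and \((\U_Y^t)^{\perp} \leftrightarrow T\).

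\emph{Main obstacle.} The delicate step is promoting this Hodge isometry to a birational equivalence. Since the monodromy group equals \(\bO^+(\bL)\) by \cite{onorati2020monodromy}, after composing with \(-\id\) if needed one may assume the isometry is orientation-preserving, hence a parallel transport operator. Markman's Hodge-theoretic Torelli theorem \cite[Theorem 1.3]{markman2011survey} then produces a birational map provided a Kähler class of \(J^t(Y)\) is sent into a birational Kähler chamber of \(X\). This is precisely where the hypothesis \(\U(3)^{\perp_{\NS(X)}} \cap \mathcal{W}^{pex}_{\ogten}(X) = \emptyset\) enters: walls coming from \(\mathcal{W}^{pex}_{\ogten}\) separate distinct birational classes, whereas the remaining walls in \(\mathcal{W}_{\ogten}\) merely subdivide a single birational class into Kähler chambers. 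Since no prime exceptional wall is orthogonal to \(\U(3)\) inside \(\NS(X)\), the class \(\U(3)\) lies in the closure of a birational Kähler chamber of \(X\) that matches the one of \(J^t(Y)\), providing the desired birational map. I expect the careful bookkeeping of orientation and of the chamber decomposition — rather than the lattice-theoretic construction of \(Y\) — to be the main technical hurdle.
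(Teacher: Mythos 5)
Your proof is correct and follows essentially the same route as the paper's: the forward direction via parallel transport of \(\U^t_Y\) together with \autoref{hodge_isometry_twist} and \autoref{period_cubic_fourfolds}, and the backward direction via reconstructing \(\F(-1)\) as \(\U(3)^{\perp_{\bL}}\) from the embedding-subgroup hypothesis, invoking surjectivity of the cubic period map off \(\mathcal{C}_2\cup\mathcal{C}_6\), and extending the Hodge isometry of the orthogonal complements to all of \(\bL\) (the paper does this gluing step explicitly via Nikulin's Corollary 1.5.2, which you should cite rather than assert). The only substantive divergence is your closing ``main obstacle'' paragraph, which is unnecessary and somewhat confused: Markman's Theorem 1.3(1) yields bimeromorphy from any orientation-preserving integral Hodge isometry with no condition on Kähler chambers (the Kähler-class condition only enters for biregularity), so the second appearance of the \(\mathcal{W}^{pex}_{\ogten}\) hypothesis there --- together with the chamber argument that conflates walls of the Kähler cone with walls of the movable cone --- can simply be deleted; that hypothesis is needed exactly once, to place the period point of \(\U(3)^{\perp}(-1)\) outside \(\mathcal{C}_2\cup\mathcal{C}_6\).
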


\begin{proof}
If \(X\) and \(J^t(Y)\) are birational, then there is an Hodge isometry \(\Homology^2(J^t(Y),\mathbb{Z})\cong\Homology^2(X,\mathbb{Z})\), so the embedding \(\U_Y^t\subset\NS(J^t(Y))\) induces the embedding of \(\U(3)\) in \(\NS(X)\). We know by \autoref{hodge_isometry_twist} that the lattice \((\U_Y^t)^\perp\subset \Homology^2(J(Y),\mathbb{Z})\) is Hodge-isometric to \(\Homology_p^4(Y,\mathbb{Z})(-1)\). The description of the image of the period map of cubic fourfold \autoref{period_cubic_fourfolds} ensures that there are no long or short roots in \(\Homology_p^4(Y,\mathbb{Z})\), which coincide with classes of prime exceptional divisors of ihs manifolds of \(\ogten\) type up to a sign, as we point out in \autoref{Automorphisms of ihs manifolds of OG10 type}.

Viceversa, if there are no short or long roots in \(\U(3)^\perp\subset\NS(X)(-1)\) then by \autoref{period_cubic_fourfolds} we know that \(\U(3)^\perp\subset\Homology^2(X,\mathbb{Z})(-1)\) is Hodge isometric to \(\Homology_p^4(Y,\mathbb{Z})\) for a cubic fourfold \(Y\), which is also Hodge isometric to \((\U_Y^t)^\perp\subset\Homology^2(J^t(Y),\mathbb{Z})(-1)\). Extending the Hodge isometry to the entire lattice \(\Homology^2(X,\mathbb{Z})\cong\Homology^2(J^t(Y),\mathbb{Z})\) via \cite[Corollary 1.5.2]{nikulin1980integral}, we conclude that \(X\) and \(J^t(Y)\) are birational.
\end{proof}

\begin{remark}
     Combining \cite[Theorem 1.3]{li2022elliptic} with \cite[Theorem 4.3]{giovenzana2022period} we obtain that \(J(Y)\) is birational to \(J^t(Y)\) if and only if the cubic fourfold \(Y\) admits a \(d\)-labeling with \(d\equiv 2 (6)\). As an application of the results in \autoref{Non-symplectic automorphisms of order three of a cubic fourfold}, we obtain that \(J(Y)\) and \(J^t(Y)\) are birational if \(Y\) is general with an automorphism \(\phi_3^7\) or \(\phi_3^2\) but they are not birational if \(Y\) is general with an automorphism \(\phi_3^1\) or \(\phi_3^5\). We also point out that by \cite{marquand2023cubic} it follows that \(J(Y)\) and \(J^t(Y)\) are birational if \(Y\) is general with one of the non-symplectic involutions \(\phi_2^1\) or \(\phi_2^3\). We collect this information in \autoref{tab:induced_actions_LSV}.
\end{remark}
We study the induced action on the second integral cohomology of birational transformations of LSV manifolds induced by automorphisms of the cubic fourfold. 
Recall that an automorphism of a cubic fourfold \(Y\) induces a birational transformation of the LSV manifold \(J(Y)\) and the same holds for the twisted case. 
\begin{remark}
An automorphism of the cubic fourfold \(Y\) induces a birational transformation of the twisted LSV manifold \( J^t(Y)\). In fact, let \(u:\mathcal{V}_U\rightarrow U\subset \mathbb{P}^5 \) be the family of smooth hyperplane sections of \(Y\), recall that Deligne-Belinson cohomology gives an exact sequence of sheaves of groups 
    \[0\rightarrow J_U(Y)\rightarrow \Homology_{\mathcal{D}}^4(\mathcal{V}_U,\mathbb{Z}(2))\xrightarrow{c} R^4 u_* \mathbb{Z}\rightarrow 0\]
 where the sheaf \(R^4 u_* \mathbb{Z}\) is canonically isomorphic to \(\mathbb{Z}\). By definition \(J_U^t=c^{-1}(1)\), functoriality gives an automorphism of \(J_U^t\).
\end{remark}

Let \(\phi\in\Aut(Y)\), we denote by \(\widetilde{\phi}\in\Bir(J(Y))\) and by \(\widetilde{\phi}^t\in\Bir(J^t(Y))\) the induced birational transformations.

\begin{lemma}[see {\cite[Lemma 7.1]{mongardi2022birational}} or {\cite[Lemma 3.2]{sacca2020birational}}]\label{non-symplectic on Y induces non-symplectic on X}
 Let \(Y\) be a cubic fourfold. An automorphism \(\phi\in \Aut(Y)\) is symplectic if and only if \(\widetilde{\phi} \in \Bir(J(Y))\) is symplectic, if and only if \(\widetilde{\phi}^t \in \Bir(J^t(Y))\) is symplectic.
\end{lemma}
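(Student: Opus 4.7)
The plan is to reduce both equivalences to the fact, already established in \autoref{hodge_isometry_twist} (and its analogue for the untwisted LSV manifold used in \autoref{birat_twisted_LSV}), that the primitive middle cohomology of $Y$ and the relevant sub-Hodge structure of the second cohomology of the LSV (resp.\ twisted LSV) manifold are Hodge-isometric. Since symplecticity is measured by the action on a one-dimensional piece of the Hodge decomposition, once one knows that the induced action in cohomology respects such a Hodge isometry the statement becomes an immediate matching of Hodge pieces.

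First I would show that the birational transformations $\widetilde\phi \in \Bir(J(Y))$ and $\widetilde\phi^t\in \Bir(J^t(Y))$ induced by $\phi\in\Aut(Y)$ act on $\Homology^2$ compatibly with the Hodge isometries
\[
\Homology^4_p(Y,\mathbb{Z})(-1)\;\cong\;(\U_Y)^{\perp}\subset\Homology^2(J(Y),\mathbb{Z}),\qquad
\Homology^4_p(Y,\mathbb{Z})(-1)\;\cong\;(\U_Y^t)^{\perp}\subset\Homology^2(J^t(Y),\mathbb{Z}),
\]
the first coming from the Beauville--Donagi--type construction of Laza--Sacc\`a--Voisin and the second being \autoref{hodge_isometry_twist}. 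Naturality of the Donagi--Markman family $\pi_U\colon J_U(Y)\to U$ and of its twisted analogue under the linear action of $\phi$ on $(\mathbb{P}^5)^\vee$ gives compatible automorphisms of the total spaces which extend, by the uniqueness of the LSV compactification (and via the Li--Pertusi--Zhao moduli-theoretic identification in the twisted case) to $\widetilde\phi$ and $\widetilde\phi^t$; these extensions intertwine the relative Abel--Jacobi map and hence induce cohomological actions compatible with the Hodge isometries above.

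Given such compatibility, the equivalences are now formal. The Tate twist $(-1)$ in the Hodge isometry is purely lattice-theoretic and does not alter bidegrees, so $\Homology^{3,1}(Y)$ is sent to $\Homology^{2,0}(J(Y))$ (respectively to $\Homology^{2,0}(J^t(Y))$), which are the lines spanned by the symplectic forms. Thus $\phi$ acts as multiplication by $\lambda\in\mathbb{C}^*$ on $\Homology^{3,1}(Y)$ if and only if $\widetilde\phi$ (respectively $\widetilde\phi^t$) acts by the same scalar $\lambda$ on the symplectic form. In particular $\lambda=1$ on one side if and only if $\lambda=1$ on the other, which is precisely the statement that $\phi$ is symplectic if and only if $\widetilde\phi$ is symplectic, if and only if $\widetilde\phi^t$ is symplectic.

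The main obstacle is verifying the equivariance of the Hodge isometries with respect to the induced birational transformations, especially in the twisted case, where \autoref{hodge_isometry_twist} is obtained by composing a Hodge embedding into the Li--Pertusi--Zhao moduli space with a birational identification to $J^t(Y)$. One has to check that the automorphism of the Kuznetsov component induced by $\phi$ descends to the moduli space $\widetilde{\mathcal{M}}_\sigma(2(\lambda_1+\lambda_2),\mathcal{A}_Y)$ in a way compatible with both the Mukai-type Hodge embedding and the birational equivalence with $J^t(Y)$. This is essentially the functoriality already implicit in the results of Li--Pertusi--Zhao and Giovenzana--Onorati, and once granted the argument is purely Hodge-theoretic.
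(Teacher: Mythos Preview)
The paper does not give its own proof of this lemma; it is stated with a citation to \cite[Lemma 7.1]{mongardi2022birational} and \cite[Lemma 3.2]{sacca2020birational} and no argument follows. So there is nothing in the paper to compare your proposal to directly.

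That said, your route is more circuitous than the one in the cited references, and the detour creates a real gap. The argument in Sacc\`a (and Mongardi--Onorati) does not pass through an integral Hodge isometry on the compactified manifold at all: it works directly on the open Donagi--Markman fibration \(J_U(Y)\to U\). The holomorphic symplectic form \(\sigma_U\) is constructed from the variation of Hodge structure of the hyperplane sections \(Y_H\), and under an automorphism \(\phi\in\Aut(Y)\) this variation is pulled back along the induced action on \(U\subset(\mathbb{P}^5)^\vee\). One then reads off immediately that \(\phi\) scales \(\sigma_U\) by the same character by which it scales \(\Homology^{3,1}(Y)\); since \(\sigma_U\) extends to the symplectic form on \(J(Y)\) (and analogously on \(J^t(Y)\)), the equivalence follows. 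This is manifestly equivariant by construction and avoids any appeal to the Li--Pertusi--Zhao moduli space or to \autoref{hodge_isometry_twist}.

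By contrast, your argument hinges on the equivariance of the Hodge isometry of \autoref{hodge_isometry_twist}, which you yourself flag as ``the main obstacle'' and then dispatch by asserting it is ``essentially the functoriality already implicit'' in Li--Pertusi--Zhao and Giovenzana et al. That is not a proof: the isometry in \autoref{hodge_isometry_twist} is obtained by composing a Mukai-type embedding with a birational identification, and checking that an automorphism of \(\mathcal{A}_Y\) induced by \(\phi\) preserves the stability condition, the Mukai vector, and the resulting birational map to \(J^t(Y)\) compatibly is genuine work that neither you nor the present paper carries out. (Indeed, the paper itself, when it later needs equivariance in \autoref{inv_to_inv}, does not use \autoref{hodge_isometry_twist} for this purpose but instead averages a correspondence over the group to force equivariance by hand.) A minor additional slip: the \((-1)\) in \(\Homology^4_p(Y,\mathbb{Z})(-1)\) is a sign change on the form, and the ``Hodge isometry'' implicitly shifts weights, so saying it ``does not alter bidegrees'' is not quite right, though your conclusion that \(\Homology^{3,1}(Y)\) lands in \(\Homology^{2,0}\) is correct.
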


\begin{proposition}\label{inv_to_inv}
Let \(Y\) be a general cubic fourfold with a non-symplectic automorphism \(\phi\in\Aut(Y)\) of finite order and let \(\widetilde{\phi}\in\Bir(J(Y))\) and \(\widetilde{\phi}^t\in\Bir(J^t(Y))\) be the induced birational transformations on the manifolds \(J(Y)\) and \(J^t(Y)\). Then we have  \[ \NS(J(Y))=  \Homology^2(J(Y),\mathbb{Z})^{\widetilde{\phi}}, \]\[ \NS(J^t(Y))=  \Homology^2(J^t(Y),\mathbb{Z})^{\widetilde{\phi}^t}.\] Moreover, we have an isometry \[(\Homology_p^4(Y,\mathbb{Z}))^\phi(-1)\cong (\U_Y^{t\perp})^{\widetilde{\phi}^t}\subset \Homology^2(J^t(Y),\mathbb{Z})^{\widetilde{\phi}^t}.\]
\end{proposition}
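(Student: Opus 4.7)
The plan is to combine three ingredients: (i) non-symplecticity of $\widetilde{\phi}$ and $\widetilde{\phi}^t$, guaranteed by \autoref{non-symplectic on Y induces non-symplectic on X}; (ii) the fact that the classes $L,\Theta$ generating $\U_Y\subset\NS(J(Y))$ and $L^t,\Theta^t$ generating $\U_Y^t\subset\NS(J^t(Y))$ are intrinsic to the LSV construction, hence pointwise fixed by any induced transformation associated to an element of $\Aut(Y)$; and (iii) the $\phi$-equivariance of the Hodge isometry in \autoref{hodge_isometry_twist}, which follows from the naturality of its construction. The generality assumption on $Y$ will then translate directly into the claimed equalities.

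For the first two equalities, let $X\in\{J(Y),J^t(Y)\}$ with corresponding induced birational transformation $\widetilde{\psi}$. Non-symplecticity implies $\Homology^2(X,\mathbb{Z})^{\widetilde{\psi}}\subseteq\NS(X)$: the symplectic form of $X$ is a nonzero eigenvector of $\widetilde{\psi}^{\ast}$ with eigenvalue different from $1$, so no nonzero invariant rational class lies in $\Homology^{2,0}(X)\oplus\Homology^{0,2}(X)$, forcing invariant integral classes to be of type $(1,1)$. For the reverse inclusion, note that $L=\pi^{\ast}\mathcal{O}_{\mathbb{P}^5}(1)$ and $\Theta$ are canonically defined by the Lagrangian fibration, so $\U_Y$ lies in the $\widetilde{\phi}$-invariant sublattice, and analogously $\U_Y^t$ is pointwise fixed by $\widetilde{\phi}^t$. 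Combining this with the generality assumption on $Y$ (which gives $A_p(Y)=(\Homology^4_p(Y,\mathbb{Z}))^{\phi}$) and \autoref{hodge_isometry_twist}, the lattice $\NS(J^t(Y))$ is an overlattice of $\U_Y^t \oplus A_p(Y)(-1)$ inside $\Homology^2(J^t(Y),\mathbb{Z})$: both summands lie in the $\widetilde{\phi}^t$-invariant sublattice, and any gluing element remains invariant because $\widetilde{\phi}^t$ acts trivially on $\U_Y^t$. A parallel argument applies to $J(Y)$. Finally, once the Hodge isometry of \autoref{hodge_isometry_twist} is known to be $\phi$-equivariant, taking $\widetilde{\phi}^t$-invariants on both sides produces the claimed identification of $(\Homology^4_p(Y,\mathbb{Z}))^{\phi}(-1)$ with $((\U_Y^t)^{\perp})^{\widetilde{\phi}^t}$.

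The main obstacle is the $\phi$-equivariance of the Hodge isometry. That isometry is produced via a chain of birational identifications between $J^t(Y)$, the Li--Pertusi--Zhao symplectic resolution $\widetilde{\Moduli}_\sigma(2(\lambda_1+\lambda_2),\mathcal{A}_Y)$, and the Hodge embedding of \cite{giovenzana2022period}. One needs to verify that $\phi$ induces a compatible autoequivalence of the Kuznetsov component $\mathcal{A}_Y$, a compatible action on the Bridgeland moduli space, and that the subsequent birational contraction to $J^t(Y)$ intertwines this with $\widetilde{\phi}^t$ as defined via the Deligne--Beilinson construction of the intermediate Jacobian fibration. Granting the canonicity of each step in \autoref{hodge_isometry_twist}, the compatibility squares commute, but carefully assembling them is the technical heart of the argument.
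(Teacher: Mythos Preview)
Your approach is genuinely different from the paper's, and there is a real gap in the untwisted case.

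The paper does \emph{not} attempt to prove equivariance of the Hodge isometry in \autoref{hodge_isometry_twist}. Instead, for both $J(Y)$ and $J^t(Y)$ it invokes the cycle-theoretic \emph{isogeny} $\alpha\colon \Homology^4_p(Y,\mathbb{Z})(-1)\to \U_Y^\perp$ of \cite[Lemma~7.1]{mongardi2022birational}, which is built from an explicit correspondence $Z$. The key trick is to replace $Z$ by its average $\widetilde{Z}=\frac{1}{\sqrt{\mathrm{ord}(\phi)}}\sum_k\overline{\phi^k(Z_U)}$, producing a $(\phi,\widetilde{\phi})$-equivariant isogeny $\widetilde{\alpha}$ essentially for free. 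This gives a finite-index inclusion $A_p(Y)(-N)\subseteq (\U_Y^\perp)^{\widetilde{\phi}}\subseteq (\U_Y^\perp)^{1,1}$, and since the outer two lattices have the same rank and the right-hand inclusion is primitive, one concludes $(\U_Y^\perp)^{\widetilde{\phi}}=(\U_Y^\perp)^{1,1}$. Only \emph{after} this equality is in hand does the paper use \autoref{hodge_isometry_twist}, and then solely as a Hodge isometry identifying $(1,1)$-parts---no equivariance is needed, because the invariant sublattice has already been shown to coincide with the $(1,1)$-part. Your route through the Kuznetsov component and Bridgeland moduli is in principle viable for $J^t(Y)$, but it trades an elementary averaging argument for a delicate compatibility check across several constructions.

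The gap is your sentence ``A parallel argument applies to $J(Y)$.'' \autoref{hodge_isometry_twist} is stated only for the twisted manifold; for $J(Y)$ the paper has only the isogeny of \cite{mongardi2022birational}, not an isometry, and you give no substitute. Without either an analogue of \autoref{hodge_isometry_twist} for $J(Y)$ or the averaging argument, your reverse inclusion $\NS(J(Y))\subseteq\Homology^2(J(Y),\mathbb{Z})^{\widetilde{\phi}}$ is unjustified.
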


\begin{proof}We prove \(\NS(J(Y))=  \Homology^2(J(Y),\mathbb{Z})^{\widetilde{\phi}}\), the twisted case is analogous.

By \cite[Section 3.1]{sacca2020birational} every birational transformation \(\widetilde{\phi}\in \Bir(J(Y))\) of a LSV manifold which is induced by an automorphism \(\phi \in \Aut(Y)\) of the cubic fourfold \(Y\) fixes the two generators of \(\U_Y\). We consider the isometry \(\widetilde{\phi}\) restricted on \(\U_Y^{\perp}\), which is still a Hodge isometry since the two generators in \(\U_Y\) are of \((1,1)\) type.
According to \cite[Lemma 7.1]{mongardi2022birational} there is an isogeny of Hodge structures 
\[\alpha \colon \Homology_p^4(Y,\mathbb{Z})(-1) \rightarrow \U_Y^\perp\subset \Homology^2(J(Y),\mathbb{Z}),\]
where \(\alpha\) is the restriction of the morphism \([Z]_*\circ q^*:\Homology^4(Y,\mathbb{Z})\rightarrow \Homology^2(J(Y),\mathbb{Z})\). Here \(q:\mathcal{U}_Y\rightarrow Y\) is the inclusion of linear sections and \([Z]_*(x)=\pi_{1*}(\pi_2^*x.Z)\) where \(Z\in\CH^2(J(Y)\times_{\mathbb{P}^5}\mathcal{U}_Y)_\mathbb{Q}\) is the closure of a distinguished cycle \(Z_U\in\CH^2(J_U(Y)\times_{\mathbb{P}^5}\mathcal{U}_U)_\mathbb{Q}\), and \(\pi_1,\pi_2\) are the respective projections.
For an integer \(k\), it is well defined the cycle \(\phi^k(Z_U)\in\CH^2(J_U(Y)\times_{\mathbb{P}^5}\mathcal{U}_U)_\mathbb{Q}\), consider its closure \(\overline{\phi^k(Z_U)}\in\CH^2(J(Y)\times_{\mathbb{P}^5}\mathcal{U}_Y)_\mathbb{Q}\). Replacing \(Z\) with 
\[\widetilde{Z}=\frac{1}{\sqrt{\ord(\phi)}}\sum_{k=0}^{\ord(\phi)} \overline{\phi^k(Z_U)}\] in the above definition, since \(\phi(\widetilde{Z})=\widetilde{Z}\), one gets a \((\phi, \widetilde{\phi})\)-equivariant isogeny
of Hodge structures \[\widetilde{\alpha} \colon \Homology_p^4(Y,\mathbb{Z})(-1) \rightarrow \U_Y^\perp\subset \Homology^2(J(Y),\mathbb{Z}).\]  By the equivariance of \(\widetilde{\alpha}\) it follows that \(\Homology^{4}_p(Y,\mathbb{Z})^{\phi}(-N) \subseteq (\U_Y^{\perp})^{\widetilde{\phi}}\) for some integer \(N>0\), since the cubic fourfold is general we have \(\Homology^{4}_p(Y,\mathbb{Z})^{\phi} \cong A_p(Y)\) and there are finite index embeddings \[ A_p(Y)(-N) \cong \Homology^{4}_p(Y,\mathbb{Z})^{\phi}(-N) \subseteq (\U_Y^{\perp})^{\widetilde{\phi}} \subseteq (\U_Y^{\perp})^{1,1},\]
we conclude that \((\U_Y^{\perp})^{\widetilde{\phi}}= (\U_Y^{\perp})^{1,1}\) by observing that the last embedding is primitive and the lattices have the same rank.

For the twisted case, there is an isometry \(A_p(Y)(-1) \cong (\U_Y^{t\perp})^{1,1}\) by \autoref{hodge_isometry_twist}, and since the automorphism is general on  \(Y\) then \((\Homology_p^4(Y,\mathbb{Z}))^\phi(-1)=A_p(Y)(-1)\cong (\U_Y^{t\perp})^{1,1}\). By the first part of the statement we conclude that \((\Homology_p^4(Y,\mathbb{Z}))^\phi(-1)=A_p(Y)(-1)\cong (\U_Y^{t\perp})^{1,1}=((\U_Y^{t\perp})^{\widetilde{\phi}^t}\).
\end{proof}

We can recover a cubic fourfold with an automorphism from a certain automorphism of a manifold of \(\ogten\) type as it is proved in the following result.
\begin{proposition}\label{recover_cubic}
Let \(X\) be a manifold of \(\ogten\) type with a marking \(\Homology^2(X,\mathbb{Z})\cong\bL\) and let \(f\in\Bir(X)\) be a general non-symplectic birational transformation of prime order. Then \(f\) is induced by an automorphism of a cubic fourfold if and only if it acts trivially on the discriminant group \(A_{\bL}\), and there is a primitive embedding \(\U(3)\hookrightarrow\NS(X)\hookrightarrow \bL\) such that the composition \(\U(3)\hookrightarrow\bL\) has embedding subgroup \(\mathbb{Z}/3\mathbb{Z}\) and \(\U(3)^{\perp_{\NS(X)}}\cap\mathcal{W}^{Pex}_{\ogten}=\emptyset\).
  
\end{proposition}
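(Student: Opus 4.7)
The plan is to split the biconditional and exploit two main tools: the birationality criterion of \autoref{birat_twisted_LSV} to handle the structural condition on $\U(3)\hookrightarrow\NS(X)$, and the equivariant Hodge isometry of \autoref{hodge_isometry_twist} combined with Torelli (\autoref{torelli_cubic_fourfolds}) to transfer isometries between $\Homology^2(J^t(Y),\mathbb{Z})$ and $\Homology^4(Y,\mathbb{Z})$. Both discriminant groups $A_\bL$ and $A_\F$ are cyclic of order three, and the whole argument revolves around tracking how the action of $f$ corresponds to an action on $\F$ under the identifications forced by the embedding $\U(3)\cong\U_Y^t\hookrightarrow\bL$.

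For the forward implication, assume $f$ is induced by some $\phi\in\Aut(Y)$, so that $X$ is birational to $J^t(Y)$ and $f$ is transported to $\widetilde{\phi}^t\in\Bir(J^t(Y))$. \autoref{birat_twisted_LSV} immediately supplies the primitive embedding $\U(3)\cong\U_Y^t\hookrightarrow\NS(X)\hookrightarrow\bL$ with orthogonal complement free of prime exceptional classes; that the embedding subgroup equals $\mathbb{Z}/3\mathbb{Z}$ is forced by a discriminant count, since $A_{\U(3)}\cong(\mathbb{Z}/3\mathbb{Z})^2$, $A_\bL\cong\mathbb{Z}/3\mathbb{Z}$, and the orthogonal complement must be isometric to $\F(-1)$ with discriminant group $\mathbb{Z}/3\mathbb{Z}$. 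Triviality of the action on $A_\bL$ stems from the fact that $\phi$ fixes $\eta_Y$ and hence acts trivially on $A_\F$ via the primitive embedding $\F\hookrightarrow\Homology^4(Y,\mathbb{Z})$ into the unimodular middle cohomology; equivariance of the Hodge isometry of \autoref{hodge_isometry_twist}, proved in the spirit of \autoref{inv_to_inv}, transfers this triviality to $A_{(\U_Y^t)^\perp}$, and combined with the triviality on $A_{\U(3)}$ (for general $f$, $\U_Y^t\subset\NS(X)$ is fixed pointwise) the induced action on $A_\bL$ is trivial.

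For the reverse implication, the structural hypothesis is exactly the one appearing in \autoref{birat_twisted_LSV}, yielding a smooth cubic fourfold $Y$ and a birational equivalence $X\dashrightarrow J^t(Y)$. Transporting $f$ through this birational equivalence produces $\overline{f}\in\Bir(J^t(Y))$ with the same action on $\bL\cong\Homology^2(J^t(Y),\mathbb{Z})$; by genericity the action on $\U_Y^t$ is trivial, so $\overline{f}$ restricts to a Hodge isometry of $(\U_Y^t)^\perp$, which via \autoref{hodge_isometry_twist} corresponds to a Hodge isometry $g\in\bO_{Hdg}(\F)$. Triviality of the action on $A_\bL$, combined with triviality on $A_{\U(3)}$ and a short computation with the graph $\Gamma\subset A_{\U(3)}\oplus A_{\F(-1)}$ of the gluing isometry, forces $g$ to act trivially on $A_\F$, so it extends to a Hodge isometry $\tilde g$ of $\Homology^4(Y,\mathbb{Z})$ fixing $\eta_Y$. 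By Torelli (\autoref{torelli_cubic_fourfolds}) this extension is realized by a unique $\phi\in\Aut(Y)$, non-symplectic by \autoref{non-symplectic on Y induces non-symplectic on X}, and the induced birational transformation $\widetilde{\phi}^t$ agrees with $\overline{f}$ by the injectivity of the representation on $\Homology^2$ for ihs manifolds of $\ogten$ type.

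The main obstacle is the discriminant bookkeeping: one has to verify that the identification $A_\bL\cong\Gamma^\perp/\Gamma$, built from the gluing isometry between $A_{\U(3)}$ and $A_{\F(-1)}$, is equivariant with respect to the actions on both sides, so that the condition on $A_\bL$ translates cleanly into the condition on $A_\F$ required to extend $g$ to the unimodular overlattice. A short explicit calculation in the $\mathbb{F}_3$-vector space $A_{\U(3)}$ makes this translation transparent once triviality of the action on $A_{\U(3)}$ is known. A secondary delicate point is the equivariance of the Hodge isometry of \autoref{hodge_isometry_twist}, which is not explicit in the statement but follows by running the argument of \autoref{inv_to_inv} in the twisted setting.
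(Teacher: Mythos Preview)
Your proposal is correct and follows essentially the same approach as the paper: both directions hinge on \autoref{birat_twisted_LSV} (or equivalently the period map description in \autoref{period_cubic_fourfolds}) to produce the cubic fourfold, the Hodge isometry of \autoref{hodge_isometry_twist} to transfer the action, and Torelli (\autoref{torelli_cubic_fourfolds}) to recover the automorphism, with the key lattice-theoretic input being that triviality on $A_{\bL}$ is exactly what is needed to extend $f|_{\N}$ to an isometry of $\Homology^4(Y,\mathbb{Z})$ fixing $\eta_Y$. Your treatment of the discriminant bookkeeping and the equivariance of the Hodge isometry is more explicit than the paper's, which simply asserts $A_{\bL(-1)}\cong A_{\N}$ and invokes \autoref{inv_to_inv}, but the underlying argument is the same.
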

\begin{proof} 
By generality assumption we have \(\bL^{f} = \NS(X)\). If \(f\) is induced by an automorphism of a cubic fourfold the statement follows from \autoref{inv_to_inv} and \autoref{torelli_cubic_fourfolds}.

Viceversa, consider the lattice \(\N:=\U(3)^{\perp_{\bL(-1)}}\) and observe that by hypothesis this is isometric to the lattice \(\U^{\oplus2}\oplus\E_8^{\oplus 2}\oplus \A_2\). Since \(\U(3)^{\perp_{\NS(X)}}\cap\mathcal{W}^{Pex}_{\ogten}=\emptyset\) and prime exceptional divisors of an \(\ogten\) type manifold correspond to short and long roots up to the sign, by \autoref{period_cubic_fourfolds} there exits a smooth cubic fourfold \(Y\) such that \(\Homology_p^4(Y,\mathbb{Z})\cong\N\) is a Hodge isometry. 
   The restriction of the isometry \(f\) to \(\N\) extends to an isometry of \(\Homology^4(Y,\mathbb{Z})\) that fixes the class \(\langle \eta_Y\rangle=\Homology^4_p(Y,\mathbb{Z})^\perp\subset\Homology^4(Y,\mathbb{Z})\) if and only if \(f\) acts trivially on the discriminant group \(A_{\bL(-1)}\cong A_{\N}\), as in our assumption. We conclude by  \autoref{inv_to_inv} and \autoref{torelli_cubic_fourfolds}.
\end{proof}

\begin{lemma}
    
\label{regular_action}
Let \(Y\) be a general cubic fourfold with a non-symplectic automorphism \(\phi\in\Aut(Y)\) of finite order. Then the induced birational transformations \(\widetilde{\phi}\in\Bir(J(Y))\) and \(\widetilde{\phi}^t\in\Bir(J^t(Y))\) are automorphisms.
\end{lemma}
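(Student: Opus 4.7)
The plan is to combine \autoref{inv_to_inv} with the Hodge--theoretic Torelli theorem for ihs manifolds of \(\ogten\) type. First, since \(Y\) is general, \autoref{inv_to_inv} already gives
\[
\NS(J(Y)) = \Homology^2(J(Y),\mathbb{Z})^{\widetilde{\phi}}, \qquad \NS(J^t(Y)) = \Homology^2(J^t(Y),\mathbb{Z})^{\widetilde{\phi}^t}.
\]
In particular \(\widetilde{\phi}^*\) (resp.\ \(\widetilde{\phi}^{t*}\)) restricts to the identity on the Néron--Severi lattice of the relevant projective manifold, and hence fixes every class in \(\NS\); in particular it fixes an ample (and so Kähler) class.

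Next, any birational self-map of an ihs manifold induces a Hodge monodromy isometry on \(\Homology^2\) via Huybrechts' deformation of birational ihs manifolds, and for the \(\ogten\) deformation class the monodromy group coincides with \(\bO^+(\bL)\). Hence \(\widetilde{\phi}^* \in \Mon^2(J(Y))\) is a Hodge monodromy isometry that preserves a Kähler class, and analogously for \(\widetilde{\phi}^{t*}\). By the Hodge--theoretic Torelli theorem cited as \cite[Theorem 1.3]{markman2011survey}, such an isometry is induced by a (unique) biregular automorphism; combined with the injectivity of the representation map \(\Aut(X)\to\bO(\bL)\) recalled in \eqref{main rep map}, this identifies \(\widetilde{\phi}\) and \(\widetilde{\phi}^t\) with automorphisms.

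The only conceptual subtlety is that preservation of a single Kähler class is enough to upgrade the birational transformation to a biregular one; this is precisely the content of Markman's theorem: a prime exceptional divisor contracted by a birational transformation necessarily lies on a wall of the Kähler cone, and fixing an interior point of the Kähler chamber forces the isometry to send that chamber to itself, ruling out the existence of indeterminacy. Since the whole of \(\NS\) is fixed, the wall-and-chamber decomposition of the positive cone induced by classes in \(\mathcal{W}_{\ogten}\cap\NS\) is preserved trivially, hence so is the Kähler cone. No delicate computation is needed; this is really a direct deduction from \autoref{inv_to_inv} together with standard machinery for ihs manifolds of \(\ogten\) type.
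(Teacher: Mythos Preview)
Your proof is correct and follows essentially the same approach as the paper: invoke \autoref{inv_to_inv} to get \(\NS=\Homology^2(\,\cdot\,,\mathbb{Z})^{\widetilde{\phi}}\), observe that this forces the Kähler cone to be preserved, and apply Markman's Hodge--theoretic Torelli theorem \cite[Theorem 1.3]{markman2011survey}. The paper's own proof is a two-line version of exactly this argument; you have simply unpacked the steps more explicitly (the monodromy property of \(\widetilde{\phi}^*\), the wall-and-chamber reasoning), which is fine.

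One small remark: your appeal to injectivity of \(\Aut(X)\to\bO(\bL)\) is not quite the right hook---to identify the birational map \(\widetilde{\phi}\) with the automorphism produced by Torelli you need injectivity on \(\Bir(X)\), not just \(\Aut(X)\). But your final paragraph already supplies the correct alternative argument: a birational self-map of an ihs manifold that preserves the Kähler cone (equivalently, does not reflect across any wall in \(\mathcal{W}^{pex}\)) is necessarily biregular. That is the cleanest way to close the loop, and it makes the reference to injectivity superfluous.
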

\begin{proof}
    From \autoref{inv_to_inv} we know that from generality assumption \(A_p(Y)=\Homology^4_p(Y,\mathbb{Z})^\phi\) then \(\NS(J(Y))=\Homology^2(J(Y),\mathbb{Z}))^{\widetilde{\phi}}\) and \(\NS(J^t(Y))=\Homology^2(J^t(Y),\mathbb{Z}))^{\widetilde{\phi}^t}\). By the Hodge theoretic Torelli theorem for ihs manifolds \cite[Theorem 1.3]{markman2011survey} we conclude that the transformations are regular.
\end{proof}

\begin{remark}
    This shows that the converse of \cite[Proposition 3.11]{sacca2020birational} does not hold. Namely, some birational transformations induced by automorphisms of a cubic fourfold extend to regular automorphisms even if the fibers of the Lagrangian fibration are reducible. Reducible fibers arise for cubic fourfolds containing planes or cubic scrolls, see \cite{marquand2023defect}.
\end{remark}
\begin{theorem}\label{induced_by_cubic}
Let \(Y\) be a general cubic fourfold with a non-symplectic automorphism of prime order. Then the invariant and coinvariant lattices of the induced action on \(J^t(Y)\) is described in \autoref{tab:induced_actions_LSV}.
\end{theorem}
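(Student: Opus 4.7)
The plan is to transfer the question to the already understood action of \(\phi\) on the primitive middle cohomology \(\Homology^4_p(Y,\mathbb{Z})\), bootstrapping along the Hodge isometry \(\Homology^4_p(Y,\mathbb{Z})(-1) \cong (\U_Y^t)^\perp \subset \Homology^2(J^t(Y),\mathbb{Z})\) of \autoref{hodge_isometry_twist}. First I would invoke \autoref{inv_to_inv} together with \cite[\S 3.1]{sacca2020birational} to see that the induced transformation \(\widetilde{\phi}^t\) fixes both generators of \(\U_Y^t \cong \U(3)\) pointwise; coupled with the \((\phi,\widetilde{\phi}^t)\)-equivariance of the above Hodge isometry (obtained from the cycle-symmetrization construction carried out in the proof of \autoref{inv_to_inv}), this shows that the induced \(G\)-action on \(\Homology^2(J^t(Y),\mathbb{Z})\cong \bL\) splits as the trivial action on \(\U_Y^t\) and, up to the sign change \((-1)\), as the \(\phi\)-action on \(\F\) on the orthogonal summand.

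As an immediate consequence one obtains \(\bL_G \cong \F_G(-1)\), and the coinvariant column of \autoref{tab:induced_actions_LSV} is then read off from \autoref{tab:order_three_non_sympl_cubic_prim} together with \cite{marquand2023cubic} for the two involutive cases, using \(\U(-1)\cong\U\) to account for hyperbolic summands without a scale twist. The invariant lattice \(\bL^G\) is the saturation inside \(\bL\) of \(\U(3)\oplus \F^G(-1)\), and differs from this direct sum by a cyclic group of order dividing three coming from the gluing of \(\U_Y^t\) and its orthogonal complement in \(\bL\).

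To pin down this glue case by case, note that by \autoref{birat_twisted_LSV} the embedding \(\U(3)\hookrightarrow \bL\) has embedding subgroup exactly \(\mathbb{Z}/3\mathbb{Z}\), so \(\U(3)\oplus \F(-1)\) sits in \(\bL\) with index three. Projecting the glue generator to \(A_{\F(-1)}\cong\mathbb{Z}/3\mathbb{Z}\) and reading its preimage along the natural map \(A_{\F^G(-1)}\oplus A_{\F_G(-1)} \to A_{\F(-1)}\) provided by \autoref{prime_action}, I would decide whether it is contributed by the invariant or the coinvariant side: in the first case \(\bL^G\) is a proper order-three overlattice of \(\U(3)\oplus \F^G(-1)\), while in the second case \(\bL^G=\U(3)\oplus \F^G(-1)\) already. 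Since the resulting lattice is \(3\)-elementary with prescribed signature and discriminant form, its isometry class is then pinned down by \cite[Corollary 1.13.3, Corollary 1.13.5]{nikulin1980integral}, matching the entries of \autoref{tab:induced_actions_LSV}.

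The main obstacle will be the explicit identification of the isometry class of the saturation for the middle-rank cases \(\phi_3^5\), \(\phi_3^7\) and \(\phi_3^2\), where \(\F^G\) is a definite lattice not uniquely determined by its genus invariants. For these I would perform the gluing directly on the Gram matrices of \autoref{tab:order_three_non_sympl_cubic_prim} and rule out alternative representatives in the genus either by Nikulin's conditions or by direct computer algebra, in the same spirit as the proof of \autoref{order_three_cubiche_prim}. The last column of \autoref{tab:induced_actions_LSV}, recording whether \(J(Y)\) and \(J^t(Y)\) are birational, is then read off from the criterion \(d\equiv 2\pmod{6}\) on the labelings of \(Y\), as noted in the remark after \autoref{birat_twisted_LSV}.
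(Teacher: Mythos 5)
Your proposal is correct and follows essentially the same route as the paper: transfer the action through the equivariant Hodge isometry of \autoref{inv_to_inv} (so that \(\bL_G\cong\F_G(-1)\) and \(\bL^G\) is the saturation of \(\U_Y^t\oplus\F^G(-1)\)), then identify the resulting pair lattice-theoretically using the classifications of \autoref{order_three_cubiche_prim}, \cite{marquand2023cubic} and the OG10 tables. The only difference is one of exposition: the paper simply cites its classification tables to pin down \(\bL^G\), whereas you make explicit the order-three gluing computation that resolves the ambiguity when the same coinvariant lattice occurs with two different glue groups in \autoref{tab:L_p} — a detail the paper leaves implicit.
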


\begin{proof}
We have a classification of possible invariant and coinvariant sublattices of \(\Homology_p^4(Y,\mathbb{Z})\) for non-symplectic automorphisms of a general \(Y\). This is the content of \cite[Theorem 1.1]{marquand2023cubic} for the case of involutions, and of \autoref{order_three_cubiche_prim} for the case of automorphisms of order three. Using \autoref{inv_to_inv} and \autoref{tab:Lid} we conclude the statement.
\end{proof}

\begin{remark}
    As \autoref{tab:induced_actions_LSV} shows, for a general cubic fourfold \(Y\) with an automorphism \(\phi_3^1\) and \(\phi_3^5\) the manifolds \(J(Y)\) and \(J^t(Y)\) are not birational. The automorphism \(\phi_3^1\) fixes only the class \(\eta_Y\), then by \autoref{inv_to_inv} the induced action on the LSV manifold \(J(Y)\) has invariant lattice given by \(\U\) and coinvariant lattice given by \(\U^{\oplus2}\oplus\E_8(-1)^{\oplus2}\oplus \A_2(-1)\). If \(Y\) is a general cubic fourfold with an automorphism \(\phi_3^5\), the invariant lattice of the induced automorphism on the LSV manifold \(J(Y)\) has signature \((1,7)\), and it corresponds to one of the pairs from 33 to 39 in \autoref{tab:L_p}. We can exclude cases \(37\) and \(39\) because they do not admit a primitive embedding of \(\U\).
\end{remark}
\begin{theorem}\label{regular_autom}
Let \(X\) be an ihs manifold of \(\ogten\) type and
let \(G= \langle f \rangle \in \Aut(X)\) be a group of automorphisms generated by a non-symplectic automorphism of prime order. If the pair \((\bL^G,\bL_G)\) appears in \autoref{tab:induced_actions_LSV}, then there exists a cubic fourfold \(Y\) with an automorphism \(\phi\in\Aut(Y)\) such that \(X\) is birational to \(J^t(Y)\) and the action of \(f\) on \(X\) coincide with the action of \(\widetilde{\phi}^t\) on \(J^t(Y)\), up to birational transformations.
\end{theorem}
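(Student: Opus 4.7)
The plan is to reduce the claim to \autoref{recover_cubic}. Given $X$ and $G=\langle f\rangle$ whose invariant and coinvariant lattices appear in \autoref{tab:induced_actions_LSV}, generality gives $\NS(X)=\bL^G$, so it suffices to verify the three hypotheses of \autoref{recover_cubic}: (i) $f$ acts trivially on $A_{\bL}\cong\mathbb{Z}/3\mathbb{Z}$; (ii) there is a primitive embedding $\U(3)\hookrightarrow\bL^G$ whose composition with $\bL^G\hookrightarrow\bL$ has embedding subgroup $\mathbb{Z}/3\mathbb{Z}$; (iii) the orthogonal complement $\U(3)^{\perp_{\bL^G}}$ contains no class in $\mathcal{W}^{pex}_{\ogten}$.

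For (i), when $p=3$ triviality is automatic, since $\bO(A_{\bL})=\{\pm1\}$ has no element of order $3$. For $p=2$, one distinguishes the two possibilities via the signature of $\bL^G$: by \autoref{signature of L_G and L^G trivial}, a trivial action on $A_{\bL}$ forces $\sgn(\bL^G)=\sgn(\bLambda^{G'})-(2,0)$, while a non-trivial action forces $\sgn(\bL^G)=\sgn(\bLambda^{G'})-(1,0)$. Comparing the invariant lattices $\U\oplus\E_6(-2)$ and $[2]\oplus[-2]\oplus\E_6(-1)\oplus\D_4(-1)$ of \autoref{tab:induced_actions_LSV} with the enumeration of order-two isometries of $\bLambda$ pinpoints the trivial case in the rows $\phi_2^1$ and $\phi_2^3$.

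For (ii), inspection of \autoref{tab:induced_actions_LSV} shows that in all four order-three rows the lattice $\U(3)$ appears as an orthogonal summand of $\bL^G$, and the induced map $\U(3)\hookrightarrow\bL$ glues via the image of a generator of $A_{\U(3)}$ into $A_{\bL}$, producing embedding subgroup $\mathbb{Z}/3\mathbb{Z}$. For $p=2$ the embedding $\U(3)\hookrightarrow\bL^G$ is built by direct computation: for $\phi_2^1$ a primitive copy of $\U(3)$ sits inside $\U\oplus\E_6(-2)$, and for $\phi_2^3$ an analogous construction uses the $\E_6(-1)$ and $[\pm2]$ summands. The gluing condition is then read off from the induced map on discriminant groups.

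For (iii), the orthogonal complement $\U(3)^{\perp_{\bL^G}}$ is explicitly determined by the entries of \autoref{tab:induced_actions_LSV}, and one checks that it contains no vector of square $-2$ nor vector of square $-6$ with divisibility $3$ in $\bL$. The complements are negative definite, so the verification reduces to a finite enumeration of short vectors, which can be carried out directly or via computer algebra as in the proof of \autoref{order_three_cubiche_prim}. This step, which parallels the requirement that the corresponding cubic fourfolds avoid the divisors $\mathcal{C}_2,\mathcal{C}_6$ appearing in \autoref{period_cubic_fourfolds}, is the main technical obstacle. Once (i)--(iii) are verified, \autoref{recover_cubic} produces a smooth cubic fourfold $Y$ and a non-symplectic automorphism $\phi\in\Aut(Y)$ of prime order $p$ such that $X$ is birational to $J^t(Y)$ and $f$ corresponds to $\widetilde{\phi}^t$ under this birational identification.
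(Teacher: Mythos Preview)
Your overall strategy—reducing to \autoref{recover_cubic} and verifying its three hypotheses—is exactly the paper's approach, which simply asserts that the hypotheses of \autoref{birat_twisted_LSV} and \autoref{recover_cubic} hold.

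There is, however, a genuine gap in your verification of (iii). You take the obvious orthogonal summand \(\U(3)\) in the listed decomposition of \(\bL^G\) and claim its complement contains no \((-2)\)-classes. This is false for \(\phi_3^7\) and \(\phi_3^2\): the complement of the displayed \(\U(3)\) summand is \(\E_6^*(-3)\oplus\A_2(-1)\), respectively \(\E_6(-1)\oplus\A_2(-1)^{\oplus 3}\), and both \(\A_2(-1)\) and \(\E_6(-1)\) are root lattices full of \((-2)\)-vectors. So the finite enumeration you propose would in fact fail for that choice of embedding.

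The point is that \(\bL^G\) admits \emph{several} primitive copies of \(\U(3)\), and you must pick the right one. The table \autoref{tab:induced_actions_LSV} was produced in \autoref{induced_by_cubic} by starting from a cubic \(Y\), where the distinguished sublattice is \(\U_Y^t\subset\NS(J^t(Y))\) and its orthogonal complement in \(\NS(J^t(Y))\) is \(A_p(Y)(-1)\) by \autoref{inv_to_inv}. By \autoref{order_three_cubiche_prim} (and \cite{marquand2023cubic} for \(p=2\)) this complement has no short or long roots, hence meets \(\mathcal{W}^{pex}_{\ogten}\) trivially; and the embedding \(\U_Y^t\hookrightarrow\bL\) has the required gluing by \autoref{hodge_isometry_twist}. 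Since the pair \((\bL^G,\bL_G)\) determines the primitive embedding \(\bL^G\hookrightarrow\bL\) up to isometry in these cases, this geometric \(\U(3)\) transports to any \(X\) with the same invariants. That is the embedding you should use, not the naive direct summand; once you do, (ii) and (iii) follow immediately from the construction of the table rather than from an ad hoc short-vector search.
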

\begin{proof}
    Suppose \(G\subset\Aut(X)\) is a group of automorphisms as in \autoref{tab:induced_actions_LSV}, then the hypothesis of \autoref{birat_twisted_LSV} are satisfied hence \(X\) is birational to a twisted LSV manifold. By \autoref{recover_cubic} we conclude the proof. 
\end{proof}

\appendix
\section{Tables of invariant and coinvariant lattices of non-symplectic automorphisms of OG10}
Here we collect tables of (isometry classes) of invariant and coinvariant lattices of the action of \(G\) on \(\bLambda\) and \(\bL\), i.e. pairs \((\bL^G,\bL_G)\) and \((\bLambda^{G},\bLambda_{G})\) of a group \(G\) of prime order isometries, with given signatures.
\subsection{\(p=2\)}
\begin{center}
\tiny{\begin{longtable}{lllllll}
		
		\caption{Pairs \((\bLambda^{G},\bLambda_{G})\) for \({G} \subset \bO(\bLambda)\) of prime order \(p=2\) and \(\sgn(\bLambda_{G})=(2, \rk(\bLambda_{G})-2)\), i.e. trivial action on the discriminant group.}
	\label{tab:Lambda} \\
		
		\toprule
	 No. & \(\rk(\bLambda^{{G}})\) &  \(\bLambda_{G}\) & \(\bLambda^{G}\) & \(\sgn(\bLambda_{G})\) & \(a\) & \(\delta\)  \\

		\midrule
		\endfirsthead
		
		\multicolumn{7}{c}%
		{\tablename\ \thetable{}, follows from previous page} \\
		\midrule
	No. & \(\rk(\bLambda^{{G}})\) &  \(\bLambda_{G}\) & \(\bLambda^{G}\) & \(\sgn(\bLambda_{G})\) & \(a\) & \(\delta\)  \\
		\midrule
		\endhead
	
		\multicolumn{7}{c}{Continues on next page} \\
		\endfoot
		
		\bottomrule
		\endlastfoot

		\(1\) & \(4\) & \(\E_8(-1)^{\oplus2} \oplus \U^{\oplus2} \oplus [-2]^{\oplus2}\)& \(\U\oplus [ 2 ] ^{\oplus 2 }\)& \((2,20)\) & \(2\) & \(1\) \\
  
         \(2\) & \(4\) & \(\E_8(-1)^{\oplus2} \oplus\U\oplus [2] \oplus [-2]^{\oplus 3}\)& \([ 2 ] ^{\oplus 3 }\oplus [-2]\)& \((2,20)\) & \(4\) & \(1\) \\
		\(3\) & \(5\) & \(\E_8(-1)^{\oplus2} \oplus \U^{\oplus2} \oplus [-2]\) & \(\U^{\oplus 2} \oplus [ 2 ] \) &  \((2,19)\) & \(1\) & \(1\) \\
  
  	\(4\) & \(5\) & \(\E_8(-1)^{\oplus2} \oplus \U\oplus [2]\oplus [-2]^{\oplus 2} \) & \(\U\oplus [2]^{\oplus 2} \oplus [-2 ] \) &  \((2,19)\) & \(3\) & \(1\) \\

   \(5\) & \(5\) & \(\E_8(-1)^{\oplus2} \oplus  [2]^{\oplus 2}\oplus [-2]^{\oplus 3} \) & \( [2]^{\oplus 3} \oplus [-2 ]^{\oplus 2} \) &  \((2,19)\) & \(5\) & \(1\) \\
   
		\(6\) & \(6\)  & \(\E_8(-1)^{\oplus2} \oplus \U^{\oplus2}\) & \(\U^{\oplus3}\)&  \((2,18)\) & \(0\) & \(0\) \\
 \(7\) & \(6\)  & \(\E_8(-1)^{\oplus2} \oplus \U\oplus\U(2)\) & \(\U^{\oplus2}\oplus \U(2)\)&  \((2,18)\) & \(2\) & \(0\) \\
 
  	\(8\) & \(6\)  & \(\E_8(-1)^{\oplus2} \oplus \U\oplus[2] \oplus[-2]\) & \(\U^{\oplus 2} \oplus[2] \oplus[-2]\)  & \((2,18)\) & \(2\) & \(1\) \\

\(9\) & \(6\)  & \(\E_8(-1)^{\oplus 2} \oplus \U(2)^{\oplus2}\) & \(\U\oplus\U(2)^{\oplus2}\) &  \((2,18)\) & \(4\) & \(0\) \\

		\(10\) & \(6\)  & \(\E_8(-1)^{\oplus 2} \oplus [ -2 ] ^{\oplus 2 } \oplus [ 2 ]^{\oplus2}\) & \(\U\oplus [2 ] ^{\oplus 2 } \oplus [-2 ]^{\oplus2}\) &  \((2,18)\) & \(4\) & \(1\) \\

        \(11\) & \(6\)  & \(\E_8(-1) \oplus \U\oplus\D_4(-1)^{\oplus2}\oplus \U(2)\) & \(\U(2)^{\oplus3}\) &  \((2,18)\) & \(6\) & \(0\) \\
        
        \(12\) & \(6\)  & \(\E_8(-1) \oplus \U\oplus\D_4(-1)^{\oplus2}\oplus [2]\oplus[-2]\) & \([2]^{\oplus3} \oplus[-2]^{\oplus3}\) &  \((2,18)\) & \(6\) & \(1\) \\
		
		\(13\) & \(7\) & \(\E_{8}(-1)^{\oplus2} \oplus \U\oplus [ 2 ]\) & \(\U^{\oplus3} \oplus [-2]\) &  \((2,17)\) & \(1\) & \(1\) \\

		\(14\) & \(7\)  & \(\E_{8}(-1)^{\oplus2} \oplus [ 2 ] ^{\oplus 2} \oplus [ -2 ] \) & \(\U^{\oplus2} \oplus [ 2 ] \oplus [ -2 ]^{\oplus 2}\) &  \((2,17)\) & \(3\) & \(1\) \\

		\(15\) & \(7\) & \(	\E_{8}(-1) \oplus \U^{\oplus 2} \oplus \D_4(-1) \oplus [-2]^{\oplus3} \) & \(\U\oplus [ 2 ]^{\oplus2} \oplus [-2]^{\oplus3}\) &  \((2,17)\) & \(5\) & \(1\) \\

  \(16\) & \(7\) & \(	\E_{8}(-1) \oplus \U\oplus \D_4(-1) \oplus[2]\oplus [-2]^{\oplus4} \) & \( [ 2 ]^{\oplus3} \oplus [-2]^{\oplus4}\) &  \((2,17)\) & \(7\) & \(1\) \\

		\(17\) & \(8\) & 
  \(\E_8(-1)^{\oplus 2}\oplus [2]^{\oplus 2} \)& \(\U^{\oplus3} \oplus [-2]^{\oplus 2}\)&  \((2,16)\) & \(2\) & \(1\) \\

		\(18\) & \(8\) & \(\E_{8}(-1) \oplus \U^{\oplus 2}\oplus \D_4(-1) \oplus [-2]^{\oplus2}\) & \(\U^{\oplus 2} \oplus [ 2 ] \oplus [ -2 ]^{\oplus3}\) &  \((2,16)\) & \(4\) & \(1\) \\

		\( 19\) & \(8\) & \(\E_{8}(-1) \oplus \D_4(-1)^{\oplus 2}\oplus [2]^{\oplus 2} \) & \(\U\oplus [2]^{\oplus 2} \oplus [-2]^{\oplus 4} \) &  \((2,16)\) & \(6\) & \(1\) \\

        	\( 20\) & \(8\) & \(\E_{8}(-1) \oplus \U\oplus  [2] \oplus [-2]^{\oplus 7} \) & \( [2]^{\oplus 3} \oplus [-2]^{\oplus 5} \) &  \((2,16)\) & \(8\) & \(1\) \\

		\(21\) & \(9\) & \(\E_{8}(-1) \oplus \U^{\oplus 2} \oplus \D_4(-1)\oplus [-2] \)& \(\U^{\oplus 3} \oplus [-2]^{\oplus 3} \) &  \((2,15)\) & \(3\) & \(1\) \\

		\(22\) & \(9\) & \(\E_{8}(-1) \oplus \U\oplus \D_4(-1)\oplus [2 ] \oplus [ -2 ]^{\oplus2}\) & \(\U^{\oplus 2} \oplus [-2]^{\oplus 4} \oplus [2]\) &  \((2,15)\) & \(5\) & \(1\) \\
		
	    \(23\) & \(9\) & \(\E_{8}(-1) \oplus \D_4(-1)\oplus [2 ]^{\oplus 2} \oplus [ -2 ]^{\oplus3}\) & \(\U\oplus [ 2 ]^{\oplus2} \oplus [-2]^{\oplus 5}\) &  \((2,15)\) & \(7\) & \(1\) \\
     
         \(24\) & \(9\) & \(\E_{8}(-1) \oplus  [2 ]^{\oplus 2} \oplus [ -2 ]^{\oplus7}\) & \( [ 2 ]^{\oplus3} \oplus [-2]^{\oplus 6}\) &  \((2,15)\) & \(9\) & \(1\) \\

\(25\) & \(10\) & \( \E_8(-1)\oplus \U^{\oplus 2}\oplus \D_4(-1) \) & \(\U^{\oplus 3} \oplus \D_4(-1)\) &  \((2,14)\) & \(2\) & \(0\) \\

          \(25\) & \(10\) & \( \E_8(-1)\oplus \U\oplus \U(2)\oplus \D_4(-1) \) & \(\U^{\oplus 2}\oplus \U(2) \oplus \D_4(-1)\) &  \((2,14)\) & \(4\) & \(0\) \\

        \(26\) & \(10\) & \(\E_8(-1)\oplus \U\oplus\D_4(-1)\oplus [2]\oplus [-2] \) & \(\U^{\oplus 3} \oplus [-2]^{\oplus 4}\) &  \((2,14)\) & \(4\) & \(1\) \\

      \(27\) & \(10\) & \( \U^{\oplus 2} \oplus \D_4(-1)^{\oplus 3} \) & \(\U\oplus\U(2)^{\oplus 2}\oplus \D_4(-1)\) &  \((2,14)\) & \(6\) & \(0\) \\

       \(28\) & \(10\) & \(\E_8(-1) \oplus \D_4(-1)\oplus  [2]^{\oplus 2} \oplus [-2]^{\oplus 2} \) & \(\U^{\oplus 2}\oplus [2]  \oplus [-2]^{\oplus 5}\) &  \((2,14)\) & \(6\) & \(1\) \\

         \(29\) & \(10\) & \(\U\oplus\U(2)\oplus\D_4(-1)^{\oplus 3} \) & \( \U(2)^{\oplus 3}  \oplus \D_4(-1)\) &  \((2,14)\) & \(8\) & \(0\) \\

        \(30\) & \(10\) & \(\U^{\oplus 2}\oplus \D_4(-1)^{\oplus 2} \oplus [-2]^{\oplus 4} \) & \(\U\oplus [2]^{\oplus 2}  \oplus [-2]^{\oplus 6}\) &  \((2,14)\) & \(8\) & \(1\) \\
        
        \(31\) & \(10\) & \(\U\oplus \D_4(-1)^{\oplus 2} \oplus [2]\oplus[-2]^{\oplus 5} \) & \( [2]^{\oplus 3}  \oplus [-2]^{\oplus 7}\) &  \((2,14)\) & \(10\) & \(1\) \\
        
        \(32\) & \(11\) & \( \E_8(-1) \oplus \U^{\oplus2} \oplus [-2]^{\oplus 3} \) & \(\E_8(-1) \oplus [2]^{\oplus3}\) &  \((2,13)\) & \(3\) & \(1\) \\

        \(33\) & \(11\) & \( \E_8(-1) \oplus \D_4(-1)\oplus  [2]^{\oplus 2} \oplus[-2]\) & \(\U^{\oplus 3}\oplus [-2]^{\oplus 5}\) &  \((2,13)\) & \(5\) & \(1\) \\

        \(34\) & \(11\) & \( \E_8(-1)  \oplus  [2]^{\oplus 2} \oplus [-2]^{\oplus 5} \) & \(\U^{\oplus 2}\oplus [2]  \oplus [-2]^{\oplus 6}\) &  \((2,13)\) & \(7\) & \(1\) \\

        \(35\) & \(11\) & \( \U^{\oplus 2} \oplus\D_4(-1) \oplus [-2]^{\oplus 7} \) & \(\U\oplus [2]^{\oplus2}  \oplus [-2]^{\oplus 7}\) &  \((2,13)\) & \(9\) & \(1\) \\
        
        \(36\) & \(11\) & \( \U\oplus\D_4(-1)\oplus[2] \oplus [-2]^{\oplus 8} \) & \( [2]^{\oplus3}  \oplus [-2]^{\oplus 8}\) &  \((2,13)\) & \(11\) & \(1\) \\

        \(37\) & \(12\) & \( \E_8(-1)\oplus \U^{\oplus2}\oplus  [-2]^{\oplus2} \) & \(\E_8(-1) \oplus \U\oplus [2]^{\oplus2}\) &  \((2,12)\) & \(2\) & \(1\) \\

        \(38\) & \(12\) & \( \E_8(-1)\oplus \U\oplus [2]\oplus [-2]^{\oplus 3}\) & \(\E_8(-1) \oplus [2]^{\oplus3}\oplus [-2]\) &  \((2,12)\) & \(4\) & \(1\) \\

        \( 39\) & \(12\) & \(\E_8(-1) \oplus [2]^{\oplus2} \oplus  [-2]^{\oplus 4} \) & \(\U^{\oplus 3}\oplus [-2]^{\oplus 6}\) &  \((2,12)\) & \(6\) & \(1\) \\

         \( 40\) & \(12\) & \( \U^{\oplus2} \oplus \D_4(-1) \oplus [-2]^{\oplus 6} \) & \(\U^{\oplus 2}\oplus [2]  \oplus [-2]^{\oplus 7}\) &  \((2,12)\) & \(8\) & \(1\) \\

       \( 41\) & \(12\) & \( \U\oplus \D_4(-1) \oplus [2]\oplus [-2]^{\oplus 7}  \) & \(\U\oplus [2]^{\oplus2}  \oplus [-2]^{\oplus 8}\) &  \((2,12)\) & \(10\) & \(1\) \\
       
        \( 42\) & \(12\) & \( \U\oplus [2] \oplus  [-2]^{\oplus 11}  \) & \( [2]^{\oplus3}  \oplus [-2]^{\oplus 9}\) &  \((2,12)\) & \(12\) & \(1\) \\

        \( 43 \) & \(13\) & \( \E_8(-1)\oplus \U^{\oplus2}\oplus  [-2] \) & \(\E_8(-1) \oplus \U^{\oplus2}\oplus [2]\) &  \((2,11)\) & \(1\) & \(1\) \\

        \( 44\) & \(13\) & \( \E_8(-1)\oplus\U\oplus [2] \oplus [-2]^{\oplus2} \) & \(\E_8(-1) \oplus \U\oplus [2]^{\oplus2}\oplus [-2]\) &  \((2,11)\) & \(3\) & \(1\) \\

       \(45 \) & \(13\) & \( \E_8(-1)\oplus [2]^{\oplus 2}\oplus [-2]^{\oplus 3} \) & \(\E_8(-1) \oplus [2]^{\oplus3}\oplus [-2]^{\oplus 2}\) &  \((2,11)\) & \(5\) & \(1\) \\

        \(  46\) & \(13\) & \( \U^{\oplus2} \oplus\D_4(-1)  \oplus [-2]^{\oplus5}  \) & \(\U^{\oplus 3}\oplus [-2]^{\oplus 7}\) &  \((2,11)\) & \(7\) & \(1\) \\

        \(  47\) & \(13\) & \( \U\oplus \D_4(-1) \oplus  [2] \oplus [-2]^{\oplus6} \) & \(\U^{\oplus 2}\oplus [2]  \oplus [-2]^{\oplus 8}\) &  \((2,11)\) & \(9\) & \(1\) \\

        \(48 \) & \(13\) & \( \D_4(-1) \oplus  [2]^{\oplus 2} \oplus [-2]^{\oplus7} \) & \(\U\oplus [2]^{\oplus2}  \oplus [-2]^{\oplus 9}\) &  \((2,11)\) & \(11\) & \(1\) \\
        
         \(49 \) & \(13\) & \(  [2]^{\oplus 2} \oplus [-2]^{\oplus 11} \) & \([2]^{\oplus3}  \oplus [-2]^{\oplus 10}\) &  \((2,11)\) & \(13\) & \(1\) \\

         \( 50 \) & \(14\) & \( \E_8(-1)\oplus \U^{\oplus 2}  \) & \(\E_8(-1)\oplus \U^{\oplus 3}\) &  \((2,10)\) & \(0\) & \(0\) \\

           \( 51 \) & \(14\) & \( \E_8(-1)\oplus \U\oplus\U(2)  \) & \(\E_8(-1)\oplus \U^{\oplus 2}\oplus\U(2)\) &  \((2,10)\) & \(2\) & \(0\) \\

           \( 52 \) & \(14\) & \(  \E_8(-1)\oplus \U\oplus[2]\oplus [-2] \) & \(\E_8(-1)\oplus \U^{\oplus 2}\oplus[2]\oplus [-2]\) &  \((2,10)\) & \(2\) & \(1\) \\

           \( 53 \) & \(14\) & \( \E_8(-1)\oplus\U(2)^{\oplus 2}  \) & \(\E_8(-1)\oplus \U\oplus\U(2)^{\oplus 2}\) &  \((2,10)\) & \(4\) & \(0\) \\

           \( 54\) & \(14\) & \( \E_8(-1)\oplus [2]^{\oplus 2}\oplus [-2]^{\oplus 2}  \) & \(\E_8(-1)\oplus \U\oplus[2]^{\oplus 2}\oplus [-2]^{\oplus 2}\) &  \((2,10)\) & \(4\) & \(1\) \\

            \( 55 \) & \(14\) & \( \U\oplus \U(2)\oplus\D_4(-1)^{\oplus 2}  \) & \(\E_8(-1)\oplus \U(2)^{\oplus 3}\) &  \((2,10)\) & \(6\) & \(0\) \\
            
           \( 56 \) & \(14\) & \(  \U^{\oplus 2} \oplus \D_4(-1) \oplus [-2]^{\oplus 4}  \) & \(\E_8(-1)\oplus [2]^{\oplus 3}\oplus [-2]^{\oplus 3}\) &  \((2,10)\) & \(6\) & \(1\) \\

           \( 57 \) & \(14\) & \( \U(2)^{\oplus 2}\oplus \D_4(-1)^{\oplus 2}  \) & \(\D_4(-1)^{\oplus 2}\oplus\U\oplus \U(2)^{\oplus 2}\) &  \((2,10)\) & \(8\) & \(0\) \\

         \( 58 \) & \(14\) & \( \D_4(-1)^{\oplus 2}\oplus [2]^{\oplus2} \oplus  [-2]^{\oplus 2} \) & \(\U^{\oplus 3}\oplus [-2]^{\oplus 8}\) &  \((2,10)\) & \(8\) & \(1\) \\

         \( 59 \) & \(14\) & \(  \U\oplus \U(2) \oplus \E_8(-2)\) & \(\D_4(-1)^{\oplus 2}\oplus \U(2)^{\oplus 3}\) &  \((2,10)\) & \(10\) & \(0\) \\

        \( 60 \) & \(14\) & \(  \U\oplus[2]\oplus [-2]^{\oplus 9} \) & \(\D_4(-1)^{\oplus 2}\oplus [2]^{\oplus 3}\oplus [-2]^{\oplus 3}\) &  \((2,10)\) & \(10\) & \(1\) \\

         \( 61 \) & \(14\) & \(  \E_8(-2)\oplus \U(2)^{\oplus 2}\) & \( \E_8(-2)\oplus \U\oplus \U(2)^{\oplus 2}\) &  \((2,10)\) & \(12\) & \(0\) \\

          \(62  \) & \(14\) & \(  [2]^{\oplus 2}\oplus [-2]^{\oplus 10}\) & \( \U\oplus [2]^{\oplus 2}\oplus [-2]^{\oplus 10}\) &  \((2,10)\) & \(12\) & \(1\) \\

          \(  63\) & \(15\) & \( \E_8(-1) \oplus \U\oplus [2] \) & \(\E_8(-1)\oplus \U^{\oplus 3}\oplus [-2] \) &  \((2,9)\) & \(1\) & \(1\) \\

          \(64\) & \(15\) & \( \E_8(-1) \oplus [2]^{\oplus 2 }\oplus [-2] \) & \(\E_8(-1)\oplus \U^{\oplus 2}\oplus [2]\oplus [-2]^{\oplus 2} \) &  \((2,9)\) & \(3\) & \(1\) \\

          \( 65 \) & \(15\) & \( \U^{\oplus 2}\oplus \D_4(-1) \oplus [-2]^{\oplus 3} \) & \(\E_8(-1)\oplus \U\oplus [2]^{\oplus 2}\oplus [-2]^{\oplus 3} \) &  \((2,9)\) & \(5\) & \(1\) \\

          \(  66\) & \(15\) & \( \U\oplus \D_4(-1)  \oplus [2]\oplus [-2]^{\oplus 4} \) & \(\E_8(-1)\oplus  [2]^{\oplus 3}\oplus [-2]^{\oplus 4} \) &  \((2,9)\) & \(7\) & \(1\) \\

          \( 67\) & \(15\) & \( \D_4(-1) \oplus [2]^{\oplus 2 }\oplus [-2]^{\oplus 5} \) & \(\U^{\oplus 3}\oplus [-2]^{\oplus 9} \) &  \((2,9)\) & \(9\) & \(1\) \\

           \(68  \) & \(15\) & \(  [2]^{\oplus 2} \oplus [-2]^{\oplus 9 } \) & \(\U^{\oplus 2}\oplus [2]^{\oplus 2}\oplus [-2]^{\oplus 9} \) &  \((2,9)\) & \(11\) & \(1\) \\

 \( 69 \) & \(16\) & \(\E_8(-1)\oplus [2]^{\oplus 2} \) & \( \E_8(-1)\oplus \U^{\oplus 3}\oplus[-2]^{\oplus 2}\) &  \((2,8)\) & \( 2\) & \(1\) \\

              \( 70\) & \(16\) & \(\U^{\oplus 2}\oplus\D_4(-1)\oplus[-2]^{\oplus 2} \) & \( \E_8(-1)\oplus \D_4(-1)\oplus \U\oplus[2]^{\oplus 2}\) &  \((2,8)\) & \( 4\) & \(1\) \\

            \( 71 \) & \(16\) & \( \U^{\oplus2}\oplus [-2]^{\oplus 6}\) & \( \E_8(-1)\oplus \U\oplus [2]^{\oplus 2}\oplus[-2]^{\oplus 4}\) &  \((2,8)\) & \( 6\) & \(1\) \\

            \( 72 \) & \(16\) & \( \U\oplus[2]\oplus [-2]^{\oplus 7}\) & \( \E_8(-1)\oplus [2]^{\oplus 3}\oplus[-2]^{\oplus 5}\) &  \((2,8)\) & \( 8\) & \(1\) \\

             \( 73 \) & \(16\) & \( [2]^{\oplus 2}\oplus [-2]^{\oplus 8}\) & \( \U^{\oplus 3}\oplus [-2]^{\oplus 10}\) &  \((2,8)\) & \( 10\) & \(1\) \\

             \( 74 \) & \(17\) & \( \U^{\oplus2} \oplus \D_{4}(-1) \oplus [-2] \) & \(\E_8(-1) \oplus \U^{\oplus3} \oplus [-2]^{\oplus 3} \) &  \((2,7)\) & \( 3\) & \(1\) \\
             
              \( 75\) & \(17\) & \( \U^{\oplus2} \oplus [-2]^{\oplus 5} \) & \(\E_8(-1) \oplus \U^{\oplus2} \oplus [2] \oplus [-2]^{\oplus 4} \) &  \((2,7)\) & \( 5\) & \(1\) \\

              \( 76\) & \(17\) & \( \U\oplus [2] \oplus [-2]^{\oplus 6} \) & \(\E_8(-1) \oplus \U\oplus [2]^{\oplus 2} \oplus [-2]^{\oplus 5} \) &  \((2,7)\) & \( 7\) & \(1\) \\

              \( 77 \) & \(17\) & \( [2]^{\oplus2} \oplus [-2]^{\oplus 7} \) & \(\E_8(-1) \oplus [2]^{\oplus 3} \oplus [-2]^{\oplus 6} \) &  \((2,7)\) & \( 9\) & \(1\) \\
            
              \(  78\) & \(18\) & \( \U^{\oplus2} \oplus \D_{4}(-1) \) & \(\E_8(-1) \oplus   \U^{\oplus3}\oplus \D_4(-1) \) &  \((2,6)\) & \( 2\) & \(0\) \\

               \(  79\) & \(18\) & \( \U\oplus \U(2)\oplus \D_{4}(-1) \) & \(\E_8(-1) \oplus \U^{\oplus2}\oplus \U(2)\oplus \D_4(-1)  \) &  \((2,6)\) & \( 4\) & \(0\) \\

               \( 80 \) & \(18\) & \( \U\oplus \D_{4}(-1)\oplus [2]\oplus[-2] \) & \(\E_8(-1) \oplus \U^{\oplus2}\oplus \D_4(-1) \oplus [2]\oplus [-2] \) &  \((2,6)\) & \( 4\) & \(1\) \\

               \( 81 \) & \(18\) & \( \U(2)^{\oplus 2}\oplus \D_{4}(-1) \) & \(\E_8(-1) \oplus \U\oplus \U(2)^{\oplus 2}\oplus \D_4(-1)  \) &  \((2,6)\) & \( 6\) & \(0\) \\

                \( 82 \) & \(18\) & \( \U \oplus[2]\oplus [-2]^{\oplus5}\) & \(\E_8(-1) \oplus \U\oplus \D_4(-1) \oplus [2]^{\oplus 2}\oplus [-2]^{\oplus 2} \) &  \((2,6)\) & \( 6\) & \(1\) \\

                \( 83 \) & \(18\) & \( [2]^{\oplus 2}\oplus [-2]^{\oplus 6} \) & \(\E_8(-1) \oplus \D_4(-1) \oplus [2]^{\oplus 3}\oplus [-2]^{\oplus 3} \) &  \((2,6)\) & \( 8\) & \(1\) \\

                \( 84 \) & \(19\) & \( \U^{\oplus 2}\oplus [-2]^{\oplus 3} \) & \( \E_8(-1)^{\oplus 2}\oplus [2]^{\oplus 3} \) &  \((2,5)\) & \( 3\) & \(1\) \\

                \( 85 \) & \(19\) & \( \U\oplus[2]\oplus [-2]^{\oplus 4} \) & \( \E_8(-1)\oplus \U^{\oplus2 }\oplus\D_4(-1)\oplus [2]\oplus[-2]^{\oplus 2} \) &  \((2,5)\) & \( 5\) & \(1\) \\

                \( 86 \) & \(19\) & \( [2]^{\oplus 2}\oplus [-2]^{\oplus 5} \) & \( \E_8(-1)\oplus \U\oplus \D_4(-1)\oplus [2]^{\oplus 2}\oplus[-2]^{\oplus 3} \) &  \((2,5)\) & \( 7\) & \(1\) \\

                \( 87 \) & \(20\) & \( \U^{\oplus 2}\oplus [-2]^{\oplus 2} \) & \( \E_8(-1)^{\oplus 2}\oplus \U\oplus [2]^{\oplus 2} \) &  \((2,4)\) & \( 2\) & \(1\) \\

                \( 88 \) & \(20\) & \( \U\oplus [2]\oplus [-2]^{\oplus 3} \) & \( \E_8(-1)^{\oplus 2}\oplus  [2]^{\oplus 3}\oplus [-2] \) &  \((2,4)\) & \( 4\) & \(1\) \\

                \( 89 \) & \(20\) & \( [2]^{\oplus 2}\oplus [-2]^{\oplus 4} \) & \( \E_8(-1)\oplus \U^{\oplus 2}\oplus \D_4(-1)\oplus [2]\oplus [-2]^{\oplus 3}\) &  \((2,4)\) & \( 6\) & \(1\) \\

                \( 90 \) & \(21\) & \( \U^{\oplus 2}\oplus [-2] \) & \( \E_8(-1)^{\oplus 2}\oplus \U^{\oplus 2}\oplus [2] \) &  \((2,3)\) & \( 1\) & \(1\) \\

                \( 91 \) & \(21\) & \( \U\oplus [2]\oplus [-2]^{\oplus 2} \) & \( \E_8(-1)^{\oplus 2}\oplus \U\oplus [2]^{\oplus 2}\oplus [-2] \) &  \((2,3)\) & \( 3\) & \(1\) \\

              \(  92\) & \(21\) & \(  [2]^{\oplus 2}\oplus [-2]^{\oplus 3} \) & \( \E_8(-1)^{\oplus 2}\oplus  [2]^{\oplus 3}\oplus [-2]^{\oplus 2} \) &  \((2,3)\) & \( 5\) & \(1\) \\

              \( 93 \) & \(22\) & \(  \U^{\oplus 2} \) & \( \E_8(-1)^{\oplus 2}\oplus \U^{\oplus 3} \) &  \((2,2)\) & \( 0\) & \(0\) \\

              \(94  \) & \(22\) & \(  \U\oplus\U(2) \) & \( \E_8(-1)^{\oplus 2}\oplus \U^{\oplus 2}\oplus \U(2) \) &  \((2,2)\) & \( 2\) & \(0\) \\

              \( 95 \) & \(22\) & \(  \U\oplus[2]\oplus [-2] \) & \( \E_8(-1)^{\oplus 2}\oplus \U^{\oplus 2}\oplus [2]\oplus [-2] \) &  \((2,2)\) & \( 2\) & \(1\) \\

                \( 96\) & \(22\) & \(  \U(2)^{\oplus 2} \) & \( \E_8(-1)^{\oplus 2}\oplus \U\oplus \U(2)^{\oplus 2} \) &  \((2,2)\) & \( 4\) & \(0\) \\

                \( 97 \) & \(22\) & \(  [2]^{\oplus 2}\oplus [-2]^{\oplus 2} \) & \( \E_8(-1)^{\oplus 2}\oplus \U\oplus [2]^{\oplus 2}\oplus [-2]^{\oplus 2} \) &  \((2,2)\) & \( 4\) & \(1\) \\

               \( 98 \) & \(23\) & \(  \U\oplus[2]  \) & \( \E_8(-1)^{\oplus 2}\oplus \U^{\oplus 3} \oplus[-2]\) &  \((2,1)\) & \( 1\) & \(1\) \\

               \( 99 \) & \(23\) & \(  [2]^{\oplus 2} \oplus [-2] \) & \( \E_8(-1)^{\oplus 2}\oplus \U^{\oplus 2}\oplus [2]\oplus[-2]^{\oplus 2}\) &  \((2,1)\) & \( 3\) & \(1\) \\

               \( 100 \) & \(24\) & \(  [2]^{\oplus 2}  \) & \( \E_8(-1)^{\oplus 2}\oplus \U^{\oplus 3} \oplus[-2]^{\oplus 2}\) &  \((2,0)\) & \( 2\) & \(1\) \\

	\midrule
    	\end{longtable}}
\end{center}

\begin{center}
\tiny{	\begin{longtable}{lllllll}
		
		\caption{Pairs \((\bL^G,\bL_G)\) for \(G \subset \bO(\bL)\) of prime order \(p=2\) and \(\sgn(\bL_G)=\sgn(\bLambda_{G})\), i.e. trivial action on the discriminant group.}
	\label{tab:Lid} \\
		
		\toprule
	 No. & \(\rk(\bLambda^{{G}})\) &  \(\bL_G\) & \(\bL^G\) & \(\sgn(\bL_G)\) & \(a(\bL_G)\) & \(\delta(\bL_G)\)  \\

		\midrule
		\endfirsthead
		
		\multicolumn{7}{c}%
		{\tablename\ \thetable{}, follows from previous page} \\
		\midrule
	No. & \(\rk(\bLambda^{{G}})\) &  \(\bL_G\) & \(\bL^G\) & \(\sgn(\bL_G)\) & \(a\) & \(\delta\)  \\
		\midrule
		\endhead
	
		\multicolumn{7}{c}{Continues on next page} \\
		\endfoot
		
		\bottomrule
		\endlastfoot

		\(1\) & \(4\) & \(\E_8(-1)^{\oplus2} \oplus \U^{\oplus2} \oplus [-2]^{\oplus2}\)& \( [ 2 ]\oplus [-6]\)& \((2,20)\) & \(2\) & \(1\) \\
		\(2\) & \(5\) & \(\E_8(-1)^{\oplus2} \oplus \U^{\oplus2} \oplus [-2]\) & \(\A_2(-1)\oplus [ 2 ] \) &  \((2,19)\) & \(1\) & \(1\) \\
  
  	\(3\) & \(5\) & \(\E_8(-1)^{\oplus2} \oplus \U\oplus [2]\oplus [-2]^{\oplus 2} \) & \( [2] \oplus [-2]\oplus [-6] \) &  \((2,19)\) & \(3\) & \(1\) \\
   
		\(4\) & \(6\)  & \(\E_8(-1)^{\oplus2} \oplus \U^{\oplus2}\) & \(\U\oplus \A_2(-1)\)&  \((2,18)\) & \(0\) & \(0\) \\
  
 \(5\) & \(6\)  & \(\E_8(-1)^{\oplus2} \oplus \U\oplus\U(2)\) & \( \U(2)\oplus \A_2(-1)\)&  \((2,18)\) & \(2\) & \(0\) \\
  	\(6\) & \(6\)  & \(\E_8(-1)^{\oplus2} \oplus \U\oplus[2] \oplus[-2]\) & \(\A_2(-1) \oplus[2] \oplus[-2]\)  & \((2,18)\) & \(2\) & \(1\) \\

		\(7\) & \(6\)  & \(\E_8(-1) \oplus [ -2 ] ^{\oplus 2 } \oplus [ 2 ]^{\oplus2}\) & \([2 ]  \oplus [-2 ]^{\oplus2}\oplus [-6]\) &  \((2,18)\) & \(4\) & \(1\) \\
		
		\(8\) & \(7\) & \(\E_{8}(-1)^{\oplus2} \oplus \U\oplus [ 2 ]\) & \(\U\oplus\A_2(-1) \oplus [-2]\) &  \((2,17)\) & \(1\) & \(1\) \\

		\(9\) & \(7\)  & \(\E_{8}(-1)^{\oplus2} \oplus [ 2 ] ^{\oplus 2} \oplus [ -2 ] \) & \( \A_2(-1) \oplus [ 2 ] \oplus [ -2 ]^{\oplus 2}\) &  \((2,17)\) & \(3\) & \(1\) \\

		\(10\) & \(7\) & \(	\E_{8}(-1) \oplus \U^{\oplus 2} \oplus \D_4(-1) \oplus [-2]^{\oplus3} \) & \( [ 2 ] \oplus [-2]^{\oplus3}\oplus [-6]\) &  \((2,17)\) & \(5\) & \(1\) \\

		\(11\) & \(8\) & 
  \(\E_8(-1)^{\oplus 2}\oplus [2]^{\oplus 2} \)& \(\U\oplus \A_2(-1) \oplus [-2]^{\oplus 2}\)&  \((2,16)\) & \(2\) & \(1\) \\

		\(12\) & \(8\) & \(\E_{8}(-1) \oplus \U^{\oplus 2}\oplus \D_4(-1) \oplus [-2]^{\oplus2}\) & \(\A_2(-1) \oplus [ 2 ] \oplus [ -2 ]^{\oplus3}\) &  \((2,16)\) & \(4\) & \(1\) \\

		\( 13\) & \(8\) & \(\E_{8}(-1) \oplus \D_4(-1)^{\oplus 2}\oplus [2]^{\oplus 2} \) & \( [2] \oplus [-2]^{\oplus 4}\oplus[-6] \) &  \((2,16)\) & \(6\) & \(1\) \\

		\(14\) & \(9\) & \(\E_{8}(-1) \oplus \U^{\oplus 2} \oplus \D_4(-1)\oplus [-2] \)& \(\U\oplus\A_2(-1) \oplus [-2]^{\oplus 3} \) &  \((2,15)\) & \(3\) & \(1\) \\

		\(15\) & \(9\) & \(\E_{8}(-1) \oplus \U\oplus \D_4(-1)\oplus [2 ] \oplus [ -2 ]^{\oplus2}\) & \(\A_2(-1) \oplus [-2]^{\oplus 4} \oplus [2]\) &  \((2,15)\) & \(5\) & \(1\) \\
		
	    \(16\) & \(9\) & \(\E_{8}(-1) \oplus \D_4(-1)\oplus [2 ]^{\oplus 2} \oplus [ -2 ]^{\oplus3}\) & \( [ 2 ]\oplus [-2]^{\oplus 5}\oplus[-6]\) &  \((2,15)\) & \(7\) & \(1\) \\

\(17\) & \(10\) & \( \E_8(-1)\oplus \U^{\oplus 2}\oplus \D_4(-1) \) & \( \U \oplus \A_2(-1)\oplus \D_4(-1)\) &  \((2,14)\) & \(2\) & \(0\) \\

          \(17\) & \(10\) & \( \E_8(-1)\oplus \U\oplus \U(2)\oplus \D_4(-1) \) & \( \U(2) \oplus \A_2(-1)\oplus \D_4(-1)\) &  \((2,14)\) & \(4\) & \(0\) \\

        \(18\) & \(10\) & \(\E_8(-1)\oplus \U\oplus\D_4(-1)\oplus [2]\oplus [-2] \) & \(\U\oplus\A_2(-1) \oplus [-2]^{\oplus 4}\) &  \((2,14)\) & \(4\) & \(1\) \\

      \(19\) & \(10\) & \( \U^{\oplus 2} \oplus \D_4(-1)^{\oplus 3} \) & \(\U\oplus\E_6(-2)\) &  \((2,14)\) & \(6\) & \(0\) \\

       \(20\) & \(10\) & \(\E_8(-1) \oplus \D_4(-1)\oplus  [2]^{\oplus 2} \oplus [-2]^{\oplus 2} \) & \(\A_2(-1)\oplus [2]  \oplus [-2]^{\oplus 5}\) &  \((2,14)\) & \(6\) & \(1\) \\

     \(21\) & \(10\) & \(\U\oplus \U(2)\oplus \D_4(-1)^{\oplus 3}  \) & \( \U(2)\oplus \E_6(-2)\) &  \((2,14)\) & \(8\) & \(0\) \\

        \(21\) & \(10\) & \(\U^{\oplus 2}\oplus \D_4(-1)^{\oplus 2} \oplus [-2]^{\oplus 4} \) & \( [2] \oplus [-2]^{\oplus 6}\oplus [-6]\) &  \((2,14)\) & \(8\) & \(1\) \\
        
        \(22\) & \(11\) & \( \E_8(-1) \oplus \U^{\oplus2} \oplus [-2]^{\oplus 3} \) & \(\D_6(-1)\oplus\A_2(-1) \oplus [2]\) &  \((2,13)\) & \(3\) & \(1\) \\

        \(23\) & \(11\) & \( \E_8(-1) \oplus \D_4(-1)\oplus  [2]^{\oplus 2} \oplus[-2]\) & \(\U\oplus\A_2(-1)\oplus [-2]^{\oplus 5}\) &  \((2,13)\) & \(5\) & \(1\) \\

        \(24\) & \(11\) & \( \E_8(-1)  \oplus  [2]^{\oplus 2} \oplus [-2]^{\oplus 5} \) & \(\A_2(-1)\oplus [2]  \oplus [-2]^{\oplus 6}\) &  \((2,13)\) & \(7\) & \(1\) \\

        \(25\) & \(11\) & \( \U^{\oplus 2} \oplus\D_4(-1) \oplus [-2]^{\oplus 7} \) & \( [2]  \oplus [-2]^{\oplus 7}\oplus [-6]\) &  \((2,13)\) & \(9\) & \(1\) \\

        \( 26\) & \(12\) & \( \E_8(-1)\oplus \U^{\oplus2}\oplus  [-2]^{\oplus2} \) & \(\E_8(-1) \oplus  [2]\oplus[-6]\) &  \((2,12)\) & \(2\) & \(1\) \\

        \(27\) & \(12\) & \( \E_8(-1)\oplus \U\oplus [2]\oplus [-2]^{\oplus 3}\) & \(\D_6(-1)\oplus\A_2(-1) \oplus [2]\oplus [-2]\) &  \((2,12)\) & \(4\) & \(1\) \\

        \( 28\) & \(12\) & \(\E_8(-1) \oplus [2]^{\oplus2} \oplus  [-2]^{\oplus 4} \) & \(\U\oplus\A_2(-1)\oplus [-2]^{\oplus 6}\) &  \((2,12)\) & \(6\) & \(1\) \\

         \( 29\) & \(12\) & \( \U^{\oplus2} \oplus \D_4(-1) \oplus [-2]^{\oplus 6} \) & \(\A_2(-1)\oplus [2]  \oplus [-2]^{\oplus 7}\) &  \((2,12)\) & \(8\) & \(1\) \\

       \( 30\) & \(12\) & \( \U\oplus \D_4(-1) \oplus [2]\oplus [-2]^{\oplus 7}  \) & \( [2]  \oplus [-2]^{\oplus 8}\oplus[-6]\) &  \((2,12)\) & \(10\) & \(1\) \\

        \( 31 \) & \(13\) & \( \E_8(-1)\oplus \U^{\oplus2}\oplus  [-2] \) & \(\E_8(-1) \oplus \A_2(-1)\oplus [2]\) &  \((2,11)\) & \(1\) & \(1\) \\

        \( 32\) & \(13\) & \( \E_8(-1)\oplus [2] \oplus [-2]^{\oplus2} \) & \(\E_8(-1) \oplus [2]\oplus [-2]\oplus[-6]\) &  \((2,11)\) & \(3\) & \(1\) \\

       \(33 \) & \(13\) & \( \E_8(-1)\oplus [2]^{\oplus 2}\oplus [-2]^{\oplus 3} \) & \(\D_6(-1)\oplus\A_2(-1) \oplus [2]\oplus [-2]^{\oplus 2}\) &  \((2,11)\) & \(5\) & \(1\) \\

        \(  34\) & \(13\) & \( \U^{\oplus2} \oplus\D_4(-1)  \oplus [-2]^{\oplus5}  \) & \(\U\oplus\A_2(-1)\oplus [-2]^{\oplus 7}\) &  \((2,11)\) & \(7\) & \(1\) \\

        \(  35\) & \(13\) & \( \U\oplus \D_4(-1) \oplus  [2] \oplus [-2]^{\oplus6} \) & \(\U \oplus [-2]^{\oplus 8}\oplus [-6]\) &  \((2,11)\) & \(9\) & \(1\) \\

        \(36 \) & \(13\) & \( \D_4(-1) \oplus  [2]^{\oplus 2} \oplus [-2]^{\oplus7} \) & \([2]  \oplus [-2]^{\oplus 9}\oplus [-6]\) &  \((2,11)\) & \(11\) & \(1\) \\

         \( 37 \) & \(14\) & \( \E_8(-1)\oplus \U^{\oplus 2}  \) & \(\E_8(-1)\oplus \U\oplus\A_2(-1)\) &  \((2,10)\) & \(0\) & \(0\) \\

           \( 38 \) & \(14\) & \( \E_8(-1)\oplus \U\oplus\U(2)  \) & \(\E_8(-1)\oplus \U(2)\oplus \A_2(-1)\) &  \((2,10)\) & \(2\) & \(0\) \\

           \( 39 \) & \(14\) & \(  \E_8(-1)\oplus \U\oplus[2]\oplus [-2] \) & \(\E_8(-1)\oplus \A_2(-1)\oplus[2]\oplus [-2]\) &  \((2,10)\) & \(2\) & \(1\) \\

           \( 40 \) & \(14\) & \( \E_8(-1)\oplus \U(2)^{\oplus 2}  \) & \( \U\oplus \D_4(-1)^{\oplus 2} \oplus \A_2(-1)\) &  \((2,10)\) & \(4\) & \(0\) \\

           \( 41 \) & \(14\) & \( \E_8(-1)\oplus [2]^{\oplus 2}\oplus [-2]^{\oplus 2}  \) & \(\E_8(-1)\oplus [2]\oplus [-2]^{\oplus 2}\oplus[-6]\) &  \((2,10)\) & \(4\) & \(1\) \\

 \( 42 \) & \(14\) & \( \U\oplus \U(2) \oplus \D_4(-1)^{\oplus 2}  \) & \(\U(2)\oplus\D_4(-1)^{\oplus 2}\oplus \A_2(-1)\) &  \((2,10)\) & \(6\) & \(0\) \\

           \( 42 \) & \(14\) & \( \U^{\oplus 2} \oplus \D_4(-1) \oplus [-2]^{\oplus 4}  \) & \(\D_6(-1)\oplus\A_2(-1)\oplus [2]\oplus [-2]^{\oplus 3}\) &  \((2,10)\) & \(6\) & \(1\) \\

           \( 43 \) & \(14\) & \( \U(2)^{\oplus 2}\oplus \D_4(-1)^{\oplus 2}  \) & \(\D_4(-1)\oplus\U\oplus \E_6(-2)\) &  \((2,10)\) & \(8\) & \(0\) \\

         \( 44 \) & \(14\) & \( \D_4(-1)^{\oplus 2}\oplus [2]^{\oplus2} \oplus  [-2]^{\oplus 2} \) & \(\U\oplus\A_2(-1)\oplus [-2]^{\oplus 8}\) &  \((2,10)\) & \(8\) & \(1\) \\

         \( 45 \) & \(14\) & \(  \U\oplus \U(2) \oplus \E_8(-2) \) & \(\D_4(-1)\oplus \U(2)\oplus\E_6(-2)\) &  \((2,10)\) & \(10\) & \(0\) \\
         
\( 46 \) & \(14\) & \(  \U\oplus[2]\oplus [-2]^{\oplus 9} \) & \([2]\oplus [-2] \oplus \E_6(-2) \oplus \D_4(-1) = \U\oplus \M \) &  \((2,10)\) & \(10\) & \(1\) \\

          \(47  \) & \(14\) & \(  \U(2)^{\oplus 2}\oplus [-2]^{\oplus 8}\) & \( \U(2) \oplus [-2]^{\oplus 9}\oplus [-6]\) &  \((2,10)\) & \(12\) & \(1\) \\

          \(  48\) & \(15\) & \( \E_8(-1) \oplus \U\oplus [2] \) & \(\E_8(-1)\oplus \U\oplus\A_2(-1)\oplus [-2] \) &  \((2,9)\) & \(1\) & \(1\) \\

          \(49  \) & \(15\) & \( \E_8(-1) \oplus [2]^{\oplus 2 }\oplus [-2] \) & \(\E_8(-1)\oplus \A_2(-1)\oplus [2]\oplus [-2]^{\oplus 2} \) &  \((2,9)\) & \(3\) & \(1\) \\

          \( 50 \) & \(15\) & \( \U^{\oplus 2}\oplus \D_4(-1) \oplus [-2]^{\oplus 3} \) & \(\E_8(-1)\oplus [2]\oplus [-2]^{\oplus 3} \oplus[-6]\) &  \((2,9)\) & \(5\) & \(1\) \\

          \(  51\) & \(15\) & \( \U\oplus \D_4(-1)  \oplus [2]\oplus [-2]^{\oplus 4} \) & \(\D_6(-1)\oplus\A_2(-1)\oplus  [2]\oplus [-2]^{\oplus 4} \) &  \((2,9)\) & \(7\) & \(1\) \\

          \( 52 \) & \(15\) & \( \D_4(-1) \oplus [2]^{\oplus 2 }\oplus [-2]^{\oplus 5} \) & \(\U\oplus\A_2(-1)\oplus [-2]^{\oplus 9} \) &  \((2,9)\) & \(9\) & \(1\) \\

           \(53  \) & \(15\) & \(  [2]^{\oplus 2} \oplus [-2]^{\oplus 9 } \) & \(\A_2(-1)\oplus [2]^{\oplus 2}\oplus [-2]^{\oplus 9} \) &  \((2,9)\) & \(11\) & \(1\) \\

 \( 54 \) & \(16\) & \( \E_8(-1) \oplus  [2]^{\oplus 2}  \) & \( \E_8(-1)\oplus \U\oplus \A_2(-1)\oplus[-2]^{\oplus 2}\) &  \((2,8)\) & \( 2\) & \(1\) \\

              \( 54 \) & \(16\) & \( \U^{\oplus2} \oplus \D_4(-1) \oplus [-2]^{\oplus 2}  \) & \( \E_8(-1)\oplus \D_4(-1)\oplus [2]\oplus[-6]\) &  \((2,8)\) & \( 4\) & \(1\) \\

            \( 55 \) & \(16\) & \( \U^{\oplus2}\oplus [-2]^{\oplus 6}\) & \( \D_6(-1)\oplus\A_2(-1)\oplus \U\oplus [-2]^{\oplus 4}\) &  \((2,8)\) & \( 6\) & \(1\) \\

            \( 56 \) & \(16\) & \( \U\oplus[2]\oplus [-2]^{\oplus 7}\) & \( \D_6(-1)\oplus\A_2(-1)\oplus [2]\oplus[-2]^{\oplus 5}\) &  \((2,8)\) & \( 8\) & \(1\) \\

             \( 57 \) & \(16\) & \( [2]^{\oplus 2}\oplus [-2]^{\oplus 8}\) & \( \U\oplus\A_2(-1)\oplus [-2]^{\oplus 10}\) &  \((2,8)\) & \( 10\) & \(1\) \\

             \( 58 \) & \(17\) & \( \U^{\oplus2} \oplus \D_{4}(-1) \oplus [-2] \) & \(\E_8(-1) \oplus \U\oplus\A_2(-1) \oplus [-2]^{\oplus 3} \) &  \((2,7)\) & \( 3\) & \(1\) \\
             
              \( 59 \) & \(17\) & \( \U^{\oplus2} \oplus [-2]^{\oplus 5} \) & \(\E_8(-1) \oplus \A_2(-1) \oplus [2] \oplus [-2]^{\oplus 4} \) &  \((2,7)\) & \( 5\) & \(1\) \\

              \( 60 \) & \(17\) & \( \U\oplus [2]^{\oplus 2} \oplus [-2]^{\oplus 5} \) & \(\E_8(-1) \oplus  [2]\oplus [-2]^{\oplus 5} \oplus[-6]\) &  \((2,7)\) & \( 7\) & \(1\) \\

              \(61  \) & \(17\) & \( [2]^{\oplus2} \oplus [-2]^{\oplus 7} \) & \(\D_6(-1)\oplus\A_2(-1) \oplus [2]\oplus [-2]^{\oplus 6} \) &  \((2,7)\) & \( 9\) & \(1\) \\
            
              \(62  \) & \(18\) & \( \U^{\oplus2} \oplus \D_{4}(-1) \) & \(\E_8(-1) \oplus   \U\oplus\A_2(-1)\oplus \D_4(-1) \) &  \((2,6)\) & \( 2\) & \(0\) \\

               \(  63\) & \(18\) & \( \U\oplus \U(2)\oplus \D_{4}(-1) \) & \(\E_8(-1) \oplus \A_2(-1)\oplus \U(2)\oplus \D_4(-1)  \) &  \((2,6)\) & \( 4\) & \(0\) \\

               \( 64 \) & \(18\) & \( \U\oplus \D_{4}(-1)\oplus [2]\oplus[-2] \) & \(\E_8(-1) \oplus \A_2(-1)\oplus \D_4(-1) \oplus [2]\oplus [-2] \) &  \((2,6)\) & \( 4\) & \(1\) \\

               \( 65 \) & \(18\) & \( \U(2)^{\oplus 2}\oplus \D_{4}(-1) \) & \(\E_8(-1) \oplus \U\oplus \E_6(-2)  \) &  \((2,6)\) & \( 6\) & \(0\) \\

                \( 66 \) & \(18\) & \( \D_{4}(-1) \oplus [2]^{\oplus2}\oplus [-2]^{\oplus2}\) & \(\E_8(-1) \oplus  \D_4(-1) \oplus [2]\oplus [-2]^{\oplus 2}\oplus [-6] \) &  \((2,6)\) & \( 6\) & \(1\) \\

                \(  67\) & \(18\) & \( [2]^{\oplus 2}\oplus [-2]^{\oplus 6} \) & \(\D_4(-1)  \oplus \D_6(-1) \oplus [2] \oplus [-2]^{\oplus3}\)&  \((2,6)\) & \( 8\) & \(1\) \\

                \(68  \) & \(19\) & \( \U^{\oplus 2}\oplus [-2]^{\oplus 3} \) & \( \E_8(-1)\oplus \D_6(-1)\oplus\A_2(-1)\oplus [2]\) &  \((2,5)\) & \( 3\) & \(1\) \\

                \( 69 \) & \(19\) & \( \U\oplus[2]\oplus [-2]^{\oplus 4} \) & \( \E_8(-1)\oplus \A_2(-1)\oplus\D_4(-1)\oplus [2]\oplus[-2]^{\oplus 2} \) &  \((2,5)\) & \( 5\) & \(1\) \\

                \( 70 \) & \(19\) & \( [2]^{\oplus 2}\oplus [-2]^{\oplus 5} \) & \( \E_8(-1)\oplus  \D_4(-1)\oplus [2]\oplus[-2]^{\oplus 3} \oplus [-6]\) &  \((2,5)\) & \( 7\) & \(1\) \\

                \( 71 \) & \(20\) & \( \U^{\oplus 2}\oplus [-2]^{\oplus 2} \) & \( \E_8(-1)^{\oplus 2}\oplus [2] \oplus [-6]\) &  \((2,4)\) & \( 2\) & \(1\) \\

                \( 72 \) & \(20\) & \( \U\oplus [2]\oplus [-2]^{\oplus 3} \) & \( \E_8(-1)\oplus\D_6(-1)\oplus\A_2(-1) \oplus  [2]\oplus [-2] \) &  \((2,4)\) & \( 4\) & \(1\) \\

                \( 73 \) & \(20\) & \( [2]^{\oplus 2}\oplus [-2]^{\oplus 4} \) & \( \E_8(-1)\oplus \A_2(-1)\oplus \D_4(-1)\oplus [2]\oplus [-2]^{\oplus 3}\) &  \((2,4)\) & \( 6\) & \(1\) \\

                \(  74\) & \(21\) & \( \U^{\oplus 2}\oplus [-2] \) & \( \E_8(-1)^{\oplus 2}\oplus \A_2(-1)\oplus [2] \) &  \((2,3)\) & \( 1\) & \(1\) \\

                \(  75\) & \(21\) & \( \U\oplus [2]\oplus [-2]^{\oplus 2} \) & \( \E_8(-1)^{\oplus 2}\oplus [2]\oplus [-2]\oplus[-6] \) &  \((2,3)\) & \( 3\) & \(1\) \\

              \(76  \) & \(21\) & \(  [2]^{\oplus 2}\oplus [-2]^{\oplus 3} \) & \( \E_8(-1)\oplus\D_6(-1)\oplus\A_2(-1) \oplus  [2]\oplus [-2]^{\oplus 2}\) &  \((2,3)\) & \( 5\) & \(1\) \\

              \(  77\) & \(22\) & \(  \U^{\oplus 2} \) & \( \E_8(-1)^{\oplus 2}\oplus \U\oplus\A_2(-1) \) &  \((2,2)\) & \( 0\) & \(0\) \\

              \( 78 \) & \(22\) & \(  \U\oplus\U(2) \) & \( \E_8(-1)^{\oplus 2}\oplus \A_2(-1)\oplus \U(2) \) &  \((2,2)\) & \( 2\) & \(0\) \\

              \(  79\) & \(22\) & \(  \U\oplus[2]\oplus [-2] \) & \( \E_8(-1)^{\oplus 2}\oplus \A_2(-1)\oplus [2]\oplus [-2] \) &  \((2,2)\) & \( 2\) & \(1\) \\

                \( 80 \) & \(22\) & \(  \U(2)^{\oplus 2} \) & \( \E_8(-1)\oplus \U\oplus \D_4(-1)^{\oplus2} \oplus \A_2(-1)\) &  \((2,2)\) & \( 4\) & \(0\) \\

                \( 81 \) & \(22\) & \(  [2]^{\oplus 2}\oplus [-2]^{\oplus 2} \) & \( \E_8(-1)^{\oplus 2}\oplus [2]\oplus [-2]^{\oplus 2}\oplus[-6] \) &  \((2,2)\) & \( 4\) & \(1\) \\

               \( 82 \) & \(23\) & \(  \U\oplus[2]  \) & \( \E_8(-1)^{\oplus 2}\oplus \U\oplus\A_2(-1)\oplus[-2]\) &  \((2,1)\) & \( 1\) & \(1\) \\

               \( 83 \) & \(23\) & \(  [2]^{\oplus 2} \oplus [-2] \) & \( \E_8(-1)^{\oplus 2}\oplus \A_2(-1)\oplus [2]\oplus[-2]^{\oplus 2}\) &  \((2,1)\) & \( 3\) & \(1\) \\

               \(  84\) & \(24\) & \(  [2]^{\oplus 2}  \) & \( \E_8(-1)^{\oplus 2}\oplus \U\oplus\A_2(-1)  \oplus[-2]^{\oplus 2}\) &  \((2,0)\) & \( 2\) & \(1\) \\

		\midrule
    	\end{longtable}}
\end{center}

 \begin{center}
\tiny{\begin{longtable}{lllllll}
		
		\caption{Pairs \((\bL^G,\bL_G)\) for \(G \subset \bO(\bL)\) of prime order \(p=2\) and \(\sgn(\bL_G)=\sgn(\bLambda_{G})-(1,0)\), i.e. non-trivial action on the discriminant group.}
	\label{tab:Lnotid} \\
		
		\toprule
	 No. & \(\rk(\bL_{G})\) &  \(\bL_G=(\bL^G)^{\perp \bL}\) & \(\bL^G=[2]^{\perp \bLambda^{G}}\) & \(\sgn(\bL^G)\) & \(a(\bL^G)\) & \(\delta(\bL^G)\) \\

		\midrule
		\endfirsthead
		
		\multicolumn{7}{c}%
		{\tablename\ \thetable{}, follows from previous page} \\
		\midrule
	No. & \(\rk(\bL_{G})\) &  \(\bL_G=(\bL^G)^{\perp \bL}\) & \(\bL^G=[2]^{\perp \bLambda^{G}}\) & \(\sgn(\bL^G)\) & \(a(\bL^G)\) & \(\delta(\bL^G)\)  \\
		\midrule
		\endhead
	
		\multicolumn{7}{c}{Continues on next page} \\
		\endfoot
		
		\bottomrule
		\endlastfoot

         \(1\) & \(3\) & \([2]^{\oplus2}\oplus[-6]\)& \(\E_8(-1)^{\oplus2} \oplus\U\oplus [-2]^{\oplus 3}\)& \((1,20)\) & \(3\) & \(1\) \\

  	\(2\) & \(4\) & \([2]^{\oplus2}\oplus\A_2(-1) \) & \(\E_8(-1)^{\oplus2} \oplus \U\oplus [-2]^{\oplus 2} \) &  \((1,19)\) & \(2\) & \(1\) \\

   \(3\) & \(4\) & \([2]^{\oplus2}\oplus[-2]\oplus[-6] \) & \(\E_8(-1)^{\oplus2} \oplus [2]\oplus [-2]^{\oplus 3} \) &  \((1,19)\) & \(3\) & \(1\) \\

		\(4\) & \(5\)  & \(\U\oplus[2]\oplus\A_2(-1)\) & \(\E_8(-1)^{\oplus2} \oplus \U\oplus[-2]\)&  \((1,18)\) & \(1\) & \(1\) \\

    \(6 \) & \(5\)  & \([2]^{\oplus2}\oplus[-2]\oplus\A_2(-1)\) & \(\E_8(-1)^{\oplus2} \oplus[2] \oplus[-2]^{\oplus 2}\)  & \((1,18)\) & \(3\) & \(1\) \\

        \(7\) & \(5\)  & \([2]^{\oplus2}\oplus[-2]^{\oplus 2}\oplus[-6]\) & \(\E_8(-1) \oplus \U\oplus\D_4(-1)^{\oplus2}\oplus[-2]\) &  \((1,18)\) & \(5\) & \(1\) \\

		\(8\) & \(6\) & \(\U^{\oplus2}\oplus\A_2(-1)\) & \(\E_{8}(-1)^{\oplus2} \oplus \U\) &  \((1,17)\) & \(0\) & \(0\) \\

\(9\) & \(6\)  & \(\U\oplus\U(2)\oplus\A_2(-1) \) & \(\E_{8}(-1)^{\oplus2} \oplus\U(2)\) &  \((1,17)\) & \(2\) & \(0\) \\

  	\( 11\) & \(6\) & \(\U\oplus[2]\oplus[-2]\oplus\A_2(-1)\) & \(\E_{8}(-1)^{\oplus2} \oplus [ 2 ]\oplus[-2]\) &  \((1,17)\) & \(2\) & \(1\) \\

  \(12\) & \(6\) & \(	\U(2)^{\oplus2}\oplus\A_2(-1) \) & \(\E_{8}(-1) \oplus \U\oplus \D_4(-1)^{\oplus2}\) &  \((1,17)\) & \(4\) & \(0\) \\

  	\(13 \) & \(6\)  & \([2]^{\oplus2}\oplus[-2]^{\oplus2}\oplus\A_2(-1) \) & \(\E_{8}(-1)\oplus\U\oplus\D_6(-1) \oplus [ -2 ]^{\oplus 2}\) &  \((1,17)\) & \(4\) & \(1\) \\

       \(14\) & \(6\) & \(	[2]^{\oplus2}\oplus [-2]^{\oplus3}\oplus[-6] \) & \(\E_{8}(-1) \oplus \U\oplus \D_4(-1) \oplus [-2]^{\oplus4}\) &  \((1,17)\) & \(6\) & \(1\) \\

		\(15\) & \(7\) & 
  \(\U^{\oplus2}\oplus[-2]\oplus\A_2(-1) \)& \(\E_8(-1)^{\oplus 2}\oplus [2]\)&  \((1,16)\) & \(1\) & \(1\) \\

         \(16 \) & \(7\) & \(\U\oplus[2]\oplus[-2]^{\oplus2}\oplus\A_2(-1)\) & \(\E_{8}(-1) \oplus \U\oplus \D_6(-1) \oplus [-2]\) &  \((1,16)\) & \(3\) & \(1\) \\

		\( 17\) & \(7\) & \(\D_4(-1)\oplus[2]^{\oplus2}\oplus[-6]\) & \(\E_{8}(-1) \oplus \U\oplus \D_4(-1) \oplus [-2]^{\oplus3}\) &  \((1,16)\) & \(5\) & \(1\) \\

        	\( 18\) & \(7\) & \([2]^{\oplus2}\oplus [-2]^{\oplus4}\oplus[-6] \) & \( \E_{8}(-1) \oplus \U\oplus [-2]^{\oplus 7} \) &  \((1,16)\) & \(7\) & \(1\) \\

		\(19\) & \(8\) & \(\U^{\oplus2}\oplus [-2]^{\oplus2}\oplus\A_2(-1) \)& \(\E_{8}(-1) \oplus \U\oplus \D_6(-1) \) &  \((1,15)\) & \(2\) & \(0\) \\

		\(20\) & \(8\) & \(\D_4(-1)\oplus [2]^{\oplus2}\oplus\A_2(-1)\) & \(\E_{8}(-1) \oplus \U\oplus \D_4(-1) \oplus [ -2 ]^{\oplus2}\) &  \((1,15)\) & \(4\) & \(1\) \\

	    \(21\) & \(8\) & \(\D_4(-1)\oplus [2]^{\oplus2}\oplus[-2]\oplus[-6]\) & \(\E_{8}(-1) \oplus \D_4(-1)\oplus [2 ] \oplus [ -2 ]^{\oplus3}\) &  \((1,15)\) & \(6\) & \(1\) \\

         \( 22 \) & \(8\) & \([2]^{\oplus 2}\oplus [-2]^{\oplus5}\oplus[-6]\) & \(\E_{8}(-1) \oplus [2 ] \oplus [ -2 ]^{\oplus7}\) &  \((1,15)\) & \(8\) & \(1\) \\

      \(23\) & \(9\) & \( \U\oplus\D_4(-1)\oplus[2]\oplus\A_2(-1)\) & \(\E_8(-1)\oplus \U\oplus\D_4(-1)\oplus [-2]\) &  \((1,14)\) & \(3\) & \(1\) \\

       \(24\) & \(9\) & \(\D_4(-1)\oplus[2]^{\oplus 2}\oplus[-2]\oplus\A_2(-1) \) & \(\E_8(-1) \oplus \D_4(-1)\oplus  [2] \oplus [-2]^{\oplus 2}\) &  \((1,14)\) & \(5\) & \(1\) \\

       \( 25\) & \(9\) & \([2]^{\oplus2}\oplus[-2]^{\oplus 5}\oplus\A_2(-1)\) & \(\E_8(-1) \oplus [2] \oplus [-2]^{\oplus 6}\) &  \((1,14)\) & \(7\) & \(1\) \\

        \(26\) & \(9\) & \([2]^{\oplus 2}\oplus [-2]^{\oplus6}\oplus[-6] \) & \(\U\oplus \D_4(-1)^{\oplus 2} \oplus [-2]^{\oplus 5} \) &  \((1,14)\) & \(9\) & \(1\) \\

\(27\) & \(10\) & \( \U^{\oplus 2}\oplus \D_4(-1)\oplus\A_2(-1) \) & \(\E_8(-1) \oplus \D_4(-1)\oplus  \U\) &  \((1,13)\) & \(2\) & \(0\) \\

  \(27\) & \(10\) & \( \U\oplus\U(2) \oplus \D_4(-1)\oplus\A_2(-1) \) & \(\E_8(-1) \oplus \D_4(-1)\oplus  \U(2)\) &  \((1,13)\) & \(4\) & \(0\) \\

        \(28\) & \(10\) & \( \U\oplus[2]\oplus[-2] \oplus \D_4(-1)\oplus\A_2(-1) \) & \(\E_8(-1) \oplus \D_4(-1)\oplus  [2]\oplus[-2]\) &  \((1,13)\) & \(4\) & \(1\) \\

\(29\) & \(10\) & \( \U(2)^{\oplus2}\oplus\D_4(-1)\oplus\A_2(-1) \) & \(\U\oplus\D_4(-1)^{\oplus 3}\) &  \((1,13)\) & \(6\) & \(0\) \\
     
        \(30\) & \(10\) & \( \D_6(-1) \oplus [2]^{\oplus3}\oplus[-6] \) & \( \E_8(-1)  \oplus  [2] \oplus [-2]^{\oplus 5} \) &  \((1,13)\) & \(6\) & \(1\) \\

\(31\) & \(10\) & \( \U\oplus \E_6(-2) \) & \(\U(2)\oplus\D_4(-1)^{\oplus 3} \) &  \((1,13)\) & \(8\) & \(0\) \\
        
        \(31\) & \(10\) & \( [2]^{\oplus2}\oplus [-2]^{\oplus 6}\oplus\A_2(-1) \) & \(\U\oplus\D_4(-1)^{\oplus 2} \oplus [-2]^{\oplus 4}\) &  \((1,13)\) & \(8\) & \(1\) \\

           \(32\) & \(10\) & \( \U(2)^{\oplus 2}\oplus \E_6(-2) \) & \(\U\oplus \E_8(-2)\oplus\D_4(-1)\) &  \((1,13)\) & \(10\) & \(0\) \\

        \(33\) & \(10\) & \( [2]^{\oplus2}\oplus [-2]^{\oplus 7}\oplus[-6] \) & \(\U\oplus\D_4(-1) \oplus [-2]^{\oplus 8}\) &  \((1,13)\) & \(10\) & \(1\) \\

        \(34\) & \(11\) & \( \E_8(-1)\oplus [2]^{\oplus2}\oplus  [-6]\) & \(\E_8(-1)\oplus \U\oplus [-2]^{\oplus 3}\) &  \((1,12)\) & \(3\) & \(1\) \\

        \(35\) & \(11\) & \(\U{\oplus2}\oplus [-2]^{\oplus6}\oplus\A_2(-1) \) & \(\E_8(-1) \oplus [2] \oplus  [-2]^{\oplus 4}\) &  \((1,12)\) & \(5\) & \(1\) \\

         \( 36\) & \(11\) & \( \D_4(-1)^{\oplus2}\oplus [2]^{\oplus2}\oplus  [-6] \) & \(\U\oplus \D_4(-1)^{\oplus2} \oplus [-2]^{\oplus 3} \) &  \((1,12)\) & \(7\) & \(1\) \\

       \( 37\) & \(12\) & \( \D_4(-1)\oplus [2]^{\oplus2}\oplus[-2]^{\oplus4}\oplus  [-6]  \) & \( \U\oplus \D_4(-1) \oplus [-2]^{\oplus 7}\) &  \((1,12)\) & \(9\) & \(1\) \\

        \( 38\) & \(11\) & \(  [2]^{\oplus2}\oplus[-2]^{\oplus8}\oplus  [-6]  \) & \( \U\oplus  [-2]^{\oplus 11}\) &  \((1,12)\) & \(11\) & \(1\) \\

        \( 39\) & \(12\) & \( \E_8(-1)\oplus [2]^{\oplus2} \oplus\A_2(-1) \) & \(\E_8(-1)\oplus\U\oplus [-2]^{\oplus2}\) &  \((1,11)\) & \(2\) & \(1\) \\

       \(40 \) & \(12\) & \( \E_8(-1)\oplus [2]^{\oplus2} \oplus[2]\oplus[-2]\) & \(\E_8(-1)\oplus [2]\oplus [-2]^{\oplus 3}\) &  \((1,11)\) & \(4\) & \(1\) \\
 
        \(  41\) & \(12\) & \( \D_4(-1)^{\oplus2}  \oplus [2]^{\oplus2}\oplus \A_2(-1)  \) & \(\U\oplus\D_4(-1)^{\oplus 2}  \oplus [-2]^{\oplus2} \) &  \((1,11)\) & \(6\) & \(1\) \\

        \(  42\) & \(12\) & \( \D_4(-1)  \oplus [2]^{\oplus2}\oplus[-2]^{\oplus4}\oplus \A_2(-1)  \) & \(\U\oplus\D_4(-1)  \oplus [-2]^{\oplus6} \) &  \((1,11)\) & \(8\) & \(1\) \\

        \(43 \) & \(12\) & \( \D_4(-1)  \oplus [2]^{\oplus2}\oplus[-2]^{\oplus5}\oplus[-6] \) & \(\D_4(-1) \oplus  [2] \oplus [-2]^{\oplus7}\) &  \((1,11)\) & \(10\) & \(1\) \\

         \(44 \) & \(12\) & \(  [2]^{\oplus 2} \oplus [-2]^{\oplus 9}\oplus[-6] \) & \([2]  \oplus [-2]^{\oplus 11}\) &  \((1,11)\) & \(12\) & \(1\) \\

         \( 45 \) & \(13\) & \( \E_8(-1)  \oplus \U\oplus[2]\oplus \A_2(-1)  \) & \(\E_8(-1)\oplus \U\oplus [-2]\) &  \((1,10)\) & \(1\) & \(1\) \\

            \(  46\) & \(13\) & \( \E_8(-1)  \oplus \oplus[2]\oplus[-2]^{\oplus 2}\A_2(-1)   \) & \(\E_8(-1)\oplus[2]\oplus[-2]^{\oplus 2}\) &  \((1,10)\) & \(3\) & \(1\) \\

     \( 47 \) & \(13\) & \( \U\oplus\D_4(-1)^{\oplus 2}\oplus[2] \oplus\A_2(-1)   \) & \(  \U\oplus \D_4(-1)^{\oplus 2} \oplus [-2]\) &  \((1,10)\) & \(5\) & \(1\) \\

           \( 48 \) & \(13\) & \( \U\oplus\D_4(-1)\oplus[2]\oplus[-2]^{\oplus4} \oplus\A_2(-1)  \) & \( \U\oplus \D_4(-1) \oplus [-2]^{\oplus 5}
        \) &  \((1,10)\) & \(7\) & \(1\) \\

 \( 49 \) & \(13\) & \(  \U\oplus[2]\oplus[-2]^{\oplus8} \oplus\A_2(-1) \) & \(\U\oplus [-2]^{\oplus 9}\) &  \((1,10)\) & \(9\) & \(1\) \\

          \(50  \) & \(13\) & \(  [2]^{\oplus2}\oplus[-2]^{\oplus9} \oplus\A_2(-1)\) & \( [2]\oplus [-2]^{\oplus 10}\) &  \((1,10)\) & \(11\) & \(1\) \\

          \(  51\) & \(14\) & \( \E_8(-1) \oplus \U^{\oplus2}\oplus \A_2(-1) \) & \(\E_8(-1) \oplus \U\) &  \((1,9)\) & \(0\) & \(0\) \\

   \(52  \) & \(14\) & \( \E_8(-1) \oplus \U\oplus\U(2)\oplus \A_2(-1)  \) & \(\E_8(-1) \oplus \U(2) \) &  \((1,9)\) & \(2\) & \(0\) \\

          \(53  \) & \(14\) & \( \E_8(-1) \oplus \U^{\oplus2}\oplus \A_2(-1)  \) & \(\E_8(-1) \oplus [2]\oplus [-2] \) &  \((1,9)\) & \(2\) & \(1\) \\

 \( 54\) & \(14\) & \( \E_8(-1) \oplus \U(2)^{\oplus2}\oplus\A_2(-1) \) & \(\U\oplus \D_4(-1)^{\oplus 2}  \) &  \((1,9)\) & \(4\) & \(0\) \\

          \( 54\) & \(14\) & \( \E_8(-1) \oplus \U\oplus[2]\oplus[-2]^{\oplus2}\oplus[-6]  \) & \(\U\oplus \D_6(-1)\oplus [-2]^{\oplus 2}  \) &  \((1,9)\) & \(4\) & \(1\) \\

   \(  55\) & \(14\) & \( \U\oplus\U(2)\oplus \D_4(-1)^{ \oplus2} \oplus\A_2(-1) \) & \( \U(2)\oplus \D_4(-1)^ {\oplus2}\) &  \((1,9)\) & \(6\) & \(0\) \\
   
          \( 56 \) & \(14\) & \( \U^{\oplus2}\oplus \D_4(-1) \oplus [-2]^{\oplus 4}\oplus\A_2(-1) \) & \(\U\oplus \D_4(-1) \oplus [-2]^{\oplus 4}\) &  \((1,9)\) & \(6\) & \(1\) \\

          \( 57 \) & \(14\) & \( \U^{\oplus2}\oplus \E_8(-2) \oplus\A_2(-1) \) & \(\U\oplus\E_8(-2)\) &  \((1,9)\) & \(8\) & \(0\) \\

          \( 58 \) & \(14\) & \( \U\oplus \D_4(-1) \oplus[2]\oplus [-2]^{\oplus 5}\oplus\A_2(-1) \) & \(\D_4(-1) \oplus [2]\oplus [-2]^{\oplus 5}\) &  \((1,9)\) & \(8\) & \(1\) \\

 \(59  \) & \(14\) & \(  \U\oplus \U(2)\oplus \E_8(-2)\oplus\A_2(-1) \) & \(\E_8(-2) \oplus \U(2)  \) &  \((1,9)\) & \(10\) & \(0\) \\

           \(60  \) & \(14\) & \(  \U\oplus[2] \oplus [-2]^{\oplus 9}\oplus\A_2(-1) \) & \([2] \oplus [-2]^{\oplus 9 } \) &  \((1,9)\) & \(10\) & \(1\) \\

            \( 61 \) & \(15\) & \(\E_8(-1)\oplus\U^{\oplus2} \oplus [-2]\oplus\A_2(-1)  \) & \( \E_8(-1)\oplus [2]\) &  \((1,8)\) & \( 1\) & \(1\) \\

 \( 62 \) & \(15\) & \(\E_8(-1)\oplus\U\oplus \D_6(-1)\oplus[2]^{\oplus2} \oplus [-6] \) & \(\U\oplus\D_6(-1)\oplus[-6]\) &  \((1,8)\) & \( 3\) & \(1\) \\

              \( 62 \) & \(15\) & \(\E_8(-1)\oplus\D_4(-1)\oplus[2]^{\oplus2} \oplus [-6] \) & \(\U\oplus\D_4(-1)\oplus[-2]^{\oplus 3}\) &  \((1,8)\) & \( 5\) & \(1\) \\

            \( 63 \) & \(15\) & \( \U\oplus\D_6(-1)\oplus[2]\oplus[-2]^{\oplus4} \oplus\A_2(-1) \) & \(\U\oplus [-2]^{\oplus 7}\) &  \((1,8)\) & \( 7\) & \(1\) \\

            \(64  \) & \(15\) & \( \D_6(-1)\oplus[2]^{\oplus2}\oplus[-2]^{\oplus5} \oplus\A_2(-1)\) & \(  [2]\oplus [-2]^{\oplus 8}\) &  \((1,8)\) & \( 9\) & \(1\) \\

\( 65 \) & \(16\) & \( \E_8(-1)\oplus\U\oplus \D_6(-1)\oplus\A_2(-1) \) & \( \U\oplus \D_{6}(-1)  \) &  \((1,7)\) & \( 2\) & \(1\) \\

             \( 65 \) & \(16\) & \( \E_8(-1)\oplus\D_4(-1)\oplus[2]^{\oplus2}\oplus\A_2(-1) \) & \( \U\oplus \D_{4}(-1) \oplus [-2]^{\oplus 2} \) &  \((1,7)\) & \( 4\) & \(1\) \\

              \( 66 \) & \(16\) & \( \E_8(-1)\oplus[2]^{\oplus2}\oplus[-2]^{\oplus4}\oplus\A_2(-1) \) & \( \U\oplus [-2]^{\oplus 6} \) &  \((1,7)\) & \( 6\) & \(1\) \\

               \( 67 \) & \(16\) & \(\E_8(-1)\oplus[2]^{\oplus2}\oplus[-2]^{\oplus5}\oplus[-6] \) & \( [2] \oplus [-2]^{\oplus 7} \) &  \((1,7)\) & \( 8\) & \(1\) \\

              \(  68\) & \(17\) & \(\E_8(-1)\oplus\U\oplus\D_4(-1)\oplus[2]\oplus\A_2(-1) \) & \(\U\oplus \D_{4}(-1)\oplus[-2] \) &  \((1,6)\) & \( 3\) & \(1\) \\

                \( 69 \) & \(17\) & \( \E_8(-1)\oplus\U\oplus[2]\oplus[-2]^{\oplus4}\oplus\A_2(-1)\) & \(\U\oplus [-2]^{\oplus 5}\) &  \((1,6)\) & \( 5\) & \(1\) \\

                \( 70 \) & \(17\) & \( \E_8(-1)\oplus[2]^{\oplus2}\oplus[-2]^{\oplus5}\oplus\A_2(-1)\) & \([2] \oplus [-2]^{\oplus 6}\) &  \((1,6)\) & \( 7\) & \(1\) \\

            \( 71 \) & \(18\) & \( \E_8(-1)\oplus\U\oplus\U(2)\oplus\D_4(-1)\oplus\A_2(-1) \) & \( \U(2)\oplus \D_4(-1) \) &  \((1,5)\) & \( 4\) & \(0\) \\

                \( 71 \) & \(18\) & \( \E_8(-1)\oplus\U^{\oplus2}\oplus[-2]^{\oplus4}\oplus\A_2(-1) \) & \( \U\oplus [-2]^{\oplus 4} \) &  \((1,5)\) & \( 4\) & \(1\) \\

                \( 72 \) & \(18\) & \( \E_8(-1)\oplus\U\oplus[2]\oplus[-2]^{\oplus5}\oplus\A_2(-1)\) & \( [2]\oplus [-2]^{\oplus 5}\) &  \((1,5)\) & \( 6\) & \(1\) \\

                \( 73 \) & \(19\) & \( \E_8(-1)^{\oplus2}\oplus[2]^{\oplus2}\oplus[-6] \) & \( \U\oplus [-2]^{\oplus 3} \) &  \((1,4)\) & \( 3\) & \(1\) \\

                \(  74\) & \(19\) & \( \E_8(-1)\oplus\U^{\oplus2}\oplus[-2]^{\oplus5}\oplus\A_2(-1) \) & \(  [2]\oplus [-2]^{\oplus 4} \) &  \((1,4)\) & \( 5\) & \(1\) \\

                \( 75 \) & \(20\) & \( \E_8(-1)^{\oplus2}\oplus[2]^{\oplus2}\oplus\A_2(-1) \) & \( \U\oplus [-2]^{\oplus 2}\) &  \((1,3)\) & \( 2\) & \(1\) \\

              \(  76\) & \(20\) & \(  \E_8(-1)^{\oplus2}\oplus[2]^{\oplus2}\oplus[-2]\oplus[-6]  \) & \( [2]\oplus [-2]^{\oplus 3}  \) &  \((1,3)\) & \( 4\) & \(1\) \\

              \( 77 \) & \(21\) & \(  \E_8(-1)^{\oplus2}\oplus\U\oplus[2]\oplus\A_2(-1)  \) & \( \U\oplus[-2] \) &  \((1,2)\) & \( 1\) & \(1\) \\

            \( 78 \) & \(21\) & \(  \E_8(-1)^{\oplus2}\oplus[2]^{\oplus2}\oplus[-2]\oplus\A_2(-1)  \) & \([2]\oplus [-2]^{\oplus2}  \) &  \((1,2)\) & \( 3\) & \(1\) \\

               \( 79 \) & \(22\) & \(  \E_8(-1)^{\oplus2}\oplus\U^{\oplus2}\oplus\A_2(-1)   \) & \( \U\) &  \((1,1)\) & \( 0\) & \(0\) \\

 \( 80\) & \(22\) & \(  \E_8(-1)^{\oplus2}\oplus\U\oplus\U(2)\oplus\A_2(-1)  \) & \( \U(2)\) &  \((1,1)\) & \( 2\) & \(0\) \\
               
               \( 80\) & \(22\) & \(  \E_8(-1)^{\oplus2}\oplus\U\oplus[2]\oplus[-2]\oplus\A_2(-1)  \) & \( [2]\oplus [-2]\) &  \((1,1)\) & \( 2\) & \(1\) \\

               \( 81 \) & \(23\) & \(  \E_8(-1)^{\oplus2}\oplus\U^{\oplus2}\oplus[2]\oplus\A_2(-1)  \) & \( [2]\)  &  \((1,0)\) & \( 1\) & \(1\) \\

	\midrule
    	\end{longtable}}
\end{center}

\subsection{\(p\geq 3 \)}
\begin{center}
\tiny{\begin{longtable}{lllllll}
		
		\caption{Pairs \((\bLambda^{G},\bLambda_{G})\) for \({G} \subset \bO(\bLambda)\) of prime order \(p \geq 3\) and \(\sgn(\bLambda_{G})=(2, \rk(\bLambda_{G})-2)\).}
	\label{tab:Lambda_p} \\
		
		\toprule
	 No. & \(\rk(\bLambda^{{G}})\) &  \(\bLambda_{G}\) & \(\bLambda^{G}\) & \(\sgn(\bLambda_{G})\) & \(a\) & \(p\)  \\

		\midrule
		\endfirsthead
		
		\multicolumn{7}{c}%
		{\tablename\ \thetable{}, follows from previous page} \\
		\midrule
	No. & \(\rk(\bLambda^{{G}})\) &  \(\bLambda_{G}\) & \(\bLambda^{G}\) & \(\sgn(\bLambda_{G})\) & \(a\) & \(p\)  \\
		\midrule
		\endhead
	
		\multicolumn{7}{c}{Continues on next page} \\
		\endfoot
		
		\bottomrule
		\endlastfoot

		\(1\) & \(24\) & \(\A_2\)& \(\U^{\oplus 3}\oplus \E_8(-1)^{\oplus 2}\oplus \A_2(-1)\)& \((2,0)\) & \(1\) & \(3\) \\
  
         \(2\) & \(22\) & \(\U^{\oplus2} \)& \(\U^{\oplus 3}\oplus \E_8(-1)^{\oplus 2}\)& \((2,2)\) & \(0\) & \(3\) \\

		 \(3\) & \(22\) & \(\U\oplus\U(3) \)& \(\U^{\oplus 2}\oplus\U(3)\oplus \E_8(-1)^{\oplus 2}\)& \((2,2)\) & \(2\) & \(3\) \\

   \(4\) & \(20\) & \( \U^{\oplus2}\oplus\A_2(-1) \)& \(\U^{\oplus 3}\oplus \E_8(-1)\oplus \E_6(-1)\)& \((2,4)\) & \(1\) & \(3\) \\
   
		\(5\) & \(20\) & \( \U\oplus\U(3)\oplus\A_2(-1) \)& \(\U^{\oplus 3}\oplus \E_8(-1)\oplus \A_2(-1)^{\oplus 3}\)& \((2,4)\) & \(3\)  & \(3\) \\

   \(6\) & \(18\) & \( \U^{\oplus 2}\oplus\A_2(-1)^{\oplus2}\)& \(\U^{\oplus 3}\oplus \E_6(-1)^{\oplus 2}\)& \((2,6)\) & \(2\) & \(3\) \\

\(7\) & \(18\) & \( \U\oplus \U(3)\oplus\A_2(-1)^{\oplus2} \)& \(\U^{\oplus 2}\oplus\U(3)\oplus \E_6(-1)^{\oplus 2}\)& \((2,6)\) & \(4\) & \(3\) \\

    \(8\) & \(16\) & \( \U^{\oplus2}\oplus\E_6(-1) \)& \(\U^{\oplus 3}\oplus\E_8(-1)\oplus \A_2(-1)\)& \((2,8)\) & \(1\) & \(3\) \\

 \(9\) & \(16\) & \( \U \oplus \U(3) \oplus \E_6(-1) \)& \(\U^{\oplus 3}\oplus\E_6(-1)\oplus \A_2(-1)^{\oplus 2}\)& \((2,8)\) & \(3\) & \(3\) \\

  \(10\) & \(16\) & \( \U\oplus\U(3)\oplus\A_2(-1)^{\oplus 3} \)& \(\U^{\oplus 2}\oplus \U(3)\oplus\E_6(-1)\oplus \A_2(-1)^{\oplus 2}\)& \((2,8)\) & \(5\) & \(3\) \\

 \(11\) & \(14\) & \( \U^{\oplus2}\oplus\E_8(-1) \)& \(\U^{\oplus3}\oplus\E_8(-1)\)& \((2,10)\) & \(0\) & \(3\) \\

        \(12\) & \(14\) & \( \U\oplus\U(3)\oplus\E_8(-1) \)& \(\U^{\oplus2}\oplus\U(3)\oplus\E_8(-1)\)& \((2,10)\) & \(2\) & \(3\) \\
        
   \(13\) & \(14\) & \( \U^{\oplus2}\oplus\A_2(-1)^{\oplus 4} \)& \(\U^{\oplus3}\oplus\A_2(-1)^{\oplus 4} \)& \((2,10)\) & \(4\) & \(3\) \\
		
	  \(14\) & \(14\) & \( \U\oplus\U(3)\oplus\A_2(-1)^{\oplus 4} \)& \(\U^{\oplus2}\oplus\U(3)\oplus\A_2(-1)^{\oplus 4} \)& \((2,10)\) & \(6\) & \(3\) \\

	  \(15\) & \(12\) & \( \U^{\oplus 2}\oplus \E_8(-1)\oplus\A_2(-1) \)& \(\U^{\oplus3}\oplus\E_6(-1) \)& \((2,12)\) & \(1\) & \(3\) \\

		\(16\) & \(12\) & \( \U\oplus\U(3)\oplus \E_8(-1)\oplus\A_2(-1) \)& \(\U^{\oplus2}\oplus\U(3)\oplus\E_6(-1) \)& \((2,12)\) & \(3\) & \(3\) \\

  	\(17\) & \(12\) & \( \U^{\oplus 2}\oplus \A_2(-1)^{\oplus 5} \)& \(\U^{\oplus2}\oplus\U(3)\oplus\A_2(-1)^{\oplus 3} \)& \((2,12)\) & \(5\) & \(3\) \\

	\(18\) & \(12\) & \( \U\oplus\U(3)\oplus \A_2(-1)^{\oplus 5} \)& \(\U^{\oplus2}\oplus \U(3) \oplus \E_6^*(-3) \)& \((2,12)\) & \(7\)  & \(3\) \\

  \(19\) & \(10\) & \( \U^{\oplus2}\oplus \E_6(-1)^{\oplus 2} \)& \(\U^{\oplus3}\oplus\A_2(-1)^{\oplus 2} \)& \((2,14)\) & \(2\) & \(3\) \\

  \(20\) & \(10\) & \( \U\oplus\U(3) \oplus\E_6(-1)^{\oplus 2} \)& \(\U^{\oplus2}\oplus\U(3)\oplus\A_2(-1)^{\oplus 2} \)& \((2,14)\) & \(4\)  & \(3\) \\

  \(21\) & \(10\) & \( \U\oplus\U(3) \oplus\E_6(-1)\oplus \A_2(-1)^{\oplus 3} \)& \(\U\oplus\U(3)^{\oplus2}\oplus\A_2(-1)^{\oplus 2} \)& \((2,14)\) & \(6\) & \(3\) \\

  \(22\) & \(10\) & \(\U\oplus\U(3) \oplus \A_2(-1)^{\oplus 6}  \)& \( \U(3)^{\oplus3}\oplus\A_2(-1)^{\oplus 2}\)& \((2,14)\) & \(8\) & \(3\) \\

	\(23\) & \(8\) & \( \U^{\oplus 2}\oplus \E_8(-1)\oplus\A_2(-1) \)& \(\U^{\oplus 3}\oplus\E_6(-1) \)& \((2,16)\) & \(1\) & \(3\) \\

        \(24\) & \(8\) & \( \U\oplus \U(3)\oplus \E_8(-1)\oplus\E_6(-1) \)& \(\U^{\oplus 2}\oplus\U(3)\oplus\A_2(-1) \)& \((2,16)\) & \(3\) & \(3\) \\

          \(25\) & \(8\) & \( \U^{\oplus 2}\oplus \E_6(-1)\oplus\A_2(-1)^{\oplus4} \)& \(\U\oplus \U(3)^{\oplus 2}\oplus\A_2(-1) \)& \((2,16)\) & \(5\) & \(3\) \\

          \(25\) & \(8\) & \( \U\oplus \U(3)\oplus \E_6(-1)\oplus\A_2(-1)^{\oplus4} \)& \( \U(3)^{\oplus 3}\oplus\A_2(-1) \)& \((2,16)\) & \(7\) & \(3\) \\

		 \(26\) & \(6\) & \( \U^{\oplus 2}\oplus\E_8(-1)^{\oplus2} \)& \(\U^{\oplus 3} \)& \((2,18)\) & \(0\) & \(3\) \\
		
	 \(27\) & \(6\) & \( \U\oplus \U(3)\oplus\E_8(-1)^{\oplus2} \)& \(\U^{\oplus 2}\oplus\U(3) \)& \((2,18)\) & \(2\) & \(3\) \\
     
    \(28\) & \(6\) & \( \U^{\oplus 2}\oplus\E_8(-1)\oplus\A_2(-1)^{\oplus4} \)& \(\U\oplus\U(3)^{\oplus 2} \)& \((2,18)\) & \(4\) & \(3\) \\

          \(29\) & \(6\) & \( \U^{\oplus 2}\oplus\E_6(-1)\oplus\A_2(-1)^{\oplus5} \)& \(\U(3)^{\oplus 3} \)& \((2,18)\) & \(6\) & \(3\) \\

     \(30\) & \(4\) & \( \U^{\oplus 2}\oplus\E_8(-1)^{\oplus2}\oplus\A_2(-1)\)& \(\U\oplus\A_2 \)& \((2,20)\) & \(1\) & \(3\) \\

      \(31\) & \(4\) & \( \U\oplus \U(3)\oplus\E_8(-1)^{\oplus2}\oplus\A_2(-1)\)& \(\U(3)\oplus\A_2 \)& \((2,20)\) & \(3\) & \(3\) \\
\hline

         \(32\) & \(22\) & \( \U\oplus\h_5\)& \(\U^{\oplus 2}\oplus\h_5\oplus\E_8(-1)^{\oplus 2} \)& \((2,2)\) & \(1\) & \(5\) \\

          \(33\) & \(18\) & \( \U\oplus \h_5 \oplus\A_4(-1)\)& \(\U^{\oplus2 }\oplus\h_5\oplus\E_8(-1)\oplus \A_4(-1)\)& \((2,6)\) & \(2\) & \(5\) \\

          \(34\) & \(14\) & \( \U\oplus\h_5\oplus\E_8(-1)\)& \(\U^{\oplus 2}\oplus \h_5\oplus\E_8(-1) \)& \((2,10)\) & \(1\) & \(5\) \\

          \(35\) & \(14\) & \( \U\oplus\h_5\oplus\A_4(-1)^{\oplus 2}\)& \(\U^{\oplus 2}\oplus \h_5\oplus\A_4(-1)^{\oplus 2} \)& \((2,10)\) & \(3\) & \(5\) \\

          \(36\) & \(10\) & \( \U\oplus\h_5\oplus\E_8(-1)\oplus\A_4(-1)\)& \(\U^{\oplus 2}\oplus\h_5\oplus \A_4(-1) \)& \((2,14)\) & \(2\) & \(5\) \\

          \(37\) & \(10\) & \( \U\oplus\h_5\oplus\A_4(-1)^{\oplus3}\)& \(\U\oplus\U(5)\oplus\h_5\oplus \A_4(-1) \)& \((2,14)\) & \(4\) & \(5\) \\

          \(38\) & \(6\) & \( \U\oplus\h_5\oplus\E_8(-1)^{\oplus 2}\)& \(\U^{\oplus 2}\oplus\h_5 \)& \((2,18)\) & \(1\) & \(5\) \\

          \(39
        \) & \(6\) & \( \U\oplus\h_5\oplus\E_8(-1)\oplus\A_4(-1)^{\oplus 2}\)& \(\U\oplus\U(5) \oplus \h_5\)& \((2,18)\) & \(3\) & \(5\) \\
  
           \(40\) & \(6\) & \( \U(5)^{\oplus2}\oplus\A_4(-1)\)& \(\U(5)^{\oplus 2}\oplus\h_5\)& \((2,18)\) & \(5\) & \(5\) \\
\hline

\(41\) & \(20\) & \( \U^{\oplus 2}\oplus\K_7(-1)\)& \(\U^{\oplus 3}\oplus\E_8(-1)\oplus\A_6(-1)\)& \((2,4)\) & \(1\) & \(7\) \\

\(42\) & \(14\) & \( \U^{\oplus 2}\oplus\E_8(-1)\)& \(\U^{\oplus 3}\oplus\E_8(-1)\)& \((2,10)\) & \(0\) & \(7\) \\

\(43\) & \(14\) & \( \U\oplus \U(7)\oplus\E_8(-1)\)& \(\U^{\oplus 2}\oplus\U(7)\oplus\E_8(-1)\)& \((2,10)\) & \(2\) & \(7\) \\

\(44\) & \(8\) & \( \U^{\oplus 2}\oplus\E_8(-1)\oplus\A_6(-1)\)& \(\U^{\oplus 3}\oplus\K_7\)& \((2,16)\) & \(1\) & \(7\) \\

\(45\) & \(8\) & \( \U\oplus \U(7)\oplus\E_8(-1)\oplus\A_6(-1)\)& \(\U^{\oplus 2}\oplus \U(7)\oplus\K_7\)& \((2,16)\) & \(3\) & \(7\) \\

\hline

\(46\) & \(16\) & \( \K_{11}\oplus\E_8(-1)\)& \(\U^{\oplus 3}\oplus  \A_{10}(-1)\)& \((2,8)\) & \(1\) & \(11\) \\

\(47\) & \(6\) & \( \U^{\oplus2}\oplus\E_8(-1)^{\oplus 2}\)& \(\U^{\oplus 3} \)& \((2,18)\) & \(0\) & \(11\) \\

\(48\) & \(6\) & \( \U\oplus\U(11)\oplus\E_8(-1)^{\oplus 2}\)& \(\U^{\oplus 2}\oplus\U(11) \)& \((2,18)\) & \(2\) & \(11\) \\

\hline

\(49\) & \(14\) & \( \U\oplus\h_{13}\oplus\E_8(-1)\)& \(\U^{\oplus 2}\oplus\h_{13}\oplus\E_8(-1) \)& \((2,10)\) & \(1\) & \(13\) \\

\hline 
\(50\) & \(10\) & \( \U^{\oplus 2}\oplus\E_8(-1)\oplus\bL_{17}(-1)\)& \(\U^{\oplus 3}\oplus\bL_{17}(-1)\)& \((2,14)\) & \(1\) & \(17\) \\

\hline 
\(51\) & \(8\) & \( \K_{19}\oplus\E_8(-1)^{\oplus2}\)& \( \U^{\oplus 3}\oplus \K_{19}(-1)\)& \((2,16)\) & \(1\) & \(19\) \\
\hline
\(52\) & \(4\) & \( \U^{\oplus2}\oplus\E_8(-1)^{\oplus 2}\oplus\K_{23}(-1)\)& \(\U\oplus\K_{23}\)& \((2,20)\) & \(1\) & \(23\) \\

	\midrule
    	\end{longtable}}
\end{center}

\begin{center}
\tiny{\begin{longtable}{lllllll}
		
		\caption{Pairs \((\bL^G,\bL_G)\) for \(G \subset \bO(\bL)\) of prime order \(p \geq 3\) and \(\sgn(\bL_G)=\sgn(\bLambda_{G})\).}
	\label{tab:L_p} \\
		
		\toprule
	 No. & \(\rk(\bL^{G})\) &  \(\bL_G\) & \(\bL^G\) & \(\sgn(\bL_G)\) & \(H\) & \(p\)  \\

		\midrule
		\endfirsthead
		
		\multicolumn{7}{c}%
		{\tablename\ \thetable{}, follows from previous page} \\
		\midrule
	No. & \(\rk(\bL^{G})\) &  \(\bL_G\) & \(\bL^G\) & \(\sgn(\bL_G)\) & \(H\) & \(p\)  \\
		\midrule
		\endhead
	
		\multicolumn{7}{c}{Continues on next page} \\
		\endfoot
		
		\bottomrule
		\endlastfoot

		\(1\) & \(22\) & \(\A_2\)& \(\U\oplus \E_8(-1)^{\oplus 2}\oplus \A_2(-1)^{\oplus 2}\)& \((2,0)\) & \(\id\) & \(3\) \\
  
         \(2\) & \(20\) & \(\U^{\oplus2} \)& \(\U\oplus \E_8(-1)^{\oplus 2}\oplus\A_2(-1)\)& \((2,2)\) & \(\id\) & \(3\) \\

		 \(3\) & \(20\) & \(\U\oplus\U(3) \)& \(\U(3)\oplus \E_8(-1)^{\oplus 2}\oplus\A_2(-1)\)& \((2,2)\) & \(\id\) & \(3\) \\
   
    \(4\) & \(20\) & \(\U\oplus\U(3) \)& \(\U\oplus \E_8(-1)^{\oplus 2}\oplus \A_2(-1)\)& \((2,2)\) & \(\mathbb{Z}/3\mathbb{Z}\) & \(3\) \\

   \(5\) & \(18\) & \( \U^{\oplus2}\oplus\A_2(-1) \)& \(\U\oplus \E_8(-1)\oplus \E_6(-1)\oplus\A_2(-1)\)& \((2,4)\) & \(\id\) & \(3\) \\

   \(6\) & \(18\) & \( \U^{\oplus2}\oplus\A_2(-1) \)& \(\U\oplus \E_8(-1)^{\oplus2} \)& \((2,4)\) & \(\mathbb{Z}/3\mathbb{Z}\) & \(3\) \\

		\(7\) & \(18\) & \( \U\oplus\U(3)\oplus\A_2(-1) \)& \(\U\oplus \E_8(-1)\oplus \A_2(-1)^{\oplus 4}\)& \((2,4)\) & \(\id\) & \(3\) \\

  	\(8\) & \(18\) & \( \U\oplus\U(3)\oplus\A_2(-1) \)& \(\U\oplus \E_8(-1)\oplus \E_6(-1)\oplus\A_2(-1)\)& \((2,4)\) & \(\mathbb{Z}/3\mathbb{Z}\) & \(3\) \\

   \(9\) & \(16\) & \( \U^{\oplus 2}\oplus\A_2(-1)^{\oplus 2} \)& \(\U\oplus \E_6(-1)^{\oplus 2}\oplus\A_2(-1)\)& \((2,6)\) & \(\id\) & \(3\) \\

\(10\) & \(16\) & \( \U^{\oplus 2}\oplus\A_2(-1)^{\oplus 2} \)& \(\U\oplus \E_8(-1)\oplus\E_6(-1)\)& \((2,6)\) & \(\mathbb{Z}/3\mathbb{Z}\) & \(3\) \\
   
\(11\) & \(16\) & \( \U\oplus \U(3)\oplus\A_2(-1)^{\oplus 2} \)& \(\U(3)\oplus \E_6(-1)^{\oplus 2}\oplus\A_2(-1)\)& \((2,6)\) & \(\id\) & \(3\) \\

\(12\) & \(16\) & \( \U\oplus \U(3)\oplus\A_2(-1)^{\oplus 2} \)& \(\U\oplus \E_6(-1)^{\oplus 2}\oplus\A_2(-1)\)& \((2,6)\) & \(\mathbb{Z}/3\mathbb{Z}\) & \(3\) \\

    \(13\) & \(14\) & \( \U^{\oplus2}\oplus\E_6(-1) \)& \(\U\oplus\E_8(-1)\oplus \A_2(-1)^{\oplus 2}\)& \((2,8)\) & \(\id\) & \(3\) \\

 \(14\) & \(14\) & \( \U^{\oplus2}\oplus\A_2(-1)^{\oplus 3} \)& \(\U\oplus\E_6(-1)\oplus \A_2(-1)^{\oplus 3}\)& \((2,8)\) & \(\id\) & \(3\) \\

  \(15\) & \(14\) & \( \U^{\oplus2}\oplus\A_2(-1)^{\oplus 3} \)& \(\U\oplus\E_6(-1)^{\oplus 2}\)& \((2,8)\) & \(\mathbb{Z}/3\mathbb{Z}\) & \(3\) \\

  \(16\) & \(14\) & \( \U\oplus\U(3)\oplus\A_2(-1)^{\oplus 3} \)& \( \U(3)\oplus\E_6(-1)\oplus \A_2(-1)^{\oplus 3}\)& \((2,8)\) & \(\id\) & \(3\) \\

   \(17\) & \(14\) & \( \U\oplus\U(3)\oplus\A_2(-1)^{\oplus 3} \)& \( \U\oplus\E_6(-1)\oplus \A_2(-1)^{\oplus 3}\)& \((2,8)\) & \(\mathbb{Z}/3\mathbb{Z}\) & \(3\) \\

 \(18\) & \(12\) & \( \U^{\oplus2}\oplus\E_8(-1) \)& \(\U\oplus\E_8(-1)\oplus\A_2(-1)\)& \((2,10)\) & \(\id\) & \(3\) \\

        \(19\) & \(12\) & \( \U\oplus\U(3)\oplus\E_8(-1) \)& \(\U(3)\oplus\E_8(-1)\oplus \A_2(-1)\)& \((2,10)\) & \(\id\) & \(3\) \\

        \(20\) & \(12\) & \( \U\oplus\U(3)\oplus\E_8(-1) \)& \(\U\oplus\E_8(-1)\oplus\A_2(-1)\)& \((2,10)\) & \(\mathbb{Z}/3\mathbb{Z}\) & \(3\) \\

   \(21\) & \(12\) & \( \U^{\oplus2}\oplus\A_2(-1)^{\oplus 4} \)& \(\U\oplus\A_2(-1)^{\oplus 5} \)& \((2,10)\) & \(\id\) & \(3\) \\

    \(22\) & \(12\) & \( \U^{\oplus2}\oplus\A_2(-1)^{\oplus 4} \)& \(\U(3)\oplus\E_8(-1)\oplus \A_2(-1)\)& \((2,10)\) & \(\mathbb{Z}/3\mathbb{Z}\) & \(3\) \\

	  \(23\) & \(12\) & \( \U\oplus\U(3)\oplus\A_2(-1)^{\oplus 4} \)& \(\U(3)\oplus\A_2(-1)^{\oplus 5} \)& \((2,10)\) & \(\id\) & \(3\) \\

   \(24\) & \(12\) & \( \U\oplus\U(3)\oplus\A_2(-1)^{\oplus 4} \)& \(\U\oplus\A_2(-1)^{\oplus 5} \)& \((2,10)\) & \(\mathbb{Z}/3\mathbb{Z}\) & \(3\) \\

	  \(25\) & \(10\) & \( \U^{\oplus 2}\oplus \E_8(-1)\oplus\A_2(-1) \)& \(\U\oplus\E_6(-1) \oplus\A_2(-1)\)& \((2,12)\) & \(\id\) & \(3\) \\

    \(26\) & \(10\) & \( \U^{\oplus 2}\oplus \E_8(-1)\oplus\A_2(-1) \)& \(\U\oplus\E_8(-1)\)& \((2,12)\) & \(\mathbb{Z}/3\mathbb{Z}\) & \(3\) \\

		\(27\) & \(10\) & \( \U\oplus\U(3)\oplus \E_8(-1)\oplus\A_2(-1) \)& \(\U(3)\oplus\E_6(-1)\oplus\A_2(-1) \)& \((2,12)\) & \(\id\) & \(3\) \\

  \(28\) & \(10\) & \( \U\oplus\U(3)\oplus \E_8(-1)\oplus\A_2(-1) \)& \(\U\oplus\E_6(-1) \oplus\A_2(-1) \)& \((2,12)\) & \(\mathbb{Z}/3\mathbb{Z}\) & \(3\) \\

  	\(29\) & \(10\) & \( \U^{\oplus 2}\oplus \A_2(-1)^{\oplus 5} \)& \(\U(3)\oplus\A_2(-1)^{\oplus 4} \)& \((2,12)\) & \(\id\) & \(3\) \\

   \(30\) & \(10\) & \( \U^{\oplus 2}\oplus \A_2(-1)^{\oplus 5} \)& \( \U(3)\oplus\E_6(-1)\oplus\A_2(-1)\)& \((2,12)\) & \(\mathbb{Z}/3\mathbb{Z}\) & \(3\) \\

	\(31\) & \(10\) & \( \U\oplus\U(3)\oplus \A_2(-1)^{\oplus 5} \)& \( \U(3)\oplus\E_6^*(-3)\oplus\A_2(-1) \)& \((2,12)\) & \(\id\) & \(3\) \\

	\(32\) & \(10\) & \( \U\oplus\U(3)\oplus \A_2(-1)^{\oplus 5} \)& \( \U(3)\oplus\A_2(-1)^{\oplus 4} \)& \((2,12)\) & \(\id\) & \(3\) \\

  \(33\) & \(8\) & \( \U^{\oplus2}\oplus \E_6(-1)^{\oplus 2} \)& \(\U\oplus\A_2(-1)^{\oplus 3} \)& \((2,14)\) & \(\id\) & \(3\) \\

   \(34\) & \(8\) & \( \U^{\oplus2}\oplus \E_6(-1)^{\oplus 2} \)& \(\U\oplus\E_6(-1) \)& \((2,14)\) & \(\mathbb{Z}/3\mathbb{Z}\) & \(3\) \\

   \(35\) & \(8\) & \( \U\oplus\U(3) \oplus\E_6(-1)^{\oplus 2} \)& \(\U(3)\oplus\A_2(-1)^{\oplus 3} \)& \((2,14)\) & \(\id\) & \(3\) \\

  \(36\) & \(8\) & \( \U\oplus\U(3) \oplus\E_6(-1)^{\oplus 2} \)& \(\U(3)\oplus\E_6(-1) \)& \((2,14)\) & \(\mathbb{Z}/3\mathbb{Z}\) & \(3\) \\

  \(37 \) & \(8\) & \( \U\oplus\U(3) \oplus\E_6(-1)\oplus \A_2(-1)^{\oplus 3} \)& \(\U(3)\oplus\E_6^*(-3) \)& \((2,14)\) & \(\id\) & \(3\) \\

  \(38 \) & \(8\) & \( \U\oplus\U(3) \oplus\E_6(-1)\oplus \A_2(-1)^{\oplus 3} \)& \(\U(3)\oplus\A_2(-1)^{\oplus 3} \)& \((2,14)\) & \(\mathbb{Z}/3\mathbb{Z}\) & \(3\) \\

   \(39 \) & \(8\) & \( \U\oplus\U(3) \oplus \A_2(-1)^{\oplus 6}  \)& \( \U(3)\oplus\E_6^*(-3) \)& \((2,14)\) & \(\mathbb{Z}/3\mathbb{Z}\) & \(3\) \\

	\(40\) & \(6\) & \( \U^{\oplus 2}\oplus \E_8(-1)\oplus\A_2(-1) \)& \(\U\oplus \E_6(-1)^{\oplus 2} \)& \((2,16)\) & \(\id\) & \(3\) \\

        \(41\) & \(6\) & \( \U\oplus \U(3)\oplus \E_8(-1)\oplus\E_6(-1) \)& \(\U(3)\oplus\A_2(-1)^{\oplus 2} \)& \((2,16)\) & \(\id\) & \(3\) \\

        \(42\) & \(6\) & \( \U\oplus \U(3)\oplus \E_8(-1)\oplus\E_6(-1) \)& \(\U\oplus \A_2(-1)^{\oplus 2} \)& \((2,16)\) & \(\mathbb{Z}/3\mathbb{Z}\) & \(3\) \\

          \(43\) & \(6\) & \( \U^{\oplus 2}\oplus \E_6(-1)\oplus\A_2(-1)^{\oplus4} \)& \(\U(3)\oplus \A_2(-1)^{\oplus 2} \)& \((2,16)\) & \(\mathbb{Z}/3\mathbb{Z}\) & \(3\) \\

		 \(44\) & \(4\) & \( \U^{\oplus 2}\oplus\E_8(-1)^{\oplus2} \)& \(\U\oplus \A_2(-1)\)& \((2,18)\) & \(\id\) & \(3\) \\
		
	 \(45\) & \(4\) & \( \U\oplus \U(3)\oplus\E_8(-1)^{\oplus2} \)& \(\U(3)\oplus\A_2(-1) \)& \((2,18)\) & \(\id\) & \(3\) \\

  \(46\) & \(4\) & \( \U\oplus \U(3)\oplus\E_8(-1)^{\oplus2}\) &\( \U\oplus \A_2(-1)\)& \((2,18)\) & \(\mathbb{Z}/3\mathbb{Z}\) & \(3\) \\
     
    \(47\) & \(4\) & \( \U^{\oplus 2}\oplus\E_8(-1)\oplus\A_2(-1)^{\oplus4} \)& \(\U(3)\oplus\A_2(-1) \)& \((2,18)\) & \(\mathbb{Z}/3\mathbb{Z}\) & \(3\) \\

     \(48\) & \(2\) & \( \U^{\oplus 2}\oplus\E_8(-1)^{\oplus2}\oplus\A_2(-1)\)& \(\U(3)  \)& \((2,20)\) & \(\id\) & \(3\) \\
     
      \(49\) & \(2\) & \( \U^{\oplus 2}\oplus\E_8(-1)^{\oplus2}\oplus\A_2(-1)\)& \(\U  \)& \((2,20)\) & \(\mathbb{Z}/3\mathbb{Z}\) & \(3\) \\

      \(50\) & \(2\) & \( \U\oplus \U(3)\oplus\E_8(-1)^{\oplus2}\oplus\A_2(-1)\)& \(\U(3) \)& \((2,20)\) & \(\mathbb{Z}/3\mathbb{Z}\) & \(3\) \\

\hline

         \(51\) & \(20\) & \( \U\oplus\h_5\)& \(\h_5\oplus\E_8(-1)^{\oplus 2}\oplus\A_2(-1) \)& \((2,2)\) & \(\id\) & \(5\) \\

          \(52\) & \(16\) & \( \U\oplus \h_5 \oplus\A_4(-1)\)& \(\h_5\oplus\E_8(-1)\oplus \A_4(-1)\oplus\A_2(-1)\)& \((2,6)\) & \(\id\) & \(5\) \\

          \(53\) & \(12\) & \( \U\oplus\h_5\oplus\E_8(-1)\)& \(\h_5\oplus\E_8(-1)\oplus\A_2(-1) \)& \((2,10)\) & \(\id\) & \(5\) \\

          \(54\) & \(12\) & \( \U\oplus\h_5\oplus\A_4(-1)^{\oplus 2}\)& \( \h_5\oplus\A_4(-1)^{\oplus 2}\oplus\A_2(-1) \)& \((2,10)\) & \(\id\) & \(5\) \\

          \(55\) & \(8\) & \( \U\oplus\h_5\oplus\E_8(-1)\oplus\A_4(-1)\)& \(\h_5\oplus \A_4(-1)\oplus\A_2(-1) \)& \((2,14)\) & \(\id\) & \(5\) \\

          \(56\) & \(8\) & \( \U\oplus\h_5\oplus\A_4(-1)^{\oplus3}\)& \(\U(5)\oplus \A_4(-1)\oplus\N_{15}(-1)\)& \((2,14)\) & \(\id\) & \(5\) \\

          \(57\) & \(4\) & \( \U\oplus\h_5\oplus\E_8(-1)^{\oplus 2}\)& \(\h_5 \oplus\A_2(-1)\)& \((2,18)\) & \(\id\) & \(5\) \\

          \(58\) & \(4\) & \( \U\oplus\h_5\oplus\E_8(-1)\oplus\A_4(-1)^{\oplus 2}\)& \(\U(5) \oplus \N_{15}(-1)\)& \((2,18)\) & \(\id\) & \(5\) \\

\hline

\(59\) & \(18\) & \( \U^{\oplus 2}\oplus\h_7\)& \(\U\oplus\E_8(-1)\oplus\A_6(-1)\oplus\A_2(-1)\)& \((2,4)\) & \(\id\) & \(7\) \\

\(60\) & \(12\) & \( \U^{\oplus 2}\oplus\E_8(-1)\)& \(\U\oplus\E_8(-1)\oplus\A_2(-1)\)& \((2,10)\) & \(\id\) & \(7\) \\

\(61\) & \(12\) & \( \U\oplus \U(7)\oplus\E_8(-1)\)& \(\U(7)\oplus\E_8(-1)\oplus\A_2(-1)\)& \((2,10)\) & \(\id\) & \(7\) \\

\(62\) & \(6\) & \( \U^{\oplus 2}\oplus\E_8(-1)\oplus\A_6(-1)\)& \(\U\oplus\K_7(-1)\oplus\A_2(-1)\)& \((2,16)\) & \(\id\) & \(7\) \\

\(63\) & \(6\) & \( \U\oplus \U(7)\oplus\E_8(-1)\oplus\A_6(-1)\)& \(\U(7)\oplus\K_7(-1)\oplus\A_2(-1)\)& \((2,16)\) & \(\id\) & \(7\) \\
\hline

\(64\) & \(14\) & \( \U\oplus\h_{11}\oplus\E_8(-1)\)& \( \h_{11}\oplus\E_8(-1)\oplus\A_2(-1)\)& \((2,8)\) & \(\id\) & \(11\) \\

\(65\) & \(4\) & \( \U^{\oplus2}\oplus\E_8(-1)^{\oplus 2}\)& \(\U\oplus\A_2(-1) \)& \((2,18)\) & \(\id\) & \(11\) \\

\(66\) & \(4\) & \( \U\oplus\U(11)\oplus\E_8(-1)^{\oplus 2}\)& \(\U(11)\oplus\A_2(-1) \)& \((2,18)\) & \(\id\) & \(11\) \\

\hline

\(67\) & \(12\) & \( \U\oplus\h_{13}\oplus\E_8(-1)\)& \(\h_{13}\oplus\E_8(-1)\oplus\A_2(-1) \)& \((2,10)\) & \(\id\) & \(13\) \\
\hline 
\(68\) & \(6\) & \( \U^{\oplus 2}\oplus\E_8(-1)\oplus\bL_{17}\)& \(\U\oplus\bL_{17}\oplus\A_2(-1) \)& \((2,14)\) & \(\id\) & \(17\) \\

\hline 
\(69\) & \(6\) & \( \K_{19}\oplus\E_8(-1)^{\oplus2}\)& \(\U\oplus\A_2(-1)\oplus\K_{19}(-1)\)& \((2,16)\) & \(\id\) & \(19\) \\
\hline
\(70\) & \(2\) & \( \U^{\oplus2}\oplus\E_8(-1)^{\oplus 2}\oplus\K_{23}\)& \(\N_{69}(-1) \)& \((2,20)\) & \(\id\) & \(23\) \\

	\midrule
    	\end{longtable}}
\end{center}

\bibliographystyle{amsplain}
\bibliography{Biblio}

\providecommand{\bysame}{\leavevmode\hbox to3em{\hrulefill}\thinspace}
\providecommand{\MR}{\relax\ifhmode\unskip\space\fi MR }
\providecommand{\MRhref}[2]{%
  \href{http://www.ams.org/mathscinet-getitem?mr=#1}{#2}
}
\providecommand{\href}[2]{#2}
\begin{thebibliography}{10}

\bibitem{beauville2011antisymplectic}
Arnaud Beauville, \emph{Antisymplectic involutions of holomorphic symplectic manifolds}, Journal of topology \textbf{4} (2011), no.~2, 300--304.

\bibitem{Beauville_Donagi_Droites}
Arnaud Beauville and Ron Donagi, \emph{La vari\'{e}t\'{e} des droites d'une hypersurface cubique de dimension {$4$}}, C. R. Acad. Sci. Paris S\'{e}r. I Math. \textbf{301} (1985), no.~14, 703--706.

\bibitem{noi_e_giovenzani}
Simone Billi, Franco Giovenzana, Luca Giovenzana, and Annalisa Grossi, \emph{Fixed loci of natural automorphisms of {LSV} manifolds}, In preparation.

\bibitem{boissiere2012automorphismes}
Samuel Boissi{\`e}re, \emph{Automorphismes naturels de l'espace de {Douady} de points sur une surface}, Canadian Journal of Mathematics \textbf{64} (2012), no.~1, 3--23.

\bibitem{boissiere2016isometries}
Samuel Boissi\`ere, Chiara Camere, Giovanni Mongardi, and Alessandra Sarti, \emph{Isometries of ideal lattices and hyperk\"{a}hler manifolds}, Int. Math. Res. Not. IMRN (2016), no.~4, 963--977.

\bibitem{boissiere2016classification}
Samuel Boissi\`ere, Chiara Camere, and Alessandra Sarti, \emph{Classification of automorphisms on a deformation family of hyper-{K}\"{a}hler four-folds by {$p$}-elementary lattices}, Kyoto J. Math. \textbf{56} (2016), no.~3, 465--499.

\bibitem{Boissier_NW_Sarti_Enriques}
Samuel Boissi\`ere, Marc Nieper-Wi{\ss}kirchen, and Alessandra Sarti, \emph{Higher dimensional {E}nriques varieties and automorphisms of generalized {K}ummer varieties}, J. Math. Pures Appl. (9) \textbf{95} (2011), no.~5, 553--563.

\bibitem{BRS19}
Michele Bolognesi, Francesco Russo, and Giovanni Staglian\`o, \emph{Some loci of rational cubic fourfolds}, Math. Ann. \textbf{373} (2019), no.~1-2, 165--190.

\bibitem{prime_order_brandhorst}
Simon Brandhorst and Alberto Cattaneo, \emph{Prime order isometries of unimodular lattices and automorphisms of {IHS} manifolds}, Int. Math. Res. Not. IMRN (2023), no.~18, 15584--15638.

\bibitem{camere2012symplectic}
Chiara Camere, \emph{Symplectic involutions of holomorphic symplectic four-folds}, Bulletin of the London Mathematical Society \textbf{44} (2012), no.~4, 687--702.

\bibitem{camere2020non}
Chiara Camere and Alberto Cattaneo, \emph{Non-symplectic automorphisms of odd prime order on manifolds of {K}3\(^{[n]}\)-type}, Manuscripta {M}athematica \textbf{163} (2020), no.~3-4, 299--342.

\bibitem{camere2021non}
Chiara Camere, Alberto Cattaneo, and Andrea Cattaneo, \emph{Non-symplectic involutions on manifolds of {K}3\(^{[n]}\)-type}, Nagoya Mathematical Journal \textbf{243} (2021), 278--302.

\bibitem{camere2019verra}
Chiara Camere, Grzegorz Kapustka, Micha\l Kapustka, and Giovanni Mongardi, \emph{Verra four-folds, twisted sheaves, and the last involution}, Int. Math. Res. Not. IMRN (2019), no.~21, 6661--6710.

\bibitem{Cools_Coppens}
Filip Cools and Marc Coppens, \emph{Star points on smooth hypersurfaces}, J. Algebra \textbf{323} (2010), no.~1, 261--286.

\bibitem{donagi1996spectral}
Ron Donagi and Eyal Markman, \emph{Spectral covers, algebraically completely integrable, {H}amiltonian systems, and moduli of bundles}, Integrable systems and quantum groups ({M}ontecatini {T}erme, 1993), Lecture Notes in Math., vol. 1620, Springer, Berlin, 1996, pp.~1--119.

\bibitem{fu2016classification}
Lie Fu, \emph{Classification of polarized symplectic automorphisms of fano varieties of cubic fourfolds}, Glasgow Mathematical Journal \textbf{58} (2016), no.~1, 17--37.

\bibitem{giovenzana2022period}
Franco Giovenzana, Luca Giovenzana, and Claudio Onorati, \emph{On the period of {L}i, {P}ertusi, and {Z}hao’s symplectic variety}, Canadian Journal of Mathematics (2023), 1–22.

\bibitem{2022symplecticrigidity}
Luca Giovenzana, Annalisa Grossi, Claudio Onorati, and Davide~Cesare Veniani, \emph{Symplectic rigidity of {O}'{G}rady's tenfolds}, Proc. Amer. Math. Soc., doi.org/10.1090/proc/16810 (2023), 1--8.

\bibitem{Gonzales_Liendo}
V\'{\i}ctor Gonz\'{a}lez-Aguilera and Alvaro Liendo, \emph{Automorphisms of prime order of smooth cubic {$n$}-folds}, Arch. Math. (Basel) \textbf{97} (2011), no.~1, 25--37.

\bibitem{grossi2022nonsymplectic}
Annalisa Grossi, \emph{Nonsymplectic automorphisms of prime order on {O}’{G}rady’s sixfolds}, Revista Matem{\'a}tica Iberoamericana \textbf{38} (2022), no.~4, 1199--1218.

\bibitem{grossi2020symplectic}
Annalisa Grossi, Claudio Onorati, and Davide~Cesare Veniani, \emph{Symplectic birational transformations of finite order on {O’G}rady’s sixfolds}, Kyoto Journal of Mathematics \textbf{63} (2023), no.~3, 615--639.

\bibitem{Hassett_Tschinkel_Rational}
B.~Hassett and Y.~Tschinkel, \emph{Rational curves on holomorphic symplectic fourfolds}, Geom. Funct. Anal. \textbf{11} (2001), no.~6, 1201--1228.

\bibitem{hassett2000special}
Brendan Hassett, \emph{Special cubic fourfolds}, Compositio Mathematica \textbf{120} (2000), no.~1, 1--23.

\bibitem{Hassett_rationality_questions}
\bysame, \emph{Cubic fourfolds, {K}3 surfaces, and rationality questions}, Rationality problems in algebraic geometry, Lecture Notes in Math., vol. 2172, Springer, Cham, 2016, pp.~29--66. \MR{3618665}

\bibitem{huybrechts2017k3}
Daniel Huybrechts, \emph{The {K3} category of a cubic fourfold}, Compositio Mathematica \textbf{153} (2017), no.~3, 586--620.

\bibitem{huybrechts2023geometry}
\bysame, \emph{The geometry of cubic hypersurfaces}, vol. 206, Cambridge University Press, 2023.

\bibitem{kuznetsov2010derived}
Alexander Kuznetsov, \emph{Derived categories of cubic fourfolds}, Cohomological and geometric approaches to rationality problems, Progr. Math., vol. 282, Birkh\"{a}user Boston, Boston, MA, 2010, pp.~219--243.

\bibitem{kuznetsov2016derived}
\bysame, \emph{Derived categories view on rationality problems}, Rationality problems in algebraic geometry, Lecture Notes in Math., vol. 2172, Springer, Cham, 2016, pp.~67--104. \MR{3618666}

\bibitem{Laza_modulispaces}
Radu Laza, \emph{The moduli space of cubic fourfolds via the period map}, Ann. of Math. (2) \textbf{172} (2010), no.~1, 673--711.

\bibitem{Laza_maximally}
\bysame, \emph{Maximally algebraic potentially irrational cubic fourfolds}, Proc. Amer. Math. Soc. \textbf{149} (2021), no.~8, 3209--3220.

\bibitem{Laza_Perl_Zheng}
Radu Laza, Gregory Pearlstein, and Zheng Zhang, \emph{On the moduli space of pairs consisting of a cubic threefold and a hyperplane}, Adv. Math. \textbf{340} (2018), 684--722.

\bibitem{laza2017hyper}
Radu Laza, Giulia Sacc{\`a}, and Claire Voisin, \emph{{A hyper-Kähler compactification of the intermediate Jacobian fibration associated with a cubic 4-fold}}, Acta Mathematica \textbf{218} (2017), no.~1, 55 -- 135.

\bibitem{laza2022automorphisms}
Radu Laza and Zhiwei Zheng, \emph{Automorphisms and periods of cubic fourfolds}, Math. Z. \textbf{300} (2022), no.~2, 1455--1507.

\bibitem{li2022elliptic}
Chunyi Li, Laura Pertusi, and Xiaolei Zhao, \emph{Elliptic quintics on cubic fourfolds, {O}'{G}rady 10, and {L}agrangian fibrations}, Adv. Math. \textbf{408} (2022), no.~part A, Paper No. 108584, 56.

\bibitem{markman2011survey}
Eyal Markman, \emph{A survey of {T}orelli and monodromy results for holomorphic-symplectic varieties}, Complex and differential geometry, Springer Proc. Math., vol.~8, Springer, Heidelberg, 2011, pp.~257--322.

\bibitem{marquand2023cubic}
Lisa Marquand, \emph{Cubic fourfolds with an involution}, Trans. Amer. Math. Soc. \textbf{376} (2023), no.~2, 1373--1406.

\bibitem{marquand2023classification}
Lisa Marquand and Stevell Muller, \emph{Classification of symplectic birational involutions of manifolds of {OG}10 type}, preprint, \href{https://arxiv.org/abs/2206.13814}{arXiv:2206.13814v4}, (2023).

\bibitem{marquand2024finite}
\bysame, \emph{Finite groups of symplectic birational transformations of ihs manifolds of {OG}10 type}, preprint, \href{https://arxiv.org/abs/2310.06580}{arXiv:2310.06580v2}, (2024).

\bibitem{marquand2023defect}
Lisa Marquand and Sasha Viktorova, \emph{The defect of a cubic threefold}, preprint, \href{https://arxiv.org/abs/2312.05118}{arXiv:2312.05118}, (2023).

\bibitem{morrison2009embeddings}
Rick Miranda and David~R. Morrison, \emph{The number of embeddings of integral quadratic forms. {II}}, Proc. Japan Acad. Ser. A Math. Sci. \textbf{62} (1986), no.~1, 29--32.

\bibitem{mongardi2012symplectic}
Giovanni Mongardi, \emph{Symplectic involutions on deformations of {K}3\(^{[2]}\)}, Open Mathematics \textbf{10} (2012), no.~4, 1472--1485.

\bibitem{Mongardi_Mori}
\bysame, \emph{A note on the {K}\"{a}hler and {M}ori cones of hyperk\"{a}hler manifolds}, Asian J. Math. \textbf{19} (2015), no.~4, 583--591.

\bibitem{mongardi2022birational}
Giovanni Mongardi and Claudio Onorati, \emph{Birational geometry of irreducible holomorphic symplectic tenfolds of {O}'{G}rady type}, Math. Z. \textbf{300} (2022), no.~4, 3497--3526.

\bibitem{mongardi2018prime}
Giovanni Mongardi, K\'{e}vin Tari, and Malte Wandel, \emph{Prime order automorphisms of generalised {K}ummer fourfolds}, Manuscripta Math. \textbf{155} (2018), no.~3-4, 449--469.

\bibitem{mongardi2015induced}
Giovanni Mongardi and Malte Wandel, \emph{Induced automorphisms on irreducible symplectic manifolds}, Journal of the London Mathematical Society \textbf{92} (2015), no.~1, 123--143.

\bibitem{Mongardi_Wandel}
\bysame, \emph{Automorphisms of {O}'{G}rady's manifolds acting trivially on cohomology}, Algebr. Geom. \textbf{4} (2017), no.~1, 104--119. \MR{3592467}

\bibitem{Nik_finitegroups}
Viacheslav~V. Nikulin, \emph{Finite groups of automorphisms of {K}\"{a}hlerian {$K3$} surfaces}, Trudy Moskov. Mat. Obshch. \textbf{38} (1979), 75--137 (Russian), English translation: Trans. Moscow Math. Soc. \textbf{38} (1980), no. 2, 71--35.

\bibitem{nikulin1980integral}
\bysame, \emph{Integer symmetric bilinear forms and some of their geometric applications}, Izv. Akad. Nauk SSSR Ser. Mat. \textbf{43} (1979), no.~1, 111--177 (Russian), English translation: Math USSR-Izv. \textbf{14} (1979), no. 1, 103--167 (1980). \MR{525944}

\bibitem{oguiso2011enriques}
Keiji Oguiso and Stefan Schr\"{o}er, \emph{Enriques manifolds}, J. Reine Angew. Math. \textbf{661} (2011), 215--235.

\bibitem{ohashi2013non}
Hisanori Ohashi and Malte Wandel, \emph{Non-natural non-symplectic involutions on symplectic manifolds of {K}3\(^{[2]}\)-type}, preprint, \href{https://arxiv.org/abs/1305.6353}{arXiv:1305.6353v2}, (2013).

\bibitem{onorati2020monodromy}
Claudio Onorati, \emph{On the monodromy group of desingularised moduli spaces of sheaves on {K}3 surfaces}, J. Algebraic Geom. \textbf{31} (2022), no.~3, 425--465.

\bibitem{OSCAR}
\emph{Oscar -- open source computer algebra research system, version 1.0.0}, 2024.

\bibitem{rapagnetta2006beauville}
Antonio Rapagnetta, \emph{On the {B}eauville form of the known irreducible symplectic varieties}, Mathematische Annalen \textbf{340} (2008), 77--95.

\bibitem{reid1997chapters}
Miles Reid, \emph{Chapters on algebraic surfaces, complex algebraic geometry, park city ut, usa (1993)}, IAS/Park City Math. Ser \textbf{3} (1997), 3.

\bibitem{rudakov1981surfaces}
Aleksei~Nikolaevich Rudakov and Igor~Rostislavovich Shafarevich, \emph{Surfaces of type {K}3 over fields of finite characteristic}, Itogi Nauki i Tekhniki. Seriya" Sovremennye Problemy Matematiki. Noveishie Dostizheniya" \textbf{18} (1981), 115--207.

\bibitem{RS19}
Francesco Russo and Giovanni Staglian\`o, \emph{Congruences of 5-secant conics and the rationality of some admissible cubic fourfolds}, Duke Math. J. \textbf{168} (2019), no.~5, 849--865. \MR{3934590}

\bibitem{sacca2020birational}
Giulia Sacc{\`a}, \emph{Birational geometry of the intermediate jacobian fibration of a cubic fourfold}, Geometry \& Topology \textbf{27} (2023), no.~4, 1479--1538.

\bibitem{voisin1986theoreme}
Claire Voisin, \emph{Th{\'e}oreme de {T}orelli pour les cubiques de {P}5}, Inventiones mathematicae \textbf{86} (1986), no.~3, 577--601.

\bibitem{voisin2016hyper}
\bysame, \emph{Hyper-{K\"a}hler compactification of the intermediate {J}acobian fibration of a cubic fourfold: the twisted case}, Contemporary Mathematics \textbf{712} (2018), 341--355.

\bibitem{yu2020moduli}
Chenglong Yu and Zhiwei Zheng, \emph{Moduli spaces of symmetric cubic fourfolds and locally symmetric varieties}, Algebra \& Number Theory \textbf{14} (2020), no.~10, 2647--2683.

\bibitem{zheng2021orbifold}
Zhiwei Zheng, \emph{Orbifold aspects of certain occult period maps}, Nagoya Mathematical Journal \textbf{243} (2021), 137--156.

\end{thebibliography}
\end{document}